\newtheorem{thm}{Theorem}[section]
\newtheorem{lem}[thm]{Lemma}
\newtheorem{prop}[thm]{Proposition}
\newtheorem{cor}[thm]{Corollary}
\newtheorem{defn}[thm]{Definition}
\newtheorem{fact}{Fact}
\theoremstyle{remark}
\newtheorem{rem}[thm]{Remark}
\def\ga{\mathfrak{a}}\def\gg{\mathfrak{g}}\def\gh{\mathfrak h}\def\gp{\mathfrak{p}}
\def\gr{\mathfrak{r}}\def\gq{\mathfrak{q}}\def\gm{\mathfrak{m}}\def\O{\mathcal{O}}
\def\BT{\mathcal{BT}}\def\F{\mathcal F}\def\G{\mathcal G}\def\K{\mathcal K}
\def\L{\mathcal L}\def\M{\mathcal M}\def\R{\mathcal{R}}\def\Q{\mathcal Q}\def\Y{\mathcal Y}
\def\Abar{\overline{A}}\def\Gbar{\overline{G}}\def\Mbar{\overline{M}}\def\Ybar{\overline{Y}}\def\Xbar{\overline{X}}
\def\Qbar{\overline{Q}}\def\Pbar{\overline{P}}\def\ebar{\overline{\eta}}\def\sbar{\overline\sigma}\def\Nbar{\overline{N}}
\def\a{\mathbb A}\def\d{\mathbb D}\def\p{\mathbb P}\def\q{\mathbb Q}\def\f{\mathbb F}
\def\c{\mathbb C}\def\g{\mathbb G}\def\h{\mathbb H}\def\r{\mathbb R}\def\z{\mathbb Z}
\def\zd{{\mathaccent94\z}}\def\md{\mathaccent94 m}\def\Yd{\mathaccent94 Y}
\def\Od{{\mathaccent94\O_L}}\def\Rd{{\mathaccent94 R}}\def\Qd{\mathaccent94 Q}
\def\Sd{\mathaccent94 S}\def\otd{\mathaccent94\otimes}
\def\lg{\operatorname{length}}\def\sy{\operatorname{sym}}\def\End{\operatorname{End}}
\def\Aut{\operatorname{Aut}}\def\Ad{\operatorname{Ad}}\def\PGL{\operatorname{PGL}}
\def\GL{\operatorname{GL}}\def\SL{\operatorname{SL}}\def\U{\operatorname{U}}
\def\Hom{\operatorname{Hom}}\def\Mat{\operatorname{Mat}}\def\Ext{\operatorname{Ext}}
\def\EXT{\operatorname{EXT}}\def\TORS{\operatorname{TORS}}\def\Lie{\operatorname{Lie}}
\def\Gal{\operatorname{Gal}}\def\Spec{\operatorname{Spec}}\def\Card{\operatorname{Card}}
\def\Df{\operatorname{Defo}}\def\tr{\operatorname{tr}}
\def\rk{\operatorname{rank}}\def\ch{\operatorname{char}}
\title{Construction of abelian varieties with given monodromy}
\author{Oliver B\"ultel}
\thanks{Mathematisches Institut der Universit\"at Heidelberg, Im Neuenheimer Feld 288, 69120 Heidelberg, Germany,
bueltel@mathi.uni-heidelberg.de, Subject Classification(2000): 14L05, 14K10, 14C25, 14C30}
\begin{document}

\maketitle

\begin{abstract} 
Let $\rho$ be a finite dimensional faithful representation of a semisimple algebraic group $G$. By means of a deformation argument we show that there exists a family of 
abelian varieties over a smooth projective curve $S/\f_p^{ac}$ with $\ell$-adic monodromy group covering $G$ and $\ell$-adic monodromy representation containing $\rho$.
\end{abstract}

\tableofcontents
\section{Introduction}

Recall that a complex abelian variety $Y/\c$ has a period lattice $TY$ which one might introduce to be the kernel of the universal covering map $\exp:\Lie Y\rightarrow Y$. As $TY$ is discrete and cocompact in $\Lie Y$ this means that the $\r$-vector space $\r\otimes TY$, being $\Lie Y$, has in fact a $\c$-structure, consequently we obtain a subgroup
\begin{equation}
\label{minus}
\c^\times\hookrightarrow\GL_\r(\r\otimes TY),
\end{equation}
which just arises by multiplying elements in $\Lie Y$ by elements in $\c^\times$. Now there exists a smallest $\q$-algebraic subgroup $H\subset\GL(VY/\q)$, where 
$VY=\q\otimes TY$, whose group of $\r$-valued points contains the above subgroup $\c^\times$, here we write $\GL(VY/\q)$ for the $\q$-algebraic group whose 
$Q$-points are $\GL_Q(Q\otimes VY)=$ set of $Q$-linear bijections on $Q\otimes VY$. This connected, reductive subgroup is called the Hodge group, it measures 
the individuality of $Y$, for example $H$ is abelian if and only if there is a semisimple commutative subalgebra in $\End^0(Y)$ that has degree $2\dim Y/\c=\dim_\q VY$, 
i.e. if and only if $Y$ ``has complex multiplication''. It is folklore that the subgroup \eqref{minus} gives a cocharacter $\mu:\g_m\rightarrow H\times\c$, which in 
$\GL_\c(\c\otimes TY)$ is conjugated to $z\mapsto(z,\dots,z,1,\dots,1)$, moreover there is no $\q$-simple factor of $H$ in which $\mu$ is trivial.\\
The significance of the Hodge group stems from its relation to the monodromy group of a family of complex abelian varieties: Let $Y$ be an abelian scheme over a 
smooth connected complex algebraic variety $S/\c$. Let $\xi$ be a base point on $S$ and let $Y_\xi/\c$ be the fibre over $\xi$. As $Y$ is a fibration there is an operation 
of $\pi_1(S,\xi)$ on $TY_\xi$ and $VY_\xi$. One simply defines the monodromy group $G_\xi$ of $Y$ to be the Zariski closure of the image of $\pi_1(S,\xi)$. By results 
of Y. Andr\'e \cite[Lemma 4/Theorem 1]{andre} there is a pathwise connected subset $S^\circ\subset S$ with meagre complement such that:
\begin{itemize}
\item
the Hodge group $H_\xi$ of $Y_\xi$ is locally constant on $S^\circ$,
\item
for all $\xi\in S^\circ$ the neutral component $G_\xi^\circ$ of the monodromy group is a normal subgroup of $H_\xi^{der}$,
\end{itemize}
where the first statement has to be interpreted by locally trivializing the locally constant sheaf $VY_\xi$, 
of course $G_\xi$ is locally constant with no restriction whatsoever. In addition we have a natural map:
$$\rho_\xi:G_\xi\hookrightarrow\GL(VY_\xi/\q) ,$$
which we want to call the monodromy representation. The whole scenario puts very strong limitations on the structure of the 
groups $H_\xi$ and $G_\xi^\circ$, leading to Deligne's well-known $A_l$, $B_l$, $C_l$, $D_l^\r$, $D_l^\h$ classification.\\
The purpose of this work is to show that over bases of positive characteristic $p$ the analogous $\ell$-adic monodromy groups $G_\ell$, where $\ell\neq p$, have a 
markedly different behaviour: If $Y$ is an abelian scheme over an algebraic variety $S$ over say the algebraic closure $\f_p^{ac}$ of the prime field, then the fibre 
$Y_\xi/\f_p^{ac}$ over a geometric point $\xi:\Spec\f_p^{ac}\rightarrow S$ can be used to define integral and rational Tate modules $T_\ell Y_\xi$ and $V_\ell Y_\xi$ 
over $\z_\ell$ and $\q_\ell$. On these there is an operation of $\pi_1^{et}(S,\xi)$, and the $\ell$-adic monodromy group $G_{\xi,\ell}\subset\GL(V_\ell Y_\xi/\q_\ell)$ 
is defined to be the Zariski closure of it, clearly independent of $\xi$. We define the $\ell$-adic monodromy representation to be the map
$$\rho_{\xi,\ell}:G_{\xi,\ell}\hookrightarrow\GL(V_\ell Y_\xi/\q_\ell)$$
with which $G_{\xi,\ell}$ is equipped by definition. By Deligne's theory of pure sheaves $G_{\xi,\ell}$ has a semisimple neutral component, \cite[Corollaire 1.3.9]{deligne4}. 
By \cite[Theorem 4.1]{larsen} the pair $(G_{\xi,\ell}\times_{\q_\ell}\c,\rho_{\xi,\ell}\times_{\q_\ell}\c)$ is independent of $\ell$, for imbeddings $\q_\ell\hookrightarrow\c$. 
Except for these fundamental results not much seems to be known. I am grateful to Prof. R. Pink for posing the stimulating question to me, on whether there were any 
necessary or sufficient criteria for groups with faithful group representation $(G_{\xi,\ell},\rho_{\xi,\ell})$ to arise from abelian schemes over varieties in characteristic 
$p$. One should certainly expect that one has more possibilities for $(G_{\xi,\ell},\rho_{\xi,\ell})$ than in characteristic $0$, but it is not so obvious how to construct 
examples where $G_{\xi,\ell}$ has a simple factor of exceptional type $G_2$.\\ 
Before we can state a result in this direction, we want to remark, that it is often easier to merely consider the projections of $G_{\xi,\ell}$ onto certain subspaces of 
$V_\ell Y_\xi$. Suppose a subfield $L\subset\End^0(Y)$ is given, and suppose $\gr|\ell$ is a prime of $\O_L$. Then it is traditional to consider the $L_\gr$-vector 
space $V_\gr Y_\xi=\q\otimes T_\gr Y_\xi$, where $T_\gr Y_\xi$ is the module $\lim\limits_{\leftarrow N}Y_\xi[\gr^N]$ which has rank $\frac{2\dim Y/S}{[L:\q]}$ over 
$\O_{L_\gr}$. Again we write $\rho_{\xi,\gr}:G_{\xi,\gr}\hookrightarrow\GL(V_\gr Y_\xi/L_\gr)$ for the smallest $L_\gr$-algebraic subgroup containing the projection 
of $G_{\xi,\ell}$ onto the direct summand $V_\gr Y_\xi$, or equivalently, containing the image of $\pi_1^{et}(S,\xi)$ in $\GL_{L_\gr}(V_\gr Y_\xi)$.\\
Furthermore we need a word on representations with pairing: Let $G^+$ be a semisimple algebraic group over a 
field $L^+$. By a unitary representation of $G^+$ we mean a triple $(B,(.,..),\rho^+)$ having the following properties:
\begin{itemize}
\item
$B$ is a free $L$-module of finite rank, where $L$ is a semisimple $L^+$-algebra of rank two.
\item
$(.,..)$ is a non-degenerate sesquilinear pairing on $B$, where the underlying involution is $x^*=\tr_{L/L^+}(x)-x$.
\item
$\rho^+:G^+\rightarrow\U(B/L)$ is a homomorphism of algebraic groups over $L^+$ with $\U(B/L)$ being the group of unitary isometries of $(B,(.,..))$
\end{itemize}
If $L$ is understood we denote the scalar extension $G^+\times_{L^+}L$ by the symbol $G$. Observe that every unitary representation of $G^+$ determines a usual 
one of $G$ as $\U(B/L)\times_{L^+}L$ is $\GL(B/L)$, we denote this representation by the symbol $(B,\rho)$. By a homomorphism between unitary representations 
$(B_1,(.,..),\rho_1^+)$ and $(B_2,(.,..),\rho_2^+)$ we mean a $G$-homomorphism between $(B_1,\rho_1)$ and $(B_2,\rho_2)$, i.e. we forget the pairing. However, every element $f\in\Hom_G(B_1,B_2)$ has a transpose $f^*\in\Hom_G(B_2,B_1)$ defined by $(f(x_1),x_2)=(x_1,f^*(x_2))$ and $f$ is called an isometry (resp. similarity) if 
$f^*\circ f$ and  $f\circ f^*$ are the identities (resp. homotheties) of $B_1$ and $B_2$. Let us call $(B,(.,..))$ positive definite if $L$ is a CM field and $(x,x)$ is a totally positive 
element of $L^+$ for all $x\in B-\{0\}$. Clearly every positive definite representation is isometric to an orthogonal direct sum of irreducible ones. Notice that a faithful positive definite representation of $G^+$ exists if and only if it is $L_\r^+$-anisotropic, in which case the adjoint representation $(\Lie G,(.,..),\Ad)$ is positive definite, where 
$(x,y)=-\tr(\Ad(x)\circ\Ad(y^*))$. We need a technical condition on such objects:

\begin{defn}
\label{vzwei}
Assume that our fixed CM field $L$ has degree $2r$ over $\q$ and is unramified at a fixed odd prime $p$. Assume further that $p$ is inert in $L^+$ and that it splits in $L$. 
Write $\gq$, $\gq^*$, and $\gq^+=\gq\gq^*$ for the three primes of $L$ and $L^+$ over $p$, each of them has norm $p^r$. Let $v$ be an element of $L^\times$ with 
$v^*=-v$. Assume further that the number of $\q_p$-linear embeddings $\iota:L_\gq\hookrightarrow\q_p^{ac}$ with $\Im(\iota(v))>0$. (resp. $\Im(\iota(v))<0$) is equal 
to $a$ (resp. $r-a$), where ``$\Im(z)$'' denotes the imaginary part of $z$ with respect to a choice of embedding $\q_p^{ac}\hookrightarrow\c$. This is independent of the 
choice, provided only one decrees $a\leq r-a$.\\
Now we call a positive definite representation $B/L$ of $G^+/L^+$ $v$-$\gq$-flexible if:
\begin{itemize}
\item[(X.1)]
The dimension of $B$ is strictly smaller than $\frac{r}{a}$.
\item[(X.2)]
There is an element $u\in G(L_\gq)$ that satisfies $0=(\rho(u)-1)^{a+1}$ and is not killed by any of the projections onto the $L^+$-simple factors.
\end{itemize}
\end{defn}

In this work the following result is proved:

\begin{thm}
\label{newnull}
Fix $L/L^+$, $v$ and $\gq$ with properties as above and let $B/L$ be a faithful unitary representation of the semisimple group $G^+/L^+$. Suppose 
that there exists a $v$-$\gq$-flexible representation $B'/L$ of $G^+/L^+$ such that $B'$ contains all of $B$, $\Lie G$, and a copy of the trivial representation.\\
Then there exists a polarized abelian scheme $(Y,\lambda)$ over a projective and smooth curve $S/\f_p^{ac}$, together 
with a Rosati invariant map $L\hookrightarrow\End^0(Y)$, such that there is an isometry of unitary representations: 
\begin{equation}
\label{inverse}
(G_{\xi,\gr^+},\rho_{\xi,\gr^+},\frac{\Psi_\lambda(.,..)}{v})\cong(G^+\times_{L^+}L_{\gr^+}^+,\rho^+\times_{L^+}L_{\gr^+}^+,(.,..))
\end{equation}
for all primes $\gr^+|\ell\neq p$ of $\O_{L^+}$ and for all geometric points $\xi$ of $S$. Here it is understood that 
$\tr_{L/\q}\Psi_\lambda=\psi_\lambda:V_{\gr^+}Y_\xi\times V_{\gr^+}Y_\xi\rightarrow\q_\ell(1)$ is a multiple of the associated Weil-pairing.
\end{thm}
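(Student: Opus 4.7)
The plan is the deformation argument advertised in the abstract. Roughly, I build a PEL-type moduli space carrying a universal abelian scheme with $L$-action and the appropriate unitary form, locate a CM special point $x_0$ over $\f_p^{ac}$ inside it, and then produce a formal one-parameter deformation of $x_0$ whose Serre--Tate coordinates are arranged so that the $\ell$-adic monodromy of the generic fibre is all of $G^+$. The three prescribed sub-representations of $B'$ play three distinct roles. The sub-representation $B$ is the target unitary space on which $G^+$ will act via $\rho^+$ in the final family. The sub-representation $\Lie G$ guarantees a non-trivial Kodaira--Spencer map into the adjoint direction, so that the formal deformation moves in a genuinely $G$-direction rather than lying inside a proper subgroup. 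The trivial sub-representation provides the slack demanded by the Kottwitz-type signature $(a, r-a)$ at $\gq$ determined by $v$. The bound $\dim B' < r/a$ from (X.1) is precisely what makes such a polarized PEL datum realizable by an actual abelian variety with $L$-action.

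More concretely, I first package the unitary representation on $B'$ into a PEL datum over $\z_p$: an $L$-action with Rosati-invariant polarization whose associated Hermitian form is $v\cdot(.,..)$, signature of type $(a, r-a)$ at $\gq$, and a suitable level structure prime to $p$. The general machinery of Kottwitz and Rapoport--Zink gives a corresponding moduli problem $\M$, whose reduction to $\f_p^{ac}$ is non-empty thanks to (X.1). Inside $\M(\f_p^{ac})$ I select a CM point $x_0$ whose Dieudonn\'e module at $\gq$ accommodates the unipotent $u$ of (X.2) as an endomorphism: the condition $(u-1)^{a+1} = 0$ matches the length-$a$ Hodge filtration of a signature $(a, r-a)$ abelian variety, so such an $x_0$ exists by the Honda--Tate-type classification of isogeny classes in the associated Rapoport--Zink space.

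Next, Grothendieck--Messing and Serre--Tate produce a formal deformation $\Y \to \operatorname{Spf} \f_p^{ac}[[t]]$ of $x_0$ along a tangent direction prescribed by the $\Lie G$-summand of $B'$. This forces the local $\ell$-adic monodromy of the generic fibre at the closed point to be a conjugate of $u$. To algebraize, I observe that the formal curve lies inside $\M$, so Artin's algebraization theorem combined with a suitable toroidal compactification produces a smooth projective curve $S/\f_p^{ac}$ and an honest abelian scheme $Y/S$ extending the formal family, equipped with the polarization $\lambda$ and $L$-action inherited from $\M$.

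Finally, I identify the monodromy. By construction the representation takes values in $\U(B/L) \otimes_{L^+} L^+_{\gr^+}$, giving $G_{\xi,\gr^+} \subseteq G^+ \otimes_{L^+} L^+_{\gr^+}$. The reverse inclusion combines two ingredients: the local monodromy at the closed point of $S$ hits a conjugate of $u$, which is non-trivial in every $L^+$-simple factor by (X.2), and the Kodaira--Spencer surjectivity onto $\Lie G$ forces the Zariski closure to meet each simple factor. Larsen's independence-of-$\ell$ theorem, cited in the introduction, propagates the identification to every prime $\gr^+ \nmid p$. The main obstacle is the conjunction of the last two steps: one must simultaneously algebraize the formal family over a \emph{projective} curve and certify that its monodromy is already Zariski-dense in $G^+$ on a \emph{one-dimensional} base. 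The $v$-$\gq$-flexibility hypotheses (X.1)--(X.2), together with the presence of $\Lie G$ as a direct summand of $B'$, are tailored so that both demands can be satisfied by a single one-parameter family.
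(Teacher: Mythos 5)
Your outline captures the coarse shape of the argument (PEL moduli space, basic point, equicharacteristic deformation, spreading out), but the step that actually certifies the monodromy group is wrong, and the roles you assign to the three summands of $B'$ do not match what they are needed for. First, the unipotent $u$ of (X.2) is not realized as a local monodromy element, and could not serve that purpose: the local monodromy of a one-parameter family over $\f_p^{ac}((t))$ is topologically procyclic, so its Zariski closure is abelian and can never be Zariski-dense in the semisimple group $G^+$; likewise ``Kodaira--Spencer surjectivity onto $\Lie G$'' is a statement about the de Rham/crystalline realization and by itself gives no control over the $\ell$-adic image. In the paper $u$ enters quite differently: via Jacobson--Morozov (lemma \ref{R2}) it produces a homomorphism $\pi':\SL_2\rightarrow G$ not contained in any proper $L^+$-subgroup, and the point $\xi$ is chosen so that the skeleton of its Dieudonn\'e module carries the $\SL_2$-representation $\rho\circ\pi'$, matching the $\sy$-structure built from the cyclic decomposition of $\rho(u)$. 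The ``sufficient deformation'' of that $\sy$-structure has Dwork descent datum $\theta^{(1)}$ generating $\pi'(\SL_2)$ Zariski-densely (lemma \ref{velf}), and by lemma \ref{vsechs} this computes the endomorphism algebras of the deformed abelian scheme \emph{and of all its exterior powers} $Y^{(k)}$ to be exactly $\End_G(\bigwedge_L^k B')$. That ``mock $G^+$-structure'' is the device that pins down the monodromy, and it is entirely absent from your proposal.

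Second, your argument for the identification of $G_{\xi,\gr^+}$ has no valid mechanism for either inclusion. Knowing only $\End^0_L(Y^{(1)})=\End_G(B')$ bounds the connected monodromy by the commutant of $\End_G(B')$, which is in general much larger than $G$; this is precisely why the paper needs the exotic maps $g^{(k)}$, the integral extension theorem \ref{extend}, and the compatibility lemma \ref{wsechs}, so that $\End^0_L(Y^{(k)})\supset\End_G(\bigwedge_L^k B')$ can be fed into the representation-theoretic lemma \ref{R3}. That lemma is also where the summands $\Lie G$ and the trivial representation of $B'$ are actually used: they make the Lie bracket $\bigwedge^2\gg\rightarrow U\otimes\gg$ and the derived action $\gg\otimes W\rightarrow U\otimes W$ visible as $G$-invariant tensors in $\End(\bigwedge^2 B')$, which forces any connected group commuting with them into $\rho(G)$ times a central torus --- they have nothing to do with Kodaira--Spencer directions or with signature slack (the signature constraint is (X.1) alone). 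The lower bound $G\times_L L_\gr\subset H_\gr$ then comes from the Mori--Zarhin theorem combined with part (ii) of lemma \ref{R3}, not from a local monodromy element. Finally, the passage to a projective curve is not Artin algebraization plus toroidal compactification: $\M^{(0\times1)}$ is already proper (lemma \ref{compact}), the mock structure descends to a finitely generated base because the endomorphism functors are locally of finite type, and one needs Jouanolou's Bertini theorem for $\pi_1$ to cut down to a curve without shrinking the monodromy group, plus Serre's lemma to handle the component group uniformly in $\gr$.
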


\subsubsection*{Comments and Examples:} Suppose, for instance that $G^+/L^+$ 
is any $L_\r^+$-anisotropic, $L_\gq$-isotropic, $L^+$-simple group of adjoint type. Then the representation $B':=L\oplus\Lie G$ is $v$-$\gq$-flexible if $v$ 
and $\gq$ have the parameter $a=2$ (choose $u$ in the longest root eigenspace) and $r=2\dim_LG+3$. Hence our theorem predicts a family where the $\gr^+$-adic 
monodromy $G_{\xi,\gr^+}\subset\GL(V_{\gr^+}Y_\xi/L_{\gr^+}^+)$ looks like $G^+\subset\GL(\Lie G/L^+)$ regarded over $L_{\gr^+}^+$. In fact, for many 
representations the assumption ``$B'\supset L\oplus\Lie G$'' is not needed. For example this is the case if $B$ is the $7$-dimensional standard 
representation of (a $L_\gq$-isotropic, $L_\r^+$-compact $L^+$-form of) the $G_2$-group, so the statement of the theorem simply holds if $B'=B$ is 
$v$-$\gq$-flexible. In particular, there exists an abelian $7\times(1\times7+1)=56$-fold with this kind of additional structure (as 
$a=1$).\\
Despite appearance of the contrary the CM extension $L$ is important in all this. In particular our methods cannot decide whether there exist abelian varieties with 
multiplication by a real field $L^+$ and such that the $\gr^+$-adic monodromy is the group $\SL_2\times L_{\gr^+}^+\subset\GL(\sy^3/L_{\gr^+}^+)$. Although 
``$\sy^3\oplus\sy^3$'' can be achieved by the means of the theorem \ref{newnull}, the result tends to have too large an arithmetic monodromy to be cut into two parts.

\subsubsection*{The tools of our proof:} First of all it is clear that when 
starting to look for families $Y/S$ one should invoke an integral model $\M^{(1)}$ of some PEL-Shimura variety $M^{(1)}$ of which the special fibre 
$\Mbar^{(1)}$ accomodates triples $(Y,\lambda,\dots)$ over characteristic $p$ varieties $S/\f_p^{ac}$, regardless of whether they satisfy something to the 
effect of $\rho_{\xi,\gr^+}\cong\rho^+\times_{L^+}L_{\gr^+}^+$ or not. The PEL Shimura varieties of this kind are associated to groups of unitary similitudes 
of a skew-Hermitian $L$-vector space, however an important technical point of our theory is that there is a certain flexibility of picking up the Kottwitz 
determinant condition in the definition of $M^{(1)}$, hence in the definition of the corresponding unitary group. For the precise description of the group, 
which we prefer to choose, we refer to section \ref{group}. The corresponding symmetric Hermitian domain $X^{(1)}$ is a product of $w=an$ copies of the 
$n-1$-dimensional complex ball, where in fact $n=\dim_LB'$. In terms of the skew-Hermitian form, with respect to which one forms the congruence subgroup of
unitary similitudes, this condition reads equivalently: there are $w$ real places of $L^+$ at which the form has one of the signatures 
$(n-1,1)$ or $(1,n-1)$, while at all the other real places it is positive definite or negative definite.\\ 
The whole point lies now in the very simple observation that unitary groups of forms with this particular signature, i.e. $(0,n)$, $(1,n-1)$, $(n-1,1)$ or $(n,0)$, have more than 
one symplectic representation with Hodge weight in the set $\{(-1,0),(0,-1)\}$, amongst them there is the standard representation that we denote by $g^{(1)}$, but there are 
further more exotic ones which will be denoted by $g^{(0)}$, $\dots$, $g^{(n)}$. These maps are needed in a very essential way in our argument and it is for this reason that 
we pick the particular Kottwitz condition alluded to above. The representations $g^{(k)}$ give highly non-trivial maps from a certain auxiliary covering PEL Shimura variety $M^{(0\times1)}\rightarrow M^{(1)}$ to Siegel space, see \cite[Introduction]{ball} or section \ref{exotic} of this work. In zero characteristic the map $g^{(k)}$ is obtained by 
applying the standard functoriality properties of canonical models, but in positive characteristic it is difficult to get and its construction occupies the bulk of chapter \ref{mock2}.\\ 
Once integral versions, and hence reductions $\pmod p$ of $g^{(k)}$ have been established, there are applications for finding upper bounds for the connected Zariski 
closures $G_{\xi,\gr}^\circ$ of the $\gr$-adic image of the $\pi_1^{et}(S,\xi)$-operation that corresponds to some given $S\rightarrow\Mbar^{(0\times1)}$, as we shall 
see shortly. Each $g^{(k)}$ goes with an abelian scheme $Y^{(k)}$ on $\M^{(0\times1)}$, obtained by pull-back of the universal one on Siegel space. Each $Y^{(k)}$ 
determines a $\pi_1^{et}(S,\xi)$-representation on the $L_\gr$-vector space $V_\gr Y_\xi^{(k)}=$ rational Tate module of $Y_\xi^{(k)}$. A slight variant of the result 
\cite[Lemma 2.2]{ball} shows the existence of a $\pi_1(S,\xi)$-equivariant isomorphism
$$T_\gr Y_\xi^{(k)}\cong{T_\gr Y_\xi^{(0)}}^{\otimes_{\O_{L_\gr}}1-k}\otimes_{\O_{L_\gr}}\bigwedge_{\O_{L_\gr}}^kT_\gr Y_\xi^{(1)}.$$
Suppose for a while we could prove that there exists a semisimple $L_\gr$-algebraic subgroup $H_\gr\subset\GL(V_\gr Y_\xi^{(1)}/L_\gr)$, which satisfies:
\begin{equation}
\label{mockk}
L_\gr\otimes_L\End_L^0(Y^{(k)}\times_{\M^{(0\times1)}}S)=\End_{H_\gr}V_\gr Y_\xi^{(k)},
\end{equation}
a representation theoretic lemma shows that the semisimple group $G_{\xi,\gr}^\circ$ is (almost always, see lemma \ref{R3}) contained in $H_\gr$. Let us call \eqref{mockk} 
a mock $H_\gr$-structure over the pointed variety $S$. Observe that \eqref{mockk} would not be very sensible unless the $\End_L^0(Y_\xi^{(k)})$'s are already quite large. Actually the base point $\xi\in S$ that our theory comes up with is lying in the ``most supersingular locus'' of the moduli space $\Mbar^{(0\times1)}$. Such points give rise to 
the largest possible endomorphism algebras $\End_L^0(Y_\xi^{(k)})\cong\Mat(\binom{n}{k},L)$, which gives a lot of room for adjustment when trying to build up $S$.\\
The next tool which our proof uses is the deformation theory of a ``most supersingular'' $\f_p^{ac}$-valued point $\xi$, say over the scheme $S=\Spec\f_p^{ac}[[t]]$. 
Ironically, the concept of $\ell$-adic monodromy is completely meaningless in this context because $\Spec\f_p^{ac}[[t]]$ is simply connected. However \eqref{mockk} 
is meaningful over arbitrary bases. Furthermore, as the functors $S\mapsto\End_L(Y^{(k)}\times_{\M^{(0\times1)}}S)$ are representable by schemes locally of finite type 
over $\M^{(0\times1)}$, any mock $H_\gr$-structure over $\Spec\f_p^{ac}[[t]]$ descends to a mock $H_\gr$-structure over some, possibly very large, finitely generated 
$\f_p^{ac}$-algebra contained in $\f_p^{ac}[[t]]$, and over this basis $\ell$-adic monodromy does have a lot of content.\\
To make this project work a lot of compatibilities have to be checked. Most of them are rather straightforward and can be deduced with a close analysis of certain exotic 
Hodge cycles in the style Blasius is doing in \cite{blasius}, but one of these compatibilities is more intricate: As $\Mat(\binom{n}{k},L)$ acts on the $\binom{n}{k}$-dimensional 
$L_\gr$-vector space $V_\gr Y_\xi^{(k)}$ we get, by Morita equivalence, an induced $L$-structure up to scalars in $L_\gr^\times$ on it. For our purposes it turns out to 
be significant that the above isomorphism between $V_\gr Y_\xi^{(k)}$ and ${V_\gr Y_\xi^{(0)}}^{\otimes_{L_\gr}1-k}\otimes_{L_\gr}\bigwedge_{L_\gr}^kV_\gr Y_\xi^{(k)}$, 
respects this $L$-structure up to scalars in $L_\gr^\times$, see lemma \ref{wsechs}. Therefore we have canonical identifications of $\End_L^0(Y_\xi^{(k)})$ 
with $\End_L(\bigwedge_L^kB)$, where $B$ is an appropriatly chosen $L$-vector space, allowing us to free the left hand side of condition \eqref{mockk} 
from the $L_\gr$-coefficients. Hence the task consists now of exhibiting a map of pointed $\f_p^{ac}$-schemes $S\rightarrow\Mbar^{(0\times1)}$ with:
\begin{equation}
\label{mock4}
\End_L(\bigwedge_L^kB)=\End_L^0(Y_\xi^{(k)})\supset\End_L^0(Y^{(k)}\times_{\M^{(0\times1)}}S)=\End_G(\bigwedge_L^kB)
\end{equation}
We sketched above that we want to use a $S=\Spec\f_p^{ac}[[t]]$-valued point over which the subalgebras
\begin{equation}
\label{mock5}
\End_L^0(\tilde Y_\xi^{(k)})=\End_L^0(Y_\xi^{(k)})\cap(\End_{L_\gq}^0(\tilde Y_\xi^{(k)}[\gq^\infty])\oplus\End_{L_{\gq^*}}^0(\tilde Y_\xi^{(k)}[\gq^{*\infty}]))
\end{equation}
coincide with the ones on the right hand side of \eqref{mock4}, where $Y^{(k)}\times_{\M^{(0\times1)}}\f_p^{ac}[[t]]=\tilde Y_\xi^{(k)}$. It goes without saying that 
we want to produce these deformations by means of the Serre-Tate theorem, but how do we finess the requested $p$-divisible groups $\tilde Y_\xi^{(1)}[\gq^\infty]$ 
over $\f_p^{ac}[[t]]$? This is cumbersome to achieve, and it is this part of our theory where the very unpleasant condition (X.2) appears, see chapter \ref{wsieben} 
and in particular subsection \ref{freakish} for details.\\
Observe once more that it is necessary to exercise a lot of care when computing the intersection in \eqref{mock5}. The reason is that to a deformation 
$\tilde Y_\xi^{(1)}[\gq^\infty]$ of $Y_\xi^{(1)}[\gq^\infty]$ one calculates easily the $L_\gq$-algebra $\End_{L_\gq}^0(\tilde Y_\xi^{(k)}[\gq^\infty])$ for all $k$, but difficulties 
show up when one wants to understand which elements of $\End_{L_\gq}^0(\tilde Y_\xi^{(k)}[\gq^\infty])\oplus\End_{L_{\gq^*}}^0(\tilde Y_\xi^{(k)}[\gq^{*\infty}])$ 
are in $\End_L^0(Y_\xi^{(k)})$. To accomplish this we need to know a crystalline version of lemma \ref{wsechs}: The $L$-structure up to 
$K(\f_p^{ac})^\times$-scalars of certain eigenspaces of the Dieudonn\'e modules of $Y_\xi^{(k)}[\gq^\infty]$ are compatible.

\subsubsection*{Related results:} Let $\psi$ be a non-trivial $\ell$-adic additive character of $\f_p$, and let $\chi$ be the Legendre symbol. By 
push-out there are associated sheaves $\L_{\psi(t)}$ and $\L_{\chi(t)}$ on the affine $t$-line $\a^1$ over say $\f_p^{ac}$ and on $\g_m\stackrel{j}
{\hookrightarrow}\a^1$. Then the Fourier transform $\F=NFT_\psi(\L_{\psi(t^7)}\otimes_{\q_\ell^{ac}}j_*\L_{\chi(t)})$ on $\a^1$ is lisse and pure of rank 
seven and weight one. By a fascinating result of Katz its geometric monodromy group is $G_2$, provided that $p\notin\{2,3,7,13\}$, cf. 
\cite[Chapter 9]{katz3},\cite[section 4]{katz4}. Here observe that the pendant of our field $L$ is $\q[\exp\frac{2i\pi}{p}]$. Hence the abelian scheme over 
$\a^1$ corresponding to $\F$ has dimension $7\times\frac{p-1}{2}$.\\ 
Oort and van der Put succeeded in constructing $5$-dimensional abelian varieties $Y/\f_p^{ac}((t))$, such that the endomorphism algebra of
$Y\times_{\f_p^{ac}((t))}\f_p^{ac}((t))^{sep}$ is equal to the quaternion algebra over $\q$ which is non-split only at $p$ and $\infty$, again a phenomenon which 
does not exist in characteristic $0$, cf.\cite[Example 1.5.1]{oort1}. In fact their method of construction inspired much of this work intuitively: We start out from a 
base point in $\Mbar^{(0\times1)}$ that is contained in the closed subset that plays the role of the ``most supersingular locus'', the basic points in the sense of 
\cite[Proposition 1.12]{richartz}. Oort and van der Put start from a base point too, however they use a multiplicatively degenerated abelian variety instead. We also 
adopt Oort and van der Put's use of deformation theoretic methods over bases like $\f_p^{ac}[[t]]$ and $\f_p^{ac}((t))$ to manufacture families with a prescribed 
``degree of individuality'', in their case provided by a clever use of the Mumford-Faltings-Chai construction and in this work provided by the Serre-Tate theorem.

\subsubsection*{Acknowledgements:} A major part of this work was written up during a visit to the university of Rennes in spring 2002. I thank Prof. P. Berthelot 
very heartily for the providing of this opportunity. Further thanks go to: Prof. R. Coleman, K. Buzzard, M. Dettweiler, Prof. S. Edixhoven, Prof. E. Freitag, Prof. N. Katz, 
Prof. M. Kisin, Prof. W. Kohnen, Prof. P. Lochak, A. Miller, Prof. R. Noot, F. Paugam, Prof. R. Taylor, Prof. O. Venjakob, M. Volkov, T. Wedhorn, Prof. R. Weissauer, 
and S. Wortmann for general interest and comments. I also thank the audience of my talk in the Oberwolfach-workshop 'Arithmetic Algebraic Geometry' in 2004, in 
which I announced the corollary \ref{R5}.

\section{$\sy$-Structures} 
\label{wsieben}
In this chapter we introduce certain $p$-divisible groups which, as we will prove much later, are essentially the 
$p$-divisible groups of the abelian varieties in theorem \ref{newnull}. We study first properties of an equicharacteristic 
deformation, with additional structure of a very particular kind, which we call a sufficient deformation of a $\sy$-structure.

\subsection{Windows and Frames}

Let $k$ be a perfect field of characteristic $p$. Let $W(k)$ be its Wittring, let $K(k)$ be its fraction field, and denote the absolute Frobenius on these by $\tau$. 

\subsubsection{Graded slopes}\label{slop} By an isocrystal we mean a finite-dimensional $K(k)$-vector space $M$ together with a $\tau$-linear bijection $\phi:M\rightarrow M$. Let us briefly recall slopes: There exists a positive integer $d$ and a basis $x_i$ of $K(k^{ac})\otimes_{K(k)}M$ such that $\phi^d(x_i)=p^{dz_i}x_i$ with certain uniquely determined rationals $z_i\in\frac{1}{d}\z$ which are called the slopes of $M$. If $M$ has only one single slope it is called isoclinal, if that slope is $0$ it is called \'etale.\\
By an effective isocrystal we mean a free $W(k)$-module $M$ with map $\phi$, such that $\q\otimes M$ is an isocrystal. A Dieudonn\'e module is by definition an effective isocrystal that satisfies $pM\subset\phi M$, or equivalently one that has an additional $\tau^{-1}$-linear operator $\psi$ with $\psi\circ\phi=\phi\circ\psi=p$. The slopes of an effective isocrystal are always non-negative. The slopes of a Dieudonn\'e module are always in the interval $[0,1]$ and if $1$ is omitted the Dieudonn\'e module is called unipotent. If $M=\d^*(\Gbar)$ is the contravariant Dieudonn\'e module of a $p$-divisible group $\Gbar/k$ as explained in section \ref{contrava} below, then the slopes of $M$ are just the slopes of $\Gbar$ in the usual sense.\\ 
Let $r$ be a positive integer. A $\z/r\z$-grading on an isocrystal, effective isocrystal, or Dieudonn\'e-module $M$ is a 
decomposition as a direct sum $M=\bigoplus_{\sigma\in\z/r\z}M_\sigma$ with $\phi M_\sigma\subset M_{\sigma+1}$. Notice that the 
functor: $\{\z/r\z$-graded isocrystals$\}\rightarrow\{$finite-dimensional $K(k)$-vector spaces with a $\tau^r$-linear bijection$\}$ that takes a pair 
$(\bigoplus_{\sigma\in\z/r\z}M_\sigma,\phi)$ to $(M',\phi')=(M_0,\phi^r)$ is an equivalence of categories. In the other direction we start with $(M',\phi')$ and we define: 
\begin{equation*}
M_\sigma=K(k)\otimes_{\tau^{\tilde\sigma},K(k)}M',
\end{equation*}
where $\tilde\sigma$ is the representative of $\sigma$ in $\{0,\dots,r-1\}$, and putting:
\begin{equation*}
\phi(\alpha\otimes_{\tau^{\tilde\sigma},K(k)}x)=\begin{cases}\tau(\alpha)\otimes_{\tau^{\tilde\sigma+1},K(k)}x&\sigma\neq-1\\\tau(\alpha)\phi'(x)&\sigma=-1\end{cases}.
\end{equation*}
gives $M_\sigma$ the structure of a graded isocrystal. Observe however, that effective graded isocrystals cannot be recovered from the 
degree $0$ submodule because the maps $\phi:M_\sigma\rightarrow M_{\sigma+1}$ might not be surjective.\\
If $M$ is graded then the multiplicities $m_1,m_2,m_3,\dots$ of the slopes $z_1,z_2,\dots$ are all divisible by $r$. 
We call $\frac{m_1}{r}\times rz_1,\frac{m_2}{r}\times rz_2,\frac{m_3}{r}\times\dots$ the graded slopes of $M$, equivalently 
the graded slopes $rz_i$ are just the $p$-adic valuations of the eigenvalues of the operator $\phi'$ on $M'$, counted 
with multiplicity. The graded slopes of a graded (unipotent) Dieudonn\'e module are always in the interval $[0,r]$ ($[0,r[$).\\
If $k$ is algebraically closed and if all the graded slopes of an isocrystal $M_\sigma$ are equal to some integer $z$, 
then the following recipe will be used time and again: Consider the $K(\f_{p^r})$-vector space (``Li's skeleton''):
\begin{equation*}
I=\{x\in\q\otimes M_0|\phi^r(x)=p^zx\}.
\end{equation*}
The functor $M_\sigma\mapsto I$ defines again an equivalence of categories 
$\{\z/r\z$-graded isocrystals of graded slope $z\}\rightarrow\{$finite-dimensional $K(\f_{p^r})$-vector spaces$\}$. In particular the graded endomorphism algebra 
of a $M_\sigma$ that involves integral graded slopes only is a product of Matrixalgebras over $K(\f_{p^r})$ then. For non-integral graded slopes this does not hold.

\subsubsection{Generalities} For the following we have to fix a frame 
$A$ in the sense of \cite{zink1}, the precise definition is irrelevant to us, as we only have to know that the following are examples of frames:
\begin{itemize}
\item
$A=W(k)[[t]]$
\item
$A=W(k)((t))\otd\z_p$
\item
$A=W(k)$
\end{itemize}
Write $R$ for $A/pA$ being one of $k[[t]]$, $k((t))$, or $k$. Observe that the absolute Frobenius on $R$ has a natural lift $\tau$ to each of the 
above three frames by sending $t$ to $t^p$. We also need to consider a notion of grading in Zink's category of $A$-windows, as introduced in \cite{zink1}:

\begin{defn}
A $\z/r\z$-graded $A$-window is a finitely generated projective graded 
$A$-module $\bigoplus_{\sigma\in\z/r\z}M_\sigma$, together with a graded 
submodule $M_1=\bigoplus_{\sigma\in\z/r\z}M_{\sigma,1}$, and a $\tau$-linear 
operator $\phi:M_\sigma\rightarrow M_{\sigma+1}$ such that
\begin{itemize}
\item[(W.1)]
$pM_\sigma\subset M_{\sigma,1}$ and the $R$-module 
$M_\sigma/M_{\sigma,1}$ is projective,
\item[(W.2)]
$\phi M_{\sigma,1}\subset pM_{\sigma+1}$,
\item[(W.3)]
the $A$-module $M_{\sigma+1}$ is generated by $\frac{1}{p}\phi M_{\sigma,1}$, 
\item[(W.4)]
the operator $\psi^\sharp:M_\sigma\rightarrow A\otimes_{\tau,A}M_{\sigma-1}$ 
described below is nilpotent $\pmod p$.
\end{itemize}
\end{defn}

The $\tau$-linear map $M_1\ni x\mapsto\frac{1}{p}\phi(x)\in M$ will be denoted by $\phi_1$. By $A$-linearization of $\phi$ and $\phi_1$ we obtain 
$\phi^\sharp:A\otimes_{\tau,A}M\rightarrow M$, and $\phi_1^\sharp:A\otimes_{\tau,A}M_1\rightarrow M$. By a graded normal decomposition we mean 
lifts $L_\sigma\subset M_\sigma$ of $M_{\sigma,1}$ together with complements $T_\sigma\subset M_\sigma$ to $L_\sigma$. To any graded normal 
decomposition one obtains an isomorphism $U^\sharp:A\otimes_{\tau,A}M_\sigma\rightarrow M_{\sigma+1}$, where $U$ is defined by 
$L_\sigma\oplus T_\sigma\ni l+t\mapsto\phi_1(l)+\phi(t)$. In loc.cit. Zink defines the operator $\psi^\sharp$ to be $U^{\sharp-1}$ followed by the 
map $l+t\mapsto l+pt$. It satisfies $\psi^\sharp\circ\phi^\sharp=\phi^\sharp\circ\psi^\sharp=p$ and does not depend on the choice of the (graded) 
normal decomposition. If $M$ is graded then $\psi^\sharp$ is homogeneous of degree $-1$, i.e. maps $M_{\sigma+1}$ to $A\otimes_{\tau,A}M_\sigma$.

\begin{rem}
\label{E6}
A standard technique allows to deform a $\z/r\z$-graded Dieudonn\'e module
$M_\sigma$ as follows: Write $\tilde M_\sigma$ for 
$W(k)[[t]]\otimes_{W(k)}M_\sigma$ and pick arbitrary $W(k)[[t]]$-linear maps 
$u_\sigma:\tilde M_\sigma\rightarrow\tilde M_\sigma$ with 
$u_\sigma\equiv 1_{M_\sigma}\pmod t$. Define a $\tau$-linear homogeneous map 
$\tilde\phi:\tilde M_{\sigma-1}\rightarrow\tilde M_\sigma $ by 
$x\mapsto u_\sigma(\phi(x))$. Corresponding to our Frobenius lift $\tau$ there 
exists a unique homomorphism $\delta:W(k)[[t]]\rightarrow W(W(k)[[t]])$, 
called the Cartier map, such that an element $\alpha$ is sent to a Witt vector
of which the $n$th ghost coordinate is equal to $\tau^n(\alpha)$. The element 
$t$, for instance, is sent to its Teichmuller lift $[t]=(t,0,0,\dots)$. Write:
\begin{equation}
\label{car}
\varkappa:W(k)[[t]]\rightarrow W(k[[t]]).
\end{equation}
for the composition of $\delta$ with the map to $W(k[[t]])$ which is 
$\pmod p$-reduction of the Witt components. By the theory in 
\cite[section (2.2)]{katz1} the object 
$W(k[[t]]^{perf})\otimes_{\varkappa,W(k)[[t]]}\tilde M_\sigma$, that one gets 
after extension of scalars to $k[[t]]^{perf}$, describes a ``locally free 
finite rank Frobenius crystal on $k[[t]]^{perf}$'', in our case with 
$\z/r\z$-grading.\\ 
The theory of Zink allows an improvement: Suppose that decreeing 
$\tilde M_{\sigma,1}:=W(k)[[t]]\otimes_{W(k)}M_{\sigma,1}$, where 
$M_{\sigma,1}:=\{x\in M_\sigma|\phi(x)\in pM_{\sigma+1}\}=\psi M_{\sigma+1}$,
makes the structure $(\tilde M_\sigma,\tilde M_{\sigma,1},\tilde\phi)$ into a 
graded $W(k)[[t]]$-window. This boils down to the nilpotence condition (W.4), 
which is satisfied if and only if the Dieudonn\'e module $W(k((t))^{perf})
\otimes_{\varkappa,W(k)[[t]]}\tilde M_\sigma$ is unipotent. Now one can use 
$(\tilde M_\sigma,\tilde M_{\sigma,1},\tilde\phi)$ to describe a ``locally 
free finite rank $\z/r\z$-graded Frobenius crystal on the ring $k[[t]]$'', not 
merely on the perfection. This is an important aspect as passage from $k[[t]]$
to $k[[t]]^{perf}$ destroys many interesting properties of the ``locally free 
finite rank Frobenius crystals''.
\end{rem}

\subsection{Formal $\sy$-structures}
We need the following integral version of a special case of an ``additional structure'' on a window. In the setting of isocrystals this concept has already been studied 
in much greater generality, see \cite{richartz} for example. Fix once and for all a finite sequence $b_1,\dots,b_c$ of non-negative integers, not all of which are zero.
\begin{defn}
\label{symD}
A $\z/r\z$-graded $A$-window $M_\sigma$ is endowed with a formal $\sy$-structure if a tuple $(N_\sigma,N_{i,\sigma},\zeta_\sigma)$, where $i\in\{1,\dots,c\}$, is given such that:
\begin{itemize}
\item[(S.1)]
$N_\sigma$ is a $\z/r\z$-graded $A$-window with $\rk_AN_\sigma=2$.
\item[(S.2)]
$N_{1,\sigma},\dots,N_{c,\sigma}$ are some further auxiliary $\z/r\z$-graded effective isocrystals which have $\rk_{W(k)}N_{i,\sigma}=1$.
\item[(S.3)]
$\zeta_\sigma:\q\otimes M_\sigma\rightarrow\q\otimes\bigoplus_{i=1}^cN_{i,\sigma}\otimes_{W(k)}\sy_A^{b_i}N_\sigma$
is a $\z/r\z$-homogeneous quasi-isogeny of $A$-modules with $\phi$-operation.
\item[(S.4)]
For all $\sigma\in\z/r\z$ with $\rk_RN_\sigma/N_{\sigma,1}=1$ the $A$-module $\zeta_\sigma(M_\sigma)$ 
is equal to $\bigoplus_{i=1}^cN_{i,\sigma}\otimes_{W(k)}\sy_A^{b_i}N_\sigma$.
\end{itemize}
\end{defn}
If $N_\sigma$ (resp. $N_{i,\sigma}$) is as in (S.1) (resp. (S.2)) we write $z$ (resp. $z_i$) for the graded slope of $\bigoplus_\sigma\det(N_\sigma)$ 
(resp. $\bigoplus_\sigma N_{i,\sigma}$). We denote the set $\{\sigma\in\z/r\z|\rk_RN_\sigma/N_{\sigma,1}=1\}$ by $\Sigma$, its cardinality satisfies 
$0\leq z-\Card(\Sigma)\equiv0\pmod2$.\\ 
Observe that any $W(k)[[t]]$-window gives rise to a $W(k)((t))\otd\z_p$-and a $W(k)$-one by extension of scalars, these will be referred to as the 
generic and the special fibre. It is clear that a $\sy$-structure on some graded $W(k)[[t]]$-window is inherited by its generic and special fibre.

\subsubsection{Examples of formal $\sy$-structures} 
\label{freakish}
For the rest of this section the frame is $A=W(k)$, we begin with a numerical condition which guarantees the existence of an 
abundance of examples of formal $\sy$-structures. We fix $z,r\in\z$, and $a\in\frac{1}{2}\z$ all positive and subject to:
\begin{itemize}
\item
$n:=\sum_{i=1}^c1+b_i<\frac{r}{a}$
\item
$\max\{b_i|i=1,\dots,c\}\leq\frac{2a}{z}$
\item
$a\in\z$, if $z$ is even and $2a\equiv b_1\equiv\dots\equiv b_c\pmod2$, if $z$ is odd
\end{itemize}
If the above holds it is possible to choose the following:
\begin{itemize}
\item[(I)]
for every index $i\in\{1,\dots,c\}$ a representation of $a-b_iz/2$ as an arbitrary sum $\sum_{j=1}^zf_{i,j}$, where the $f_{i,j}$ are non-negative integers,
\item[(II)]
integers $\sigma_1<\dots<\sigma_z<\sigma_1+r$, and a proper subset $\Omega\subset\z/r\z$ such that its intersection with each of the intervals
$\{\sigma_j,\dots,\sigma_{j+1}-1\}$ (along with $\{\sigma_z,\dots,\sigma_1+r-1\}$ of course) contains exactly $w_j$ many elements where
$$w_j=\frac{1}{2}\sum_{i=1}^c(b_i+1)(b_i+2f_{i,j}).$$
\end{itemize}
Let $w$ be $\sum_{j=1}^zw_j$. Observe that $\Card(\Omega)=w=an<r$. We want to start with a graded Dieudonn\'e module $N_\sigma$, such that
\begin{eqnarray*}
&&\dim_kN_\sigma/N_{\sigma,1}=\begin{cases}1&\sigma\equiv\sigma_j\pmod r\\2&\text{ otherwise}\end{cases}\\
&&\rk_{W(k)}N_\sigma=2,
\end{eqnarray*}
where, as usual, $N_{\sigma,1}=\{x\in N_\sigma|\phi(x)\in pN_{\sigma+1}\}$. Notice that these conditions imply $\Sigma=\{\sigma_1,\dots,\sigma_z\}$, 
in particular $\Card(\Sigma)=z$. Let $N_{\sigma_j}=L_j\oplus T_j$ ($j=1,\dots,z$) be a graded normal decomposition. Let us 
endow the graded $W(k)$-modules $N_{i,\sigma}=W(k)$ with the structure of an effective isocrystal by the following formula:
$$\phi(x)=\begin{cases}p^{f_{i,j}}\tau(x)&\sigma\equiv\sigma_j\pmod r\\\tau(x)&\text{ otherwise}\end{cases},$$
where $x$ is to be regarded in $N_{i,\sigma}$, and $\phi(x)$ is to be regarded in $N_{i,\sigma+1}$. We next want to establish a graded Dieudonn\'e 
module $M_\sigma\subset\bigoplus_{i=1}^cN_{i,\sigma}\otimes_{W(k)}\sy_{W(k)}^{b_i}N_\sigma$. Moreover, we want $M_\sigma$ to satisfy:
\begin{eqnarray*}
&&\dim_kM_\sigma/M_{\sigma,1}=\begin{cases}n-1&\sigma\in\Omega\\n&\text{ otherwise}\end{cases}\\
&&\rk_{W(k)}M_\sigma=n
\end{eqnarray*}
We start with an element $\sigma_j\in\Sigma$ and set 
\begin{eqnarray*}
&&M_{\sigma_j}=\bigoplus_{i=1}^cN_{i,\sigma_j}\otimes_{W(k)}\sy_{W(k)}^{b_i}N_{\sigma_j}\\
&&=\bigoplus_{i=1}^cN_{i,\sigma_j}\otimes_{W(k)}(\bigoplus_{b=0}^{b_i} L_j^{\otimes b}\otimes_{W(k)}T_j^{\otimes b_i-b}).
\end{eqnarray*}
The lattice $M_{\sigma'}$ for some $\sigma'\in\{\sigma_j,\dots,\sigma_{j+1}\}$ will be manufactured from
$$\bigoplus_{i=1}^cN_{i,\sigma_j}\otimes_{W(k)}(\bigoplus_{b=0}^{b_i}p^{-e_{b,i}} L_j^{\otimes b}\otimes_{W(k)}T_j^{\otimes b_i-b})$$
by applying $\phi^{\sigma'-\sigma_j}$ with a choice of integers $e_{b,i}$ 
satisfying
\begin{eqnarray*}
&&0\leq e_{b,i}\leq b+f_{i,j}\\
&&e_{b,i}\leq e_{b+1,i}\leq e_{b,i}+1.
\end{eqnarray*}
We proceed inductively starting with $e_{0,i}=\dots=e_{b_i,i}=0$ at $\sigma'=\sigma_j$. If $\sigma'-1$ is not in $\Omega$, we keep all of $e_{0,i},\dots,e_{b_i,i}$, and otherwise we increase any of $e_{0,i},\dots,e_{b_i,i}$ by one. If we specialize the construction to $\sigma'=\sigma_{j+1}$ we find that the integers $e_{b,i}$ reach the maximal value 
$b+f_{i,j}$, due to $\sum_{i=1}^c\sum_{b=0}^{b_i}b+f_{i,j}=w_j$ and (II). The result is that $M_{\sigma_{j+1}}$ is the image under $\phi^{\sigma_{j+1}-\sigma_j}$ of the lattice
$$\bigoplus_{i=1}^cp^{-f_{i,j}}N_{i,\sigma_j}\otimes_{W(k)}(\bigoplus_{b=0}^{b_i}(p^{-1}L_j)^{\otimes b}\otimes_{W(k)}T_j^{\otimes b_i-b}).$$
In order to guarantee the consistency of the construction we have to check that $M_{\sigma_{j+1}}$ agrees with 
$\bigoplus_{i=1}^cN_{i,\sigma_{j+1}}\otimes_{W(k)}(\sy_{W(k)}^{b_i}N_{\sigma_{j+1}})$. This follows easily from $\phi(p^{-1}L_j\oplus T_j)=M_{\sigma_j+1}$, 
and $\phi(p^{-f_{i,j}}N_{i,\sigma_j})=N_{i,\sigma_j+1}$, together with the fact that $\phi$ induces bijections $N_{\sigma'}\rightarrow N_{\sigma'+1}$ and 
$N_{i,\sigma'}\rightarrow N_{i,\sigma'+1}$, whenever those degrees $\sigma'$ stay in the range from $\sigma_j+1$ to $\sigma_{j+1}-1$, i.e. outside $\Sigma$.

\begin{rem}
\label{E17}
Let $N_\sigma$ be as above and let $\{z',z''\}$ be the graded slopes of it. According to \cite[Theorem 4.2]{richartz} they satisfy:
\begin{itemize}
\item[(N.1)]
$z'+z''=z$,
\item[(N.2)]
$z',z''\in\{\frac{z}{2}\}\cup\{0,1,\dots,z\}$.
\end{itemize}
Moreover, for fixed $\Sigma$ every $\{z',z''\}$ with the above properties arises from some $N_\sigma$, see \cite[Theorem 5.1]{kottwitz3} for a more general result.
\end{rem}

We finish this subsection with a lemma on slopes:
 
\begin{lem}
\label{slope}
Suppose that $M_\sigma$ has a $\sy$-structure as above, and let 
$\bigoplus_\sigma N_\sigma$ have the graded slopes $\{z',z''\}$. Then 
$\bigoplus_\sigma M_\sigma$ has graded slopes $z_i+bz'+(b_i-b)z''$ where $i$ 
runs through $\{1,\dots,c\}$, and $b$ runs through $\{0,\dots,b_i\}$. The 
$W(k)$-rank of $M_\sigma$ is equal to $\sum_{i=1}^c1+b_i$.
\end{lem}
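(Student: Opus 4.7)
The statement is about rational invariants (rank and graded slopes), so the first move is to forget $M_\sigma$ itself and pass to the rationalization. By axiom (S.3) the quasi-isogeny $\zeta_\sigma$ gives a $\z/r\z$-homogeneous isomorphism
\[
\q\otimes M_\sigma \;\xrightarrow{\;\sim\;}\; \bigoplus_{i=1}^{c}\,(\q\otimes N_{i,\sigma})\otimes_{K(k)}\sy^{b_i}_{K(k)}(\q\otimes N_\sigma)
\]
of graded isocrystals. Thus I only need to read off the $W(k)$-rank and the graded slopes of the right hand side, because both invariants are preserved under isogeny of (graded) isocrystals.

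The rank is immediate: $\rk_{W(k)}N_{i,\sigma}=1$ by (S.2) and $\rk_{W(k)}\sy^{b_i}_{W(k)}N_\sigma=b_i+1$ since $\rk_{W(k)}N_\sigma=2$ by (S.1). Summing gives $\sum_{i=1}^{c}(1+b_i)$, which is what is claimed and, incidentally, agrees with the definition of $n$ in the examples built in \ref{freakish}.

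For the graded slopes I plan to invoke the equivalence recalled in \S\ref{slop} between $\z/r\z$-graded isocrystals and finite-dimensional $K(k)$-vector spaces equipped with a $\tau^r$-linear bijection $\phi'=\phi^r$: the graded slopes are precisely the $p$-adic valuations (counted with multiplicity) of the eigenvalues of $\phi'$ on an algebraic closure of $K(k)$. This equivalence is manifestly compatible with tensor products and symmetric powers, so graded slopes add under $\otimes$ and behave on $\sy^{b_i}$ exactly as in linear algebra. Explicitly, if $e_1,e_2$ is an eigenbasis of $\phi^r$ on $\q\otimes N_\sigma\otimes_{K(k)}K(k^{ac})$ with eigenvalues of $p$-adic valuations $z',z''$, then the monomials $e_1^{b}e_2^{b_i-b}$ diagonalize $\phi^r$ on $\sy^{b_i}N_\sigma$ with valuations $bz'+(b_i-b)z''$ for $b=0,\dots,b_i$. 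Twisting by the rank one factor $\q\otimes N_{i,\sigma}$ of graded slope $z_i$ simply shifts every eigenvalue's valuation by $z_i$, producing the list $z_i+bz'+(b_i-b)z''$, $b=0,\dots,b_i$. Running $i$ through $\{1,\dots,c\}$ and taking the union gives exactly the list in the lemma.

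There is no serious obstacle: the only thing to verify carefully is the compatibility of the functor $M_\sigma\mapsto(M_0,\phi^r)$ with $\otimes$ and $\sy^{b_i}$, which follows directly from its definition via extension of scalars along $\tau^{\tilde\sigma}$, and the fact that quasi-isogenies preserve slopes and ranks, which is standard. Once these formalities are in place, the computation of slopes is just the observation that a tensor product (resp.\ symmetric power) on the level of $(M_0,\phi^r)$ adds (resp.\ combines) valuations of eigenvalues in the evident way.
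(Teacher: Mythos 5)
Your argument is correct and is essentially the paper's own proof: pass to the isocrystal via the quasi-isogeny (S.3), take slope eigenbases of $\q\otimes N_0$ and $\q\otimes N_{i,0}$ over $k^{ac}$, and observe that the monomials $x_i{x'}^b{x''}^{b_i-b}$ form a basis of $\q\otimes M_0$ on which the Frobenius acts with the stated valuations. The only cosmetic difference is that the paper works with $\phi^{rd}$ (so the eigenvalue equations $\phi^{rd}x'=p^{z'd}x'$ etc.\ are literal), which is the precise form of your slightly informal ``eigenbasis of $\phi^r$ with eigenvalues of given valuation'' for a semilinear operator.
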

\begin{proof}
We can assume that $k$ is algebraically closed. Let $\q\otimes N_0$, and 
$\q\otimes N_{i,0}$ have bases $\{x',x''\}$ and $\{x_i\}$, with 
$\phi^{rd}x'=p^{z'd}x'$, $\phi^{rd}x''=p^{z''d}x''$, and 
$\phi^{rd}x_i=p^{z_id}x_i$. Now just note that 
$\q\otimes\bigoplus_{i=1}^cN_{i,0}\otimes_{W(k)}\sy_{W(k)}^{b_i}N_0$ has 
a basis consisting of the elements $\{x_i{x'}^b{x''}^{b_i-b}\}$ and that
$\phi^r(x_i)\phi^r(x')^b\phi^r(x'')^{b_i-b}$ is 
$\phi^r(x_i{x'}^b{x''}^{b_i-b})$. We get a $\sum_{i=1}^c1+b_i$-element basis 
of $\q\otimes M_0$ on which, by (S.3), $\phi^{rd}$ has eigenvalues with the 
valuations $d(z_i+bz'+(b_i-b)z'')$.
\end{proof}

\begin{rem}
\label{neweins}
Assume that $\bigoplus_\sigma M_\sigma$ has pure graded slope $a$. Then 
$\bigoplus_\sigma N_\sigma$ is isoclinal too, and necessarily has pure graded 
slope $z/2$, by (N.1). Consequently the integers $z_i$ have to be equal to 
$a-b_iz/2$. In particular $a$ must always be in $\frac{1}{2}\z$ and moreover 
one has:
\begin{itemize}
\item
$a\in\z$, if $z$ is even
\item
$2a\equiv b_1\equiv\dots\equiv b_c\pmod2$, if $z$ is odd.
\end{itemize}
Notice that in the first case all graded slopes are integral.
\end{rem}

\subsubsection{Results of Wedhorn}\label{bizarre} Let $M_\sigma$ be a $\z/r\z$-graded Dieudonn\'e module equipped 
with a $\sy$-structure $(N_\sigma,N_{i,\sigma},\zeta_\sigma)$. We want to exhibit lifts to non-isotrivial $W(k)[[t]]$-windows 
which preserve the $\sy$-structure. We begin with a description of a lift of $N_\sigma$ where we follow work of Wedhorn. If
\begin{itemize}
\item
$\dim_kN_\sigma/N_{\sigma,1}\geq1$, 
\item
and all graded slopes of $N_\sigma$ are $\neq0$,
\end{itemize}
then, by \cite[Proposition 4.1.4]{wedhorn}, there exists a deformation sequence. This is a family of 
elements $e_\sigma\in N_\sigma-N_{\sigma,1}$ such that for every $\sigma$ one and only one of
\begin{itemize}
\item
$\phi(e_{\sigma-1})\equiv e_\sigma\pmod p$
\item
$\phi(e_{\sigma-1})\in N_{\sigma,1}$
\end{itemize}
holds. Observe that if the second alternative holds then necessarily $\sigma\in\Sigma$, and $\{e_\sigma,\phi(e_{\sigma-1})\}$ 
is a basis of the $W(k)$-module $N_\sigma$. Now define $W(k)$-linear maps $v_\sigma:N_\sigma\rightarrow N_\sigma$ by:
\begin{itemize}
\item
if $\phi(e_{\sigma-1})\equiv e_\sigma\pmod p$, then $v_\sigma=0$
\item
if $\phi(e_{\sigma-1})\in N_{\sigma,1}$ then
\begin{eqnarray*}
v_\sigma(\phi(e_{\sigma-1}))=&e_\sigma&\\
v_\sigma(e_\sigma)=&0&
\end{eqnarray*}
\end{itemize}
We proceed as follows: The $W(k)[[t]]$-modules $\tilde M_\sigma$, $\tilde M_{\sigma,1}$, $\tilde N_\sigma$, and $\tilde N_{\sigma,1}$ 
are obtained by scalar extension, as in remark \ref{E6}. Now let $x$ be an element in one of $\tilde N_{\sigma-1}$ or $\tilde M_{\sigma-1}$. 
To define its image $\tilde\phi(x)$ under the $\tau$-linear operator, we distinguish two cases. If $\sigma\in\Sigma$ we set:
$$\tilde\phi(x)=u_\sigma(\phi(x)),$$
where $u_\sigma$ shall act $W(k)[[t]]$-linearly on $\tilde N_\sigma$ according to the formula 
$u_\sigma=1_{\tilde N_\sigma}+t\otimes_{W(k)}v_\sigma$, with $v_\sigma$ as above, and $u_\sigma$ shall act on $\tilde M_\sigma$ according to
$$u_\sigma(\zeta_\sigma^{-1}(x_i{x'}^b{x''}^{b_i-b}))=\zeta_\sigma^{-1}(x_iu_\sigma(x')^bu_\sigma(x'')^{b_i-b})$$
for any bases $\{x',x''\}$ of $N_\sigma$ and $x_i\in N_{i,\sigma}-pN_{i,\sigma}$. This is meaningful because of condition 
(S.4). If, however, $x$ is in one of $\tilde N_{\sigma-1}$ or $\tilde M_{\sigma-1}$ with $\sigma\notin\Sigma$ we set:
$$\tilde\phi(x)=\phi(x).$$
In so doing we obtain a very particular deformation of the graded Dieudonn\'e module $M_\sigma$, which we 
will refer to as a sufficient deformation with $\sy$-structure $\zeta_\sigma$. Let us state thoroughly what this shows:

\begin{lem}
\label{suff}
Let $(N_\sigma,N_{i,\sigma},\zeta_\sigma)$ be a $\sy$-structure on a $\z/r\z$-graded Dieudonn\'e module $M_\sigma$. Assume 
$\dim_kN_\sigma/N_{\sigma,1}\geq1$ for all $\sigma$, and assume that the graded slopes of $\bigoplus_\sigma N_\sigma$ are non-zero. Then one has:
\begin{itemize}
\item[(i)]
If $\bigoplus_\sigma\tilde N_\sigma$ is a deformation corresponding to a deformation sequence, then the graded slopes of its generic fibre are $\{0,z\}$.
\item[(ii)]
If there exists a $\sigma_0$ with $M_{\sigma_0,1}=pM_{\sigma_0}$ then the sufficient deformation $\tilde M_\sigma$, that corresponds to (i) 
satisfies the nilpotence condition (W.4) and is therefore a graded $W(k)[[t]]$-window with $\sy$-structure in the sense of definition \ref{symD}.
\end{itemize}
\end{lem}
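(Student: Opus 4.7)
I will dispose of (ii) first---it is immediate from the hypothesis---and then attack (i) via explicit computation of $\tilde\phi^r$ modulo $p$. For (ii): the condition $M_{\sigma_0,1}=pM_{\sigma_0}$ is equivalent to $\ker(\phi\bmod p\colon M_{\sigma_0}/p\to M_{\sigma_0+1}/p)=0$, so by dimension count $\phi$ is a $\tau$-semilinear bijection mod $p$ at $\sigma_0$. Since $\tilde M_{\sigma_0,1}=p\tilde M_{\sigma_0}$ and $u_{\sigma_0+1}=1+tv_{\sigma_0+1}$ is a unit in $\GL_n(W(k)[[t]])$, the same holds for $\tilde\phi=u_{\sigma_0+1}\circ\phi$; the identity $\tilde\psi\tilde\phi=p$ then forces $\tilde\psi\equiv 0\pmod p$ at the reverse step $\sigma_0+1\to\sigma_0$, whence $(\tilde\psi^\sharp)^r\equiv 0\pmod p$, which is (W.4). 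Axioms (W.1)--(W.3) and the compatibility of $\zeta_\sigma$ with $\tilde\phi$ are inherited from remark~\ref{E6}: $\tilde M_{\sigma,1}$ is obtained by scalar extension, and $u_\sigma$ on $\tilde M_\sigma$ was defined via the symmetric-tensor formula precisely to commute with $\zeta_\sigma$.

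For (i), each $v_\sigma$ is nilpotent of index at most two in the basis $\{e_\sigma,\phi(e_{\sigma-1})\}$, hence $\det u_\sigma=1$ and $\det\tilde\phi^r$ differs from $\det\phi^r$ only by a unit in $W(k)[[t]]^{\times}$, so the generic graded slopes of $\bigoplus_\sigma\tilde N_\sigma$ sum to $z$. It therefore suffices to exhibit slope~$0$ generically, equivalently to show that $\tilde\phi^r\bmod p$ is non-zero on the generic fibre $k((t))$. Cyclically rotate so $0\notin\Sigma$, and choose bases $\{e_\sigma,f_\sigma\}$ of $N_\sigma$ with $e_\sigma$ from the deformation sequence; $f_\sigma\in N_{\sigma,1}\setminus pN_\sigma$ when $\sigma\in\Sigma$; and, when $\sigma\notin\Sigma$, $f_\sigma$ chosen so that the Case~(a) relation $\phi(e_{\sigma-1})\equiv e_\sigma\pmod p$ carries no $f_\sigma$-component. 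Unwinding the definition $\tilde\phi=u_\sigma\circ\phi$ shows that the first column of the matrix $A_\sigma$ of $\tilde\phi\bmod p$ is $(1,0)$ in Case~1 or Case~(a), and $(t,\beta_\sigma)$ with $\beta_\sigma\in k^\times$ in Case~2, while its second column vanishes mod $p$ precisely when $\sigma-1\in\Sigma$ (because then $f_{\sigma-1}\in N_{\sigma-1,1}$ forces $\phi(f_{\sigma-1})\in pN_\sigma$, and $u_\sigma$ preserves $p$-divisibility).

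Writing $(C_\sigma,D_\sigma)$ for the coordinates of $\tilde\phi^\sigma(e_0)\pmod p$ in the chosen basis of $N_\sigma$, one has the recursion
\[
\begin{pmatrix}C_\sigma\\ D_\sigma\end{pmatrix}=A_\sigma\begin{pmatrix}\tau(C_{\sigma-1})\\ \tau(D_{\sigma-1})\end{pmatrix}.
\]
A short induction combining (a) vanishing of the second column when $\sigma-1\in\Sigma$ with (b) $d_\sigma=0$ when $\sigma\notin\Sigma$ forces $D_\sigma=0$ for every $\sigma\notin\Sigma$; in particular $D_0=0$ and $C_0=\prod_{\sigma=1}^{r}\tau^{r-\sigma}(c_\sigma)$, a unit in $k((t))$ (a power $t^w$ with $w\geq 0$, and $w>0$ as soon as any Case~2 step occurs). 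Hence $\tilde\phi^r\bmod p$ is non-zero on the generic fibre, producing a rank-$1$ unit-root subcrystal; combined with the determinant computation, the generic graded slopes of $\bigoplus_\sigma\tilde N_\sigma$ are $\{0,z\}$. The main obstacle lies in this $D$-bookkeeping: an $f$-contribution created at a Case~2 step might a priori leak back into the $e$-coordinate via a subsequent step with non-trivial second column, but the induction rules this out because $D_\sigma\neq 0$ forces $\sigma\in\Sigma$, and the very next step then annihilates its second column mod $p$ before any contamination can reach $C$.
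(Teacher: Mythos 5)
Your argument is correct and follows the paper's own proof: part (ii) is the same observation that $\tilde M_{\sigma_0,1}=p\tilde M_{\sigma_0}$ makes $\psi^\sharp$ divisible by $p$ at the step $\sigma_0+1\to\sigma_0$ and hence $(\psi^\sharp)^r\equiv0\pmod p$, while part (i) is exactly the paper's parenthetical remark (the deformation-sequence vector $e_\sigma$ is a mod-$p$ eigenvector of $\tilde\phi^r$ with eigenvalue a power of $t$, hence a unit on the generic fibre, forcing graded slope $0$; the other slope is $z$ because $\det u_\sigma=1$ leaves the determinant undeformed), which you carry out by explicit matrix bookkeeping instead of citing Wedhorn's Proposition 4.1.5. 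The one quibble is the normalization ``rotate so that $0\notin\Sigma$'', which presupposes $\Sigma\neq\z/r\z$ --- a case not literally excluded by the hypotheses of the lemma, though harmless: if every step is a Case-2 step one still gets $C_0=t^{W}\neq0$ together with $\tilde\phi^r(f_0)\equiv0\pmod p$, so the stable rank of $\tilde\phi^r\bmod p$ is again $1$ and slope $0$ occurs.
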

\begin{proof}
In the generic fibre of $\bigoplus_\sigma\tilde N_\sigma$ the graded slope $0$ must occur, due to \cite[Proposition 4.1.5]{wedhorn} (the image of 
$e_\sigma$ in $\tilde N_\sigma/\tilde N_{\sigma,1}$ is an eigenvector of $\tilde\phi^r$ with eigenvalue equal to $t$ to the power of number of 
occurences of $\phi(e_{\sigma-1})\in N_{\sigma,1}$). By refering to (N.1), we find that the other graded slope has to be $z$. This shows (i).\\

In order to justify (ii) notice that the construction of the underlying $W(k)[[t]]$-modules 
$\tilde M_\sigma$ and $\tilde M_{\sigma,1}$ shows $\tilde M_{\sigma_0,1}=p\tilde M_{\sigma_0}$. This means that 
$\psi^\sharp:\tilde M_{\sigma_0+1}\rightarrow W(k)[[t]]\otimes_{\tau,W(k)[[t]]}\tilde M_{\sigma_0}$ is divisible by $p$, implying the nilpotence condition (W.4).
\end{proof}

\begin{rem}
Assume that $\bigoplus_\sigma M_\sigma$ has pure graded slope $a$. Then, the graded slope of $\bigoplus_\sigma N_{i,\sigma}$ is 
$z_i=a-b_iz/2$, according to remark \ref{neweins}. As the slopes of the generic fibre of $\bigoplus_\sigma\tilde N_\sigma$ are $\{0,z\}$ we can 
use lemma \ref{slope} to deduce that the slopes of the generic fibre of $\bigoplus_\sigma\tilde M_\sigma$ are the numbers $a-b_iz/2,\dots,a+b_iz/2$ 
where $i$ runs through $\{1,\dots,c\}$. Observe that this implies the inequality
$$\max\{b_i|i=1,\dots,c\}\leq\frac{2\min\{a,r-a\}}{z},$$
which was also present in subsection \ref{freakish}.
\end{rem}

Therefore we get examples of $\sy$-structures on $W(k)[[t]]$-windows with non-constant Newton polygons. Observe that the non-constancy of the Newton polygon 
implies that the window is not isotrivial. In fact, the converse holds also, provided only that the special fibre is isoclinal, see \cite[Theorem 2.7.1]{katz1} for a proof of 
this in the language of ``locally free finite rank Frobenius crystals over $k[[t]]$''. In the next section we give a precise measure for the non-isotriviality of $\tilde M_\sigma$.

\subsection{Results of Dwork} 
\label{vdrei}
In order to state the main result of this subsection we employ connections. Let $A$ be one of the frames $W(k)[[t]]$ or $W(k)((t))\otd\z_p$. If $M$ is a 
finitely generated projective $A$-module, then a quasi-connection is a $K(k)$-linear map $\nabla:\q\otimes M\rightarrow\q\otimes M$ that satisfies 
\begin{equation}
\label{der}
\nabla(\alpha x)=\frac{\partial\alpha}{\partial t}x+\alpha\nabla(x),
\end{equation}
and $\nabla$ is called a connection if it preserves $M$. Recall that the sets of quasi-connections and connections form 
principal homogeneous spaces under $\End_A^0(M)$ and $\End_A(M)$. Recall also that a connection is called nilpotent 
if some power kills $M/pM$. The following result seems to be well known, for lack of reference we provide a proof:

\begin{lem}
\label{das}
Let $(M,M_1,\phi)$ be a $A$-window. Then there exists one and only one quasi-connection $\nabla$ with
\begin{equation}
\label{die}
\nabla(\phi(x))=pt^{p-1}\phi(\nabla(x)),
\end{equation}
moreover $\nabla$ is a connection, it is nilpotent and, if the $A$-window has a $\z/r\z$-grading, then $\nabla$ preserves this.
\end{lem}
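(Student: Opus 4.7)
The identity \eqref{die} is precisely the horizontality of $\phi^\sharp: A\otimes_{\tau,A}M\to M$ with respect to $(\nabla,\tau^*\nabla)$, where $\tau^*\nabla(\alpha\otimes x) = (\partial_t\alpha)\otimes x + pt^{p-1}\alpha\otimes\nabla(x)$ is the pullback connection, the coefficient $pt^{p-1}$ being forced by $\partial_t\circ\tau = pt^{p-1}\cdot\tau\circ\partial_t$. My plan is to run a single $p$-adic fixed-point recursion, fuelled by the nilpotence mod $p$ of $\psi^\sharp$ guaranteed by (W.4), and to harvest uniqueness, existence, integrality and preservation of the grading in one stroke; nilpotence I would treat separately.

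For uniqueness, the difference $D:=\nabla_1-\nabla_2$ of two solutions is $A$-linear (hence in $\End_A^0(M)$) and satisfies $D\phi=pt^{p-1}\phi D$; using $\phi^\sharp\circ\psi^\sharp=p\cdot\mathrm{id}$ this rearranges to the fixed-point identity
$$D \;=\; t^{p-1}\,\phi^\sharp\circ(1\otimes D)\circ\psi^\sharp,$$
whose $n$-fold iteration $D=t^{n(p-1)}\,\phi^{\sharp,n}\circ(1^{\otimes n}\otimes D)\circ\psi^{\sharp,n}$ is divisible by arbitrarily high powers of $p$ in $\End_A^0(M)$ thanks to (W.4), so $D=0$. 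For existence, the analogous manipulation on \eqref{die} itself yields $p\nabla = \phi^\sharp(\partial_t\otimes 1)\psi^\sharp + pt^{p-1}\phi^\sharp(1\otimes\nabla)\psi^\sharp$, which upon iterated substitution and passage to the limit (convergent in $\End_A^0(M)$ by (W.4)) produces a canonical $p$-adic formula for $\nabla$ that satisfies \eqref{die} by direct verification. For integrality I fix a graded normal decomposition $M=L\oplus T$ and observe that writing $\psi^\sharp(y)=\xi+p\eta$ with $\xi\in A\otimes_\tau L$ and $\eta\in A\otimes_\tau T$, one has $\phi^\sharp(\partial_t\otimes 1)\psi^\sharp(y)=\phi^\sharp(\partial_t\xi)+p\phi^\sharp(\partial_t\eta)\in pM$, since $\phi|_L=p\phi_1$ contributes the missing factor of $p$ to the first summand; the same calculation applied to the higher iterates shows every term of the series defining $\nabla$ lies in $\End_A(M)$, so $\nabla(M)\subset M$. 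Preservation of the $\z/r\z$-grading is immediate: the normal decomposition can be taken graded, every operator in the recursion is of predictable degree, and uniqueness seals the argument.

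Nilpotence is the most delicate step. Reducing \eqref{die} mod $p$ gives $\bar\nabla\circ\bar\phi\equiv 0$, and since $\bar U|_{\bar T}=\bar\phi|_{\bar T}$ this shows $\bar\nabla\bar U$ already vanishes on $\bar T$. On $\bar L$, the identity $\bar\nabla\bar\phi_1=t^{p-1}\bar\phi\bar\nabla$ (valid on $M_1\supset L$) combined with Leibniz yields by induction
$$\bar\nabla^k\bar\phi_1(\bar l)\;\equiv\;(-1)^{k-1}(k-1)!\,t^{p-k}\,\bar\phi\bar\nabla(\bar l)\pmod p,\qquad 1\le k\le p;$$
Wilson's theorem $(p-1)!\equiv-1\pmod p$ together with $\bar\nabla\bar\phi=0$ then forces $\bar\nabla^{p+1}\bar U(\bar l)=0$ for $\bar l\in\bar L$, so $\bar\nabla^{p+1}\bar U\equiv 0$ on the whole of $\bar L\oplus\bar T$. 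As $\bar U^\sharp$ is an $R$-linear isomorphism, the family $\{\bar U(\bar e_i)\}$ attached to any basis $\{\bar e_i\}$ of $\bar L\oplus\bar T$ is an $R$-basis of $\bar M$; and on each of $R\in\{k[[t]],k((t))\}$ the $p$-th derivative $\partial_t^p$ vanishes modulo $p$, because any $p$ consecutive integers contain a multiple of $p$. Expanding $\bar\nabla^{2p}(r\bar U(\bar e_i))$ by Leibniz, the terms with $k\le p$ vanish via $\partial_t^{2p-k}r=0$ and those with $k\ge p+1$ via $\bar\nabla^k\bar U(\bar e_i)=0$; hence $\bar\nabla^{2p}$ annihilates $\bar M$ and $\nabla$ is nilpotent.
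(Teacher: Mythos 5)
Your argument is correct and is essentially the paper's: your fixed-point recursion $\nabla=\tfrac1p\phi^\sharp\bigl(\partial_t\otimes 1+pt^{p-1}(1\otimes\nabla)\bigr)\psi^\sharp$ is exactly the paper's contractive operator $F(\nabla)=\phi^\sharp\circ\nabla^\tau\circ{\phi^\sharp}^{-1}$ on the $p$-adically complete torsor of connections, with (W.4) supplying the contraction (hence uniqueness and convergence) and the normal decomposition supplying integrality and preservation of the grading. Your nilpotence computation is a correct filling-in of the paper's one-line assertion that $\nabla^p(M)\subset A\phi M$ and $\nabla^p(A\phi M)\subset pM$, arriving at the same bound $\nabla^{2p}(M)\subset pM$.
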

\begin{proof}
Suppose that $\nabla$ is just any quasi-connection. Let us define a quasi-connection $\nabla^\tau$ on the $A$-module $A\otimes_{\tau,A}M$ by the formula
\begin{equation*}
\nabla^\tau(\alpha\otimes_{\tau,A}x)=\frac{\partial\alpha}{\partial t}\otimes_{\tau,A}x+pt^{p-1}\alpha\otimes_{\tau,A}\nabla(x).
\end{equation*}
Now we use $\phi^\sharp:A\otimes_{\tau,A}M\rightarrow M$ to get a quasi-connection $F(\nabla)$ on $M$ by transport of structure, i.e. 
$F(\nabla)=\phi^\sharp\circ\nabla^\tau\circ{\phi^\sharp}^{-1}$, here notice that the existence of $\psi^\sharp$ implies that ${\phi^\sharp}^{-1}$, 
is a well-defined map from $\q\otimes M$ to $\q\otimes A\otimes_{\tau,A}M$. It is easy to see that the Leibniz-formula \eqref{der} holds 
for $F(\nabla)$. We claim that $F(\nabla)$ preserves $M$, provided that $\nabla$ preserves it, i.e. if $\nabla$ is in fact a connection: 
indeed by (W.3) it is enough to verify this for elements of the form $\alpha\phi_1(x)$, where $\alpha\in A$ and $x\in M_1$. Notice that
\begin{eqnarray*}
&&\phi^\sharp\circ\nabla^\tau\circ{\phi^\sharp}^{-1}(\alpha\phi_1(x))=\\
&&\phi^\sharp\circ\nabla^\tau(\alpha\otimes_{\tau,A}\frac{x}{p})=\\
&&\phi^\sharp(\frac{\partial\alpha}{\partial t}\otimes_{\tau,A}\frac{x}{p}+
\alpha t^{p-1}\otimes_{\tau,A}\nabla(x))=\\
&&\frac{\partial\alpha}{\partial t}\phi_1(x)+
\alpha t^{p-1}\phi(\nabla(x)),
\end{eqnarray*}
showing $F(\nabla)(M)\subset M$. Observe finally that for a $A$-linear map $u:\q\otimes M\rightarrow\q\otimes M$ we 
get $F(\nabla+u)=F(\nabla)+\phi^\sharp\circ t^{p-1}\otimes_{\tau,A}u\circ\psi^\sharp$. We deduce that $F$ is a contractive 
map for the $p$-adic topology, because $\psi^\sharp$ is $p$-adically nilpotent. As $\End_A^0(M)$ and $\End_A(M)$ are 
both $p$-adically complete, it follows that in the set of quasi-connections and connections $F$ has a unique fixed point.\\

The nilpotency of $\nabla$ follows from $\nabla^p(M)\subset A\phi M$, and $\nabla^p(A\phi M)\subset pM$
\end{proof}

We call the connection on a $A$-window fulfilling equation \eqref{die} the Dieudonn\'e connection. 

\subsubsection{Base change to $K(k)\{\{t\}\}$} We next invoke Dwork's trivialisation of a 
graded window $\tilde M_\sigma$, to this end we introduce the $W(k)[[t]][\frac{1}{p}]$-algebra
$$K(k)\{\{t\}\}=\{\sum_ia_it^i|a_i\in K(k), |a_i|_pC^i\rightarrow0\,\forall C<1\}.$$
We need the following:

\begin{lem}
\label{ff}
The ring $K(k)\{\{t\}\}$ is faithfully flat over $W(k)[[t]][\frac{1}{p}]$.
\end{lem}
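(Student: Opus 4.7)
The strategy is to decouple faithful flatness into two independent pieces: first, flatness of $K(k)\{\{t\}\}$ over $W(k)[[t]][\frac{1}{p}]$; second, the condition that every maximal ideal of $W(k)[[t]][\frac{1}{p}]$ remains proper upon extension to $K(k)\{\{t\}\}$.

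For flatness, the key structural observation is that $W(k)[[t]][\frac{1}{p}]$ is a principal ideal domain. Since $W(k)[[t]]$ is a two-dimensional regular local ring, Auslander--Buchsbaum makes it a unique factorization domain. Weierstrass preparation gives, for every nonzero element, a factorization of the form $p^n u P$ with $u$ a unit and $P$ a distinguished polynomial; after inverting $p$ only the distinguished factors contribute to irreducibles, so $W(k)[[t]][\frac{1}{p}]$ is a one-dimensional UFD, hence a PID, whose maximal ideals are generated by the irreducible distinguished polynomials $P(t) = t^n + a_{n-1}t^{n-1} + \cdots + a_0$ with $a_i \in pW(k)$. Because $K(k)\{\{t\}\}$ is a subring of the integral domain $K(k)[[t]]$, it is torsion-free over this PID and therefore flat.

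For the second condition, I fix a maximal ideal $\mathfrak{m} = (P)$ and pick a root $\alpha$ of $P$ in a finite extension $K'/K(k)$. That $|\alpha|_p < 1$ follows from the distinguished shape of $P$: if $|\alpha|_p \geq 1$, the identity $\alpha^n = -\sum_{i<n} a_i \alpha^i$ would force $|\alpha|_p^n \leq p^{-1}|\alpha|_p^n$, a contradiction. Choose any $C < 1$ with $|\alpha|_p \leq C$; the defining condition $|a_i|_p C^i \to 0$ implies $|a_i \alpha^i|_p \to 0$, so the series $\sum_i a_i \alpha^i$ converges in the complete field $K'$. This yields a ring homomorphism $\mathrm{ev}_\alpha : K(k)\{\{t\}\} \to K'$ sending $P$ to $0$, so $P$ is not a unit, confirming $\mathfrak{m} K(k)\{\{t\}\} \subsetneq K(k)\{\{t\}\}$.

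No genuine obstacle is anticipated: the argument reduces to assembling Weierstrass preparation, Auslander--Buchsbaum, and the standard fact that a rigid analytic function on the open unit disk can be evaluated at any interior point. The only place demanding a bit of care is the identification of the maximal spectrum of $W(k)[[t]][\frac{1}{p}]$, but that identification is transparent once the PID property is in place.
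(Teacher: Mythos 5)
Your proof is correct and follows essentially the same route as the paper: Weierstrass preparation to identify the maximal ideals of the PID $W(k)[[t]][\frac{1}{p}]$ with distinguished polynomials, torsion-freeness via the inclusion $K(k)\{\{t\}\}\subset K(k)[[t]]$ for flatness, and the fact that the roots of a distinguished polynomial have absolute value $<1$ (so that evaluation there makes sense on $K(k)\{\{t\}\}$) for faithfulness. You merely spell out two steps the paper leaves implicit, namely the PID verification and the explicit evaluation homomorphism $\mathrm{ev}_\alpha$.
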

\begin{proof}
By Weierstrass preparation \cite[Chapter 5, Theorem 11.2]{lang}, every non-zero ideal $\ga$ of $W(k)[[t]][\frac{1}{p}]$ is generated by a polynomial of the 
form $t^d+\sum_{i=0}^{d-1}a_it^i$ with $a_0,\dots,a_{d-1}\in pW(k)$. Such polynomials are not units in $K(k)\{\{t\}\}$, because all of their zeros have $p$-adic 
absolute value strictly less than $1$. It follows that $\ga K(k)\{\{t\}\}\neq K(k)\{\{t\}\}$ for all $\ga\neq W(k)[[t]][\frac{1}{p}]$, hence the faithfulness follows.\\
Observe that $W(k)[[t]][\frac{1}{p}]$ is a principal ideal domain. So $K(k)\{\{t\}\}$ is flat if and only if it is torsion-free. However, this can be deduced from $K(k)[[t]]\supset K(k)\{\{t\}\}$.
\end{proof}

Now let $(\tilde M_\sigma,\tilde M_{\sigma,1},\tilde\phi)$ be a $\z/r\z$-graded $W(k)[[t]]$-window. Let us assume that the special 
fibre $(M_\sigma,M_{\sigma,1},\phi)$ is isoclinal with integral graded slope $a$. The $\sigma=0$ eigenspace can be written as
\begin{equation}
\label{newzwei}
\q\otimes M_0\cong I\otimes_{K(\f_{p^r})}K(k),
\end{equation}
where $I$ is the skeleton, as in subsection \ref{slop}. The lemma of Dwork \cite[Proposition 3.1]{katz2} reads: There is a unique isomorphism of $K(k)\{\{t\}\}$-modules:
$$\tilde M_0\otimes_{W(k)[[t]]}K(k)\{\{t\}\}\cong I\otimes_{K(\f_{p^r})}K(k)\{\{t\}\}$$
such that
\begin{itemize}
\item
the Dieudonn\'e connection $\nabla$ on the left matches the $\frac{\partial}{\partial t}$-operator on the right 
\item
reducing $\pmod t$ gives one back the isomorphism \eqref{newzwei}
\end{itemize}
We will call this isomorphism the Dwork trivialization. It is given by Taylor's formula
$$\tilde M_0\otimes_{W(k)[[t]]}K(k)\{\{t\}\}\ni x\mapsto\Theta(x)=\sum_{i=0}^\infty\nabla^i(x)|_{t=0}\frac{t^i}{i!},$$
where $|_{t=0}$ means the reduction $\pmod t$. This formula implies readily $\Theta(\tilde\phi^r(x))=\phi^r(\Theta(x))$. Consider the ring 
\begin{eqnarray*}
&&\Q(k)=K(k)\{\{t\}\}\otimes_{W(k)[[t]][\frac{1}{p}]}K(k)\{\{t\}\}
\end{eqnarray*}
and let $\theta\in\GL(I/K(\f_{p^r}))(\Q(k))$ be the isomorphism making the diagram:
$$\begin{CD}
\tilde M_0\otimes_{W(k)[[t]]}\Q(k)@>\Theta_1>>
I\otimes_{K(\f_{p^r})}\Q(k)\\
@A{=}AA@A{\theta}AA\\
\tilde M_0\otimes_{W(k)[[t]]}\Q(k)
@>\Theta_2>>I\otimes_{K(\f_{p^r})}\Q(k)
\end{CD}$$
commutative, where $\Theta_i=\Theta\otimes_{K(k)\{\{t\}\},q_i}1_{\Q(k)}$ are obtained from the Dwork trivialization via scalar extensions 
\begin{eqnarray}
\label{projone}
&&q_1:K(k)\{\{t\}\}\rightarrow\Q(k);\alpha\mapsto\alpha\otimes_{K(k)\{\{t\}\}}1\\
\label{projtwo}
&&q_2:K(k)\{\{t\}\}\rightarrow\Q(k);\alpha\mapsto1\otimes_{K(k)\{\{t\}\}}\alpha.
\end{eqnarray}
It is important to note that the trivial deformation satisfies $\theta=1$. Observe that we also have 
$\theta(\tau^r(x))=\tau^r(\theta(x))$. Now we are in a position to give a partial justification for the term ``$\sy$-structure'':

\begin{lem}
\label{velf}
Assume $p\neq2$ and let $(\tilde M_\sigma,\tilde N_\sigma,N_{i,\sigma},\zeta_\sigma)$ be as in part (ii) of lemma \ref{suff}. Assume that  
$\bigoplus_\sigma M_\sigma$, $\bigoplus_\sigma N_\sigma$, $\bigoplus_\sigma N_{i,\sigma}$ are isoclinal with integral graded slopes, 
and write $I$, $J$, $J_i$ for their skeletons. Let $\theta_{\tilde M}$ and $\theta_{\tilde N}$ be the Dwork descent data as above. Then:
\begin{itemize}
\item[(i)]
$\theta_{\tilde N}\in\SL(J/K(\f_{p^r}))(\Q(k))$ and no smaller $K(\f_{p^r})$-algebraic subgroup of $\SL(J/K(\f_{p^r}))$ contains $\theta_{\tilde N}$.
\item[(ii)]
Let $\pi$ denote the representation of $\SL(J/K(\f_{p^r}))$ on the $K(\f_{p^r})$-vector space $I$, that comes from the natural isomorphism
$$\zeta_0:I\stackrel{\cong}{\rightarrow}\bigoplus_{i=1}^cJ_i\otimes_{K(\f_{p^r})}\sy_{K(\f_{p^r})}^{b_i}J.$$
Then we have $\pi(\theta_{\tilde N})=\theta_{\tilde M}$.
\end{itemize}
\end{lem}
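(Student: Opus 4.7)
Both parts hinge on uniqueness properties of the Dwork trivialization together with an explicit understanding of the sufficient-deformation construction of subsection \ref{bizarre}.

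For (ii) the argument is by functoriality. The defining formula
$$u_\sigma(\zeta_\sigma^{-1}(x_i{x'}^b{x''}^{b_i-b}))=\zeta_\sigma^{-1}(x_iu_\sigma(x')^bu_\sigma(x'')^{b_i-b})$$
used in subsection \ref{bizarre} was expressly chosen so that the auxiliary operators $u_\sigma$ on $\tilde N_\sigma$ and on $\tilde M_\sigma$ are intertwined by $\zeta_\sigma$. Consequently $\zeta_\sigma$ extends, after inverting $p$, to a $\tilde\phi$-equivariant quasi-isogeny $\tilde\zeta_\sigma$ of graded $W(k)[[t]]$-modules relating the two deformations. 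By the uniqueness part of lemma \ref{das}, $\tilde\zeta_\sigma$ then automatically intertwines the Dieudonn\'e connections on both sides. Invoking the characterization of the Dwork trivialization as the unique $\nabla$-to-$\partial/\partial t$ intertwining isomorphism whose reduction modulo $t$ is the canonical one \eqref{newzwei}, one obtains a commutative square of $K(k)\{\{t\}\}$-modules relating $\Theta_{\tilde M}$, $\Theta_{\tilde N}$, $\zeta_0$ and $\tilde\zeta_0$; base changing via $q_1$ and $q_2$ from \eqref{projone}, \eqref{projtwo} and reading off the induced descent data then forces $\pi(\theta_{\tilde N})=\theta_{\tilde M}$.

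For the $\SL$-assertion in (i) I pass to the determinant. The rank-one graded $W(k)[[t]]$-window $\det\tilde N_\sigma$ has graded slope $z$ on the special fibre by hypothesis, and on the generic fibre too: lemma \ref{suff}(i) gives slopes $\{0,z\}$, and condition (N.1) yields $z'+z''=z$. Over $W(k)[[t]]$ with $k$ algebraically closed, a rank-one graded window of constant integral slope is isotrivial, so its Dwork descent datum is the identity. Applying the already-established part (ii) to the one-dimensional determinant representation then gives $\det\theta_{\tilde N}=\theta_{\det\tilde N}=1$, so $\theta_{\tilde N}\in\SL(J/K(\f_{p^r}))(\Q(k))$.

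The minimality statement in (i) is the main obstacle. Any proper $K(\f_{p^r})$-algebraic subgroup of $\SL_2$ is either finite, or contained in a Borel, or contained in the normalizer of a torus, and each of these possibilities must be excluded. I would compute $\Theta$ explicitly from the Taylor expansion $\Theta(x)=\sum_i\nabla^i(x)|_{t=0}\frac{t^i}{i!}$ on the deformation-sequence basis $\{e_\sigma,\phi(e_{\sigma-1})\}$ of Wedhorn's construction, using the contractive fixed-point recursion in the proof of lemma \ref{das} to determine $\nabla$ term by term from the explicit operators $v_\sigma$; here $p\neq 2$ enters to keep the factorials invertible. The essential input is the non-constancy of the Newton polygon of $\bigoplus_\sigma\tilde N_\sigma$, with distinct slopes $\{0,z\}$ generically versus a single repeated slope $z/2$ on the special fibre: this jump forbids any $K(\f_{p^r})$-rational $\theta_{\tilde N}$-stable line (excluding containment in a Borel), any uniform $K(\f_{p^r})$-diagonalization (excluding tori), and forces $\theta_{\tilde N}$ to have infinite order (excluding finite subgroups).
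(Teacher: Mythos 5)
Your part (ii) is essentially the paper's argument: transport the Dieudonn\'e connection from $\tilde N_\sigma$ to $\bigoplus_iN_{i,\sigma}\otimes\sy^{b_i}\tilde N_\sigma$ by the Leibniz rule, check it satisfies \eqref{die}, invoke the uniqueness in lemma \ref{das}, and conclude that $\pi(\Theta_{\tilde N})$ is horizontal and reduces to \eqref{newzwei} modulo $t$, hence equals $\Theta_{\tilde M}$. Your determinant argument is also fine, though the paper gets $\det\theta_{\tilde N}=1$ more cheaply: $\det(u_\sigma)=1$, so $(\det\tilde N_\sigma,\det\tilde\phi)$ is literally the trivial deformation of $(\det N_\sigma,\det\phi)$.

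The gap is in the minimality assertion of (i), which is the substance of the lemma. First, your case analysis omits the decisive case: a maximal proper $K(\f_{p^r})$-subgroup of $\SL(J/K(\f_{p^r}))$ need not stabilize a rational line nor be diagonalizable over $K(\f_{p^r})$; it can be the normalizer of a \emph{non-split} maximal torus, i.e.\ the orthogonal group of a binary quadratic form, whose eigenlines live only over a quadratic extension and are swapped by Galois and by elements of $N(T)\setminus T$. Neither ``no rational stable line'' nor ``no uniform diagonalization'' touches this. In the paper this is exactly the hard case: one assumes $\theta_{\tilde N}$ is a similitude of a symmetric pairing, kills the similitude factor by identifying its line with $\q\otimes\det(\tilde N_0)$ via a square-root argument --- this is where $p\neq2$ is genuinely used, to extract $\sqrt{x}$ for $x\in1+tW(k)[[t]]$, not to control the factorials in Taylor's formula, which are handled by the overconvergence built into $K(k)\{\{t\}\}$ --- and then derives a contradiction because $\theta_{\tilde N}$ would commute with a quadratic \'etale algebra that would act on $\q\otimes\tilde N_0$, impossible by the slope jump. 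Second, even in the Borel case you only gesture at the mechanism: the actual argument is that a $\theta_{\tilde N}$-stable rational line descends, by faithful flatness of $K(k)\{\{t\}\}$ over $W(k)[[t]][\frac{1}{p}]$ (lemma \ref{ff}), to a rank-one $\tilde\phi^r$-stable submodule of $\q\otimes\tilde N_0$ of constant slope $z/2$, which is incompatible with the generic slopes $\{0,z\}$, $z\neq0$, of lemma \ref{suff}(i). A term-by-term computation of $\Theta$ from Taylor's formula, as you propose, is not carried out and would not by itself decide which algebraic subgroups contain $\theta_{\tilde N}$; the proof has to run through these descent arguments.
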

\begin{proof}
Recall that the Frobenius operator $\tilde\phi:\tilde N_{\sigma-1}\rightarrow\tilde N_\sigma$ is of the form $u_\sigma\circ\phi$. Here the automorphisms $u_\sigma$ 
arise from a choice of deformation sequence, and $(\tilde N_\sigma,\phi)$ is the trivial deformation of $(N_\sigma,\phi)$. As $\det(u_\sigma)=1$ it follows that the Dwork 
trivializations of the windows $(\det(\tilde N_\sigma),\det(\tilde\phi))$ and $(\det(\tilde N_\sigma),\det(\phi))$ agree, clearly leading to $\det(\theta_{\tilde N})=1$.\\

Consider a maximal proper $K(k)$-algebraic subgroup $H\subset\SL(J/K(\f_{p^r}))$ with $\theta_{\tilde N}\in H(\Q(k))$. In case $H$ is the Borel subgroup 
stabilizing a line $R$ in $J$ we get immediately an induced descent datum $\theta:R\otimes_{K(\f_{p^r})}\Q(k)\stackrel{\cong}{\rightarrow}R\otimes_{K(\f_{p^r})}\Q(k)$, 
of which the cocycle condition follows from the one of $\theta_{\tilde N}$. Hence we obtain a submodule
\begin{eqnarray*}
&&\tilde V=\{x\in R\otimes_{K(\f_{p^r})}K(k)\{\{t\}\}|\\
&&\theta(x\otimes_{K(k)\{\{t\}\},q_2}1_{\Q(k)})=x\otimes_{K(k)\{\{t\}\},q_1}1_{\Q(k)}\},
\end{eqnarray*}
of $\q\otimes\tilde N_0$ that is projective of rank $1$ over $W(k)[[t]][\frac{1}{p}]$. Analogously we have a factor module $\tilde V'=\q\otimes\tilde N_0/\tilde V$. 
The equation $\theta_{\tilde N}(\tau^r(x))=\tau^r(\theta_{\tilde N}(x))$ implies that $\tilde V$ and $\tilde V'$ are stable under the 
operator $\tilde\phi^r=p^{z/2}\tau^r$. This means that their graded slopes must be constant contradicting lemma \ref{suff}, (i).\\

If $H$ is not a Borel subgroup then $\theta_{\tilde N}$ is a similitude with respect to some non-degenerate 
$K(\f_{p^r})$-valued symmetric pairing $(.,..)$, i.e. $(x,y)=\theta^{-1}(\theta_{\tilde N}(x),\theta_{\tilde N}(y))$ for some 
$\theta\in\Q(k)^\times$. Again this implies a cocycle condition for $\theta$, giving rise to a $W(k)[[t]][\frac{1}{p}]$-module
\begin{eqnarray*}
&&\tilde V=\{x\in K(k)\{\{t\}\}|\\
&&\theta(x\otimes_{K(k)\{\{t\}\},q_2}1_{\Q(k)})=x\otimes_{K(k)\{\{t\}\},q_1}1_{\Q(k)}\},
\end{eqnarray*}
which inherits a $\tilde\phi^r$-operation from the map $p^z\tau^r$ on $K(k)\{\{t\}\}$. Moreover, the $K(k)\{\{t\}\}$-linear extension of $(.,..)$ defines a 
$\tilde\phi^r$-equivariant pairing $\q\otimes\tilde N_0\times\q\otimes\tilde N_0\rightarrow\tilde V$. By passing to the determinant of the pairing we find 
that $\q\otimes\det(\tilde N_0)^{\otimes2}$ regarded as a $W(k)[[t]][\frac{1}{p}]$-module with $\tilde\phi^r$-operation is isomorphic to $\tilde V^{\otimes2}$. 
If we fix a generator $w$ of $\tilde V$, and write $p^z\tau^r(w)=\alpha w$ for some $\alpha\in W(k)[[t]][\frac{1}{p}]$, this means that $\alpha^2$ is in fact equal 
to $p^{2z}\frac{\tau^r(x)}{x}$ where $x\in1+tW(k)[[t]]$. Hence $\alpha=\pm p^z\frac{\tau^r(\sqrt x)}{\sqrt x}$. After adjusting $w$ with a $p^r-1$th root of $-1$ 
we get an isomorphism $\q\otimes\det(\tilde N_0)\cong\tilde V$ of $W(k)[[t]][\frac{1}{p}]$-modules with $\tilde\phi^r$-operation, implying $\theta=1$. Hence 
$(\theta_{\tilde N}x,y)=(x,(\tr(\theta_{\tilde N})-\theta_{\tilde N})y)$ for all $x,y\in J\otimes_{K(\f_{p^r})}\Q(k)$, due to $\det(\theta_{\tilde N})=1$ and $\dim_{K(\f_{p^r})}J=2$. 
As the algebra $\{f\in\End_{K(\f_{p^r})}(J)|(fx,y)=(x,(\tr(f)-f)y)\}$ is a quadratic extension of $K(\f_{p^r})$, all of its elements commute with $\theta_{\tilde N}$ and thus 
give rise to endomorphisms of $\q\otimes\tilde N_0$. This is again a contradiction because due to slope considerations such endomorphisms can not exist.\\

Let $\Theta_{\tilde M}$ and $\Theta_{\tilde N}$ be the Dwork trivializations of $\tilde M_\sigma$ and $\tilde N_\sigma$. It will not cause confusion to write 
$\pi(\Theta_{\tilde N})$ for the isomorphism from $\tilde M_0\otimes_{W(k)[[t]]}K(k)\{\{t\}\}$ to $I\otimes_{K(\f_{p^r})}K(k)\{\{t\}\}$ that is induced by the isomorphism 
$$\Theta_{\tilde N}:\tilde N_0\otimes_{W(k)[[t]]}K(k)\{\{t\}\}\rightarrow J\otimes_{K(\f_{p^r})}K(k)\{\{t\}\}$$
Clearly we have to show that $\pi(\Theta_{\tilde N})=\Theta_{\tilde M}$, as $\theta_{\tilde M}$ is 
$$(\Theta_{\tilde M}\otimes_{K(k)\{\{t\}\},q_1}1_{\Q(k)})\circ(\Theta_{\tilde M}\otimes_{K(k)\{\{t\}\},q_2}1_{\Q(k)})^{-1},$$ 
and $\theta_{\tilde N}$ is
$$(\Theta_{\tilde N}\otimes_{K(k)\{\{t\}\},q_1}1_{\Q(k)})\circ(\Theta_{\tilde N}\otimes_{K(k)\{\{t\}\},q_2}1_{\Q(k)})^{-1}.$$ 
Let $\nabla$ be the Dieudonn\'e connection on the graded window $\tilde N_\sigma$. This induces immediately a connection on 
$\bigoplus_{i=1}^cN_{i,\sigma}\otimes_{W(k)}\sy_{W(k)[[t]]}^{b_i}\tilde N_\sigma$, by the formula
$$\nabla(x_i{x'}^b{x''}^{b_i-b})=x_i(b\nabla(x'){x'}^{b-1}{x''}^{b_i-b}+(b_i-b)\nabla(x''){x'}^b{x''}^{b_i-b-1}).$$
By transport of structure we get a quasi-connection on $\tilde M_\sigma$ whichis easily verified to satisfy \eqref{die}, so that it agrees with the Dieudonn\'e connection 
on $\tilde M_\sigma$, by lemma \ref{das}. It follows that $\pi(\Theta_{\tilde N})$ is horizontal and reduces to \eqref{newzwei} $\pmod t$, which is all we want.
\end{proof}

\section{$Y^{(k)}$ in Characteristic $0$} 
In this chapter we introduce the $k$th exterior power of an abelian variety of $\U(n-1,1)$ type. We start with an elementary 
discussion of the complex Hodge structure of $Y^{(k)}$. Then we sum up which properties of the $p$-adic Tate module 
can be deduced from it using methods of $p$-adic Hodge theory of Blasius, Faltings, Fontaine, Lafaille and Messing.

\subsection{Hodge Structure of $Y^{(k)}$ and Results of Blasius}
\label{exotic}
Let us recall objects and concepts from \cite[section 2]{ball} we fix:

\begin{itemize}
\item
torsion free, finitely generated $\O_L$-modules $V^{(k)}$ ($k=0,\dots,n$), each equipped with a $\q$-valued $(L,*)$-skew-Hermitian form $\psi^{(k)}$ which one can write as
\begin{equation}
\label{polzwei}
\psi^{(k)}(x,y)=\tr_{L/\q}\Psi^{(k)}(x,y)
\end{equation}
for a unique form $\Psi^{(k)}$ with $\Psi^{(k)}(\alpha x,y)=\Psi^{(k)}(x,\alpha^*y)=\alpha\Psi^{(k)}(x,y)$, for all $x,y\in V_\q^{(k)}$. 
We require that $(V^{(k)},\Psi^{(k)})$ is gotten from $V^{(0)}$, $V^{(1)}$, $\Psi^{(0)}$, and $\Psi^{(1)}$ according to the assignments 
\begin{eqnarray}
\label{vnull}
&&V^{(k)}={V^{(0)}}^{\otimes_{\O_L}1-k}\otimes_{\O_L}\bigwedge_{\O_L}^kV^{(1)},\\
\label{polka}
&&\Psi^{(k)}(x_0^{1-k}x_1\wedge\dots\wedge x_k,y_0^{1-k}y_1\wedge\dots\wedge y_k)=\\
\label{polnull}
&&(\Psi^{(0)}(x_0,y_0))^{1-k}
\det\left(\begin{matrix}\Psi^{(1)}(x_1,y_1)&\dots&\Psi^{(1)}(x_1,y_k)\\\vdots&\ddots&\vdots\\\Psi^{(1)}(x_k,y_1)&\dots&\Psi^{(1)}(x_k,y_k)\end{matrix}\right)
\end{eqnarray}
where $x_1,\dots,x_k,y_1,\dots,y_k\in V_\q^{(1)}$, and $x_0,y_0\in V_\q^{(0)}$,
\item 
We need certain group homomorphisms $g^{(k)}:G^{(0\times1)}\rightarrow G^{(k)}$, where the target reductive $\q$-groups $G^{(k)}$ represent the functors:
$$Q\mapsto\{(\gamma,\mu)\in\End_{L\otimes Q}^\times(V_Q^{(k)})\times Q^\times|\psi^{(k)}(\gamma x,\gamma y)=\mu\psi^{(k)}(x,y)\}$$
on the category of all $\q$-algebras $Q$, and where the source is the reductive $\q$-group $G^{(0\times1)}$ making
$$\begin{CD}
G^{(0\times 1)}@>g^{(1)}>>G^{(1)}\\
@Vg^{(0)}VV@V{c^{(1)}}VV\\
G^{(0)}@>{c^{(0)}}>>\g_m
\end{CD}$$
into a cartesian diagram. Here notice that the variable $\mu$ gives natural characters $c^\delta$ on each of the groups $G^\delta$, where $\delta$ stands for 
a symbol in $\{(0\times1),(0),\dots,(n)\}$. We define $g^{(k)}$ on $G^{(0\times1)}$ by sending say $(\gamma^{(0)},\gamma^{(1)},\mu)$ in $G^{(0\times1)}(Q)$ 
to $(\gamma^{(k)},\mu)\in G^{(k)}(Q)$, where the action of $\gamma^{(k)}$ on $V_Q^{(k)}$ is defined by
\begin{equation*}
\gamma^{(k)}(x_0^{1-k}x_1\wedge\dots\wedge x_k)=\gamma^{(0)}(x_0)^{1-k}\gamma^{(1)}(x_1)\wedge\dots\wedge\gamma^{(1)}(x_k).
\end{equation*}
Notice that $c^{(k)}\circ g^{(k)}=c^{(0\times1)}$. Throughout the whole article we prefer to work with specific connected, smooth $\z$-models $G_\z^\delta$ of the groups 
$G^\delta$: They are obtained by taking the schematic closure in the group $\z$-schemes $\GL(V^\delta/\z)$, where we write $V^{(0\times1)}$ for $V^{(0)}\oplus V^{(1)}$. 
It is easy to see that our homomorphisms $g^{(k)}$ give rise to integral maps $G_\z^{(0\times1)}\rightarrow G_\z^{(k)}$ and the same is true for $c^\delta$.
\item
We need a specific conjugacy class $X^{(0\times1)}$ of homomorphisms $\c^\times\rightarrow G^{(0\times1)}(\r)$ such that $(G^{(0\times1)},X^{(0\times1)})$ and 
$(G^{(k)},X^{(k)})$ are both Shimura data, in fact of PEL type, where $X^{(k)}$ denotes the $G^{(k)}(\r)$-conjugacy class containing $g^{(k)}(X^{(0\times1)})$, for every 
$k\in\{0,\dots,n\}$. To this end we require that $X^{(0\times1)}$ is chosen such that some $h^{(0)}$ in $X^{(0)}$ and $h^{(1)}$ in $X^{(1)}$ satisfy the following properties:\\
Firstly the forms $\psi^{(1)}(x,h^{(1)}(i)y)$ and $\psi^{(0)}(x,h^{(0)}(i)y)$ are positive definite, and 
secondly the Hodge structures $V^{(0)}$ and $V^{(1)}$ are of type $\{(-1,0),(0,-1)\}$ and satisfy:
\begin{eqnarray}
\label{kottnull}
&&\tr(x|_{{V^{(0)}}^{-1,0}})=\Phi^{(0)}(x)\\
\label{kotteins}
&&\tr(x|_{{V^{(1)}}^{-1,0}})=(n-1)\Phi^{(0)}(x)+\Phi^{(n)}(x),
\end{eqnarray}
for all $x\in L$, where $\Phi^{(0)}$ and $\Phi^{(n)}$ are CM traces arising from CM-types $|\Phi^{(0)}|,|\Phi^{(n)}|\subset\Hom_\q(L,\c)$. 
Under these assumptions it turns out that firstly $\psi^{(k)}(x,h^{(k)}(i)y)$ is positive definite for every $k$, and that secondly the 
Hodge structure $V^{(k)}$ is of type $\{(-1,0),(0,-1)\}$ as well. Moreover, the action of $L$ on its tangent space is given by
\begin{equation*}
\tr(x|_{{V^{(k)}}^{-1,0}})=\Phi^{(k)}(x)=\binom{n-1}{k}\Phi^{(0)}(x)+\binom{n-1}{k-1}\Phi^{(n)}(x).
\end{equation*}
Finally let us denote by $E$ the subfield of $\c$ that is generated by all embeddings $L\hookrightarrow\c$. 
It contains the reflex fields of all of the $(G^\delta,X^\delta)$ for $\delta\in\{(0\times1),(0),\dots,(n)\}$.
\item
A level $l\geq3$, gives rise to neat compact open groups 
\begin{equation*}
K^\delta=\{\gamma\in G_\z^\delta(\zd)|\gamma\equiv1\pmod l\}
\end{equation*}
for every $\delta$. Having fixed it we write $M^\delta$ for the weakly canonical model of 
$G^\delta(\q)\backslash(X^\delta\times G^\delta(\a^\infty))/K^\delta$ over $E$. By abuse of notation we also write 
\begin{equation*}
g^{(k)}:M^{(0\times1)}\rightarrow M^{(k)},
\end{equation*} 
for the induced map, cf. \cite[Corollaire 5.4]{deligne1}. 
\end{itemize}

\subsubsection{Moduli interpretations}
We remark that $(G^{(0\times1)},X^{(0\times1)})$ can very nicely be written as a cartesian product of the toric Shimura datum $(G^{(0)},X^{(0)})$ with a certain 
preabelian Shimura datum $(G,X)$ belonging to the unitary group of $V={V^{(0)}}^{\otimes_{\O_L}-1}\otimes_{\O_L}V^{(1)}$, cf. \cite[Remark 2.4]{ball}. 
For sake of completeness we give the Hodge weights of $\bigwedge_\c^kV_\iota$, they are:
$$(p,q)=\begin{cases}{\binom{n-1}{k}\times(0,0), \binom{n-1}{k-1}\times(1,-1)}&\text{if $\iota\in|\Phi^{(0)}|-|\Phi^{(n)}|$}\\
{\binom{n-1}{k-1}\times(-1,1), \binom{n-1}{k}\times(0,0)}&\text{if $\iota\in|\Phi^{(n)}|-|\Phi^{(0)}|$}\\
{\binom{n}{k}\times(0,0)}&\text{otherwise}\end{cases}$$
observe that, unlike $V^{(k)}$, none of the Hodge structures $\bigwedge_{\O_L}^kV$ are of type $\{(-1,0),(0,-1)\}$, hence are not 
period lattices of abelian varieties. Also, the Shimura variety to $(G,X)$, which in fact looks more natural to work with, has no moduli 
interpretation. However $M^{(0\times1)}$ does have one, let us sketch it briefly. The morphisms $S\rightarrow M^{(0\times1)}$ 
parameterize tuples $(Y^{(1)},Y^{(0)},\q_{>0}\lambda^{(0)}\times\lambda^{(1)},\iota^{(1)},\iota^{(0)},\ebar^{(0\times1)})$ where:
\begin{itemize}
\item[(a)]
$Y^{(1)}$ and $Y^{(0)}$, abelian schemes over $S/E$, up to isogeny,
\item[(b)]
$\q_{>0}\lambda^{(0)}\times\lambda^{(1)}$, a homogeneous class of polarizations on the product $Y^{(0\times1)}=Y^{(0)}\times_SY^{(1)}$
\item[(c)]
Rosati invariant operations $\iota^{(1)}:L\rightarrow\End^0(Y^{(1)})$ and $\iota^{(0)}:L\rightarrow\End^0(Y^{(0)})$, 
such that the formula $\tr_{\Lie(Y^{(k)})}(\iota^{(k)}(x))=\Phi^{(k)}(x)$ holds for $k\in\{0,1\}$
\item[(d)]
level-$K^{(0\times1)}$-structure $\ebar^{(0\times1)}$, i.e. for some choice of geometric point $\xi$ of $S$ one has a 
$\pi_1(S,\xi)$-invariant  $K^{(1)}$- (resp. $K^{(0)}$-) class of $\O_L\otimes\a^\infty$-linear symplectic similitudes:
$$\eta^{(1)}:V^{(1)}\otimes\a^\infty\rightarrow H_1(Y_\xi^{(1)},\a^\infty)$$
and $$\eta^{(0)}:V^{(0)}\otimes\a^\infty\rightarrow H_1(Y_\xi^{(0)},\a^\infty)$$ with the same multipliers. 
\end{itemize}
The Shimura variety $M^{(k)}$ has a similar moduli interpretation, moreover the map $g^{(k)}$ gives rise to 
a quadruple $(Y^{(k)},\iota^{(k)},\q_{>0}\lambda^{(k)},\ebar^{(k)})$ over the scheme $M^{(0\times1)}$. 

\subsubsection{Computation of $H_{dR}^1(Y_\xi^{(k)})$}
Unless otherwise said we will always work with the representative of the isogeny class of abelian schemes $Y^{(k)}$ which satisfies 
$\eta^{(k)}(V_{\zd}^{(k)})=H_1(Y_\xi^{(k)},\zd)$, where $Y_\xi^{(k)}$ denotes the fibre of $Y^{(k)}$ over a $\c$-valued point $\xi$ of $M^{(0\times1)}$. If 
$\lambda^{(0)}\times\lambda^{(1)}$ denotes a quasi-polarization of $Y^{(0\times1)}$ that represents the homogeneous class $\q_{>0}\lambda^{(0)}\times\lambda^{(1)}$, 
then the formulae \eqref{polnull}, \eqref{polka} determine a corresponding representative $\lambda^{(k)}$ in the homogeneous class $\q_{>0}\lambda^{(k)}$. 
Moreover, a suitable multiple turns all of them into effective ones. Let us therefore fix such polarizations
\begin{equation}
\label{pol}
\lambda^\delta:Y^\delta\rightarrow{Y^\delta}^t
\end{equation}
once and for all. By definition the $\z$-Hodge structure
$$H_1(Y_\xi^{(0)}(\c),\z)^{\otimes_{\O_L}1-k}\otimes_{\O_L}\bigwedge_{\O_L}^kH_1(Y_\xi^{(1)}(\c),\z)$$
is furnished with a specific $\O_L$-linear isomorphism to:
$$H_1(Y_\xi^{(k)}(\c),\z)$$
which we denote by $m_{\xi,B}^{(k)}$. Dualizing the above yields a cohomological pendant $t_{\xi,B}^{(k)}$, and indeed it will prove useful to 
occasionally use $H^1(Y_\xi^{(k)}(\c),\z)$ instead of $H_1(Y_\xi^{(k)}(\c),\z)$. Observe also that the polarization $\lambda^{(k)}$ induces a map: 
\begin{equation}
\label{polvier}
H_1(Y_\xi^{(k)}(\c),\z)\rightarrow H_1({Y_\xi^{(k)}}^t(\c),\z)\cong H^1(Y_\xi^{(k)}(\c),\z)(1)
\end{equation}
carrying $m_{\xi,B}^{(k)}$ to $t_{\xi,B}^{(k)}$. By using \cite[Corollaire 5.4]{deligne1} and \cite{deligne2} one obtains easily the 
following key property of $t_{\xi,B}^{(k)}$, see \cite[section 2]{ball} for some more details:

\begin{lem}
\label{vvier}
Let $V^\delta,X^\delta,\psi^\delta,K^\delta$, be as above. Let $F\subset\c$ be a field containing $E$. Let
$$\xi:\Spec F\rightarrow M^{(0\times1)} $$
be a $F$-valued point, then:
\begin{itemize}
\item[(i)]
the $\Od=\O_L\otimes\zd$-linear isomorphism $t_{\xi,B}^{(k)}\otimes1_{\zd}$ between
$$H^1(Y_\xi^{(0)}\times_FF^{ac},\zd)^{\otimes_\Od1-k}\otimes_\Od\bigwedge_\Od^kH^1(Y_\xi^{(1)}\times_FF^{ac},\zd)$$
and
$$H^1(Y_\xi^{(k)}\times_FF^{ac},\zd)$$
is $\Gal(F^{ac}/F)$-equivariant.
\item[(ii)]
the $\c\otimes L$-linear isomorphism between
$$(H_{dR}^1(Y_\xi^{(0)})^{\otimes_{F\otimes L}1-k}\otimes_{F\otimes L}\bigwedge_{F\otimes L}^kH_{dR}^1(Y_\xi^{(1)}))\otimes_F\c$$
and
$$H_{dR}^1(Y_\xi^{(k)})\otimes_F\c$$
obtained from $t_{\xi,B}^{(k)}$ is defined over $F\otimes L$
\end{itemize}
\end{lem}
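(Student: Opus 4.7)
The plan is to exhibit $t_{\xi,B}^{(k)}$ as the fibre at $\xi$ of a morphism of sheaves already defined on the canonical model $M^{(0\times1)}/E$, forcing both the Galois equivariance in (i) and the descent of the de~Rham realization in (ii). By \cite[Corollaire~5.4]{deligne1} the map $g^{(k)}\colon M^{(0\times1)}\to M^{(k)}$ is defined over $E$, and by the moduli interpretation recalled above the quadruple $(Y^{(k)},\iota^{(k)},\q_{>0}\lambda^{(k)},\ebar^{(k)})$ on $M^{(0\times1)}$ is the pullback under $g^{(k)}$ of the universal quadruple on $M^{(k)}$.

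For (i), recall that the first relative étale cohomology $R^1\pi^{(k)}_*\zd$ of the universal abelian scheme $\pi^{(k)}\colon Y^{(k)}\to M^{(k)}$ is a lisse $\Od$-sheaf on $M^{(k)}/E$, whose fibre at a $\c$-valued point $\xi$ is $H^1(Y_\xi^{(k)}(\c),\zd)$. The very definition \eqref{vnull} of $V^{(k)}$ as a functor of $V^{(0)},V^{(1)}$, together with the tensor--wedge nature of the group homomorphism $g^{(k)}$, provides a canonical identification of $g^{(k)*}R^1\pi^{(k)}_*\zd$ with ${R^1\pi^{(0)}_*\zd}^{\otimes_{\Od}1-k}\otimes_{\Od}\bigwedge^k_{\Od}R^1\pi^{(1)}_*\zd$ as étale sheaves on $M^{(0\times1)}_E$, the identification at any complex fibre reducing, by construction of $m_{\xi,B}^{(k)}$ in \eqref{vnull}, to $m_{\xi,B}^{(k)}\otimes 1_{\zd}$. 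Specializing this isomorphism at the $F$-point $\xi$ and dualizing yields the required $\Gal(F^{ac}/F)$-equivariance.

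For (ii) the same strategy applies to algebraic de~Rham cohomology. The vector bundle $H_{dR}^1(Y^{(k)}/M^{(k)})$ is defined over $E$ and carries an $\O_{M^{(k)}}\otimes L$-action; pulling back by $g^{(k)}$ and using functoriality of relative de~Rham cohomology for the $g^{(k)}$-compatible families of abelian schemes produces a canonical $\O\otimes L$-linear isomorphism
\begin{equation*}
H_{dR}^1(Y^{(0)})^{\otimes_{\O\otimes L}1-k}\otimes_{\O\otimes L}\bigwedge^k_{\O\otimes L}H_{dR}^1(Y^{(1)})\stackrel{\cong}{\longrightarrow}H_{dR}^1(Y^{(k)})
\end{equation*}
of sheaves on $M^{(0\times1)}/E$, where $\O=\O_{M^{(0\times1)}}$. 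Deligne's theory of motives over Shimura varieties \cite{deligne2}, applied to the morphism of PEL Shimura data underlying $g^{(k)}$, guarantees that after $-\otimes_E\c$ and composition with the Betti--de~Rham comparison this algebraic isomorphism coincides with the one induced by $t_{\xi,B}^{(k)}$. Restricting to the $F$-point $\xi$ yields the asserted $F\otimes L$-linear descent.

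The genuine content lies in the compatibility of the algebraically constructed isomorphism with $t_{\xi,B}^{(k)}$ at complex points: that is, one must check that the tensor--wedge construction on relative cohomology sheaves induced by the morphism $g^{(k)}$ of Shimura data produces, on each Betti fibre, precisely the combinatorial identification \eqref{vnull} that defines $m_{\xi,B}^{(k)}$. This is essentially tautological once the integral normalization $\eta^{(k)}(V_{\zd}^{(k)})=H_1(Y_\xi^{(k)},\zd)$ fixed in \cite[section~2]{ball} is in place, because both sides are characterized by the same $G^{(0\times 1)}(\a^\infty)$-equivariance. Everything else---rigidity of lisse $\zd$-sheaves, existence and functoriality of algebraic de~Rham cohomology over $E$, automatic Galois equivariance of fibres of étale sheaves defined over $F$, and $L$-linearity throughout---is a routine consequence of the canonical model formalism of \cite{deligne1}.
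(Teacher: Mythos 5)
Your overall strategy --- realize $t_{\xi,B}^{(k)}$ as the fibre of a globally defined object on the canonical model and invoke \cite[Corollaire 5.4]{deligne1} together with \cite{deligne2} --- is exactly the route the paper takes (its proof is essentially the one-line citation of these two sources, with details deferred to \cite[section 2]{ball}). Part (i) of your argument is sound: the isomorphism of lisse $\zd$-sheaves is pinned down by the compatible level structures, which are part of the moduli data over $E$ because $g^{(k)}$ is an $E$-morphism of canonical models respecting the moduli interpretation; Galois equivariance of the fibre at an $F$-point follows.

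Part (ii), however, contains a genuine gap as written. You assert that ``functoriality of relative de Rham cohomology for the $g^{(k)}$-compatible families'' produces a canonical $\O\otimes L$-linear isomorphism of vector bundles over $M^{(0\times1)}/E$. But relative de Rham cohomology is functorial only for morphisms of abelian schemes, and there is no morphism of abelian schemes realizing the tensor--wedge operation; the class $t_{\xi,B}^{(k)}$ is a Hodge cycle, not (a priori) an algebraic cycle, so no geometric functoriality applies. The existence of an algebraic, $F\otimes L$-rational de Rham isomorphism agreeing with the transcendentally defined Betti one is precisely the content of (ii), so positing it and then citing \cite{deligne2} to identify it with $t_{\xi,B}^{(k)}$ begs the question. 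The correct argument starts from the Betti class: it is a Hodge cycle on the product abelian variety, hence absolutely Hodge by Deligne's main theorem in \cite{deligne2}; combined with the Galois equivariance of its \'etale components from part (i), Deligne's compatibility results for absolute Hodge cycles give that the de Rham component is $\Gal(F^{ac}/F)$-invariant, hence descends to $F\otimes L$. Equivalently, one can run the partie-fixe/special-point argument (spread $t_B^{(k)}$ over a compactification, propagate by horizontality, and check rationality at a CM point where everything splits), which is exactly the structure of the paper's own Proposition \ref{dR} in the $p$-adic setting.
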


Let us thus write $t_{\xi,dR}^{(k)}$ for the isomorphism of the filtered $F\otimes L$-modules. Finally, consider a $K(k)$-valued point $\xi$, where $k$
is a perfect field of characteristic $p$. Notice the functorial comparison isomorphism of Faltings and Fontaine, \cite{fontaine2}, \cite{faltings}:
$$\begin{CD}
H^1(Y_\xi^{(k)}\times_{K(k)}K(k)^{ac},\q_p)\otimes_{\q_p}B_{dR}(K(k)^{ac})\\
@V{I_{dR}}VV\\
H_{dR}^1(Y_\xi^{(k)})\otimes_{K(k)}B_{dR}(K(k)^{ac})
\end{CD}$$
Let us now explain a method of Blasius to prove certain compatibilities. We prefer to reproduce his beautiful argument for two reasons: In the situation 
we have his proof simplifies a little, yet we need a slight generalization of the result. The reader who is familiar with the details of \cite{blasius} and
\cite{deligne2} should certainly skip the rest of this section, see also \cite[Chapter 4]{ogus}.

\begin{prop}
\label{dR}
Let $\xi$ be a $K(k)$-valued point of $M^{(0\times1)}$, and let $K(k)\hookrightarrow\c$ be an imbedding ($\Card(k)\leq2^{\aleph_0}$). Then the map from
$$(H_{dR}^1(Y_\xi^{(0)})^{\otimes_{K(k)\otimes L}1-k}\otimes_{K(k)\otimes L}\bigwedge_{K(k)\otimes L}^kH_{dR}^1(Y_\xi^{(1)}))\otimes_{K(k)}B_{dR}(K(k)^{ac})$$
to
$$H_{dR}^1(Y_\xi^{(k)})\otimes_{K(k)}B_{dR}(K(k)^{ac})$$
induced by $t_{\xi,dR}^{(k)}\otimes_{K(k)}1_{B_{dR}(K(k)^{ac})}$ agrees with $I_{dR}\circ(t_{\xi,B}^{(k)}\otimes1_{B_{dR}(K(k)^{ac})})\circ I_{dR}^{-1}$.
\end{prop}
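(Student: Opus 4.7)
The strategy is Blasius's: the isomorphisms $t_{\xi,B}^{(k)}$ and $t_{\xi,dR}^{(k)}$ are the Betti and de Rham incarnations of one and the same morphism in Deligne's category of motives for absolute Hodge cycles, namely the one induced by the Shimura morphism $g^{(k)}:M^{(0\times1)}\to M^{(k)}$. The proposition is therefore a special case of the $B_{dR}$-compatibility of \'etale and de Rham realizations of absolute Hodge cycles on products of abelian varieties, which is the main theorem of \cite{blasius}.

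\textbf{Reduction to a $\q^{ac}$-point.} The cardinality hypothesis $\Card(k)\leq 2^{\aleph_0}$ is exactly what is needed to provide the imbedding $K(k)\hookrightarrow\c$. The point $\xi$ factors through a finitely generated subfield $F\subset K(k)$ over $E$, so by spreading out it is the restriction of a morphism $T\to M^{(0\times1)}$ with $T/E$ a smooth affine scheme of finite type, over which the families $Y^{(0)}$, $Y^{(1)}$, $Y^{(k)}$ all extend. On $T$ the cycle $t^{(k)}$ gives a horizontal isomorphism of the relative de Rham bundles, by a relative version of lemma \ref{vvier} (ii) (which is immediate from the fact that $g^{(k)}$ is a morphism of $E$-schemes and hence the whole family comes from a Shimura datum). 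Consequently the discrepancy between $t_{\xi,dR}^{(k)}\otimes 1_{B_{dR}(K(k)^{ac})}$ and $I_{dR}\circ(t_{\xi,B}^{(k)}\otimes 1_{B_{dR}(K(k)^{ac})})\circ I_{dR}^{-1}$ is a Gauss--Manin horizontal section of the appropriate $B_{dR}$-tensored endomorphism bundle on $T$; by Deligne's variational principle B it vanishes on all of $T$ as soon as it vanishes at one single $\q^{ac}$-valued specialization.

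\textbf{Conclusion and main obstacle.} At a $\q^{ac}$-valued point the required equality reduces to: for the absolute Hodge cycle describing $t^{(k)}$, Faltings--Fontaine's $I_{dR}$ sends its $p$-adic \'etale realization to its de Rham realization. This is the main theorem of \cite{blasius} (see also \cite[Chapter 4]{ogus}). In our case the cycle is induced directly from the morphism $g^{(k)}$ of PEL Shimura varieties, i.e.\ is essentially the isogeny underlying the identity ${V^{(0)}}^{\otimes_{\O_L}1-k}\otimes_{\O_L}\bigwedge_{\O_L}^kV^{(1)}=V^{(k)}$, so one stays inside the ``motivated'' subclass where Blasius's theorem is unconditional. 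The principal obstacle in this plan is exactly the invocation of Blasius's theorem: the $\c$-compatibility of lemma \ref{vvier} (ii) and the descent from $K(k)$ to $\q^{ac}$ are essentially formal, but the $p$-adic comparison between Faltings--Fontaine and Deligne's classical complex comparison requires the full deformation-theoretic $p$-adic periods argument of \cite{blasius}, which is what the remainder of this section must carry out (with the slight generalization the author refers to before the statement).
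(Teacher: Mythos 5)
Your outline is the right one and is essentially Blasius's method, which is also what the paper follows: globalize the cycle over the (compactified) family using Deligne's th\'eor\`eme de la partie fixe, observe that the discrepancy $I_{dR}\circ(t_{B}^{(k)}\otimes1)\circ I_{dR}^{-1}-t_{dR}^{(k)}\otimes1$ is the restriction of a global class, so that its vanishing can be checked at a single well-chosen point of the same geometric connected component. Where you diverge is at the base case, and this is precisely the step you leave as a black box (``the main theorem of \cite{blasius}''). The paper does not invoke Blasius's theorem there, and for good reason: that theorem is stated for abelian varieties over $\q^{ac}$, whereas here one needs the statement at a $K(k)$-valued point, and the cycle is the $L$-linear isomorphism ${V^{(0)}}^{\otimes1-k}\otimes\bigwedge^kV^{(1)}\cong V^{(k)}$ rather than a cycle literally covered by his statement --- this is the ``slight generalization'' alluded to before the proposition. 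Instead of reducing to a $\q^{ac}$-point and importing the hard CM-period input behind Blasius's theorem, the paper chooses the comparison point $\xi'$ (still $K(k)$-valued, in the same geometric component) so that $Y_{\xi'}^{(1)}$ is isogenous to ${Y_\xi^{(0)}}^{\times(n-1)}\times Y_\xi^{(n)}$ and hence $Y_{\xi'}^{(k)}$ to ${Y_\xi^{(0)}}^{\times\binom{n-1}{k}}\times{Y_\xi^{(n)}}^{\times\binom{n-1}{k-1}}$; at such a point the cycle $t^{(k)}$ is induced by an honest isogeny and $I_{dR}$ is a direct sum of the comparison isomorphisms of the factors, so the compatibility is pure functoriality --- no period computation at all. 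So your plan is sound but incomplete exactly where you say it is; the paper's split-point trick is what closes that gap, and it also sidesteps the issue (which your reduction glosses over) of whether a suitable $\q^{ac}$-point even sits inside $K(k)$ so that both comparisons live in the same $B_{dR}(K(k)^{ac})$-module.
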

\begin{proof}
Recall once more that $t_{\xi,dR}^{(k)}$ was defined from $t_{\xi,B}^{(k)}$ in the first place: Starting with the archimedian comparison isomorphisms
$$H_{dR}^1(Y_\xi^{(k)})\otimes_{K(k)}\c\cong H^1(Y_\xi^{(k)}(\c),\q)\otimes\c,$$
the map $t_{\xi,B}^{(k)}\otimes1_\c$ gives rise to $t_{\xi,dR}^{(k)}\otimes_{K(k)}1_\c$ and the $K(k)$-rationality of 
that map was checked in the second part of lemma \ref{vvier}. Write $Y/M^{(0\times1)}$ for the abelian scheme
$${Y^{(0)}}^{\times_{M^{(0\times1)}}k-1}\times_{M^{(0\times1)}}Y^{(1)}\times_{M^{(0\times1)}}Y^{(k)},$$
and let $s:Y\rightarrow M^{(0\times1)}$ be the structural morphism. By some custommary 
manipulation of tensor products and duals one can regard $t_{\xi,B}^{(k)}$ as an element in
$$H^{2k}(Y_\xi(\c),\q)(k).$$
As $\xi$ varies these form a global section which we denote by
$$t_B^{(k)}\in H^0(M^{(0\times1)}(\c),R^{2k}s_*\q)(k).$$
We want to choose (over $K(k)$) a smooth compactification $j:Y\times_EK(k)\hookrightarrow\Y/K(k)$, with $\Y-Y\times_EK(k)=\Ybar/K(k)$ a normal 
crossings divisor. Due to Deligne's th\'eor\`eme de la partie fixe \cite[Th\'eor\`eme 4.1.1]{deligne3}, we deduce the existence of 
a cohomology class $T_B^{(k)}\in H^{2k}(\Y(\c),\q)(k)$ such that $j^*(T_B^{(k)})$ maps to $t_B^{(k)}$, in the Leray spectral sequence 
to $s:Y\rightarrow M^{(0\times1)}$. The restriction of $T_B^{(k)}$ to each fibre $Y_\xi\subset\Y$ is $t_{\xi,B}^{(k)}$. Analogously one 
can find a $T_{dR}^{(k)}\in H_{dR}^{2k}(\Y)(k)$ that restricts to $t_{\xi,dR}^{(k)}\in H_{dR}^{2k}(Y_\xi)(k)$ for all $\xi$ (N.B.: The 
images of $T_B^{(k)}$ and $T_{dR}^{(k)}$ in the group $H_{dR}^{2k}(\Y\times_{K(k)}\c)(k)$ may well be different, 
but this is irrelevant). Now let $t_{\xi,B}^{(k)}\otimes1_{\q_p}$ and $t_B^{(k)}\otimes1_{\q_p}$ be the images of $t_{\xi,B}^{(k)}$ 
and $t_B^{(k)}$ in $p$-adic \'etale cohomology. The assertion in question is equivalent to the statement 
$I_{dR}(t_{\xi,B}^{(k)}\otimes1_{B_{dR}(K(k)^{ac})})=t_{\xi,dR}^{(k)}\otimes_{K(k)}1_{B_{dR}(K(k)^{ac})}$.\\

The key idea of Blasius is that an element in $H_{dR}^{2k}(\Y)(k)$ restricts to $0$ in $H_{dR}^{2k}(Y_\xi)(k)$ if and only if it restricts to $0$ in 
$H_{dR}^{2k}(Y_{\xi'})(k)$, where $\xi'$ is any other $K(k)$-valued point lying in the same geometric connected component of $M^{(0\times1)}$. When 
applying this to the element $I_{dR}(T_B^{(k)}\otimes1_{B_{dR}(K(k)^{ac})})-T_{dR}^{(k)}\otimes_{K(k)}1_{B_{dR}(K(k)^{ac})}$ we find that our assertion is 
true for $\xi$ if and only if it is true for $\xi'$, here again, we denote the image of $T_B^{(k)}$ in $H^{2k}(\Y\times_{K(k)}K(k)^{ac},\q_p)(k)
\otimes_{\q_p}B_{dR}(K(k)^{ac})$ by $T_B^{(k)}\otimes1_{B_{dR}(K(k)^{ac})}$. There exist such points $\xi'$ with $Y_{\xi'}^{(1)}$ isogenous to
$${Y_\xi^{(0)}}^{\times_{K(k)}n-1}\times_{K(k)}Y_\xi^{(n)}$$ 
as a polarized abelian variety with $L$-operation. For those $\xi'$ there is an isogeny between 
${Y_\xi^{(0)}}^{\times_{K(k)}\binom{n-1}{k}}\times_{K(k)}{Y_\xi^{(n)}}^{\times_{K(k)}\binom{n-1}{k-1}}$ and $Y_{\xi'}^{(k)}$ as well. 
Now observe that the isomorphism $I_{dR}$ for $Y_{\xi'}^{(k)}$ is just the direct sum of $\binom{n-1}{k}$ (resp. $\binom{n-1}{k-1}$ 
many) copies of the corresponding isomorphism for $Y_\xi^{(0)}$ (resp. $Y_\xi^{(n)}$). This proves it for $\xi'$ and we are done.
\end{proof}

\subsection{The Categories of Fontaine}
\label{crys}
Our reference for good reduction of Shimura varieties of $PEL$-type is the work \cite{kottwitz1} of Kottwitz. In our case 
the unramifiedness hypotheses of loc.cit. boil down to the requirement that $L$, $V^\delta$, $\psi^\delta$, $l$ satisfy:

\begin{itemize}
\item
$\psi^\delta$ induces a $\z_{(p)}$-valued perfect pairing of $V_{\z_{(p)}}^\delta$.
\item
$l$ is coprime to $p$.
\item
$L$ is unramified at all primes over $p$.
\end{itemize}

Here $p$ is some fixed prime. If this is fulfilled then Kottwitz obtains an extension of $M^\delta/E$ to a scheme $\M^\delta$ smooth over $\O_E\otimes\z_{(p)}$, 
moreover $E$ is unramified over $p$ too. One constructs such models $\M^\delta$ via a moduli interpretation, which is very similar to the one given in (a), (b), 
(c), and (d) above. However, one has to replace the constraint (c) by the so-called determinant condition, see \cite[chapter 5]{kottwitz1} for details. In fact the above 
unramifiedness hypothesis is fulfilled for all $\delta$ if and only if it is fulfilled for $\delta=(0\times1)$, due to \cite[Lemma 5.1]{ball}. Moreover loc.cit. shows that the 
polarizations in \eqref{pol} can be chosen to be $p$-principal ones, which we assume from now on. It will be useful to have a criterion for the compactness of $\M^\delta$:

\begin{lem}
\label{compact}
Assume that $L$, $V^{(k)}$, $\psi^{(k)}$, $l\geq3$, are unramified at $p$, and assume 
$\Phi^{(n)}\neq\Phi^{(0)}\circ*$. Then the scheme $\M^{(k)}$ is proper over $\O_E\otimes\z_{(p)}$.
\end{lem}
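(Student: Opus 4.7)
The plan is a two-step reduction: first, I would reduce properness of $\M^{(k)}$ to $\q$-anisotropy modulo centre of the similitude group $G^{(k)}$; then I would verify that anisotropy by computing signatures of $\Psi^{(k)}$ at real places of $L^+$ and reading off the hypothesis.

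For the first step I would apply the valuative criterion. Given a complete discrete valuation ring $R$ over $\O_E\otimes\z_{(p)}$ with fraction field $K$ and a $K$-point $(Y^{(k)}_K,\lambda^{(k)}_K,\iota^{(k)}_K,\ebar^{(k)}_K)$ of $\M^{(k)}$, Grothendieck's semistable reduction theorem supplies, after a finite base change of $R$, a semiabelian scheme extending $Y^{(k)}_K$. The associated monodromy filtration on the $\ell$-adic Tate module ($\ell\neq\ch R$) is preserved by the $L$-action and compatible with the polarization, and therefore gives rise to a parabolic subgroup of $G^{(k)}$ defined over $\q$. If $G^{(k)}$ is $\q$-anisotropic modulo centre the only such parabolic is $G^{(k)}$ itself, forcing trivial monodromy and hence an abelian extension; the $L$-action and polarization descend back to $R$ by uniqueness of N\'eron models, yielding the sought $R$-point. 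Thus the lemma reduces to $L$-anisotropy of the skew-Hermitian form $\Psi^{(k)}$.

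For the second step I would compute the signature of $\Psi^{(k)}$ at each real place $\sigma$ of $L^+$. Fixing one extension $\tau:L\hookrightarrow\c$ of $\sigma$, the $\tau$-isotypic component of the Hodge piece ${V^{(k)}}^{-1,0}$ has complex dimension
\begin{equation*}
m_\tau = \binom{n-1}{k}\mathbf{1}_{|\Phi^{(0)}|}(\tau) + \binom{n-1}{k-1}\mathbf{1}_{|\Phi^{(n)}|}(\tau),
\end{equation*}
and because $\Psi^{(k)}(x,h^{(k)}(i)y)$ is positive definite, $\Psi^{(k)}$ itself has signature $(m_\tau,\binom{n}{k}-m_\tau)$ at $\sigma$ relative to $\tau$. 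This signature is definite exactly when $m_\tau\in\{0,\binom{n}{k}\}$, which happens precisely when $|\Phi^{(0)}|$ and $|\Phi^{(n)}|$ make the same choice between $\tau$ and $\tau^*$. The hypothesis $\Phi^{(n)}\neq\Phi^{(0)}\circ*$ is equivalent to saying that $|\Phi^{(n)}|$ is not the complementary CM-type of $|\Phi^{(0)}|$; hence there exists a real place $\sigma_0$ where the two CM-types agree on the chosen extension, and at $\sigma_0$ the form $\Psi^{(k)}$ is definite, hence locally anisotropic. By Landherr's Hasse principle for Hermitian forms over CM fields, this local anisotropy promotes to global $L$-anisotropy of $\Psi^{(k)}$, completing the argument.

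The main obstacle is the first paragraph, where one must confirm that the semistable reduction is genuinely forced to be abelian in the presence of the PEL data. The conceptual content is: any non-trivial toric quotient of the reduction would produce an isotropic $L$-subspace of $V^{(k)}$ stable under $\Psi^{(k)}$, contradicting $L$-anisotropy. This argument is classical in the unitary PEL setting, but some care is needed because the group $G^{(k)}$ for $2\le k\le n-2$ does not literally match the similitude group of a standard rank-$n$ Hermitian space; here one exploits that, via the formulas \eqref{vnull}--\eqref{polnull}, $G^{(k)}$ is obtained from the rank-$n$ group attached to $(V^{(1)},\Psi^{(1)})$ by an explicit exterior-power functoriality, so its $\q$-parabolics are in bijection with the $\q$-isotropic flags of $\Psi^{(1)}$, reducing everything to the $k=1$ case. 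Once this is in place the signature calculation and the CM-type reading of the hypothesis are routine.
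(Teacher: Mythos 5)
Your second step is sound: $\Phi^{(n)}\neq\Phi^{(0)}\circ*$ is indeed equivalent to the existence of a conjugate pair $\{\tau,\tau^*\}$ on which $|\Phi^{(0)}|$ and $|\Phi^{(n)}|$ agree, and at the real place below such a $\tau$ the form $\Psi^{(k)}$ is definite, hence anisotropic over $L$ (no Hasse principle is needed for that direction -- a global isotropic vector would be isotropic at every place). This is essentially the same combinatorial observation the paper exploits, since $\tau\in|\Phi^{(0)}|\cap|\Phi^{(n)}|$ is the same as $\tau^*\notin|\Phi^{(0)}|\cup|\Phi^{(n)}|$.

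The genuine gap is in your first step, at the sentence claiming the monodromy filtration ``gives rise to a parabolic subgroup of $G^{(k)}$ defined over $\q$.'' The weight filtration attached to a semiabelian degeneration lives on the $\ell$-adic Tate modules; transported through the level structure it produces a totally isotropic $L\otimes\a^\infty$-submodule of $V^{(k)}\otimes\a^\infty$, i.e.\ a parabolic of $G^{(k)}\times_\q\q_\ell$ for each $\ell$, not a $\q$-rational one. That is no contradiction with $\q$-anisotropy: for $n\geq3$ the Hermitian form $\Psi^{(k)}\otimes\q_\ell$ is isotropic at \emph{every} finite place, regardless of its archimedean signatures, so the local parabolics always exist. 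Promoting the $\a^\infty$-rational isotropic submodule to a $\q$-rational (in fact $\z$-rational) one requires the full Faltings--Chai/Lan theory of degeneration data, in which the character group of the torus part supplies an honest $\O_L$-lattice; that machinery is the real content of the ``anisotropic $\Rightarrow$ proper'' implication and cannot be waved through via the $\ell$-adic monodromy filtration alone. (Your closing remark that the $\q$-parabolics of $G^{(k)}$ biject with isotropic flags of $\Psi^{(1)}$ is also false -- $G^{(k)}$ is the full similitude group of the rank-$\binom{n}{k}$ space $(V^{(k)},\Psi^{(k)})$, of which $g^{(k)}(G^{(0\times1)})$ is a small subgroup -- but this is harmless since your signature computation addresses $\Psi^{(k)}$ directly.) The paper avoids all of this by a different route: it first notes that by flatness one may test the valuative criterion on characteristic-$0$ discrete valuation rings, then invokes Morita's theorem -- an abelian variety with $L$-action whose Lie trace $\Phi^{(k)}$ omits some embedding $\iota_0$ has potentially good reduction -- and finally removes ``potentially'' using Raynaud's criterion and the level-$l$ structure with $l\geq3$. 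If you want to salvage your approach you must either import the degeneration-theoretic properness criterion wholesale or switch to the Morita--Raynaud argument.
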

\begin{proof}
As $\M^{(k)}$ is smooth it is flat and the generic points of all irreducible components are mapped to the generic point of $\Spec\O_E\otimes\z_{(p)}$. So it 
suffices to verify the valuative criterion of properness \cite[Corollaire 7.3.10(ii)]{egaii} only for discrete valuation rings $V$ over $\O_E\otimes\z_{(p)}$ whose 
field $F$ of fractions has characteristic $0$. Consider the quadruple $(Y,\q_{>0}\lambda,\dots)$ corresponding to some $\eta:\Spec F\rightarrow M^{(k)}$. 
Observe that the $L$-operation that is induced on the tangent space $\Lie Y$ of the abelian variety has trace $\Phi^{(k)}$. Consequently we can apply 
the theory in \cite[Theorem 3]{morita}: As $\Phi^{(n)}\neq\Phi^{(0)}\circ*$ we can find a $\iota_0\notin|\Phi^{(0)}|\cup|\Phi^{(n)}|$ so that $\Phi^{(k)}$ does 
never involve the embedding $\iota_0$. By loc.cit. this forces $Y$ to have potentially good reduction. However, as the abelian variety in question also 
has a $F$-rational level $l$-structure, we may use the Crit\`ere de Raynaud \cite[Proposition 4.7]{grothendieck2}, to deduce it has in fact good reduction.
\end{proof}

We usually want to consider the scheme $\M_\gp^\delta:=\M^\delta\times_{\O_E\otimes\z_{(p)}}\O_{E_\gp}$, where $\gp$ is a 
fixed prime of $\O_E$. The study of the local properties of $\M_\gp^\delta$ is significantly eased if a further assumption is made:
\begin{itemize}
\item[(T.1)]
$\Phi^{(0)}\circ*=\tr_{L_\gq/\q_p}|_L\neq\Phi^{(n)}$
\end{itemize}
for some prime $\gq$ of $\O_L$. If we fix an embedding
\begin{equation}
\label{nzwei}
\iota:\O_{L_\gq}\stackrel{\cong}{\rightarrow}W(\f_{p^r})=\O_{E_\gp},
\end{equation}
then (T.1) means that 
\begin{equation}
\label{sechsz}
\{\tau^\sigma\circ\iota\circ*|\sigma=0,\dots,r-1\}=|\Phi^{(0)}|,
\end{equation}
while there exists a proper subset $\Omega\subset\{0,\dots,r-1\}$ with 
\begin{equation}
\label{vierz}
\{\tau^\sigma\circ\iota|\sigma\in\Omega\}\cup\{\tau^\sigma\circ\iota\circ*|\sigma\notin\Omega\}=|\Phi^{(n)}|.
\end{equation}
Under the validity of this assumption $\M_\gp^{(0\times1)}$ is proper, all fibres of the $p$-divisible group $Y^{(1)}[\gq^\infty]$ over $\M_\gp^{(0\times1)}$ 
are unipotent and $Y^{(0)}[\gq^\infty]$ is \'etale. Moreover their duals are identified with $Y^{(1)}[\gq^{*\infty}]$ and $Y^{(0)}[\gq^{*\infty}]$.

\subsubsection{Dieudonn\'e theory}
\label{contrava} 
Let us sketch one of the numerous ways to define the contravariant Dieudonn\'e module of a $p$-divisible group $\G/W(k)$: According to 
\cite{mazur} one attaches a crystal to $\G$, for our purposes it is enough to only consider the value of this crystal on the object $\Spec W(k)$ being:
$$\d^*(\G)=\Ext^{crys/\z_p}(\G,\g_a)$$
which is defined to be the abelian group of isomorphism classes of the (by \cite[Corollary 7.8]{mazur} rigid) category
$$\EXT^{crys/\z_p}(\G,\g_a)=\lim\limits_{\leftarrow N}\EXT^{crys/\z_p}(\G[p^N],\g_a).$$
Moreover, the category $\EXT^{crys/\z_p}(\G[p^N],\g_a)$ consists of objects in $\TORS^{crys/\z_p}(\G[p^N],\g_a)$ -the 
$\g_a$-torseurs $P$ on the small crystalline site $Crys(\G[p^N]/\z_p)$- together with isomorphisms:
$$(p_1+p_2)^*(P)\cong p_1^*(P)\wedge p_2^*(P)$$
of objects in $\TORS^{crys/\z_p}(\G[p^N]\times_{W(k)}\G[p^N],\g_a)$, that satisfy commutativity 
and associativity constraints. Here $\wedge$ means contracted sum of $\g_a$-torseurs and 
$$p_1,p_2:\G[p^N]\times_{W(k)}\G[p^N]\rightarrow\G[p^N]$$
are the coordinate maps. The resulting group $\d^*(\G)$ is a $W(k)$-module because $W(k)$ acts on $\g_a$, regarded 
as a sheaf of abelian groups on $Crys(\G[p^N]/\z_p)$. From this definition one can read off immediately the crystalline 
functoriality, in particular we have a $\tau$-linear map $\phi$ on it that is induced from the relative Frobenius:
$$Frob:\Gbar\rightarrow\Gbar\times_{k,\tau}k;x\mapsto x^p$$
where $\Gbar$ is $\G\times_{W(k)}k$. Finally, according to 
\cite[corollary 7.13]{mazur} there is an exact sequence of finitely generated torsion free $W(k)$-modules:
$$0\rightarrow\omega(\G)\rightarrow\Ext^{crys/\z_p}(\G,\g_a)\rightarrow\Ext(\G,\g_a)\rightarrow0$$
where $\omega(\G)=\lim\limits_{\leftarrow j}\omega(\G\times_{W(k)}W_j(k))$ is the cotangent space of $\G$, giving $\d^*(\G)$ a filtration, by 
\begin{equation*}
Fil^i\d^*(\G)=\begin{cases}\d^*(\G)&i<1\\\omega(\G)&i=1\\0&i>1\end{cases}.
\end{equation*}
Let us describe Fontaine's $W(k)$-linear $\otimes$-category $MF^{fd}$ as well: A object in $MF^{fd}$ is a tuple $(M,Fil^iM,\phi)$ with:
\begin{itemize}
\item
$M$ is a finitely generated, torsion free $W(k)$-module, $\dots\subset Fil^{i+1}M\subset Fil^iM\subset\dots$ 
is a separating and exhausting descending filtration, such that $M/Fil^iM$ is torsion free.
\item
$\phi:\q\otimes M\rightarrow\q\otimes M$ is a $\tau$-linear bijection
\item
$\sum_{i\in\z}p^{-i}\phi(Fil^iM)=M$
\end{itemize}
and a morphism in $MF^{fd}$ is a $W(k)$-linear map which preserves the filtration and the $\phi$-operation. For every integer $w\geq0$ 
there is an important full subcategory $MF^{fd,w}$ which consists of objects with $M=Fil^0M$ and $Fil^wM=0$. It is a fact that 
$(\d^*(\G),Fil^i\d^*(\G),\phi)$ obtained from a $p$-divisible group $\G/W(k)$ constitutes an object in $MF^{fd,2}$, moreover $\d^*$ is an 
anti-equivalence between the category of $p$-divisible groups over $W(k)$ and $MF^{fd,2}$.\\ 
Finally observe that a $p$-divisible group $\Gbar$ over $k$ gives rise to a Dieudonn\'e module $\d^*(\Gbar)$ over $W(k)$ too, it lacks a filtration, but 
does however still satisfy the crucial property $p\d^*(\Gbar)\subset\phi\d^*(\Gbar)\subset\d^*(\Gbar)$, in this setting $\d^*$ is again an anti-equivalence of categories.\\
In \cite[7.14]{fontaine} a fully faithful contravariant exact functor $U$ from a certain full subcategory $MF^{fd,p'}$ of $MF^{fd,p}$ to the category of 
torsion free finitely generated $\z_p$-modules with continuous $\Gal(K(k)^{ac}/K(k))$-operation is constructed. An object of $MF^{fd,p}$ is 
in $MF^{fd,p'}$ if and only if it satisfies the following subtle extra condition: Whenever a subobject $M'$ of $M$ and a maximal subobject $M''$ of
$M'$ is given we have that $Fil^{p-1}M'/M''\neq M'/M''$. This is implied by demanding that $p^{p-1}\phi^{-1}$ acts topologically nilpotent on $M$. 
According to the discussion in \cite[3.8, Remarque]{fontaine3} the functor $U$ can be described as follows: Consider $W_j(\O_{K(k)^{ac}}/p\O_{K(k)^{ac}})$, 
the Witt vectors of length $j$ with coefficients in $\O_{K(k)^{ac}}/p\O_{K(k)^{ac}}$. Let $W_j^{DP}(\O_{K(k)^{ac}}/p\O_{K(k)^{ac}})$ be the divided power envelope of 
the ideal consisting of all $(a_0,\dots,a_{j-1})\in W_j(\O_{K(k)^{ac}}/p\O_{K(k)^{ac}})$ with $a_0^{p^j}=0$, relative to the standard divided powers on the subideal formed 
by all Witt vectors $(a_0,\dots,a_{j-1})$ with $a_0=0$. Finally set 
$$\Sd'=\lim\limits_{\leftarrow j}W_j^{DP}(\O_{K(k)^{ac}}/p\O_{K(k)^{ac}}),$$
this is a $W(k)$-algebra which has a natural $\Gal(K(k)^{ac}/K(k))$-operation a natural filtration, and a Frobenius $\tau$, so one puts:
\begin{equation}
\label{fact2}
U(M)=Fil^0\Hom_{W(k)}(M,\Sd')^{\phi=1},
\end{equation}
for every $M$ in $MF^{fd,p'}$. Its significance to this paper stems from its relation to $p$-divisible groups over 
$W(k)$: If $\G/W(k)$ is such that $\d^*(\G)$ lies in $MF^{fd,p'}$ (which means $p$ odd or $\G$ unipotent), then
\begin{equation}
\label{fact1}
U(\d^*(\G))\cong T_p\G
\end{equation}
holds, where $T_p\G=\lim\limits_{\leftarrow N}\G[p^N](K(k)^{ac})$ is the Tate module, see \cite[Proposition 9.12]{fontaine} for this.\\

The following consequence of Fontaine and Lafaille's theory is basic to this work. Let $k$ be a perfect field extension of $\f_{p^r}$. Notice that a $\O_{L_\gq}$-operation on 
any object $M$ of $MF^{fd}$ is equivalent to giving a grading $M=\bigoplus_{\sigma\in\z/r\z}M_\sigma$ with $\q\otimes\phi(M_\sigma)=\q\otimes M_{\sigma+1}$. This is done 
by making $\alpha\in\O_{L_\gq}$ act on $M_\sigma$ through $x\mapsto\tau^\sigma(\iota(\alpha))x$.

\begin{lem}
\label{BARS}
Assume that $p\geq k+1$. Let $\G_\gq^{(1)}$ be a $p$-divisible group over $W(k)$ with $\O_{L_\gq}$-operation and of $\O_{L_\gq}$-height $n$. Assume that 
$$\rk_{W(k)}\omega_\sigma(\G_\gq^{(1)})=\begin{cases}1&\sigma\in\Omega\\0&\sigma\notin\Omega\end{cases}$$
Fix some \'etale $p$-divisible group $\G_\gq^{(0)}/W(k)$ with $\O_{L_\gq}$-operation, and with $\O_{L_\gq}$-height 
equal to $1$. Then there exists a $p$-divisible group $\G_\gq^{(k)}/W(k)$ with $\O_{L_\gq}$-operation such that one has 
$$T_p\G_\gq^{(k)}\cong(T_p\G_\gq^{(0)})^{\otimes_{\O_{L_\gq}}1-k}\otimes_{\O_{L_\gq}}\bigwedge_{\O_{L_\gq}}^k(T_p\G_\gq^{(1)})$$
as $\O_{L_\gq}[\Gal(K(k)^{ac}/K(k))]$-modules. Moreover, the graded Dieudonn\'e module $M_\sigma^{(k)}$ of $\G_\gq^{(k)}$ is canonically isomorphic to
\begin{equation}
\label{vfuenfz}
{M_\sigma^{(0)}}^{\otimes_{W(k)}1-k}\otimes_{W(k)}\bigwedge_{W(k)}^kM_\sigma^{(1)},
\end{equation} 
where $M_\sigma^{(1)}$ and $M_\sigma^{(0)}$ are the ones of $\G_\gq^{(1)}$ and $\G_\gq^{(0)}$. In particular we have
$$\rk_{W(k)}\omega_\sigma(\G_\gq^{(k)})=\begin{cases}\binom{n-1}{k-1}&\sigma\in\Omega\\0&\sigma\notin\Omega\end{cases}$$
and the $\O_{L_\gq}$-height is $\binom{n}{k}$. 
\end{lem}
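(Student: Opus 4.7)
The plan is to build the Dieudonn\'e module $M_\sigma^{(k)}$ directly by the formula \eqref{vfuenfz} and then invoke the Fontaine--Lafaille anti-equivalence between $MF^{fd,p'}$ and Galois stable $\z_p$-lattices to obtain $\G_\gq^{(k)}$. Concretely, I first define, as a $\z/r\z$-graded $W(k)$-module with $\tau$-linear Frobenius,
$$M_\sigma^{(k)} := {M_\sigma^{(0)}}^{\otimes_{W(k)} 1-k}\otimes_{W(k)} \bigwedge_{W(k)}^k M_\sigma^{(1)},$$
where the negative tensor power is interpreted as the $(k-1)$-st power of the $\O_{L_\gq}$-linear dual of the \'etale rank-one module $M_\sigma^{(0)}$, equipped with its natural $\phi$-operator; the $\phi$ on the wedge is $\phi^{(1)\wedge k}$, and one tensors.

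Next I equip $M_\sigma^{(k)}$ with a filtration. Because $M^{(0)}$ is \'etale it carries only the trivial filtration $Fil^0=M^{(0)}$, $Fil^1=0$, so the filtration comes entirely from the wedge factor: I set
$$Fil^1 M_\sigma^{(k)} = {M_\sigma^{(0)}}^{\otimes 1-k}\otimes\Bigl(\bigwedge_{W(k)}^{k-1} M_\sigma^{(1)}\Bigr)\wedge Fil^1 M_\sigma^{(1)}.$$
Since $\rk_{W(k)} Fil^1 M_\sigma^{(1)}\le 1$, the square vanishes, hence $Fil^2 M_\sigma^{(k)}=0$, so the object sits in $MF^{fd,2}$. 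A short count gives $\rk Fil^1 M_\sigma^{(k)} = \binom{n-1}{k-1}$ for $\sigma\in\Omega$ and $0$ otherwise, matching the claimed $\omega_\sigma(\G_\gq^{(k)})$; similarly the total $W(k)$-rank is $\binom{n}{k}$, giving the correct $\O_{L_\gq}$-height.

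Then I verify strong divisibility $\sum_i p^{-i}\phi(Fil^i M_\sigma^{(k)})=M_\sigma^{(k)}$. This follows formally from the corresponding property for $M^{(0)}$ and $M^{(1)}$: if $e_1,\dots,e_n$ is a basis of $M_\sigma^{(1)}$ adapted to the filtration so that $\phi(e_1),\dots,\phi(e_{n-1}),\phi_1(e_n)$ is again a basis, then the corresponding wedges of length $k$ (in which at most one slot uses $\phi_1$) span $\bigwedge^k M_\sigma^{(1)}$ and show strong divisibility on the wedge, and tensoring with the \'etale factor is harmless. The hypothesis $p\ge k+1$ ensures $k!\in\z_p^\times$ and, more importantly, places $M_\sigma^{(k)}$ safely within the Fontaine--Lafaille range $MF^{fd,p'}$. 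By the anti-equivalence recalled in section~\ref{crys}, $M_\sigma^{(k)}$ is the Dieudonn\'e module of a (unique) $p$-divisible group $\G_\gq^{(k)}/W(k)$, and the $\z/r\z$-grading endows it with the required $\O_{L_\gq}$-action.

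Finally, the Tate module comparison uses that the Fontaine--Lafaille functor $U$ is a contravariant tensor equivalence onto its image. Combined with \eqref{fact1}, the construction gives canonically
$$T_p\G_\gq^{(k)} = U(M_\sigma^{(k)}) \cong (T_p\G_\gq^{(0)})^{\otimes_{\O_{L_\gq}} 1-k}\otimes_{\O_{L_\gq}} \bigwedge_{\O_{L_\gq}}^k T_p\G_\gq^{(1)}$$
as $\O_{L_\gq}[\Gal(K(k)^{ac}/K(k))]$-modules. The main technical hurdle I expect is the correct interpretation of the negative tensor power ${M_\sigma^{(0)}}^{\otimes 1-k}$ inside $MF^{fd}$ (requiring the appropriate Tate twist of the \'etale dual), together with the verification that the combined object still satisfies both strong divisibility and the subcategory condition defining $MF^{fd,p'}$; it is precisely here that the bound $p\ge k+1$ enters.
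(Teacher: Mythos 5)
Your proposal is correct and follows essentially the same route as the paper: define $M_\sigma^{(k)}$ by formula \eqref{vfuenfz} as a graded object of $MF^{fd,2}$, recover $\G_\gq^{(k)}$ from the Fontaine--Laffaille anti-equivalence, and transport the tensor/wedge construction through the functor $U$ to identify the Tate module. The only point where the paper is more explicit is the step you flag at the end as the ``main technical hurdle'': $U$ is not a tensor functor on all of $MF^{fd,p'}$, so one must check via Fontaine's Remarque 6.13(b) that the intermediate objects such as ${M^{(1)}}^{\otimes_{W(k)}k}$ and ${M^{(0)}}^{\otimes_{W(k)}k-1}\otimes_{W(k)}M^{(k)}$ themselves lie in $MF^{fd,p'}$ --- their filtration length is at most $k+1\leq p$ since $M^{(1)}\in MF^{fd,2}$, and the extra condition defining $MF^{fd,p'}$ follows from the topological nilpotence of $p\phi^{-1}$ on $M^{(1)}$, which comes from (T.1); the paper also sidesteps your dualization of the \'etale factor by moving the negative tensor power to the other side of the isomorphism.
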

\begin{proof}
Write $M_\sigma^{(k)}$ for the $\z/r\z$-graded object of $MF^{fd}$, that is obtained from the filtered Dieudonn\'e modules of $\G_\gq^{(1)}$, and 
$\G_\gq^{(0)}$ according to equation \eqref{vfuenfz}. By analyzing its Hodge weights one finds analogously to \cite[proof of lemma 2.1]{ball}:
\begin{itemize}
\item
$\rk_{W(k)}Fil^0M_\sigma^{(k)}=\rk_{W(k)}M_\sigma^{(k)}=\binom{n}{k}$
\item
$\rk_{W(k)}Fil^1M_\sigma^{(k)}=\begin{cases}\binom{n-1}{k-1}&\sigma\in\Omega\\0&\sigma\notin\Omega\end{cases}$
\item
$\rk_{W(k)}Fil^2M_\sigma^{(k)}=0$.
\end{itemize}
Using the fully faithfulness of $U$ we are left with proving that
\begin{eqnarray}
\label{foxy}
&&\bigwedge_{\O_{L_\gq}}^kU(M^{(1)})\cong U(\bigoplus_\sigma\bigwedge_{W(k)}^kM_\sigma^{(1)})\\
\label{fix}
&&\cong U(M^{(0)})^{\otimes_{\O_{L_\gq}}k-1}\otimes_{\O_{L_\gq}}U(M^{(k)})
\end{eqnarray}
The isomorphism \eqref{foxy} follows in two steps: Firstly, we take the appropriate $\O_{L_\gq}^{\otimes_{\z_p}k}$-eigenspace of the isomorphism:
$$U(M^{(1)})^{\otimes_{\z_p}k}\cong U({M^{(1)}}^{\otimes_{W(k)}k}),$$
which in turn can be justified by \cite[Remarque 6.13(b)]{fontaine}, if we only know that ${M^{(1)}}^{\otimes_{W(k)}k}$ is in $MF^{fd,p'}$. However, this 
object is certainly in $MF^{fd,p}$, because of $p\geq k+1$ and $M^{(1)}\in MF^{fd,2}$. Moreover, $p\phi^{-1}$ acts topologically nilpotently on $M^{(1)}$ 
by (T.1). So $p^k\phi^{-1}$, hence $p^{p-1}\phi^{-1}$ act topologically nilpotent on ${M^{(1)}}^{\otimes_{W(k)}k}$ making it a $MF^{fd,p'}$-object. 
Secondly, we pass to the eigenspace on which the symmetric group operates according to the sign character $S_k\rightarrow\{\pm1\}$.\\

In a similar way we establish that ${M^{(0)}}^{\otimes_{W(k)}k-1}\otimes_{W(k)}M^{(k)}\in MF^{fd,p'}$. We obtain, using \cite[Remarque 6.13(b)]{fontaine} again :
$$U({M^{(0)}}^{\otimes_{W(k)}k-1}\otimes_{W(k)}M^{(k)})\cong U(M^{(0)})^{\otimes_{\z_p}k-1}\otimes_{\z_p}U(M^{(k)}),$$
Then we consider an appropriate $\O_{L_\gq}^{\otimes_{\z_p}k}$-eigenspace to arrive at the isomorphism \eqref{fix}.
\end{proof}

\subsubsection{Computation of $\d^*(Y_\xi^{(k)}[\gq^\infty])$} 
Let us return to some $k$-valued point $\xi:\Spec k\rightarrow\M_\gp^\delta$. By a lift of $\xi$ we mean:
\begin{itemize}
\item
a point $\eta$ of $\M_\gp^\delta$ over a complete discrete valuation ring $R$ with perfect residue field $k'$, and 
\item
an imbedding of $k$ into $k'$, such that the restriction of $\eta$ to $\Spec k'\hookrightarrow\Spec R$ 
agrees with the composition of $\xi$ with $\Spec k'\rightarrow\Spec k$.
\end{itemize}
In case $\delta=(0\times1)$ and $\ch(R)=0$, every lift determines $R[\frac{1}{p}]$-valued points on 
$M^{(k)}\times_EE_\gp$, by applying $g^{(k)}$. Interchangeably one can think of them as objects:
\begin{equation}
\label{Yps}
(\Y_\eta^{(k)},\iota^{(k)},\q_{>0}\lambda^{(k)},\ebar^{(k)})
\end{equation}
over $R$, because the moduli spaces $\M_\gp^{(k)}$ are all proper. We need to study the $\gr$-adic and 
crystalline cohomologies of these $\Y_\eta^{(k)}$'s. In the rest of this chapter we confine ourself to the case:
\begin{itemize}
\item
$R=W(k')$, and
\item
$k$ is algebraically closed in $k'$.
\end{itemize}
If this holds we call $\eta$ unramified, we consequently have graded contravariant Dieudonn\'e modules $M^{(k)}=\d^*(\Y_\eta^{(k)}[\gq^\infty])$. Observe that:
$$H_{dR}^1(\Y_\eta^{(k)})\cong\d^*(\Y_\eta^{(k)}[p^\infty])\cong\d^*(\Y_\eta^{(k)}[\gq^\infty])\oplus\d^*(\Y_\eta^{(k)}[\gq^{*\infty}])$$
which follows from \cite[chapter V, Theorem 2.1]{crystals} and \cite[Corollary 7.13]{mazur}. The above isomorphisms are $L$-equivariant 
and so give rise to $H_{dR}^1(\Y_\eta)_\sigma\cong M_\sigma$, where we use $H_{dR}^1(\dots)_\sigma$ (resp. $\d^*(\dots)_\sigma$) 
to denote the $\tau^\sigma\circ\iota$-eigenspace of $H_{dR}^1(\dots)$ (resp. $\d^*(\dots)$), if $\iota$ is as in \eqref{nzwei}. Now recall our 
$K(k')\otimes L$-linear isomorphism $t_{\eta,dR}^{(k)}$ of lemma \ref{vvier}, which by passing to the eigenspaces yields an isomorphism: 
\begin{equation}
\label{lattice}
\q\otimes({M_\sigma^{(0)}}^{\otimes_{W(k')}1-k}\otimes_{W(k')}\bigwedge_{W(k')}^kM_\sigma^{(1)})\stackrel{t_{\eta,dR,\sigma}^{(k)}}{\rightarrow}\q\otimes M_\sigma^{(k)}.
\end{equation}
Note that the two formulas \eqref{kotteins} and \eqref{kottnull} that entered crucially into the determinant condition (c) of the 
modular definition of $\M^{(0\times1)}$, imply immediately that the two $p$-divisible groups $\G_\gq^{(1)}=\Y_\eta^{(1)}[\gq^\infty]$ 
and $\G_\gq^{(0)}=\Y_\eta^{(0)}[\gq^\infty]$ with $\O_{L_\gq}$-operation satisfy the assumptions of lemma \ref{BARS}.

\begin{lem}
\label{vdreiz}
Assume that $p\geq k+1$, and let $\eta\in\M_\gp^{(0\times1)}(W(k'))$ be an unramified lift of $\xi\in\M_\gp^{(0\times1)}(k)$. Then 
there exists a $k$-valued point, say $(Y_\xi^{(k)},\iota^{(k)},\q_{>0}\lambda^{(k)},\ebar^{(k)})$ on $\M_\gp^{(k)}$, such that:
\begin{itemize}
\item[(i)]
\eqref{Yps} is a lift of $(Y_\xi^{(k)},\iota^{(k)},\q_{>0}\lambda^{(k)},\ebar^{(k)})$
\item[(ii)]
there exists an isomorphism of graded $k$-Dieudonn\'e modules:
\begin{equation*}
\d^*(Y_\xi^{(0)}[\gq^\infty])_\sigma^{\otimes_{W(k)}1-k}\otimes_{W(k)}\bigwedge_{W(k)}^k\d^*(Y_\xi^{(1)}[\gq^\infty])_\sigma
\stackrel{t_{\xi,cris,\sigma}^{(k)}}{\rightarrow}\d^*(Y_\xi^{(k)}[\gq^\infty])_\sigma
\end{equation*}
which induces $t_{\eta,dR,\sigma}^{(k)}$, when base changed to $K(k')$.
\end{itemize}
\end{lem}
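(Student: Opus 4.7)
The plan is first to manufacture the required $k$-point and then to use Lemma~\ref{BARS} together with the results of Blasius and Fontaine--Laffaille to pin down the Dieudonné module structure. To build the quadruple $(Y_\xi^{(k)},\iota^{(k)},\q_{>0}\lambda^{(k)},\ebar^{(k)})$, I would apply $g^{(k)}$ to the generic fibre of $\eta$, getting a point of $M^{(k)}(R[\tfrac{1}{p}])$. Hypothesis (T.1) gives $\Phi^{(n)}\neq\Phi^{(0)}\circ*$, so Lemma~\ref{compact} makes $\M_\gp^{(k)}$ proper over $\O_{E_\gp}$, and the valuative criterion extends this uniquely to $\eta^{(k)}\in\M_\gp^{(k)}(R)$, realised by the tuple \eqref{Yps}. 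Its special fibre is a priori a $k'$-valued point; to get (i) I would verify that it descends to $k$, using that both the prime-to-$p$ level data (via Lemma~\ref{vvier}(i)) and the full Dieudonné module at $\gq$ (via Lemma~\ref{BARS} below) admit canonical descriptions purely from data already defined over $k$, so the moduli point is in fact the base change of a $k$-point.

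For (ii), I would first check that the $\z/r\z$-graded Dieudonné modules $M_\sigma^{(0)}:=\d^*(Y_\xi^{(0)}[\gq^\infty])_\sigma$ and $M_\sigma^{(1)}:=\d^*(Y_\xi^{(1)}[\gq^\infty])_\sigma$ satisfy the numerical input of Lemma~\ref{BARS}: the determinant relations \eqref{kottnull} and \eqref{kotteins}, together with (T.1) and \eqref{vierz}, force $\rk_{W(k)}\omega_\sigma(\G_\gq^{(1)})=1$ for $\sigma\in\Omega$ and $0$ otherwise. Since $p\geq k+1$, Lemma~\ref{BARS} furnishes a $p$-divisible group $\G_\gq^{(k)}/W(k)$ with $\O_{L_\gq}$-operation whose Dieudonné module is canonically
\[
{M_\sigma^{(0)}}^{\otimes_{W(k)}1-k}\otimes_{W(k)}\bigwedge_{W(k)}^kM_\sigma^{(1)},
\]
and whose Tate module is the analogous tensor--wedge of Tate modules. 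It remains to identify the special fibre $\Gbar_\gq^{(k)}$ with $Y_\xi^{(k)}[\gq^\infty]$ by an isomorphism base-changing to $t_{\eta,dR,\sigma}^{(k)}$ over $K(k')$.

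This identification combines Proposition~\ref{dR} with full faithfulness of the Fontaine--Laffaille functor $U$. Over $K(k')$, Proposition~\ref{dR} expresses $t_{\eta,dR,\sigma}^{(k)}$ as $I_{dR}\circ(t_{\eta,B}^{(k)}\otimes1_{\q_p})\circ I_{dR}^{-1}$, where $t_{\eta,B}^{(k)}$ is the integral, $\Gal(K(k')^{ac}/K(k'))$-equivariant étale isomorphism of Lemma~\ref{vvier}(i). The hypothesis $p\geq k+1$ together with (T.1), as used inside the proof of Lemma~\ref{BARS}, makes $p^k\phi^{-1}$ topologically nilpotent on all tensor and exterior powers in play, keeping these objects inside $MF^{fd,p'}$. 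Full faithfulness of $U$ on $MF^{fd,p'}$ then promotes the integral étale isomorphism to an integral isomorphism of graded Dieudonné modules over $W(k')$, which by canonicity descends to $W(k)$ and yields the desired $t_{\xi,cris,\sigma}^{(k)}$.

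The main obstacle will be the controlled transfer from integral étale data to integral crystalline data: this is exactly where $p\geq k+1$ is essential, since it is what keeps the relevant $MF^{fd,p}$-objects inside $MF^{fd,p'}$, where $U$ is fully faithful. The descent from $W(k')$ to $W(k)$ of the resulting isomorphism, and the analogous descent of the $k'$-point of $\M_\gp^{(k)}$ needed for (i), should both reduce to canonicity arguments using that $k$ is algebraically closed in $k'$.
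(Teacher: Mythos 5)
Your plan for part (ii) is essentially the paper's argument: you invoke Proposition \ref{dR} to identify $t_{\eta,dR,\sigma}^{(k)}$ with the image of the integral, Galois-equivariant \'etale isomorphism of Lemma \ref{vvier}(i) under the period comparison, check via $p\geq k+1$ and (T.1) that the relevant tensor and exterior powers stay in $MF^{fd,p'}$, and use Lemma \ref{BARS} together with the full faithfulness of the Fontaine--Laffaille functor $U$ to promote this to an integral, Frobenius- and filtration-compatible isomorphism. The paper organizes this slightly differently (first deducing that $t_{\eta,dR,\gq}^{(k)}$ is a morphism of filtered isocrystals from the fact that its image in the $B_{cris}$-Hom space is Galois-invariant, then comparing lattices by exhibiting a quasi-isogeny $f:\Y_\eta^{(k)}[\gq^\infty]\rightarrow\G_\gq^{(k)}$ and showing it induces an isomorphism on Tate modules), but the tools and the logic are the same.

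The genuine gap is in the descent from $k'$ to $k$ needed for part (i). You assert that because the prime-to-$p$ level data and the Dieudonn\'e module at $\gq$ ``admit canonical descriptions from data over $k$,'' the moduli point descends, and that this ``should reduce to canonicity arguments.'' But a $k$-structure on the $\ell$-adic and crystalline realizations of $\Ybar_\eta^{(k)}$ does not by itself produce a $k$-form of the abelian variety: descent of the variety is the nontrivial point here. The paper handles it by observing that the $\Gal(K(k')^{ac}/K(k'))$-action on $H_1(Y_\eta^{(1)},\z_\ell)$ factors through $\Gal(k^{ac}/k)$ (since the reduction of $Y_\eta^{(1)}$ is $Y_\xi^{(1)}\times_kk'$), hence by Lemma \ref{vvier}(i) so does the action on $H_1(Y_\eta^{(k)},\z_\ell)$; Grothendieck's theorem \cite[Proposition 4.4]{grothendieck1} then gives isotriviality, i.e.\ an abelian variety $\Abar/k$ with a $p$-isogeny $f:\Abar\times_kk'\rightarrow\Ybar_\eta^{(k)}$, and Lemma \ref{skel} shows $\d^*(f)$, hence $\ker(f[p^\infty])$, is defined over $W(k)$, so that $Y_\xi^{(k)}=\Abar/\ker(f[p^\infty])$. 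Without an argument of this type your part (i) is only established when $k'=k$.
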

\begin{proof}
Let us write $\Ybar_\eta^{(1)}$, $\Ybar_\eta^{(0)}$, and $\Ybar_\eta^{(k)}$ for the special fibres of $\Y_\eta^{(1)}$, $\Y_\eta^{(0)}$, and $\Y_\eta^{(k)}$. 
Let us assume for a while that $k'=k$, in which case clearly $\Ybar_\eta^{(k)}=Y_\xi^{(k)}$, and $t_{\eta,dR,\sigma}^{(k)}=t_{\xi,cris,\sigma}^{(k)}$. 
All we have to do is check that the isomorphism \eqref{lattice} preserves $\phi$-structure and the $W(k)$-lattice.\\

We begin with the $\phi$-structure. Let us regard the collection of maps $t_{\eta,dR,\sigma}^{(k)}$ as a single element, 
say $t_{\eta,dR,\gq}^{(k)}$, in the filtered $K(k)$-isocrystal $\q\otimes M$, where $M$ is the filtered $W(k)$-module:
$${M^{(0)}}^{\otimes_{W(k)}k-1}\otimes_{W(k)}(\bigwedge_{W(k)}^k{M^{(1)}})^*\otimes_{W(k)}M^{(k)}.$$ 
First of all we have $T_p\Y_\eta^{(k)}[\gq^\infty]=U(M^{(k)})$, by \eqref{fact1}. Write
$$m_{\xi,\gq}^{(k)}:U(M^{(0)})^{\otimes_{\O_{L_\gq}}1-k}\otimes_{\O_{L_\gq}}\bigwedge_{\O_{L_\gq}}^kU(M^{(1)})\stackrel{\cong}{\rightarrow}U(M^{(k)}),$$
for the $\gq$-component of $m_{\xi,B}^{(k)}\otimes1_{\zd}$. By lemma \ref{vvier}, part (i), it is 
$\Gal(K(k)^{ac}/K(k))$-equivariant. However, exposing $t_{\eta,dR,\gq}^{(k)}$ to the functor 
$$?\mapsto Fil^0\Hom_{K(k)}(\q\otimes ?,B_{cris}(K(k)^{ac}))^{\phi=1}=\q\otimes U(?),$$
gives $m_{\xi,\gq}^{(k)}$ too, by proposition \ref{dR}. Therefore $t_{\eta,dR,\gq}^{(k)}$ is actually contained in the $\Gal(K(k)^{ac}/K(k))$-invariants of the space 
$$Fil^0\Hom_{K(k)}(\q\otimes M^*,B_{cris}(K(k)^{ac}))^{\phi=1}.$$
As all $\Gal(K(k)^{ac}/K(k))$-invariants of $B_{cris}(K(k)^{ac})$ are in $K(k)$, we infer that $t_{\eta,dR,\gq}^{(k)}$ is 
contained in $Fil^0\Hom_{K(k)}(\q\otimes M^*,K(k))^{\phi=1}$, which is precisely the set of homomorphisms from 
$({M^{(0)}}^{\otimes_{W(k)}k-1})^*\otimes_{W(k)}\bigwedge_{W(k)}^k{M^{(1)}}$ to $M^{(k)}$, as filtered isocrystals.\\

Now write $N_\sigma^{(k)}\subset\q\otimes M_\sigma^{(k)}$ for the image lattice of $t_{\eta,dR,\sigma}^{(k)}$. By lemma \ref{BARS} 
this is a graded object in $MF^{fd,2}$ and there exists a $p$-divisible group $\G_\gq^{(k)}/W(k)$ with $\O_{L_\gq}$-operation 
such that $\d^*(\G_\gq^{(k)})=N^{(k)}$, moreover as $N^{(k)}$ is isogenous to $M^{(k)}$ there is a canonical quasi-isogeny: 
$$f:\Y_\eta^{(k)}[\gq^\infty]\rightarrow\G_\gq^{(k)}.$$
This quasi-isogeny induces an isomorphism:
$$T_p\Y_\eta^{(k)}[\gq^\infty]\cong T_p\G_\gq^{(k)},$$
which one sees by applying $U$ to both $N^{(k)}$ and $M^{(k)}$. It follows that $f$ is an isomorphism, so that $N^{(k)}=M^{(k)}$.\\

We now come back to the case of a possibly smaller field $k$. As we know the result over $k'$ all we have to do is check that 
$t_{\eta,dR,\gq}^{(k)}$ and $\Ybar_\eta^{(k)}$ are defined over $k$. We begin with $t_{\eta,dR,\gq}^{(k)}$: Observe that the objects $M_\sigma^{(1)}$ 
and $M_\sigma^{(0)}$ have natural $W(k)$-structures, say $M_{\xi,\sigma}^{(1)}$ and $M_{\xi,\sigma}^{(0)}$, when regarded as non-filtered 
Dieudonn\'e modules. Using $t_{\eta,dR,\sigma}^{(k)}$ this carries over to a $W(k)$-form $M_{\xi,\sigma}^{(k)}$ of $M_\sigma^{(k)}$. Let us 
denote the corresponding $p$-divisible groups by $Y_\xi^{(k)}[\gq^\infty]$. The existence of $t_{\xi,cris,\sigma}^{(k)}$ is then trivial.\\

To finish the proof we have to show that the $k$-form $Y_\xi^{(k)}[\gq^\infty]$ just exhibited comes from a $k$-form of $\Ybar_\eta^{(k)}$. To this end consider the 
$\Gal(K(k')^{ac}/K(k'))$-operation on $H_1(Y_\eta^{(1)}\times_{K(k')}K(k')^{ac},\z_\ell)$. It factors through the quotient $\Gal(k^{ac}/k)$, because the $\pmod p$-reduction 
of $Y_\eta^{(1)}$ is just $Y_\xi^{(1)}\times_kk'$. Consequently the $\Gal(K(k')^{ac}/K(k'))$-operation on $H_1(Y_\eta^{(k)}\times_{K(k')}K(k')^{ac},\z_\ell)$ factors 
through $\Gal(k^{ac}/k)$ too, by part (i) of lemma \ref{vvier}. By using a theorem of Grothendieck \cite[Proposition 4.4]{grothendieck1} we deduce the existence of 
an abelian variety $\Abar$ over $k$ together with a $p$-isogeny $f:\Abar\times_kk'\rightarrow\Ybar_\eta^{(k)}$, i.e. the abelian variety $\Ybar_\eta^{(k)}$ is isotrivial. 
Now note that the map $\d^*(f)$ from $\d^*(\Ybar_\eta^{(k)})=W(k')\otimes_{W(k)}(\bigoplus_\sigma M_{\xi,\sigma}^{(k)}\oplus(\bigoplus_\sigma M_{\xi,\sigma}^{(k)})^*))$ 
to $W(k')\otimes_{W(k)}\d^*(\Abar)$ is defined over $W(k)$ by lemma \ref{skel} below. This provides us with $Y_\xi^{(k)}=\Abar/\ker(f[p^\infty])$.
\end{proof}

\begin{lem}
\label{skel}
Let $k'$ be a perfect field of characteristic $p$. Let $k\subset k'$ be algebraically closed in $k'$. Let $M_1$ and $M_2$ be effective 
$W(k)$-isocrystals. Let $f':M_1'\rightarrow M_2'$ be a morphism where $M_1'$ and $M_2'$ are $W(k')$-isocrystals obtained by extension 
of scalars. Then $f'$ preserves the $W(k)$-structures of $M_1'$ and $M_2'$ and therefore induces a unique morphism $f:M_1\rightarrow M_2$.
\end{lem}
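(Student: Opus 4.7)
The plan is to reformulate $f'$ as a $\phi$-fixed element of a suitable $\Hom$-space and then descend it by Galois theory. Since $M_1$ is free of finite rank over $W(k)$, one has $\Hom_{W(k')}(M_1',M_2')=W(k')\otimes_{W(k)}N$, where $N:=\Hom_{W(k)}(M_1,M_2)$ carries the $\tau$-linear operator $\phi(g):=\phi_2\circ g\circ\phi_1^{-1}$, which defines an isocrystal structure on $\q\otimes N$. Under this identification a morphism $f'$ between effective $W(k')$-isocrystals corresponds to an element of $(K(k')\otimes_{K(k)}(\q\otimes N))^\phi$, and $f'$ descends to a $W(k)$-morphism $f:M_1\to M_2$ precisely when it lies in $N\subset W(k')\otimes_{W(k)}N$. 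I split the argument into a rational descent and an integrality check.

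For the rational descent the claim is that the natural inclusion $(\q\otimes N)^\phi\hookrightarrow(K(k')\otimes_{K(k)}(\q\otimes N))^\phi$ is bijective. Base-changing further to $K(k^{ac})$ and invoking the Dieudonn\'e-Manin slope decomposition $K(k^{ac})\otimes_{K(k)}(\q\otimes N)=\bigoplus_\alpha N_\alpha^{ac}$, all $\phi$-fixed vectors are supported on $N_0^{ac}$, and $N_0^{ac}=K(k^{ac})\otimes_{\q_p}V$ with $V:=(N_0^{ac})^\phi$ a finite-dimensional $\q_p$-vector space. Because $\phi$ is defined over $K(k)$, the actions of $\phi$ and $G:=\Gal(k^{ac}/k)$ on $K(k^{ac})\otimes_{K(k)}(\q\otimes N)$ commute, so $G$ preserves $V$, and Galois descent for vector spaces gives $(\q\otimes N)^\phi=((N^{ac})^\phi)^G=V^G$. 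After fixing an embedding $k^{ac}\hookrightarrow{k'}^{ac}$ extending $k\hookrightarrow k'$, the same computation yields $(K(k')\otimes_{K(k)}(\q\otimes N))^\phi=V^\Gamma$, where $\Gamma:=\Gal({k'}^{ac}/k')$ acts on $V$ through the restriction map $\Gamma\to G$.

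The hypothesis that $k$ is algebraically closed in $k'$ is precisely the condition that $k^{ac}$ and $k'$ are linearly disjoint over $k$, equivalently that the restriction $\Gamma\to G$ is surjective; hence $V^\Gamma=V^G$, proving the claim. For the integrality step, the rational descent produces $f\in\q\otimes N$ whose extension to $K(k')\otimes_{K(k)}(\q\otimes N)$ equals $f'$; for every $x\in M_1$ one then has $f(x)=f'(x)\in M_2'\cap(\q\otimes M_2)$, the intersection being taken inside $K(k')\otimes_{W(k)}M_2$. Since $M_2$ is $W(k)$-free and $W(k')\cap K(k)=W(k)$ (as $W(k)$ is the valuation ring of $K(k)$ and $p$ remains a uniformizer of $W(k')$), this intersection equals $M_2$, so $f\in N$. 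Uniqueness of $f$ is immediate from the faithful flatness of $W(k)\to W(k')$. The main obstacle is the descent step, whose success rests squarely on extracting the surjection $\Gamma\twoheadrightarrow G$ from the hypothesis that $k$ is algebraically closed in $k'$.
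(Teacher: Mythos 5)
Your proof is correct, and it reaches the conclusion by a genuinely different (though closely related) route. The paper argues directly on $M_1$ and $M_2$: for algebraically closed $k\subset k'$ it invokes Dieudonn\'e--Manin to produce the $K(\f_{p^d})$-skeletons $I_i=\bigoplus_z\{x\,|\,\phi^d(x)=p^zx\}$, which are simultaneously forms of $\q\otimes M_i$ and $\q\otimes M_i'$; since $f'$ commutes with $\phi^d$ it preserves these skeletons, hence preserves $\q\otimes M_i$, and integrality follows from $M_i=M_i'\cap(\q\otimes M_i)$. The general case is then settled by intersecting lattices inside $W(k'^{ac})\otimes M_i$, using $W(k^{ac})\cap W(k')=W(k)$, which is where the hypothesis $k^{ac}\cap k'=k$ enters. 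You instead package $f'$ as a $\phi$-fixed vector of the internal Hom isocrystal $\q\otimes N$, reduce to the slope-zero part, and descend the $\q_p$-space $V$ of $\phi$-fixed vectors by Galois theory, extracting from the hypothesis the surjectivity of $\Gal(k'^{ac}/k')\to\Gal(k^{ac}/k)$ so that $V^\Gamma=V^G$. The two arguments use the same two inputs --- Dieudonn\'e--Manin over the algebraic closure and the linear disjointness of $k^{ac}$ and $k'$ over $k$ --- but deploy the second differently: lattice intersection versus equality of Galois invariants. Your version isolates the clean statement that $(\q\otimes N)^\phi\hookrightarrow(K(k')\otimes_{K(k)}(\q\otimes N))^\phi$ is bijective, which is arguably more conceptual and reusable; the paper's version avoids forming $N$ and any mention of Galois groups, and its skeletons $I_i$ are reused elsewhere in the text. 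One small point you pass over quickly (as does the paper, with its ``it is well known''): the identification of the $\phi$-fixed vectors over $K(k'^{ac})$ with those over $K(k^{ac})$ rests on the invariance of the slope-zero multiplicity under extension of algebraically closed base fields; a dimension count makes this precise. Your integrality and uniqueness steps coincide with the paper's.
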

\begin{proof}
Assume for a while that $k'$ and $k$ are algebraically closed. Let $d=\max\{\rk_{W(k)}M_1,\rk_{W(k)}M_2\}!$, and write $I_i\subset\q\otimes M_i$ for the 
$K(\f_{p^d})$-vector spaces $\bigoplus_z\{x\in\q\otimes M_i'|\phi^d(x)=p^zx\}$. It is well known that these are $K(\f_{p^d})$-forms of both $\q\otimes M_i'$ 
and $\q\otimes M_i$. However $f'$ sends $I_1$ to $I_2$, and therefore it sends $\q\otimes M_1$ to $\q\otimes M_2$. Evidently $M_i=M_i'\cap\q\otimes M_i$ 
so $f'$ restricts to a map $f$ from $M_1$ to $M_2$.\\

Now let $k'$ be arbitrary, and let $k^{'ac}$ be an algebraic closure of $k'$. Let also $k^{ac}$ be the algebraic closure of $k$ in $k^{'ac}$. By the preceding 
argument we know already that $f^{'ac}:M_1^{'ac}\rightarrow M_2^{'ac}$, the scalar extension of $f'$ to $W(k^{'ac})$, preserves the $W(k^{ac})$-structures 
$M_1^{ac}$ and $M_2^{ac}$ on $M_1^{'ac}$ and $M_2^{'ac}$. As $k=k'\cap k^{'ac}$ we get $M_i=M_i^{ac}\cap M_i'$ so this suffices.
\end{proof}

\section{$Y^{(k)}$ in Characteristic $p$}
\label{mock2}
In this chapter we study the integral properties of $Y^{(k)}$, we show that our map $g^{(k)}$ can be 
extended into characteristic $p$, and we show further that the map so obtained preserves isogeny classes.
 
\subsection{Displays and their Crystals}
Let $p$ be a prime, let $R$ be a $W(\f_{p^r})$-algebra. We assume that it is $p$-adically complete and separated, i.e. that $R\rightarrow\lim\limits_{\leftarrow j}R/p^jR$ 
is an isomorphism. In ~\cite{zink2} displays and $3n$-displays were introduced. We need to endow them with gradings:

\begin{defn}
\label{E8}
Let $P_\sigma$ and $Q_\sigma$ be $W(R)$-modules for $\sigma\in\z/r\z$. A $2r+2$-tuple $(P_\sigma,Q_\sigma,F,V^{-1})$ is called a $\z/r\z$-graded 
$3n$-display (resp. display), if $(P,Q,F,V^{-1})$ is a $3n$-display (resp. display) in the usual sense where $P=\bigoplus_{\sigma\in\z/r\z}P_\sigma$, 
$Q=\bigoplus_{\sigma\in\z/r\z}Q_\sigma$ and $F$ and $V^{-1}$ act homogeneously of degree one, i.e.
$$F:P_\sigma\rightarrow P_{\sigma+1}$$ 
and 
$$V^{-1}:Q_\sigma\rightarrow P_{\sigma+1}.$$
We write $\End_{W(\f_{p^r})}(P)$ for the ring of endomorphism that preserve the grading and similarly for $\Hom_{W(\f_{p^r})}(.,..)$.
\end{defn}

\begin{rem}
\label{E7}
One has graded versions of the usual properties, for example:
\begin{itemize}
\item
We have $I_RP_\sigma\subset Q_\sigma\subset P_\sigma$, where $I_R$ denotes the kernel of the $0$th ghost map. Moreover 
$P_\sigma$ is a finitely generated projective module and $P_\sigma/Q_\sigma$ is a direct factor of $P_\sigma/I_RP_\sigma$.
\item
From $W(R)$-linearization of $F$ we obtain a map of $W(R)$-modules
\begin{equation}
\label{Kskel}
F^\sharp:W(R)\otimes_{F,W(R)}P_{\sigma-1}\rightarrow P_\sigma;w\otimes x\mapsto w\otimes F(x).
\end{equation}
If $pR=0$ then one can consider the pull back of $(P_\sigma,Q_\sigma,F,V^{-1})$ by means of the absolute Frobenius. Moreover, 
this graded $3n$-display has the shape $(W(R)\otimes_{F,W(R)}P_\sigma,\dots)$, and there exists a map of $3n$-displays, denoted by
$$Ver_P:(W(R)\otimes_{F,W(R)}\bigoplus_\sigma P_{\sigma-1},\dots)\rightarrow(\bigoplus_\sigma P_\sigma,\dots)$$
in \cite[example 23]{zink2}, which is the map \eqref{Kskel} on the underlying modules. Notice that $Ver_P$ shifts the grading by one.
\item
every graded $3n$-display has a graded normal decomposition $P_\sigma=L_\sigma\oplus T_\sigma$
\end{itemize}
\end{rem}

From now on all displays will be graded. By Zink's theory displays have a crystalline nature: Assume that one is given a graded display $(P_\sigma,Q_\sigma,F,V^{-1})$ 
over the ring $S$ with pd-ideal $\ga$. Let $(\Pbar_\sigma,\Qbar_\sigma,F,V^{-1})$ be the base change to $R=S/\ga $. According to \cite[chapter 1.4]{zink2} 
one obtains a lift of $\ga$ to $W(S)$, because $\ga$ has divided powers, moreover $V^{-1}$ has a unique extension to a $^F$-linear map:
$$V^{-1}:\Qd_\sigma=Q_\sigma+\ga P_\sigma\rightarrow P_{\sigma+1},$$
such that $V^{-1}(\ga P_\sigma)=0$. The structure $(P_\sigma,\Qd_\sigma,F,V^{-1})$ thus obtained we will call a graded triple. Its significance stems from the 
following crystalline functoriality property which is proved in \cite[Theorem 46]{zink2}: If $p$ is nilpotent in $S$ and if $(P_{1,\sigma},Q_{1,\sigma},F,V^{-1})$ and 
$(P_{2,\sigma},Q_{2,\sigma},F,V^{-1})$ are two graded displays over $S$ with a morphism of graded displays $\alpha_\sigma:\Pbar_{1,\sigma}\rightarrow\Pbar_{2,\sigma}$ 
over $R$, then there are unique $W(S)$-linear maps $\tilde\alpha_\sigma:P_{1,\sigma}\rightarrow P_{2,\sigma}$ which send $\Qd_{1,\sigma}$ to $\Qd_{2,\sigma}$, 
commute with $F$ and $V^{-1}$, and have $\pmod{W(\ga)}$-reduction equal to $\alpha_\sigma$.

\begin{prop}
\label{E1}
Let $(P_\sigma^{(1)},Q_\sigma^{(1)},F,V^{-1})$ be a graded $3n$-display, with $\rk_R(Q_\sigma^{(1)}/I_RP_\sigma^{(1)})\leq1$ 
for all $\sigma$. Let $P_\sigma^{(1)}=L_\sigma^{(1)}\oplus T_\sigma^{(1)}$ be a graded normal decomposition. Write 
$$P_\sigma^{(k)}=\bigwedge_{W(R)}^kP_\sigma^{(1)}=L_\sigma^{(k)}\oplus T_\sigma^{(k)},$$
with $L_\sigma^{(k)}=L_\sigma^{(1)}\otimes_{W(R)}\bigwedge_{W(R)}^{k-1}T_\sigma^{(1)}$, and $T_\sigma^{(k)}=\bigwedge_{W(R)}^kT_\sigma^{(1)}$. 
Define maps $V^{-1}:L_\sigma^{(k)}\rightarrow P_{\sigma+1}^{(k)}$, and $F: P_\sigma^{(k)}\rightarrow P_{\sigma+1}^{(k)}$ by
\begin{eqnarray}
\label{Versch}
&&V^{-1}(x_1\wedge\dots\wedge x_k)=V^{-1}(x_1)\wedge F(x_2)\wedge\dots\wedge F(x_k)\\
\label{Froben}
&&F(x_1\wedge\dots\wedge x_k)=F(x_1)\wedge\dots\wedge F(x_k)
\end{eqnarray}
then:
\begin{itemize}
\item[(i)]
there is a unique extension of $V^{-1}$ to $Q_\sigma^{(k)}=L_\sigma^{(k)}\oplus I_RT_\sigma^{(k)}$ such that $(P_\sigma^{(k)},Q_\sigma^{(k)},F,V^{-1})$ 
is a $\z/r\z$-graded $3n$-display, with graded normal decomposition $L_\sigma^{(k)}\oplus T_\sigma^{(k)}$.
\item[(ii)]
$(P_\sigma^{(k)},Q_\sigma^{(k)},F,V^{-1})$ is independent of the choice of the normal decomposition, $L_\sigma^{(1)}\oplus T_\sigma^{(1)}$.
\end{itemize}
Moreover, if $\rk_R(Q_{\sigma_0}^{(1)}/I_RP_{\sigma_0}^{(1)})=0$ for at least one $\sigma_0$, then $P^{(k)}$ is a display.
\end{prop}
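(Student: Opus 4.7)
The plan is to verify the axioms of a $\z/r\z$-graded $3n$-display directly for the tuple given by \eqref{Versch} and \eqref{Froben}, and then to extract the intrinsic characterization needed for (ii). First, the hypothesis $\rk_R(Q_\sigma^{(1)}/I_RP_\sigma^{(1)})\leq1$ forces $\bigwedge_{W(R)}^jL_\sigma^{(1)}=0$ for all $j\geq2$, so the stated direct sum $P_\sigma^{(k)}=L_\sigma^{(k)}\oplus T_\sigma^{(k)}$ is valid with both summands finitely generated projective over $W(R)$. With this, \eqref{Versch} and \eqref{Froben} unambiguously define Frobenius-semilinear maps on $L_\sigma^{(k)}$ and $P_\sigma^{(k)}$, alternation posing no obstacle precisely because at most one tensor factor can lie in $L^{(1)}$. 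I then extend $V^{-1}$ to $I_RT_\sigma^{(k)}$ by the forced identity $V^{-1}(V(\xi)\cdot t)=\xi\,F(t)$ for $t\in T_\sigma^{(k)}$ and $\xi\in W(R)$; together with the formula on $L_\sigma^{(k)}$ this yields a Frobenius-semilinear map on $Q_\sigma^{(k)}=L_\sigma^{(k)}\oplus I_RT_\sigma^{(k)}$, and it is the only extension satisfying the standard $3n$-display axiom.

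The remaining axioms are straightforward. Projectivity of $P_\sigma^{(k)}/Q_\sigma^{(k)}\cong\bigwedge_R^k(T_\sigma^{(1)}/I_RT_\sigma^{(1)})$ over $R$ is immediate, and the relation $V^{-1}(V(\xi)\cdot x)=\xi F(x)$ on $L_\sigma^{(k)}$ reduces via \eqref{Versch}, \eqref{Froben} to the analogous relation on $P^{(1)}$. For the surjectivity axiom, any wedge $p_1\wedge\dots\wedge p_k\in P_{\sigma+1}^{(k)}$ with each $p_j$ in the generating set $V^{-1}(L_\sigma^{(1)})\cup F(T_\sigma^{(1)})$ can contain at most one factor in $V^{-1}(L_\sigma^{(1)})$, since the latter has rank $\leq 1$; hence the wedge is either of the form $F(t_1\wedge\dots\wedge t_k)$ or $V^{-1}(l\wedge t_2\wedge\dots\wedge t_k)$, and arbitrary elements of $P_{\sigma+1}^{(k)}$ then come from $W(R)$-linear combinations.

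For (ii), the key observation is that both $Q_\sigma^{(k)}$ and the map $V^{-1}$ on it admit intrinsic descriptions: the image of $Q_\sigma^{(k)}$ in $\bigwedge_R^k(P_\sigma^{(1)}/I_RP_\sigma^{(1)})$ equals the image of $(Q_\sigma^{(1)}/I_RP_\sigma^{(1)})\otimes_R\bigwedge_R^{k-1}(P_\sigma^{(1)}/I_RP_\sigma^{(1)})$, which is built only from the Hodge filtration of $P^{(1)}$, and $V^{-1}$ on $I_RP_\sigma^{(k)}$ is forced by the axiom $V^{-1}(V(\xi)\cdot x)=\xi F(x)$ and the intrinsic $F$. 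It remains to compare the two versions of \eqref{Versch} for two normal decompositions $(L,T)$ and $(L',T')$: writing $l'=l+\phi(l)$ for $l\in L$ with $\phi:L\to T$ a graded $W(R)$-morphism, one expands $V^{-1}(l'\wedge t'_2\wedge\dots\wedge t'_k)$ using \eqref{Versch} and matches it against $V^{-1}(l\wedge t_2\wedge\dots\wedge t_k)$ plus correction terms, which all land in $I_RP_{\sigma+1}^{(k)}$ where $V^{-1}$ is intrinsically fixed. Finally, if $L_{\sigma_0}^{(1)}=0$ at some $\sigma_0$, then $L_{\sigma_0}^{(k)}=0$ and $Q_{\sigma_0}^{(k)}=I_RP_{\sigma_0}^{(k)}$; propagating this around the $\z/r\z$-grading via the inverse operator $V$ yields Zink's display-nilpotence condition. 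The main obstacle I anticipate is the comparison computation in (ii), which is conceptually transparent but combinatorially delicate---keeping track of which correction terms land in the intrinsic $I_RP^{(k)}$-piece is the crux.
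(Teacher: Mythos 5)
Your proposal is correct and follows essentially the same route as the paper's proof: the same forced extension of $V^{-1}$ to $I_RT_\sigma^{(k)}$, the same use of $\rk L_\sigma^{(1)}\leq1$ to kill wedges with two factors from the Hodge summand, and the same intrinsic characterization of $Q_\sigma^{(k)}$ (generated by $x_1\wedge\dots\wedge x_k$ with $x_1\in Q_\sigma^{(1)}$) for part (ii). The only cosmetic difference is that the paper obtains the surjectivity axiom by taking the $k$-th exterior power of Zink's ${}^F$-linear isomorphism $U=V^{-1}|_{L_\sigma^{(1)}}\oplus F|_{T_\sigma^{(1)}}$ from \cite[Lemma 9]{zink2}, whereas you argue directly on generators; the two arguments are interchangeable.
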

\begin{proof}
Note that $I_RT_\sigma^{(k)}=I_R\otimes_{W(R)}T_\sigma^{(k)}$. Hence we are allowed to define the operator $V^{-1}$ on it by the formula $V^{-1}(^Vwx)=wFx$, for 
$w\in W(R)$ and $x\in T_\sigma^{(k)}$. It is evident that $F$ and $V^{-1}$ thus defined are $^F$-linear maps on $P^{(k)}$ and $Q^{(k)}$, satisfying $V^{-1}(^Vwx)=wFx$. 
It is also evident that $P^{(k)}$ is a finitely generated projective $W(R)$-module and that $P^{(k)}/Q^{(k)}$ is a projective $R$-module. Now consider the operator 
$U:P_\sigma^{(1)}\rightarrow P_{\sigma+1}^{(1)}$ defined by $V^{-1}$ on $L_\sigma^{(1)}$ and by $F$ on $T_\sigma^{(1)}$. According to \cite[Lemma 9]{zink2} 
it is a $^F$-linear isomorphism (in fact $\z/r\z$-homogeneous of degree one), as is therefore the operator $U:P_\sigma^{(k)}\rightarrow P_{\sigma+1}^{(k)}$ that 
we define by $U(x_1\wedge\dots\wedge x_k)=U(x_1)\wedge\dots\wedge U(x_k)$. It follows that $V^{-1}:Q_\sigma^{(k)}\rightarrow P_{\sigma+1}^{(k)}$ is a 
$^F$-linear epimorphism, because $U(l+t)=V^{-1}(l+{^V1}t)$. This shows $P^{(k)}$ is a graded $3n$-display with normal decomposition $L^{(k)}\oplus T^{(k)}$.\\

In order to check the second assertion just notice that $Q_\sigma^{(k)}$ is generated by the elements of the form $x_1\wedge\dots\wedge x_k$, where 
$x_1\in Q_\sigma^{(1)}$ and $x_2,\dots,x_k\in P_\sigma^{(1)}$. Finally it is clear that the nilpotence condition for $P^{(k)}$ is granted if some $L_{\sigma_0}^{(k)}$ is $0$, 
because the image of $V^\sharp:P_{\sigma_0+1}^{(k)}\rightarrow W(R)\otimes_{F,W(R)}P_{\sigma_0}^{(k)}$ is contained in $pW(R)\otimes_{F,W(R)}P_{\sigma_0}^{(k)}$ then.
\end{proof}

\begin{rem}
\label{E2}
Let $i:R\rightarrow R'$ be a homomorphism of $W(\f_{p^r})$-algebras which are separated and complete with respect to the $p$-adic topology. Assume the graded 
display $(P_\sigma^{(1)},Q_\sigma^{(1)},F,V^{-1})$ satisfies the assumption of proposition \ref{E1}, and thus gives rise to unnormalized graded exterior power 
displays $(P_\sigma^{(k)},Q_\sigma^{(k)},F,V^{-1})$. Then, the same is true for its base change $(P_\sigma^{'(1)},Q_\sigma^{'(1)},F,V^{-1})$ to $R'$, and it gives 
rise to $(P_\sigma^{'(k)},Q_\sigma^{'(k)},F,V^{-1})$, with a graded normal decomposition of the form: $L_\sigma^{'(k)}=W(R')\otimes_{W(R)}L_\sigma^{(k)}$ and 
$T_\sigma^{'(k)}=W(R')\otimes_{W(R)}T_\sigma^{(k)}$, i.e. the base change to $R'$ of $(P_\sigma^{(k)},Q_\sigma^{(k)},F,V^{-1})$ is $(P_\sigma^{'(k)},Q_\sigma^{'(k)},F,V^{-1})$. We will frequently use this for the special case where $R'=R$ is a $\f_{p^r}$-algebra and where $i$ is the absolute Frobenius. 
\end{rem}

In the sequel we call $P_\sigma^{(k)}$ the unnormalized graded exterior power display to $P_\sigma^{(1)}$. The category of displays is an additive category, 
but observe that our rule to produce $P_\sigma^{(k)}$ from $P_\sigma^{(1)}$ is not additive. However at least it is functorial in the following sense: 

\begin{prop}
\label{E5}
Let $P_{1,\sigma}^{(1)}$ and $P_{2,\sigma}^{(1)}$ be graded displays over $R$, both of which fulfill the assumptions of the previous 
proposition (N.B.: we do not assume $\rk_R(Q_{1,\sigma}^{(1)}/I_RP_{1,\sigma}^{(1)})=\rk_R(Q_{2,\sigma}^{(1)}/I_RP_{2,\sigma}^{(1)})$). Let
$$\alpha_\sigma^{(1)}:P_{1,\sigma}^{(1)}\rightarrow P_{2,\sigma}^{(1)}$$
be a homomorphism between them. Then:
\begin{itemize}
\item[(i)]
the graded $W(R)$-linear map
$$\alpha_\sigma^{(k)}:P_{1,\sigma}^{(k)}\rightarrow P_{2,\sigma}^{(k)};x_1\wedge\dots\wedge x_k\mapsto\alpha_\sigma^{(1)}(x_1)\wedge\dots\wedge\alpha_\sigma^{(1)}(x_k)$$
is a homomorphism of graded displays too.
\item[(ii)]
Assume $pR=0$. If $P_{1,\sigma}^{(1)}=W(R)\otimes_{F,W(R)}P_{2,\sigma-1}$ and $\alpha^{(1)}=Ver_{P_2^{(1)}}$, 
in the sense of \cite[example 23]{zink2} then the same holds for $P_{1,\sigma}^{(k)}$ and $\alpha^{(k)}$.
\item[(iii)]
Let $T_{i,\sigma}^{(1)}$ be the graded triples of $P_{i,\sigma}^{(1)}$ over some pd-thickening $S\rightarrow R$ in which $p$ 
is nilpotent. Let $\tilde\alpha_\sigma^{(1)}$ be the (by \cite[Theorem 46]{zink2} unique) morphism of graded triples that lifts 
$\alpha_\sigma^{(1)}$. Then $\bigwedge_S^kT_{i,\sigma}^{(1)}$ are the graded triples of $P_{i,\sigma}^{(k)}$ over $S$, and
$$\tilde\alpha_\sigma^{(k)}:\bigwedge_S^kT_{1,\sigma}^{(1)}\rightarrow\bigwedge_S^kT_{2,\sigma}^{(1)};
x_1\wedge\dots\wedge x_k\mapsto\tilde\alpha_\sigma^{(1)}(x_1)\wedge\dots\wedge\tilde\alpha_\sigma^{(1)}(x_k)$$
is the morphism of graded triples lifting $\alpha_\sigma^{(k)}$.
\end{itemize}
\end{prop}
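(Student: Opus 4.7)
I would verify each of (i), (ii), (iii) by a direct computation from the explicit formulas \eqref{Versch} and \eqref{Froben} of Proposition \ref{E1}, leveraging Remark \ref{E2} for (ii) and the uniqueness clause of \cite[Theorem 46]{zink2} for (iii).

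\emph{Proof of (i).} The map $\alpha_\sigma^{(k)}$ is graded and $W(R)$-linear by construction, so I only need to check that it carries $Q_{1,\sigma}^{(k)}$ into $Q_{2,\sigma}^{(k)}$ and commutes with $F$ and $V^{-1}$. For the first point, I use the description at the end of the proof of Proposition \ref{E1}: $Q_\sigma^{(k)}$ is generated by wedges $x_1 \wedge \cdots \wedge x_k$ with $x_1 \in Q_\sigma^{(1)}$, and $\alpha^{(1)}_\sigma$ sends such elements to analogous generators of $Q_{2,\sigma}^{(k)}$. Commutation with $F$ is immediate from \eqref{Froben} and $\alpha^{(1)} \circ F = F \circ \alpha^{(1)}$. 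For $V^{-1}$ one picks graded normal decompositions and reduces to the two types of generators of $Q_\sigma^{(k)} = L_\sigma^{(k)} \oplus I_R T_\sigma^{(k)}$; on $L^{(k)}$ formula \eqref{Versch} together with the commutation of $\alpha^{(1)}$ with $F$ and $V^{-1}$ gives the claim, on $I_R T^{(k)}$ the identity $V^{-1}({}^V w\, x) = w \, F x$ does the same. By part (ii) of Proposition \ref{E1} the result is independent of the chosen decomposition.

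\emph{Proof of (ii).} Under $pR = 0$ the absolute Frobenius $R \to R$ is a ring homomorphism, so Remark \ref{E2} (with $R'=R$ and $i$ the absolute Frobenius) applies and shows that the exterior power construction commutes with Frobenius pull-back: the $k$th exterior power display of $P_{1,\sigma}^{(1)} = W(R) \otimes_{F, W(R)} P_{2,\sigma-1}^{(1)}$ is $W(R) \otimes_{F, W(R)} P_{2,\sigma-1}^{(k)}$, which is $P_{1,\sigma}^{(k)}$. By definition $Ver_{P_2^{(1)}}$ is the $W(R)$-linearization $w \otimes x \mapsto w F(x)$ of $F$, so formula \eqref{Froben} identifies its $k$th exterior power, sending $w \otimes (x_1 \wedge \cdots \wedge x_k)$ to $w \cdot F(x_1) \wedge \cdots \wedge F(x_k)$, with $Ver_{P_2^{(k)}}$.

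\emph{Proof of (iii).} Working over $W(S)$, I would repeat the construction of Proposition \ref{E1} verbatim, with $V^{-1}$ replaced by its unique extension to $\Qd_{i,\sigma}^{(1)} = Q_{i,\sigma}^{(1)} + \ga P_{i,\sigma}^{(1)}$ killing $\ga P_{i,\sigma}^{(1)}$. This produces a $W(S)$-module $\bigwedge^k P_{i,\sigma}^{(1)}$ with a submodule $\Qd_{i,\sigma}^{(k)}$ containing both $Q_{i,\sigma}^{(k)}$ and $\ga \bigwedge^k P_{i,\sigma}^{(1)}$, on which the extended $V^{-1}$ vanishes on the $\ga$-part by multilinearity of \eqref{Versch}. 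Hence this data is the graded triple of $P_{i,\sigma}^{(k)}$ over $S$. Then the same verification as in (i), applied over $W(S)$, shows that $\tilde\alpha_\sigma^{(k)}$ sends $\Qd_{1,\sigma}^{(k)}$ into $\Qd_{2,\sigma}^{(k)}$ and commutes with $F$ and $V^{-1}$; it reduces to $\alpha_\sigma^{(k)}$ mod $W(\ga)$ because $\tilde\alpha_\sigma^{(1)}$ reduces to $\alpha_\sigma^{(1)}$. The uniqueness clause of \cite[Theorem 46]{zink2} then identifies $\tilde\alpha_\sigma^{(k)}$ with the canonical lift of $\alpha_\sigma^{(k)}$.

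\textbf{Main obstacle.} The only non-formal point is the verification in (iii) that the naively-extended $V^{-1}$ on the exterior power matches the one prescribed by Zink's crystalline theory; this reduces to checking that it annihilates $\ga \cdot \bigwedge^k P^{(1)}$, which is handled by the multilinearity of formula \eqref{Versch}. All other steps are a direct unwinding of the definitions.
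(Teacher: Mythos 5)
Your proposal is correct and follows essentially the same route as the paper: part (i) via the generator description of $Q_\sigma^{(k)}$ from the proof of Proposition \ref{E1} together with formulas \eqref{Froben} and \eqref{Versch}, part (ii) via \eqref{Froben} and the description \eqref{Kskel} of $Ver$, and part (iii) by forming the exterior power of the triples and invoking the uniqueness in \cite[Theorem 46]{zink2}. The paper's own proof is terser (it declares (iii) "immediately clear"), so your spelled-out verification of the $\ga$-part annihilation is a welcome elaboration rather than a deviation.
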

\begin{proof}
The proof of the previous proposition discloses that $Q_{1,\sigma}^{(k)}$ is the $W(R)$-submodule of $P_{1,\sigma}^{(k)}$ that is generated by the elements 
$x_1\wedge x_2\wedge\dots\wedge x_k$ where $x_1\in Q_{1,\sigma}^{(1)}$ and $x_2,\dots,x_k\in P_{1,\sigma}^{(1)}$. It is clear that $\alpha_\sigma^{(k)}$ 
maps this into $Q_{2,\sigma}^{(k)}$, and equations \eqref{Froben} and \eqref{Versch} show it commutes with $F$ and $V^{-1}$, showing (i).\\

Now (ii) follows from equation \eqref{Froben}, together with the description of the $Ver_{P_2^{(k)}}$'s in \eqref{Kskel}. Statement (iii) is immediately clear.
\end{proof}

\begin{rem}
\label{E10}
Fix a graded $W(R)$-module $P^{(0)}$ such that $P_\sigma^{(0)}$ is projective of rank one together with $^F$-linear isomorphisms:
$$F:P_\sigma^{(0)}\rightarrow P_{\sigma+1}^{(0)}.$$
We will call this a ``multiplicative display'', following \cite[example 16]{zink2}. In the sequel we frequently need to twist and untwist displays by this kind of modules: 
If $(P_\sigma,Q_\sigma,F,V^{-1})$ is a graded display, then so is $(P_\sigma^{(0)}\otimes_{W(R)}P_\sigma,P_\sigma^{(0)}\otimes_{W(R)} Q_\sigma,F,V^{-1})$, 
where for every $x_0\in P_\sigma^{(0)}$ the operators $F$, and $V^{-1}$ act as: $F(x_0\otimes x)=F(x_0)\otimes F(x)$, if $x\in P_\sigma$, and 
$V^{-1}(x_0\otimes x)=F(x_0)\otimes V^{-1}(x)$, if $x\in Q_\sigma$. In particular, if $(P_\sigma^{(1)},Q_\sigma^{(1)},F,V^{-1})$ satisfies the 
assumptions of proposition \ref{E1} and if $(P_\sigma,Q_\sigma,F,V^{-1})$ is the unnormalized graded exterior power, then we call 
$$(P_\sigma^{(0)^{\otimes_{W(R)}1-k}}\otimes_{W(R)}P_\sigma,P_\sigma^{(0)^{\otimes_{W(R)}1-k}}\otimes_{W(R)}Q_\sigma,F,V^{-1})$$ 
the normalized graded exterior power.
\end{rem}

\subsection{Exterior powers of formal groups}
\label{vneun}
From now on the ground $W(\f_{p^r})$-algebra $R$ is an excellent local ring. 
\begin{fact}
\label{was}
The functor $\BT$ from displays over $R$ to $p$-divisible formal groups over $R$, as constructed in \cite[chapter 3]{zink2}, is an equivalence of categories, 
cf. \cite[Theorem 103]{zink2}. Giving an operation of $W(\f_{p^r})$ on $\BT(P)$ is equivalent to giving a $\z/r\z$-grading on $P$, in the sense of definition \ref{E8}.
\end{fact}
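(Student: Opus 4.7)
The first assertion is cited directly from \cite[Theorem 103]{zink2}, so nothing remains to be proved for it. For the second assertion I would argue via the equivalence $\BT$ that $W(\f_{p^r})$-actions on the formal $p$-divisible group correspond to $W(\f_{p^r})$-actions on the underlying display, and then unfold what such an action amounts to algebraically.

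Specifically, a homomorphism $W(\f_{p^r}) \to \End(\BT(P))$ corresponds under $\BT$ to a ring homomorphism $\rho: W(\f_{p^r}) \to \End(P)$ in the category of displays. By construction each $\rho(a)$ is a $W(R)$-linear endomorphism of $P$ that preserves $Q$ and commutes with the $F$- resp.\ $V^{-1}$-operators. Thus $P$ acquires the structure of a $W(\f_{p^r}) \otimes_{\z_p} W(R)$-module. Using that $W(\f_{p^r})$ is finite \'etale of degree $r$ over $\z_p$, together with the structure map $W(\f_{p^r}) \hookrightarrow W(R)$ coming from the hypothesis that $R$ is a $W(\f_{p^r})$-algebra, one checks that the ring homomorphism
\begin{equation*}
W(\f_{p^r}) \otimes_{\z_p} W(R) \stackrel{\cong}{\longrightarrow} \prod_{\sigma \in \z/r\z} W(R), \qquad a \otimes w \longmapsto (\tau^\sigma(a) \cdot w)_{\sigma},
\end{equation*}
is an isomorphism (the $r$ factors correspond to the $r$ different Frobenius-twisted embeddings of $W(\f_{p^r})$ into $W(R)$). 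The resulting decomposition of $P$ into eigenspaces is the desired grading: set $P_\sigma = \{x \in P \mid \rho(a)x = \tau^\sigma(a)x \text{ for all } a \in W(\f_{p^r})\}$, and $Q_\sigma = Q \cap P_\sigma$, which gives $Q = \bigoplus_\sigma Q_\sigma$ since $\rho$ preserves $Q$.

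To verify that $F$ and $V^{-1}$ are homogeneous of degree $+1$, take $x \in P_\sigma$. Since $\rho$ commutes with $F$ and $F$ is $\tau$-semilinear over $W(R)$,
\begin{equation*}
\rho(a) F(x) = F(\rho(a) x) = F(\tau^\sigma(a) x) = \tau^{\sigma+1}(a) F(x),
\end{equation*}
so $F(x) \in P_{\sigma+1}$; the same argument with $V^{-1}$ applied to $x \in Q_\sigma$ shows $V^{-1}(x) \in P_{\sigma+1}$. This proves that a $W(\f_{p^r})$-action produces a $\z/r\z$-grading in the sense of Definition \ref{E8}.

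Conversely, from a grading $P = \bigoplus_\sigma P_\sigma$ as in Definition \ref{E8} one recovers an action by letting $a \in W(\f_{p^r})$ act on $P_\sigma$ as multiplication by $\tau^\sigma(a) \in W(R)$; this is $W(R)$-linear, preserves $Q = \bigoplus_\sigma Q_\sigma$, and commutes with $F$ and $V^{-1}$ precisely because those operators are homogeneous of degree one. The two constructions are manifestly inverse to each other, which gives the claimed equivalence. The only nontrivial step is the \'etale splitting of $W(\f_{p^r}) \otimes_{\z_p} W(R)$, and this is standard since $W(\f_{p^r})$ is unramified over $\z_p$ and $W(R)$ already contains a lift of $\f_{p^r}$.
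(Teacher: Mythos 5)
Your argument is correct: the paper states this as a bare \emph{Fact}, citing Zink's Theorem 103 for the equivalence and leaving the grading/action dictionary unproved, and your eigenspace decomposition via the idempotent splitting $W(\f_{p^r})\otimes_{\z_p}W(R)\cong\prod_{\sigma}W(R)$ is exactly the standard justification it implicitly relies on. The degree-shift computation for $F$ and $V^{-1}$ (using that the Witt Frobenius of $W(R)$ restricts to $\tau$ on the image of $W(\f_{p^r})$) is the only point needing care, and you handle it correctly.
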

\begin{fact}
\label{E14}
The tangent space of the formal group can be canonically recovered from the display by
\begin{equation*}
\Lie(\BT(\bigoplus_\sigma P_\sigma))_\sigma\cong P_\sigma/Q_\sigma,
\end{equation*}
where a subscript $\sigma\in\z/r\z$ means the eigenspace with respect to the $W(\f_{p^r})$-operation. 
\end{fact}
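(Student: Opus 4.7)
The plan is to bootstrap from the ungraded version of the statement, which is part of Zink's foundational package, and then upgrade it to the $\z/r\z$-graded setting by pure functoriality. First, I would invoke the ungraded identification $\Lie(\BT(P))\cong P/Q$ as finitely generated projective $R$-modules; this is built into the very construction of $\BT(P)$, where $P/Q$ plays the role of the tangent space at the origin (equivalently, $Q/I_RP\subset P/I_RP$ is the cotangent space). The crucial additional property to record is that this isomorphism is natural: it is compatible with every morphism of displays over $R$, by construction.

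Next, I would translate the $\z/r\z$-grading on $P$ into a $W(\f_{p^r})$-action on the display. For $\alpha\in W(\f_{p^r})$, define an endomorphism of $P$ by letting $\alpha$ act on $P_\sigma$ as multiplication by $\tau^\sigma(\alpha)$. Because $F$ and $V^{-1}$ are $^F$-linear and of degree $+1$, they commute with this action (the identity $\tau(\tau^\sigma(\alpha))=\tau^{\sigma+1}(\alpha)$ is exactly the compatibility needed). Hence the grading is equivalent data to a ring map $W(\f_{p^r})\to\End_{\text{display}}(P)$, which is one direction of Fact \ref{was}; the converse also holds because an idempotent decomposition of $W(\f_{p^r})\otimes_{\z_p}W(R)$ along the $r$ characters $\tau^\sigma$ produces the eigenspace decomposition $P=\bigoplus_\sigma P_\sigma$.

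Finally I would apply $\BT$ to this action: by fact \ref{was}, we obtain a $W(\f_{p^r})$-action on $\BT(P)$, and by naturality of $\Lie(\BT(P))\cong P/Q$ this action matches, on $P/Q$, the one induced from the grading. Taking $\tau^\sigma$-eigenspaces on both sides yields the desired canonical isomorphism $\Lie(\BT(P))_\sigma\cong P_\sigma/Q_\sigma$. The only point demanding care is the compatibility of the eigenspace decompositions — i.e.\ that $Q\subset P$ is a graded submodule so that $P/Q$ inherits the decomposition $\bigoplus_\sigma P_\sigma/Q_\sigma$; but this is built into definition \ref{E8}, since $Q=\bigoplus_\sigma Q_\sigma$ by assumption. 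In short, there is no essential obstacle: the statement is an entirely formal consequence of the equivalence of categories in fact \ref{was} together with the naturality of the tangent space identification.
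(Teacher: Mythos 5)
Your argument is correct and is exactly the justification the paper intends: the ungraded identification $\Lie(\BT(P))\cong P/Q$ is part of Zink's construction of $\BT$, and the graded refinement follows formally from its naturality together with the dictionary of fact \ref{was} between $\z/r\z$-gradings and $W(\f_{p^r})$-actions, since $Q=\bigoplus_\sigma Q_\sigma$ is graded by definition \ref{E8}. The paper states this as a fact without written proof, and your filling-in (including the check that $F$ and $V^{-1}$ commute with the $W(\f_{p^r})$-action because they are $^F$-linear of degree $+1$) is the standard and complete one.
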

\begin{fact}
\label{E15}
Assume $pR=0$. Then the natural isogeny:
\begin{equation*}
\BT(Ver_P):\BT(\bigoplus_\sigma P_{\sigma-1})\times_{R,F}R\rightarrow\BT(\bigoplus_\sigma P_\sigma)
\end{equation*}
is identical to the Verschiebung of the $p$-divisible group, by virtue of 
\cite[Proposition 87]{zink2}.
\end{fact}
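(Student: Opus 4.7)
The plan is to invoke the universal characterization of the Verschiebung: a morphism $V\colon\BT(P)\times_{R,F}R\to\BT(P)$ coincides with the relative Verschiebung if and only if the composition $V\circ F_{\BT(P)}$ with the relative Frobenius equals $[p]_{\BT(P)}$. Through the equivalence of Fact \ref{was}, the Frobenius twist $\BT(P)\times_{R,F}R$ corresponds to the base-changed display, whose underlying graded module is $W(R)\otimes_{F,W(R)}\bigoplus_\sigma P_{\sigma-1}$, while the relative Frobenius $F_{\BT(P)}$ corresponds to a canonical morphism of displays going in the opposite direction to $Ver_P$ (cf.\ \cite[example 23]{zink2}). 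It therefore suffices to check, on the level of displays, the identity
\begin{equation*}
Ver_P\circ Frob_P=p\cdot\mathrm{id}_{P},
\end{equation*}
where $Frob_P$ is the canonical display morphism attached to $F_{\BT(P)}$.

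The key step is this algebraic identity. By \eqref{Kskel}, $Ver_P$ is the $W(R)$-linearization of $F$, i.e.\ $w\otimes x\mapsto wF(x)$, while $Frob_P$ sends $x\in P_\sigma$ to $1\otimes x$ in $W(R)\otimes_{F,W(R)}P_\sigma$ (with the appropriate grading shift coming from the $W(\mathfrak f_{p^r})$-twist). The composition is thus literally the operator $F$ on $P$, and one has to see that, regarded as an endomorphism of $P$ via these identifications, it equals multiplication by $p$. This collapses using the structural relations $F\circ V=V\circ F=p$ of a $3n$-display, together with $V^{-1}({}^V1\cdot x)=F(x)$ and ${}^V1\cdot w={}^V(w)$ inside $W(R)$. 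In the form suitable for graded displays this verification is precisely \cite[Proposition 87]{zink2}, which I would quote directly.

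The main obstacle, were one to rederive the identity from scratch, would be bookkeeping the $\z/r\z$-grading: $Ver_P$ shifts degree by $+1$ (matching the $+1$ shift of the Verschiebung on the $W(\mathfrak f_{p^r})$-graded formal group) while $Frob_P$ shifts by $-1$, and one must verify these shifts line up with the $W(\mathfrak f_{p^r})$-action transported along $\BT$. This compatibility is built into the construction underlying Fact \ref{was} (giving a $\mathfrak f_{p^r}$-action on $\BT(P)$ is equivalent to imposing the grading on $P$), so no supplementary argument is required beyond the citation of \cite[Proposition 87]{zink2}.
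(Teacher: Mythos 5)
The paper gives no argument for this Fact beyond the citation of \cite[Proposition 87]{zink2}, and your proposal ultimately rests on the same citation, so in substance the two agree: the characterization of the Verschiebung by $V\circ F_{\BT(P)}=[p]$ (legitimate, since the relative Frobenius of a $p$-divisible group is an fppf epimorphism, so $V$ is determined by this identity), the identification of the Frobenius twist of $\BT(P)$ with $\BT$ of the base-changed display, and the degree-shift bookkeeping for the $\z/r\z$-grading are exactly what Zink's proposition packages.

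One caveat about your sketch of the display-level verification: the canonical display morphism $Frob_P$ attached to the relative Frobenius is \emph{not} the map $x\mapsto 1\otimes x$. That map is not a morphism of $3n$-displays (it fails to commute with $F$ and $V^{-1}$), and composing it with $Ver_P=F^\sharp$ gives the semilinear operator $F$ itself, which is not $p\cdot\mathrm{id}_P$; note also that a $3n$-display carries no operator $V$, only $V^{-1}$ defined on $Q$, so the relation ``$F\circ V=V\circ F=p$'' is not available in the form you invoke it. The correct $Frob_P$ is the $W(R)$-linear map $V^\sharp:P\rightarrow W(R)\otimes_{F,W(R)}P$ determined by $V^\sharp(wF(x))=pw\otimes x$ for $x\in P$ and $V^\sharp(wV^{-1}(y))=w\otimes y$ for $y\in Q$; with this definition one has $F^\sharp\circ V^\sharp=p\cdot\mathrm{id}$ and $V^\sharp\circ F^\sharp=p\cdot\mathrm{id}$, and the rest of your argument goes through. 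Since you defer the actual verification to \cite[Proposition 87]{zink2}, this slip does not invalidate the proof, but the ``key step'' as you wrote it would not check out if carried through literally.
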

\begin{fact}
\label{E16}
The result \cite[Lemma 93]{zink2} tells us that the Lie-algebra of the Grothendieck-Messing universal vector extension can be recovered as:
\begin{equation*}
H_1^{dR}(\BT(\bigoplus_\sigma P_\sigma))_\sigma\cong P_\sigma/I_RP_\sigma,
\end{equation*}
this isomorphism is compatible with the natural projections to the left and right hand side of the isomorphism in fact \ref{E14}. 
\end{fact}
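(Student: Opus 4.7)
The plan is to realize $P_\sigma/I_RP_\sigma$ as the value of Messing's crystal of $\BT(\bigoplus_\sigma P_\sigma)$ at the canonical PD-thickening $w_0 : W(R) \twoheadrightarrow R$, and then invoke the Grothendieck--Messing identification of this value with the Lie algebra of the universal vector extension.

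First I would observe that the ideal $I_R = VW(R)$ carries its canonical divided power structure, so $w_0 : W(R) \twoheadrightarrow R$ qualifies as a PD-thickening. The triple construction recalled at the start of this subsection then applies (after passage to the inverse limit over $W(R)/p^jW(R)$, since \cite[Theorem 46]{zink2} is phrased with $p$ nilpotent in $S$): the operator $V^{-1}$ on $Q_\sigma$ has a unique $^F$-linear extension to $\hat Q_\sigma := Q_\sigma + I_RP_\sigma$ killing $I_RP_\sigma$, producing a graded triple $(P_\sigma, \hat Q_\sigma, F, V^{-1})$ over $W(R)$. Pushing the underlying module forward along $w_0$ gives $\bigoplus_\sigma P_\sigma \otimes_{W(R),w_0} R = \bigoplus_\sigma P_\sigma/I_RP_\sigma$; by \cite[Lemma 93]{zink2} this is canonically $H_1^{dR}(\BT(\bigoplus_\sigma P_\sigma))$, and the $W(\f_{p^r})$-operation---which by Fact \ref{was} corresponds to the $\z/r\z$-grading on $P$---acts compatibly, so restricting to $\sigma$-eigenspaces yields the claimed isomorphism.

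Compatibility with Fact \ref{E14} is then formal: the surjection $H_1^{dR}(\BT(P)) \twoheadrightarrow \Lie\BT(P)$ coming from $0 \to V(\BT(P)) \to E(\BT(P)) \to \BT(P) \to 0$ matches, under the identifications above, the tautological quotient $P_\sigma/I_RP_\sigma \twoheadrightarrow P_\sigma/Q_\sigma$ (well-defined because $I_RP_\sigma \subset Q_\sigma$, and visibly homogeneous of degree zero). The genuine difficulty---that the triple construction actually computes the Messing crystal rather than some a priori different invariant---is Zink's crystallinity theorem, which we use as a black box; once that is granted, reducing to the universal thickening $w_0$ is the only observation specific to this Fact.
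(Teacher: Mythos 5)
The paper does not prove this statement: it is recorded as a ``Fact'' with a bare citation to \cite[Lemma 93]{zink2}, which constructs the universal extension of $\BT(\mathcal P)$ directly from the display and identifies its Lie algebra with $P/I_RP$. Your route --- deriving it instead from the crystalline property of displays (the paper's fact \ref{wieso}, i.e.\ Zink's Theorem 94) together with Messing's identification of the crystal at the trivial thickening with $\Lie$ of the universal vector extension --- is a legitimate alternative, and it correctly makes fact \ref{E16} a formal consequence of fact \ref{wieso}. The compatibility with fact \ref{E14} via the tautological surjection $P_\sigma/I_RP_\sigma\twoheadrightarrow P_\sigma/Q_\sigma$ is also right.

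However, the middle step is garbled. The thickening at which Messing's crystal computes $\Lie E(\BT(P))$ is the \emph{trivial} one $\operatorname{id}\colon R\to R$, at which the triple is the display itself (no extension of $V^{-1}$ is needed) and the crystal value is $P\otimes_{W(R)}R=P/I_RP$. Your proposed triple over $w_0\colon W(R)\to R$ is internally inconsistent: by the display axioms (cf.\ remark \ref{E7}) one already has $I_RP_\sigma\subset Q_\sigma$, so your $\hat Q_\sigma=Q_\sigma+I_RP_\sigma$ is just $Q_\sigma$, and $V^{-1}$ does \emph{not} vanish on $I_RP_\sigma$ (e.g.\ $V^{-1}({}^V1\cdot t)=Ft$ for $t\in T_\sigma$); hence the ``unique ${}^F$-linear extension killing $I_RP_\sigma$'' you invoke does not exist. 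If you insist on using the Witt thickening $W(R)\to R$, the PD-ideal $I_R$ must be lifted logarithmically into $W(W(R))$ in Zink's sense, the crystal value there is $P$ itself as a $W(R)$-module, and only after base change along the morphism of thickenings $(W(R)\to R)\to(R\to R)$ do you land on $P/I_RP$. With that correction (or simply by evaluating at $R=R$ from the start, after reducing to $p$ nilpotent as you do), the argument goes through.
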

\begin{fact}
\label{wieso}
Let $S\rightarrow R$ is a pd-thickening in which $p$ is nilpotent. Then by \cite[Theorem 94]{zink2} the $\pmod{I_S}$-reduction of a lift of $\bigoplus_\sigma P_\sigma$ 
to a triple over $S$ is canonically isomorphic to the $S$-value of the crystal to $\BT(\bigoplus_\sigma P_\sigma)$ as defined in Messing's book \cite{crystals}.
\end{fact}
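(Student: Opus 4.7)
The statement is essentially a $\z/r\z$-graded upgrade of Zink's Theorem 94, so the plan is to reduce to the ungraded statement and then check equivariance.

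First, I would forget the $\z/r\z$-grading and view $P=\bigoplus_\sigma P_\sigma$ as an ordinary display over $R$ and $(P,\Qd,F,V^{-1})$ as an ordinary triple over $S$ lifting it. Zink's Theorem 94 then supplies a canonical isomorphism
\[
(P/I_SP,\phi)\;\cong\;\d_{\BT(P)}(S),
\]
between the $\pmod{I_S}$-reduction of the triple and the value at $S\to R$ of the Messing crystal attached to $\BT(P)$. This is the content I wish to quote; no re-derivation of Zink's theorem is needed.

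Next I would put the grading back in. By Fact \ref{was}, giving a $\z/r\z$-grading on $P$ is the same data as giving a $W(\f_{p^r})$-action on the formal group $\BT(P)$. Both constructions entering the displayed isomorphism above are functorial in the display and the $p$-divisible group respectively: Zink's crystal on the left (the triple) is functorial in displays by the crystalline functoriality recalled after Proposition \ref{E1}, and the Messing crystal on the right is functorial in the $p$-divisible group. Consequently, applying functoriality to each scalar endomorphism $\alpha\in W(\f_{p^r})$ shows that the canonical isomorphism intertwines the two induced $W(\f_{p^r})$-actions. Passing to $\sigma$-eigenspaces for each $\sigma\in\z/r\z$ then yields the stated graded version.

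The only genuinely delicate point is the equivariance check, and this is really automatic once one notices that both sides of Theorem 94 are constructed as functors from the appropriate categories: the graded structure is not an extra datum to verify compatibility with, it is simply carried along by the functoriality. Hence there is no essential obstacle; the proof is essentially an invocation of \cite[Theorem 94]{zink2} together with Fact \ref{was}.
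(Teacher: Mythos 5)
Your proposal is correct and matches the paper, which states this as a \emph{Fact} justified solely by the citation of Zink's Theorem 94 (note the claim is literally about the total display $\bigoplus_\sigma P_\sigma$, so the ungraded theorem already gives the statement). Your additional check that the $W(\f_{p^r})$-action, equivalently the $\z/r\z$-grading, is carried along by functoriality on both sides is a harmless and sensible elaboration of what the paper leaves implicit.
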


Let us derive some consequences for $p$-divisible groups of the kind $\G^{(k)}=\BT(\bigoplus_\sigma P_\sigma^{(k)})$, where 
$P_\sigma^{(k)}={P_\sigma^{(0)}}^{\otimes_{W(R)}1-k}\otimes_{W(R)}\bigwedge_{W(R)}^kP_\sigma^{(1)}$ is the (normalized) 
exterior power as in proposition \ref{E1} , (remark \ref{E10}). Fact \ref{E16} gives a specific isomorphism, say:
$$\md_\sigma^{(k)}:H_1^{dR}(\G^{(0)})_\sigma^{\otimes_R1-k}\otimes_R\bigwedge_R^kH_1^{dR}(\G^{(1)})_\sigma\rightarrow H_1^{dR}(\G^{(k)})_\sigma,$$
and it has the feature that it induces an isomorphism:
$$\Lie(\G^{(0)})_\sigma^{\otimes_\Rd1-k}\otimes_\Rd\bigwedge_\Rd^k\Lie(\G^{(1)})_\sigma\rightarrow\Lie(\G^{(k)})_\sigma,$$
Let us pin down the crystalline nature of $\md_\sigma^{(k)}$:

\begin{lem}
\label{vzehn}
Assume that $R$ is a mixed characteristic complete discrete valuation ring with uniformizer $\varpi$ and perfect residue field $k$. 
Consider two graded displays $(P_{i,\sigma}^{(1)},Q_{i,\sigma}^{(1)},F,V^{-1})$, ($i\in\{1,2\}$) as in proposition \ref{E1}, and let 
$(P_{i,\sigma}^{(0)},I_RP_{i,\sigma}^{(0)},F,V^{-1})$ be displays of the kind considered in remark \ref{E10}. Let $\G_i^{(k)}/R$ 
be the $p$-divisible formal group corresponding to the normalized exterior powers $(P_{i,\sigma}^{(k)},Q_{i,\sigma}^{(k)},F,V^{-1})$, 
with $W(\f_{p^r})$-action, of course. Write $\Gbar_i^{(k)}$ for the special fibre over $k$. Assume that there is a $K(\f_{p^r})$-linear quasi-isogeny
$$\alpha^{(0)}\in\Hom_{K(\f_{p^r})}^0(W(k)\otimes_{W(R)}P_1^{(0)},W(k)\otimes_{W(R)}P_2^{(0)}),$$
and an element
$$\alpha^{(1)}\in\Hom_{K(\f_{p^r})}^0(W(k)\otimes_{W(R)}P_1^{(1)},W(k)\otimes_{W(R)}P_2^{(1)})$$
and let
$$\alpha^{(k)}\in\Hom_{K(\f_{p^r})}^0(W(k)\otimes_{W(R)}P_1^{(k)},W(k)\otimes_{W(R)}P_2^{(k)})$$
be induced by part (i) of proposition \ref{E5} on the functoriality of the unnormalized graded exterior power, composed with the action of $\alpha^{(0)}$. Then the diagram
$$\begin{CD}
\q\otimes H_1^{dR}(\G_1^{(0)})_\sigma^{\otimes_R1-k}\otimes_R\bigwedge_R^kH_1^{dR}(\G_1^{(1)})_\sigma
@>{\md_{1,\sigma}^{(k)}}>>\q\otimes H_1^{dR}(\G_1^{(k)})_\sigma\\
@VVV@VVV\\
\q\otimes H_1^{dR}(\G_2^{(0)})_\sigma^{\otimes_R1-k}\otimes_R\bigwedge_R^kH_1^{dR}(\G_2^{(1)})_\sigma
@>{\md_{2,\sigma}^{(k)}}>>\q\otimes H_1^{dR}(\G_2^{(k)})_\sigma\\
\end{CD}$$
commutes for all $\sigma$, where the vertical maps are obtained from $\BT(\alpha^{(1)})\in\Hom_{K(\f_{p^r})}^0(\Gbar_1^{(1)},\Gbar_2^{(1)})$, 
$\BT(\alpha^{(0)})\in\Hom_{K(\f_{p^r})}^0(\Gbar_1^{(0)},\Gbar_2^{(0)})$, and $\BT(\alpha^{(k)})\in\Hom_{K(\f_{p^r})}^0(\Gbar_1^{(k)},\Gbar_2^{(k)})$ 
by using the crystalline functoriality of $H_1^{dR}$ in the sense of Grothendieck-Messing.
\end{lem}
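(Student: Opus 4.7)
The plan is to show that both the horizontal maps $\md_{i,\sigma}^{(k)}$ and the vertical quasi-isogenies in the diagram are the $\pmod{I_R}$-reductions of a single pair of maps of graded triples over $R$, after which commutativity becomes a direct consequence of Proposition \ref{E5}(iii).

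First I would reduce to the case where $\alpha^{(0)}$ and $\alpha^{(1)}$ are honest morphisms of graded displays and not merely quasi-isogenies. Since both paths around the diagram depend $\q$-linearly on the pair $(\alpha^{(0)},\alpha^{(1)})$, it suffices to multiply each by a sufficiently high power of $p$. Proposition \ref{E5}(i), together with the twisting recipe of Remark \ref{E10}, then shows that the induced $\alpha^{(k)}$ is itself an honest morphism of displays, given by the exterior-power formula on $W(k)\otimes_{W(R)}\bigwedge^{k}P^{(1)}$ tensored with $(\alpha^{(0)})^{\otimes 1-k}$ on the multiplicative factor.

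Next, by Zink's crystalline rigidity theorem, cited just before Proposition \ref{E1}, each honest morphism $\alpha^{(?)}$ of displays over $W(k)$ lifts uniquely to a morphism $\tilde\alpha^{(?)}$ of graded triples over $R$, the triples in question being the displays $P_{i,\sigma}^{(?)}$ themselves regarded as the tautological lifts corresponding to $\G_i^{(?)}$. By Fact \ref{wieso} the $\pmod{I_R}$-reduction of $\tilde\alpha^{(?)}$ coincides with the Grothendieck--Messing crystal value of $\BT(\alpha^{(?)})$ at the pd-thickening $R\to k$; under the canonical identification $P_{i,\sigma}^{(?)}/I_R P_{i,\sigma}^{(?)}\cong H_1^{dR}(\G_i^{(?)})_\sigma$ of Fact \ref{E16}, this reduction is precisely the vertical map of the diagram.

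Finally, Proposition \ref{E5}(iii) tells us that the triple attached to the unnormalized exterior power $P_{i,\sigma}^{(k)}$ is the exterior power of the triple of $P_{i,\sigma}^{(1)}$, and that $\tilde\alpha^{(k)}$ is obtained from $\tilde\alpha^{(1)}$ by the same exterior-power construction, twisted by $\tilde\alpha^{(0)}$ through Remark \ref{E10}. Reducing modulo $I_R$ and transporting via Fact \ref{E16} at all four corners turns this identity into precisely the commutativity of the diagram, because $\md_{i,\sigma}^{(k)}$ is by construction the canonical identification of $P_{i,\sigma}^{(k)}/I_R P_{i,\sigma}^{(k)}$ with the normalized exterior power of $P_{i,\sigma}^{(1)}/I_R P_{i,\sigma}^{(1)}$. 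The main obstacle I expect is bookkeeping: keeping track of the normalization by $P_\sigma^{(0)}$ and verifying that the reduction from quasi-isogenies to honest morphisms leaves both sides of the square intact. The genuinely crystalline content is already packaged in Proposition \ref{E5}(iii), which itself rests on \cite[Theorem 46]{zink2}, so no new crystalline input is required here.
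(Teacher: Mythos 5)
Your overall architecture --- rescale to honest morphisms, lift to a morphism of graded triples, apply Proposition \ref{E5}(iii), and reduce modulo the augmentation ideal via Facts \ref{E16} and \ref{wieso} --- is the same as the paper's. But there is a genuine gap at the lifting step. The crystalline rigidity you invoke (Zink's Theorem 46, as quoted before Proposition \ref{E1}) and Fact \ref{wieso} both require a pd-thickening $S\to R$ in which $p$ is \emph{nilpotent}. The surjection $R\to k$ satisfies neither hypothesis: $p$ is not nilpotent in a mixed characteristic discrete valuation ring, and the kernel $(\varpi)$ carries no divided powers unless the ramification is small. So the assertion that each honest $\alpha^{(?)}$ ``lifts uniquely to a morphism of graded triples over $R$'' is not available as stated, and with it the identification of the vertical maps with the Grothendieck--Messing functoriality of $\BT(\alpha^{(?)})$ collapses.

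The paper circumvents this by two devices your argument omits. First, after the rescaling, the morphism $\alpha^{(1)}$ of displays over $k$ is lifted (via \cite[Proposition 40]{zink2}) to a genuine morphism $\gamma^{(1)}$ of displays over the artinian ring $R/pR$; only then can the exterior powers $\gamma^{(k)}$ be formed over $R/pR$ by Proposition \ref{E5}(i). Second, rigidity is applied to the pd-thickenings $S=R/\varpi^jR\to R/pR$ (whose kernel is generated by $p$, hence has canonical divided powers, and in which $p$ is nilpotent), producing the commutative square over $W(S)$ by Proposition \ref{E5}(iii); the statement over $R$ follows by letting $j\to\infty$. A minor further imprecision: the dependence of the diagram on $\alpha^{(1)}$ is homogeneous of degree $k$, not linear, though this does not affect the validity of the rescaling step.
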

\begin{proof}
Without loss of generality we assume that $\alpha^{(0)}$ is an isomorphism, because multiplication by a scalar in $K(\f_{p^r})^\times$ 
does not change the statement. For the same reason we assume without loss of generality that $\alpha_\sigma^{(1)}$ comes from a graded 
homomorphism from $W(k)\otimes_{W(R)}P_{1,\sigma}^{(1)}$ to $W(k)\otimes_{W(R)}P_{2,\sigma}^{(1)}$ and in fact lifts to, say 
$$\gamma_\sigma^{(1)}:W(R/pR)\otimes_{W(R)}P_{1,\sigma}^{(1)}\rightarrow W(R/pR)\otimes_{W(R)}P_{2,\sigma}^{(1)}.$$
over the artinian ring $R/pR$, cf. \cite[Proposition 40]{zink2}. By proposition \ref{E5} there are associated morphisms $\gamma_\sigma^{(k)}$ between the $W(R/pR)\otimes_{W(R)}P_{i,\sigma}^{(k)}$'s. Now $S=R/\varpi^jR\rightarrow R/pR$ is a pd-thickening, if $p$ divides $\varpi^j$. Let us fix such a $j$. Then we get a map of graded triples:
$$\widetilde{\gamma_\sigma^{(k)}}:W(S)\otimes_{W(R)}P_{1,\sigma}^{(k)}\rightarrow W(S)\otimes_{W(R)}P_{2,\sigma}^{(k)},$$
induced from $\gamma_\sigma^{(k)}$'s action on the displays over $R/pR$. Passage from $W(S)$ to $S$, as in fact \ref{E16}, brings us to the maps 
$$\widetilde{\gamma_\sigma^{(k)}}\pmod{I_S}:H_1^{dR}(\G_1^{(k)}\times_RS)_\sigma\rightarrow H_1^{dR}(\G_2^{(k)}\times_RS)_\sigma,$$
which agree with the ones defined by the procedure in \cite{crystals}, recall that $\G_i^{(k)}\times_RS$ 
is $\BT(\bigoplus_\sigma W(S)\otimes_{W(R)}P_{i,\sigma}^{(k)})$. However, the diagrams
$$\begin{CD}
W(S)\otimes_{W(R)}(P_{1,\sigma}^{(0)^{\otimes_{W(R)}1-k}}\otimes_{W(R)}\bigwedge_{W(R)}^kP_{1,\sigma}^{(1)})
@>{=}>>W(S)\otimes_{W(R)}P_{1,\sigma}^{(k)}\\
@V{\tilde\gamma_\sigma^{(k)}}VV@V{\widetilde{\gamma_\sigma^{(k)}}}VV\\
W(S)\otimes_{W(R)}(P_{2,\sigma}^{(0)^{\otimes_{W(R)}1-k}}\otimes_{W(R)}\bigwedge_{W(R)}^kP_{2,\sigma}^{(1)})
@>{=}>>W(S)\otimes_{W(R)}P_{2,\sigma}^{(k)}\\
\end{CD}$$
do commute, which one sees by  applying part (iii) of proposition \ref{E5} to the graded displays 
$W(S)\otimes_{W(R)}(P_{i,\sigma}^{(0)^{\otimes_{W(R)}-1}}\otimes_{W(R)}P_{i,\sigma}^{(1)})$. If we let $j$ tend to $\infty$ we get the result.
\end{proof}

\subsection{An Extension Theorem}

Before we state our theorem on the extension of $Y^{(k)}$ over $\M_\gp^{(0\times1)}$, we would like to say that there has already been a lot of activity in the subject of 
extending abelian schemes over higher dimensional base schemes by C.-L. Chai, G. Faltings, O. Gabber, A. Grothendieck, J. Milne, B. Moonen, A. Ogus, M. Raynaud, 
A. Vasiu and many others, we certainly do not pretend to prove anything new in this section. Let us begin with some rather trivial remarks on the local structure of the 
integral model $\M_\gp^{(0\times1)}$: We fix a closed point $x\in\M_\gp^{(0\times1)}$ and write $R_x=\O_{\M_\gp^{(0\times1)},x}$ for the local ring. Let $k$ be the residue 
field of $R_x$, it is naturally a finite extension of $\O_E/\gp=\f_{p^r}$, write $\xi:\Spec k\rightarrow\M_\gp^{(0\times1)}$ for the associated morphism. Assume that (T.1) holds. When writing $\Lie(\dots)_\sigma$ (resp. $H_1^{dR}(\dots)_\sigma$) for the $\tau^\sigma\circ\iota\circ*$-eigenspace of $\Lie(\dots)$ (resp. $H_1^{dR}(\dots)$) we have
\begin{eqnarray*}
\rk_{R_x}\Lie(Y_x^{(1)})_\sigma=\begin{cases}n-1&\text{if }\sigma\in\Omega\\n&\text{if }\sigma\notin\Omega\end{cases},&&\rk_{R_x}H_1^{dR}(Y_x^{(1)})_\sigma=n,
\end{eqnarray*}
for all $\sigma\in\{0,\dots,r-1\}$, where $Y_x^{(1)}$ is $Y^{(1)}\times_{\M_\gp^{(0\times1)}}R_x$.\\
Let also $\Rd_x$ be the completion of $R_x$ at the maximal ideal $\gm$. According to the Serre-Tate theorem we have the following description of the 
deformation functor prorepresented by $\Rd_x$: It sends $N\in nil$ to the set $\Df_x(N)$ of deformations of $\Gbar_{x,\gq}^{(1)}:=Y_\xi^{(1)}[\gq^\infty]$ 
as a $p$-divisible group with $\O_{L_\gq}$-operation. We put $\G_{x,\gq}^{(1)}=\Yd_x^{(1)}[\gq^\infty]$ for the universal deformation over it, where 
$\Yd_x^{(1)}=Y_x^{(1)}\times_{R_x}\Rd_x$, and similarily for $\G_{x,\gq}^{(0)}$. \\
Again we need to shift from cohomology to homology now: Suppose that $\eta:\Rd_x\rightarrow R$ is a characteristic $0$ lift of $\xi$, then we have
$$H_1^{dR}(\lambda^{(k)}):H_1^{dR}(\Y_\eta^{(k)}[\gq^{*\infty}])_\sigma\stackrel{\cong}{\rightarrow}H_{dR}^1(\Y_\eta^{(k)}[\gq^\infty])_\sigma=H_{dR}^1(\Y_\eta^{(k)})_\sigma,$$
where the notation is from \eqref{Yps}. Let us therefore write $m_{\eta,dR,\sigma}^{(k)}$ for the $R[\frac{1}{p}]$-linear 
map which is obtained from $t_{\eta,dR,\sigma}^{(k)}$ by transport of structure, as in \eqref{polvier}.

\begin{thm}
\label{extend}
Assume that the data $L$, $V^{(0\times1)}$, $\psi^{(0\times1)}$, and $l$ are unramified at some prime $p$. Assume 
also that the condition (T.1) in section \ref{crys} is valid. For all $k=0,\dots,\min\{n,p-1\}$ the following holds:
\begin{itemize}
\item[(i)]
The map $g^{(k)}:M^{(0\times1)}\rightarrow\M^{(k)}$ extends to the whole of the integral model $\M_\gp^{(0\times1)}$ 
and therefore gives rise to an extension of the abelian schemes $Y^{(k)}$ over $\M_\gp^{(0\times1)}$.
\item[(ii)]
For every closed point $x$ there exist $\z/r\z$-graded displays $P_{x,\sigma}^{(1)}$ and $P_{x,\sigma}^{(0)}$ over $\Rd_x$ together with $\O_{L_{\gq^*}}$-isomorphism
$$m_{x,\BT}^{(k)}:\BT(\bigoplus_\sigma{P_{x,\sigma}^{(0)}}^{\otimes_{W(\Rd_x)}1-k}\otimes_{W(\Rd_x)}
\bigwedge_{W(\Rd_x)}^kP_{x,\sigma}^{(1)})\rightarrow\Yd_x^{(k)}[\gq^{*\infty}]$$
where $\Yd_x^{(k)}=Y^{(k)}\times_{\M_\gp^{(0\times1)}}\Rd_x$.
\item[(iii)] 
For every characteristic $0$ lift $\eta:\Rd_x\rightarrow R$, the isomorphism $\md_{x,\sigma}^{(k)}$ from the graded filtered module
$$H_1^{dR}(\Yd_x^{(0)}[\gq^{*\infty}])_\sigma^{\otimes_{\Rd_x}1-k}\otimes_{\Rd_x}\bigwedge_{\Rd_x}^kH_1^{dR}(\Yd_x^{(1)}[\gq^{*\infty}])_\sigma$$ 
to the graded filtered module
$$H_1^{dR}(\Yd_x^{(k)}[\gq^{*\infty}])_\sigma$$ 
obtained from (ii) together with the considerations in section \ref{vneun} induces $m_{\eta,dR,\sigma}^{(k)}$, when base changed to $R[\frac{1}{p}]$.
\end{itemize}
\end{thm}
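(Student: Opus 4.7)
The plan is to construct the extension locally at each closed point $x\in\M_\gp^{(0\times 1)}$ using Zink's display theory and Serre--Tate, and then to glue. Fix $x$ with residue field $k$ and completed local ring $\Rd_x$. Under (T.1) the group $\Yd_x^{(1)}[\gq^\infty]$ is unipotent, so its Serre dual $\Yd_x^{(1)}[\gq^{*\infty}]$ is formal; similarly $\Yd_x^{(0)}[\gq^{*\infty}]$ is of multiplicative type. By Fact \ref{was} both correspond to $\z/r\z$-graded displays $P_{x,\sigma}^{(1)}$ and $P_{x,\sigma}^{(0)}$ over $\Rd_x$, the grading coming from the $\O_{L_\gq}$-action and $P_{x,\sigma}^{(0)}$ being multiplicative in the sense of Remark \ref{E10}. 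The Kottwitz determinant condition combined with Fact \ref{E14} forces $\rk_{\Rd_x}(Q_{x,\sigma}^{(1)}/I_{\Rd_x}P_{x,\sigma}^{(1)})\in\{0,1\}$, with the value $0$ attained at each $\sigma_0\notin\Omega$. Proposition \ref{E1} and Remark \ref{E10} then produce the normalized graded exterior power display
$$P_{x,\sigma}^{(k)}:={P_{x,\sigma}^{(0)}}^{\otimes_{W(\Rd_x)}1-k}\otimes_{W(\Rd_x)}\bigwedge_{W(\Rd_x)}^kP_{x,\sigma}^{(1)},$$
which is a genuine display (some $L_{\sigma_0}^{(k)}$ vanishes), so $\G_{x,\gq^*}^{(k)}:=\BT(P_{x,\sigma}^{(k)})$ is a formal $p$-divisible group with $\O_{L_\gq}$-action over $\Rd_x$.

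Next I would realise $Y_\xi^{(k)}$ as a $k$-valued point of $\M_\gp^{(k)}$: by smoothness of $\M_\gp^{(0\times 1)}/\O_{E_\gp}$ pick an unramified lift $\eta_0:\Spec W(k)\to\M_\gp^{(0\times 1)}$ of $x$ and apply Lemma \ref{vdreiz} to obtain $Y_\xi^{(k)}$ together with the graded crystalline isomorphism $t_{\xi,cris,\sigma}^{(k)}$. Serre--Tate then identifies deformations of $Y_\xi^{(k)}$ over $\Rd_x$ with deformations of its $p$-divisible group; I take $\G_{x,\gq^*}^{(k)}$ on the $\gq^*$-side (its reduction matches $Y_\xi^{(k)}[\gq^{*\infty}]$ by base-change compatibility of the exterior power display construction, Remark \ref{E2}, translating Lemma \ref{vdreiz} into covariant language), its Serre dual on the $\gq$-side (matched via the polarization), and the prime-to-$p$ Tate modules are already determined on $\Spec\Rd_x$. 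The resulting $\Yd_x^{(k)}$ carries an $L$-action, a polarization, and a level structure via the formulas \eqref{polnull}, \eqref{polka}; its Kottwitz determinant condition follows from the trace identity $\Phi^{(k)}=\binom{n-1}{k}\Phi^{(0)}+\binom{n-1}{k-1}\Phi^{(n)}$ together with the rank statements in Proposition \ref{E1}. As $x$ ranges over the closed points, the resulting morphisms $\Spec\Rd_x\to\M_\gp^{(k)}$ together with $g^{(k)}$ on $M^{(0\times 1)}$ glue uniquely to a morphism $\M_\gp^{(0\times 1)}\to\M_\gp^{(k)}$, using properness of $\M_\gp^{(k)}$ (Lemma \ref{compact}) and Zariski--Nagata purity on the smooth source. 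This establishes (i) and (ii), with $m_{x,\BT}^{(k)}$ being the tautological identification built into the construction.

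The main obstacle is (iii): comparing the display-theoretic map $\md_{x,\sigma}^{(k)}$ obtained from Fact \ref{E16} applied to $P_{x,\sigma}^{(k)}$ with $m_{\eta,dR,\sigma}^{(k)}$ from Lemma \ref{vvier} after passage to $R[\tfrac{1}{p}]$ along $\eta:\Rd_x\to R$. I would argue by rigidity of horizontal sections of $F$-isocrystals: by Lemma \ref{das} and the naturality of the display formalism recalled in Section \ref{vneun}, both maps are horizontal for the Dieudonné connection on $R[\tfrac{1}{p}]$, so it suffices to check they coincide modulo the maximal ideal of $R$. On the display side this reduction is handled by Lemma \ref{vzehn}, applied to the identity quasi-isogeny of the special fibre, and yields the canonical identification $t_{\xi,cris,\sigma}^{(k)}$ of Lemma \ref{vdreiz}. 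On the $m_{\eta,dR}$-side, Proposition \ref{dR} combined with the Fontaine--Faltings comparison and the full-faithfulness argument at the end of the proof of Lemma \ref{vdreiz} identifies the reduction as the same $t_{\xi,cris,\sigma}^{(k)}$. Since horizontal morphisms between free finite-rank $R[\tfrac{1}{p}]$-modules with $F$-isocrystal structure are determined by their restriction to the residue field, the two maps agree, completing (iii).
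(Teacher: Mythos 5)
Your construction of the graded displays $P_{x,\sigma}^{(0)}$, $P_{x,\sigma}^{(1)}$, their normalized exterior powers, and the Serre--Tate deformation of $Y_\xi^{(k)}$ is exactly the paper's skeleton for (i) and (ii); but two steps do not go through as written.

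First, you compare the exterior-power construction with $g^{(k)}$ only at a single unramified lift $\eta_0:\Rd_x\rightarrow W(k)$. That is not enough: Serre--Tate produces \emph{some} $\Rd_x$-valued point of $\M_\gp^{(k)}$, and to prove (i) (and to make the local constructions glue) you must show that this point agrees with $g^{(k)}$ on the whole generic fibre of $\Spf\Rd_x$, not merely at one closed point of it. The paper's device is the particular lift $\kappa:\Rd_x\cong W(k)[[s_1,\dots,s_d]]\rightarrow W(k')$, $s_j\mapsto p[t_j]$, with $k'$ the perfection of $k(t_1,\dots,t_d)$: this $\kappa$ is injective, hence schematically dense in $\Spec\Rd_x$, so agreement at $\kappa$ forces agreement of the two $\Rd_x$-points everywhere. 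Relatedly, your phrase ``translating Lemma \ref{vdreiz} into covariant language'' conceals the one nontrivial verification at the special fibre, namely that the Frobenius of the exterior-power display coincides with the Frobenius of the $MF^{fd}$-exterior power appearing in Lemmas \ref{BARS} and \ref{vdreiz}; the paper proves this by realizing the Frobenius as $\BT(Ver_P)$ (Fact \ref{E15}) and then applying Lemma \ref{vzehn} together with part (ii) of Proposition \ref{E5}. Your appeal to Lemma \ref{vzehn} ``applied to the identity quasi-isogeny'' is vacuous and does not supply this.

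Second, the rigidity argument proposed for (iii) fails. Lemma \ref{das} concerns the frames $W(k)[[t]]$ and $W(k)((t))\otd\z_p$; over a mixed-characteristic complete discrete valuation ring $R$ there is no Dieudonn\'e connection whose horizontal sections you could invoke, and two maps of free $R[\frac{1}{p}]$-modules that agree modulo $\gm_R$ need not agree. Nor can you fall back on $F$-isocrystal rigidity: for a ramified lift $\eta$ the map $m_{\eta,dR,\sigma}^{(k)}$ is not a priori $\phi$-equivariant over the Witt vectors of the residue field, since Lemma \ref{vdreiz} treats only unramified lifts. The paper closes (iii) by an analytic continuity argument instead: using the Baire property it chooses embeddings $\iota_i:K(k')\rightarrow\c$ with $\iota_i\circ\kappa$ converging to $\iota_\infty\circ\eta$, notes that the relevant diagram commutes at each $\iota_i\circ\kappa$ by construction, and passes to the limit. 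Some argument of this kind (or a genuine crystalline rigidity statement valid for ramified $R$) is needed before (iii) can be considered proved.
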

\begin{proof}
Pick a closed point $x$. Let $\G_{x,\gq^*}^{(1)}$ and $\G_{x,\gq^*}^{(0)}$ be the duals of the $p$-divisible groups $\G_{x,\gq}^{(1)}$ and $\G_{x,\gq}^{(0)}$. 
According to fact \ref{was} we may pick two graded displays $P_{x,\sigma}^{(k)}$ over $\Rd_x$ (unique up to unique isomorphism) corresponding 
to the $\G_{x,\gq^*}^{(k)}$'s for $k\leq1$. Write $P_{x,\sigma}^{(k)}$ for the normalized graded exterior power $3n$-displays, which we 
are allowed to form by proposition \ref{E1} and remark \ref{E10}. Write $\G_{x,\gq^*}^{(k)}=\BT(\bigoplus_\sigma P_{x,\sigma}^{(k)})$ for 
$k\geq2$, and let $\G_{x,\gq}^{(k)}$ be the Serre-dual. Choose an isomorphism $\Rd_x\cong W(k)[[s_1,\dots,s_d]]$, let $k'$ be the perfect 
closure of the pure transcendental extension $k(t_1,\dots,t_d)$, and define $\kappa:\Rd_x\rightarrow W(k')$ by $\kappa(s_j)=p[t_j]$.\\

We next study properties of the contravariant Dieudonn\'e modules of the groups $\G_{\kappa,\gq}^{(k)}=\G_{x,\gq}^{(k)}\times_{\Rd_x,\kappa}W(k')$ 
and of their special fibres $\Gbar_{\kappa,\gq}^{(k)}$. We start with the observation that:
$$\d^*(\G_{\kappa,\gq}^{(0)})_\sigma^{\otimes_{W(k')}1-k}\otimes_{W(k')}\bigwedge_{W(k')}^k\d^*(\G_{\kappa,\gq}^{(1)})_\sigma\cong\d^*(\G_{\kappa,\gq}^{(k)})_\sigma.$$
Here is a proof using only first properties of displays: The techniques of Mazur and Messing produce the usual comparison isomorphism
$$\d^*(\G_{\kappa,\gq}^{(k)})_\sigma\cong H_1^{dR}(\G_{\kappa,\gq^*}^{(k)})_\sigma$$
of filtered $W(k')$-modules, if one combines \cite[proposition 7.12]{mazur} \cite[proposition 5.1]{mazur} and 
\cite[proposition 8.7]{mazur}. From remark \ref{E2} it is clear that we have a canonical diagram of $W(k')$-modules
$$\begin{CD}
H_1^{dR}(\G_{\kappa,\gq^*}^{(0)})_\sigma^{\otimes_{W(k')}1-k}\otimes_{W(k')}\bigwedge_{W(k')}^kH_1^{dR}(\G_{\kappa,\gq^*}^{(1)})_\sigma
@>{\cong}>>H_1^{dR}(\G_{\kappa,\gq^*}^{(k)})_\sigma\\
@VVV@VVV\\
\Lie(\G_{\kappa,\gq^*}^{(0)})_\sigma^{\otimes_{W(k')}1-k}\otimes_{W(k')}\bigwedge_{W(k')}^k\Lie(\G_{\kappa,\gq^*}^{(1)})_\sigma
@>{\cong}>>\Lie(\G_{\kappa,\gq^*}^{(k)})_\sigma
\end{CD}.$$
We only have to check that the upper horizontal map respects the various Frobenius operators. We have to think of 
them as the maps which are induced from the various Verschiebungen, according to fact \ref{E15} these are given by:
$$\BT(Ver_{P_\xi^\delta})\times_kk':\Gbar_{\kappa,\gq^*}^\delta\times_{k',\tau}k'\rightarrow\Gbar_{\kappa,\gq^*}^\delta,$$
where the graded displays $P_{\xi,\sigma}^{(k)}={P_{\xi,\sigma}^{(0)}}^{\otimes_{W(k)}1-k}\otimes_{W(k)}\bigwedge_{W(k)}^kP_{\xi,\sigma}^{(1)}$ over the residue field $k$ 
stand for the special fibre of the $P_{x,\sigma}^{(k)}$'s. We can now deduce the desired Frobenius equivariance from lemma \ref{vzehn} and part (ii) of proposition \ref{E5}.\\

We next apply lemma \ref{vdreiz} to the unramified lift $\kappa$. Associated to it we have the point $(Y_\xi^{(k)},\iota^{(k)},\lambda^{(k)},\ebar^{(k)})\in\M^{(k)}(k)$ 
with $M_\sigma^{(k)}:=\d^*(Y_\xi^{(k)}[\gq^\infty])_\sigma$, and a $t_{\xi,cris,\sigma}^{(k)}$ and a $t_{\kappa,dR,\sigma}^{(k)}$. Combining this gives:
\begin{eqnarray*}
&&W(k')\otimes_{W(k)}\d^*(Y_\xi^{(k)}[\gq^\infty])_\sigma\cong\\
&&W(k')\otimes_{W(k)}({M_\sigma^{(0)}}^{\otimes_{W(k)}1-k}\otimes_{W(k)}\bigwedge_{W(k)}^kM_\sigma^{(1)})\cong\\
&&W(k')\otimes_{W(k)}\d^*(\Gbar_{x,\gq}^{(k)})_\sigma.
\end{eqnarray*}
Again it is fairly easy to see that this isomorphism is truely defined over $W(k)$, compare everything with any 
$W(k)$-valued lift of $\xi$ for example, or just use lemma \ref{skel}, hence $Y_\xi^{(k)}[\gq^\infty]\cong\Gbar_{x,\gq}^{(k)}$.\\ 

We obtain immediately an abelian scheme $\Yd_x^{(k)}$ over $\Df_x$, by deforming $Y_\xi^{(k)}$. Clearly it inherits a polarization and a $\O_L$-operation from the corresponding data of $\G_{x,\gq}^{(k)}$ and $Y_\xi^{(k)}$. Finally the level $K^{(k)}$-structure $\ebar^{(k)}$ lifts uniquely from $(Y_\xi^{(k)},\iota^{(k)},\lambda^{(k)})$ 
to $(\Yd_x^{(k)},\iota^{(k)},\lambda^{(k)})$ by the rigidity of \'etale covers. In so doing we manufactured a $\Rd_x$-valued point of $\M^{(k)}$, extending the point 
$g^{(k)}\circ\kappa$. With the usual methods one completes the proof of (i) and (ii), please see \cite[proof of theorem 5.4]{ball} for example.\\

Assertion (iii) can be dealt with by a continuity argument: Using the Baire property one can choose a family of embeddings $\iota_i:K(k')\rightarrow\c$ 
such that $\iota_i\circ\kappa$ converges in the complex topology to $\iota_\infty\circ\eta$ where $\iota_\infty$ is some embedding of $R$. The natural diagram
$$\begin{CD}
H_1^{dR}(\Yd_x^{(0)}[\gq^{*\infty}])_\sigma^{\otimes_{\Rd_x}1-k}
\otimes_{\Rd_x}\bigwedge_{\Rd_x}^kH_1^{dR}(\Yd_x^{(1)}[\gq^{*\infty}])_\sigma
@>{\md_{x,\sigma}^{(k)}}>>H_1^{dR}(\Yd_x^{(k)}[\gq^{*\infty}])_\sigma\\
@V{H_1^{dR}(\lambda^{(0\times1)})}VV@V{H_1^{dR}(\lambda^{(k)})}VV\\
H_{dR}^1(Y_{\iota_i\circ\kappa}^{(0)})_\sigma^{\otimes_\c1-k}
\otimes_\c\bigwedge_\c^kH_{dR}^1(Y_{\iota_i\circ\kappa}^{(1)})_\sigma
@>{t_{\iota_i\circ\kappa,dR,\sigma}^{(k)}}>>
H_{dR}^1(Y_{\iota_i\circ\kappa}^{(k)})_\sigma
\end{CD}$$
commutes by construction of $\Yd_x^{(k)}$. If we pass to the limit we get the result for $\iota_\infty\circ\eta$.
\end{proof}

Throughout the whole work we will retain the notation for the graded displays $P^{(k)}_{x,\sigma}$ over $\Rd_x$ and the map $m_{x,\BT}^{(k)}$, where $x$ is a 
closed point. If $\eta$ is a $R$-valued lift we will write $P_{\eta,\sigma}^{(k)}$ and $m_{\eta,\BT}^{(k)}$ for the objects over $R$, and similarly for the fibre over $\xi$.

\begin{rem}
The attantive reader may have observed that the proofs of parts (i) and (ii) of the previous theorem do not really use the deep proposition \ref{dR}, nor does the proof of the extension theorem in \cite{ball}. However, part (iii) does make use of it, and we need it in the proof of theorem \ref{vacht}, and in all compatibility statements that build on it.
\end{rem}

\subsection{Action of $g^{(k)}$ on Quasi-Isogenies} 
\label{qisg}
Let $\eta:\Spec F\rightarrow M^{(0\times1)}\times_EE_\gp$ be a geometric point, i.e. $F$ separably 
closed. Recall that in case $\ch(F)=0$ we associated to it a specific $\Od$-linear isomorphism from
$$H_1(Y_\eta^{(0)},\zd)^{\otimes_\Od1-k}\otimes_\Od\bigwedge_\Od^kH_1(Y_\eta^{(1)},\zd)$$
to
$$H_1(Y_\eta^{(k)},\zd)$$
which, upon choosing an embedding $F\hookrightarrow\c$, could be described as $m_{\eta,B}^{(k)}\otimes1_\zd$. Observe that $\Od=\prod_\gr\O_{L_\gr}$ 
and let us write $T_\gr Y_\eta^{(k)}$ and $m_{\eta,\gr}^{(k)}$ for the $\gr$-components of $H_1(Y_\eta^{(k)},\zd)$ and $m_{\eta,B}^{(k)}\otimes1_\zd$, 
where $\gr$ is a prime of $\O_L$. We need to extend this to the case where $\ch(F)=p$ is coprime to $\gr$. To this end observe that the group 
$\Aut_{\eta}(F)$ of automorphisms of $F$ that fix the geometric point $\eta:\Spec F\rightarrow\M_\gp^{(0\times1)}$ acts naturally on $T_\gr Y_\eta^{(k)}$. 
Observe also that, if $\R$ is a strictly henselian local subring of $F$ such that $\eta$ factors through $\Spec\R$, then there is a canonical generization map: 
$$\begin{CD}
T_\gr Y_\xi^{(k)}@>{\cong_\R}>>T_\gr Y_\eta^{(k)},\\
\end{CD}$$
where $\xi$ arises from $\Spec\R\rightarrow\M_\gp^{(0\times1)}$ by passage to the closed point, leading to $\Spec k\rightarrow\M_\gp^{(0\times1)}$ 
where $k$ is the residue field of $\R$. Moreover for every $\sigma\in\Aut_\eta(F)$ that stabilizes $\R$ there is a self-explanatory commutative diagram:
$$\begin{CD}
T_\gr Y_\xi^{(k)}@>{\sbar}>>T_\gr Y_\xi^{(k)}\\
@V{\cong_\R}VV@V{\cong_\R}VV\\
T_\gr Y_\eta^{(k)}@>{\sigma}>>T_\gr Y_\eta^{(k)}
\end{CD}$$
which we will refer to as the Galois-invariance of $\cong_\R$, here $\sbar\in\Aut_\xi(k)$ is the automorphism 
of $k$ which is induced by the reduction of $\sigma|_\R$ modulo the maximal ideal of $\R$.\\
If $k$ has characteristic $0$ it is easy to see that the diagram
$$\begin{CD}
{T_\gr Y_\xi^{(0)}}^{\otimes_{\O_{L_\gr}}1-k}\otimes_{\O_{L_\gr}}\bigwedge_{\O_{L_\gr}}^kT_\gr Y_\xi^{(1)}@>{m_{\xi,\gr}^{(k)}}>>T_\gr Y_\xi^{(k)}\\
@V{\cong_\R}VV@V{\cong_\R}VV\\
{T_\gr Y_\eta^{(0)}}^{\otimes_{\O_{L_\gr}}1-k}\otimes_{\O_{L_\gr}}\bigwedge_{\O_{L_\gr}}^kT_\gr Y_\eta^{(1)}@>{m_{\eta,\gr}^{(k)}}>>T_\gr Y_\eta^{(k)}
\end{CD}$$
is commutative, (pick embeddings $F,k\hookrightarrow\c$). Now let $\xi$ be a geometric point in characteristic $p$. Let us simply define $m_{\xi,\gr}^{(k)}$ 
to be the unique upper horizontal map rendering the previous diagram commutative, where $F\supset\R\rightarrow k$ is a choice of a characteristic-zero 
generization. The commutativity of the aforementioned diagram in the equal characteristic-zero case shows $m_{\xi,\gr}^{(k)}$ thus introduced does not depend 
on the choice of $F\supset\R\rightarrow k$. Moreover, the Galois-invariance of $\cong_\R$ implies that $m_{\xi,\gr}^{(k)}$ is $\Aut_\xi(k)$-equivariant too.\\
We next study what happens to quasi-isogenies. Assume that $\xi_1$ and $\xi_2$ are $\f^{ac}$-valued points on $\M_\gp^{(0\times1)}$. Assume that
$$\gamma^{(0\times1)}:Y_{\xi_1}^{(0)}\times_{\f^{ac}}Y_{\xi_1}^{(1)}\rightarrow Y_{\xi_2}^{(0)}\times_{\f^{ac}}Y_{\xi_2}^{(1)}$$ 
is $L\oplus L$-linear and preserves the homogeneous polarizations. This quasi-isogeny induces a graded map
$$\gamma_{cris,\sigma}^{(0\times1)}:P_{\xi_1,\sigma}^{(0)}\oplus P_{\xi_1,\sigma}^{(1)}\rightarrow P_{\xi_2,\sigma}^{(0)}\oplus P_{\xi_2,\sigma}^{(1)}.$$ 
By slight abuse of notation we denote by $\gamma_{cris,\sigma}^{(k)}$ the map which is induced according 
to part (i) of proposition \ref{E5}. Similarily we let $\gamma_{et,\gr}^{(k)}$ be the map that arises from letting
$$\gamma_{et,\gr}^{(0\times1)}:T_\gr Y_{\xi_1}^{(0)}\oplus T_\gr Y_{\xi_1}^{(1)}\rightarrow T_\gr Y_{\xi_2}^{(0)}\oplus T_\gr Y_{\xi_2}^{(1)}$$
act on tensors of the form $x_0^{1-k}x_1\wedge\dots\wedge x_k$.

\begin{thm}
\label{vacht}
Let $\xi_1$ and $\xi_2$ be two points of $\M_\gp^{(0\times1)}$ over the field $\f^{ac}$. Let 
$\gamma^{(0\times1)}$ be a quasi-isogeny as above. Then there exists a unique quasi-isogeny 
$$g^{(k)}(\gamma^{(0\times1)}):Y_{\xi_1}^{(k)}\rightarrow Y_{\xi_2}^{(k)}$$
preserving the homogeneous polarizations, and inducing the map such that the diagram
$$\begin{CD}
\BT(\bigoplus_\sigma{P_{\xi_1,\sigma}^{(0)}}^{\otimes_{W(\f^{ac})}1-k}\otimes_{W(\f^{ac})}\bigwedge_{W(\f^{ac})}^kP_{\xi_1,\sigma}^{(1)})
@>{m_{\xi_1,\BT}^{(k)}}>>Y_{\xi_1}^{(k)}[\gq^{*\infty}]\\
@V{\BT(\gamma_{cris}^{(k)})}VV
@V{g^{(k)}(\gamma^{(0\times1)})[\gq^{*\infty}]}VV\\
\BT(\bigoplus_\sigma{P_{\xi_2,\sigma}^{(0)}}^{\otimes_{W(\f^{ac})}1-k}\otimes_{W(\f^{ac})}\bigwedge_{W(\f^{ac})}^kP_{\xi_2,\sigma}^{(1)})
@>{m_{\xi_2,\BT}^{(k)}}>>Y_{\xi_2}^{(k)}[\gq^{*\infty}]
\end{CD}$$
commutes, moreover 
$$m_{\xi_2,\gr}^{(k)}\circ\gamma_{et,\gr}^{(k)}\circ m_{\xi_1,\gr}^{(k)^{-1}}$$ 
agrees with the map from $T_\gr Y_{\xi_1}^{(k)}$ to $T_\gr Y_{\xi_2}^{(k)}$ that is induced by $g^{(k)}(\gamma^{(0\times1)})[\gr^\infty]$.
\end{thm}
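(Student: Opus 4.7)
My strategy is to reduce to characteristic $0$ via Serre--Tate lifting, where $g^{(k)}$ is defined by its moduli interpretation and is manifestly functorial in quasi-isogenies, and then specialize the resulting map back modulo $p$.

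First, pick by Serre--Tate a lift $\eta_1 \in \M_\gp^{(0\times 1)}(W(\f^{ac}))$ of $\xi_1$; theorem \ref{extend} then produces abelian schemes $Y_{\eta_1}^{(k)}$ over $W(\f^{ac})$ deforming the $Y_{\xi_1}^{(k)}$, together with the crystalline comparisons $m_{\eta_1,\BT}^{(k)}$. The main task is to produce a companion lift $\eta_2' \in \M_\gp^{(0\times 1)}(W(\f^{ac}))$ of $\xi_2$ together with a lift $\tilde\gamma^{(0\times 1)}: Y_{\eta_1}^{(0\times 1)} \to Y_{\eta_2'}^{(0\times 1)}$ of the given $\gamma^{(0\times 1)}$. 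At the level of graded displays, the lift $\eta_1$ corresponds to a filtered refinement of $P_{\xi_1}^{(0\times 1)}$; pushing this filtration through the rational map $\gamma_{cris}^{(0\times 1)}\otimes\q$ and taking the appropriate integral saturation inside the display attached to $Y_{\xi_2}^{(0\times 1)}[p^\infty]$ yields an admissible lifted filtration on $P_{\xi_2}^{(0\times 1)}$. The crucial point is that $\gamma^{(0\times 1)}$ is polarization-preserving and $\O_L$-equivariant, so all ranks in each $\sigma$-graded piece match, and the resulting filtered display does deform $P_{\xi_2}^{(0\times 1)}$. This determines $\eta_2'$, and then $\tilde\gamma^{(0\times 1)}$ lifts uniquely.

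Over the generic fiber the classical moduli-theoretic functoriality of $g^{(k)}$ (compare \cite[section 2]{ball}), extended to quasi-isogenies via the formulas $V^{(k)}={V^{(0)}}^{\otimes 1-k}\otimes\bigwedge^k V^{(1)}$, carries $\tilde\gamma^{(0\times 1)}$ to a polarization- and $L$-preserving quasi-isogeny from $Y_{\eta_1}^{(k)}$ to $Y_{\eta_2'}^{(k)}$. Its reduction modulo $p$ is the required
$$g^{(k)}(\gamma^{(0\times 1)}) : Y_{\xi_1}^{(k)} \to Y_{\xi_2}^{(k)}.$$
The stated crystalline commutative square then follows by combining part (iii) of theorem \ref{extend} (applied to both $\eta_1$ and $\eta_2'$) with proposition \ref{E5}(i) and lemma \ref{vzehn}, which track the functoriality of the unnormalized exterior-power display under quasi-isogenies. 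The étale compatibility reduces similarly, via the definition of $m_{\xi,\gr}^{(k)}$ by Galois-equivariant generization, to the tautological identity for $m_{\eta,\gr}^{(k)}$ in characteristic $0$. Uniqueness of $g^{(k)}(\gamma^{(0\times 1)})$ is automatic from the faithfulness of the Dieudonné functor.

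The main obstacle lies in the lifting step: one must verify that transporting the integral Hodge filtration across the rational map $\gamma_{cris}^{(0\times 1)}$ produces an integral, admissible deformation of the target, not merely a rational submodule. The polarization- and $\O_L$-equivariance of $\gamma^{(0\times 1)}$ pin down the correct rank in every $\sigma$-eigenspace, so this reduces to routine display-theoretic bookkeeping; once this is settled, the rest of the proof is unpacking of definitions.
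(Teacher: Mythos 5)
Your overall architecture (lift to characteristic $0$, invoke the moduli-theoretic functoriality of $g^{(k)}$ there, reduce back, and then unpack the crystalline and \'etale compatibilities via theorem \ref{extend}(iii), proposition \ref{E5}(i) and lemma \ref{vzehn}) is the same as the paper's, and the "unpacking" half of your argument is fine. The gap is in the step you dismiss as routine bookkeeping: producing the companion lift $\eta_2'$ by pushing the Hodge filtration of $\eta_1$ through $\gamma_{cris}^{(0\times1)}\otimes\q$ and saturating. The saturated transported filtration is indeed a direct summand of the correct rank in each $\sigma$-eigenspace, but there is no reason for its reduction modulo $p$ to coincide with the Hodge filtration $\omega$ of $Y_{\xi_2}^{(0\times1)}$, and if it does not, it defines a lift of some \emph{other} point of $\Mbar^{(0\times1)}$, not of $\xi_2$. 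Already for a single ordinary $p$-divisible group of height $2$ this fails: write $\d^*=W e_0\oplus W e_1$ with $\omega=\overline{e_1}$, take the quasi-isogeny acting by $e_0\mapsto pe_0$, $e_1\mapsto e_1$, and the lift with $Fil_1=W(e_1+pu\,e_0)$, $u$ a unit; the transported saturation is $W(e_1+u\,e_0)$, whose reduction is not $\overline{e_1}$. Polarization- and $\O_L$-equivariance constrain ranks and isotropy but not this mod-$p$ reduction, so a fixed (or even arbitrary) lift $\eta_1$ need not admit any $\eta_2'$ through which $\gamma^{(0\times1)}$ lifts.

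This is precisely where the paper spends its effort. It first reduces, via lemma \ref{verynasty}, to quasi-isogenies that are products of isogenies $\gamma$ of the special type ${\gamma}^t\circ\lambda'\circ\gamma=p\lambda$ (equivalently, to lifting a finite subgroup scheme $K^{(1)}$ sandwiched between $Y_\xi^{(1)}[\gq^{*\infty}]$ and its image under $p$), and then proves that the pair (source lift, kernel) can be lifted simultaneously to a mixed-characteristic discrete valuation ring by exhibiting explicit affine charts of the Rapoport--Zink local model $M^{loc}_\sigma$ and checking it is flat over $\O_{E_\gp}$. Note that even there one does not lift $\xi_1$ first and then chase the isogeny: the lift of the source and the lift of the kernel are chosen together, as a point of the local model. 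To repair your argument you would need to supply both ingredients — the factorization of an arbitrary polarization-preserving quasi-isogeny into such elementary pieces, and an existence statement (flatness of the relevant local model, or an equivalent deformation-theoretic argument) guaranteeing that each piece lifts.
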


\begin{proof}
Let $\eta_1,\eta_2:\Spec R\rightarrow\M_\gp^{(0\times1)}$ be lifts of $\xi_1$ and $\xi_2$ over a mixed characteristic complete discrete valuation ring with uniformizer 
$\varpi$ and residue field $\f^{ac}$. If we apply lemma \ref{vzehn} to the groups $\G_i^{(k)}=\BT(\bigoplus_\sigma P_{\eta_i,\sigma}^{(k)})$, we get the diagram:
$$\begin{CD}
\q\otimes H_1^{dR}(\G_1^{(0)})_\sigma^{\otimes_R1-k}\otimes_R\bigwedge_R^kH_1^{dR}(\G_1^{(1)})_\sigma
@>{\md_{1,\sigma}^{(k)}}>>\q\otimes H_1^{dR}(\G_1^{(k)})_\sigma\\
@V{\tilde\gamma_{cris,\sigma}^{(k)}}VV
@V{\widetilde{\gamma_{cris,\sigma}^{(k)}}}VV\\
\q\otimes H_1^{dR}(\G_2^{(0)})_\sigma^{\otimes_R1-k}\otimes_R\bigwedge_R^kH_1^{dR}(\G_2^{(1)})_\sigma
@>{\md_{2,\sigma}^{(k)}}>>\q\otimes H_1^{dR}(\G_2^{(k)})_\sigma
\end{CD}$$
here the symbol $\widetilde{\gamma_{cris,\sigma}^{(k)}}$ indicates the Grothendieck-Messing functoriality of the map $\BT(\gamma_{cris}^{(k)})$ and the 
symbol $\tilde\gamma_{cris,\sigma}^{(k)}$ means the effect of$\widetilde{\gamma_{cris,\sigma}^{(1)}}$ and $\widetilde{\gamma_{cris,\sigma}^{(0)}}$ 
on tensors of the kind $x_0^{1-k}x_1\wedge\dots\wedge x_k$. If we compose the rows of the diagram with the $H_1^{dR}$ of the maps 
$m_{\eta_i,\BT}^{(k)}:\G_i^{(k)}\rightarrow Y_{\eta_i}^{(k)}[\gq^{*\infty}]$, and use part (iii) of theorem \ref{extend}, we get the diagram:
$$\begin{CD}
{H_1^{dR}(Y_{\eta_1}^{(0)})_\sigma^{\otimes_R1-k}\otimes_R\bigwedge_R^kH_1^{dR}(Y_{\eta_1}^{(1)})_\sigma}
@>{m_{\eta_1,dR}^{(k)}}>>{H_1^{dR}(Y_{\eta_1}^{(k)})_\sigma}\\
@V{\tilde\gamma_{cris,\sigma}^{(k)}}VV
@V{\widetilde{\gamma_{cris,\sigma}^{(k)}}}VV\\
{H_1^{dR}(Y_{\eta_2}^{(0)})_\sigma^{\otimes_R1-k}\otimes_R\bigwedge_R^kH_1^{dR}(Y_{\eta_2}^{(1)})_\sigma}
@>{m_{\eta_2,dR}^{(k)}}>>{H_1(Y_{\eta_2}^{(k)})_\sigma}.
\end{CD}$$
All we have to do is check that there exists a quasi-isogeny $Y_{\xi_1}^{(k)}\rightarrow Y_{\xi_2}^{(k)}$ that induces the vertical right map of this diagram, and 
``commutes'' with the $m_{\xi_i,\gr}^{(k)}$'s. In case $\gamma^{(0\times1)}$ lifts to a quasi-isogeny between the $Y_{\eta_i}^{(0)}\times_RY_{\eta_i}^{(1)}$'s 
over $R$ this is evident, as one can work over the extension $\c\supset R$ to come up with a quasi-isogeny $Y_{\eta_1}^{(k)}\rightarrow Y_{\eta_2}^{(k)}$ 
``commuting'' with both $m_{\eta_i,dR}^{(k)}$ and $m_{\eta_i,\gr}^{(k)}$. In case $\gamma^{(0\times1)}$ is a product of such isogenies, this is evident too.\\

In view of the lemma \ref{verynasty} below we can write every quasi-isogeny as a product of (possibly inverses of) isogenies with 
$\deg(\gamma^{(0\times1)})=p^{(n+1)[L^+:\q]}\z_{(p)}^\times$, i.e. satisfying:
$${\gamma^{(0\times1)}}^t\circ\lambda^{'(0\times1)}\circ\gamma^{(0\times1)}=p\lambda^{(0\times1)},$$
so that we can rephrase the problem as follows: Suppose $K^{(1)}$ is a subgroup scheme over $\f^{ac}$ such that
$$\begin{CD}
Y_\xi^{(1)}[\gq^{*\infty}]@>>>Y_\xi^{(1)}[\gq^{*\infty}]/K^{(1)}\\Y_\xi^{(1)}[\gq^{*\infty}]/K^{(1)}@>p>>Y_\xi^{(1)}[\gq^{*\infty}]
\end{CD}$$
are $\O_{L_{\gq^*}}$-linear isogenies with degrees $p^{rk}$, and $p^{r(n-k)}$ -and similarily for $Y_\xi^{(0)}[\gq^{*\infty}]$- can one find lifts to $p$-divisible groups 
$Y_\eta^{(1)}[\gq^{*\infty}]$, $Y_\eta^{(0)}[\gq^{*\infty}]$ over a mixed characteristic discrete valuation ring $R$ together with lifts of subgroup schemes $\K^{(1)}$ 
and $\K^{(0)}$ with $\O_{L_{\gq^*}}$-operation? It is quite obvious how to lift $Y_\xi^{(0)}[\gq^{*\infty}]$, and $K^{(0)}$, in order to lift $Y_\xi^{(1)}[\gq^{*\infty}]$, and 
$K^{(1)}$ we use the machinery of the local models of ~\cite{rapoport}. In the case at hand $M^{loc}$ decomposes into a product of $M_\sigma^{loc}$ parameterized 
by the elements $\sigma\in\Omega$. The functor which the factor $M_\sigma^{loc}$ represents, can be described as follows: Write $\O_{E_\gp}^n=A\oplus B$ with 
$\rk_{\O_{E_\gp}}A=k$, $\rk_{\O_{E_\gp}}B=n-k$. Then over $S/\Spec\O_{E_\gp}$ the points consist of quadruples $(t_A,\phi_A,t_B,\phi_B)$ where $t_A$ and $t_B$ 
are $\O_S$-modules, and $\phi_A$ and $\phi_B$ are $\O_S$-linear surjective maps from $\O_{E_\gp}^n\otimes_{\O_{E_\gp}}\O_S$ to $t_A$ and $t_B$ such that:
\begin{itemize}
\item
$t_A$ and $t_B$ are projective of rank $n-1$
\item
the kernel of $\phi_A$ is mapped into the kernel of $\phi_B$ under the map on $\O_{E_\gp}^n$ defined by the $A\oplus B\ni a+b\mapsto a+pb$
\item
the kernel of $\phi_B$ is mapped into the kernel of $\phi_A$ under the map on $\O_{E_\gp}^n$ defined by $a+b\mapsto pa+b$
\end{itemize}
Observe that $M_\sigma^{loc}$ is naturally a subscheme of $\p_{\O_{E_\gp}}^{n-1}\times\p_{\O_{E_\gp}}^{n-1}$, let us write $(t_1,\dots,t_n)$ for 
the coordinates of the first factor and $(s_1,\dots,s_n)$ for the coordinates of the second factor. This should mean that $t_A$ ($t_B$) is the quotient of 
$\O_{E_\gp}^n\otimes_{\O_{E_\gp}}\O_S$ by the $\O_S$ module generated by the element $(t_1,\dots,t_n)$ (resp. $(s_1,\dots,s_n)$). It is easy to see that the 
open subset of $M_\sigma^{loc}$ where one of the induced maps $\ker(\phi_A)\rightarrow\ker(\phi_B)$ or $\ker(\phi_B)\rightarrow\ker(\phi_A)$ is an isomorphism, 
is actually smooth. The rest is covered by the Zariski open sets $\{s_\nu t_\mu\neq0\}$ where $\nu\leq k<\mu$. On this set $M_\sigma^{loc}$ has the equations:
\begin{eqnarray*}
&&t_1=\frac{s_1}{s_\nu}t_\nu,\dots,t_k=\frac{s_k}{s_\nu}t_\nu\\
&&s_{k+1}=\frac{t_{k+1}}{t_\mu}s_\mu,\dots,s_n=\frac{t_n}{t_\mu}s_\mu\\
&&\frac{s_\mu}{s_\nu}\frac{t_\nu}{t_\mu}=p
\end{eqnarray*}
This is because the above equations are equivalent to:
$$(t_1,\dots,t_k,pt_{k+1},\dots,pt_n)=\frac{t_\nu}{s_\nu}(s_1,\dots,s_n)$$
and
$$(ps_1,\dots,ps_k,s_{k+1},\dots,s_n)=\frac{s_\mu}{t_\mu}(t_1,\dots,t_n).$$
So this affine chart is the spectrum of the factor ring of the polynomial $\O_{E_\gp}$-algebra in the $n+1$ variables 
$$\frac{s_1}{s_\nu},\dots,\frac{s_{\nu-1}}{s_\nu},\frac{s_\mu}{s_\nu},\frac{s_{\nu+1}}{s_\nu},\dots,\frac{s_k}{s_\nu},
\frac{t_{k+1}}{t_\mu},\dots,\frac{t_{\mu-1}}{t_\mu},\frac{t_\nu}{t_\mu},\frac{t_{\mu+1}}{t_\mu},\dots,\frac{t_n}{t_\mu}$$
by the ideal generated by the single element $\frac{s_\mu}{s_\nu}\frac{t_\nu}{t_\mu}-p$, and is therefore flat over $\O_{E_\gp}$. Therefore the requested lifts do exist.
\end{proof}

\begin{lem}
\label{verynasty}
Let $k$ be a perfect field of characteristic $p$, and let $M=\bigoplus_{\sigma\in\z/r\z}M_\sigma$ be a $\z/r\z$-graded Dieudonn\'e module over $k$, such that 
\begin{equation*}
\dim_kM_{\sigma+1}/\phi(M_\sigma)\leq1
\end{equation*}
holds. Consider a $\z/r\z$-graded Dieudonn\'e submodule $N_\sigma\subset M_\sigma$ such that the lengths of the $W(k)$-modules 
$M_\sigma/N_\sigma$ are finite and independent of $\sigma$. Then at least one of the following two assertions holds:
\begin{itemize}
\item[(i)]
$pM_\sigma\subset N_\sigma$ for all $\sigma$
\item[(ii)]
$N_\sigma$ is contained in at least one other $\z/r\z$-graded Dieudonn\'e submodule $K_\sigma\subset M_\sigma$, 
such that the lengths of $K_\sigma/N_\sigma$ and $M_\sigma/K_\sigma$ are non-zero and independent of $\sigma$.
\end{itemize}
\end{lem}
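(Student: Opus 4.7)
My plan is to assume (i) fails and work with $X=M/N$, a graded torsion $W(k)$-Dieudonn\'e module of constant $\sigma$-length $\ell$; the task then becomes producing a graded sub-Dieudonn\'e module $Y\subsetneq X$ of constant $\sigma$-length $\ell'\in(0,\ell)$, since its preimage $K\subset M$ automatically satisfies (ii). The first step extracts the key structural consequence of the hypothesis: applying the snake lemma to $0\to N\to M\to X\to 0$ with the semilinear Frobenius $\phi$ and using that $\phi$ is injective on the torsion-free modules $M$ and $N$, one obtains the four-term exact sequence
\[
0\to\ker\bar\phi_\sigma\to N_{\sigma+1}/\phi N_\sigma\to M_{\sigma+1}/\phi M_\sigma\to\mathrm{coker}\,\bar\phi_\sigma\to 0,
\]
from which $\mathrm{length}_{W(k)}\ker\bar\phi_\sigma=\mathrm{length}_{W(k)}\mathrm{coker}\,\bar\phi_\sigma\le 1$.

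Next I apply Fitting's lemma to the $\tau^r$-semilinear endomorphism $\bar\Phi=\bar\phi^r$ of each $X_\sigma$, yielding $X_\sigma=I_\sigma\oplus J_\sigma$ with $\bar\Phi$ bijective on $I_\sigma$ and nilpotent on $J_\sigma$. A routine check shows that both $I$ and $J$ are graded sub-Dieudonn\'e modules of $X$: stability under $\bar\phi$ is immediate, while $\bar\psi$-stability holds on $I$ via $\bar\psi=p\bar\phi^{-1}$ (where $\bar\phi$ is bijective) and on $J$ via the general inclusion $pX\subset\bar\phi X$ combined with $\psi\phi=p$. Because $\bar\phi\colon I_\sigma\to I_{\sigma+1}$ is itself bijective, the $W(k)$-module types of the $I_\sigma$ coincide for all $\sigma$, so both $\ell_I:=\mathrm{length}\,I_\sigma$ and $\ell-\ell_I=\mathrm{length}\,J_\sigma$ are independent of $\sigma$. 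If $\ell_I\in(0,\ell)$ I take $Y=I$. If $\ell_I=\ell$, so that $\bar\phi$ is bijective on all of $X$, the $X_\sigma$ are pairwise $W(k)$-isomorphic and $pX$ is a sub-Dieudonn\'e module of constant length $\ell-m$, where $m$ is the common number of invariant factors; failure of (i) forces $m<\ell$, and $Y=pX$ works.

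The delicate residual case is $\ell_I=0$, where $\bar\phi$ acts nilpotently on $X$ and $pX$ itself may fail to have constant length, because the number of invariant factors $m_\sigma$ of $X_\sigma$ is free to vary. Here I exploit the graded submodule $\ker\bar\phi\subset X$, which is automatically a sub-Dieudonn\'e module: since $\ker\bar\phi_\sigma$ has length $\le 1$ it is killed by $p$, and for $x\in\ker\bar\phi$ the identity $\bar\phi\bar\psi x=px=0$ forces $\bar\psi x\in\ker\bar\phi_{\sigma-1}$. Starting from $Y^{(0)}=pX+\ker\bar\phi$ I iteratively enlarge the deficient components $Y^{(i)}_\sigma$ by cyclically adjoining the $\bar\phi$-preimage $\bar\phi^{-1}(Y^{(i)}_{\sigma+1})\cap X_\sigma$ at each degree below the target length, equivalently by bringing in higher iterated kernels $\ker\bar\phi^j$ at the relevant $\sigma$. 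The bound $|\ker\bar\phi_\sigma|\le 1$ controls how much the $W(k)$-type of $X_\sigma$ can shift from one $\sigma$ to the next, so the procedure stabilises after at most $r$ steps at a $\bar\phi$- and $\bar\psi$-stable graded sub-Dieudonn\'e module whose $\sigma$-components share a common $W(k)$-length strictly between $0$ and $\ell$. The hardest step is verifying that this hand-built closure really does terminate at a properly intermediate submodule; granting it, taking $K$ to be the preimage of $Y$ in $M$ completes the proof of (ii).
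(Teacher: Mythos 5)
Your reduction to the torsion quotient $X=M/N$, the snake-lemma identity $\lg_{W(k)}\ker\bar\phi_\sigma=\lg_{W(k)}\operatorname{coker}\bar\phi_\sigma\le1$, and the Fitting decomposition $X=I\oplus J$ are all sound, and the two cases $0<\lg_{W(k)}I_\sigma<\ell$ and $I=X$ are correctly disposed of. But the remaining case, where $\bar\phi$ is nilpotent around the circle, is the real content of the lemma, and there your argument stops exactly where the difficulty begins: you concede that the termination of the ``hand-built closure'' at a submodule of constant intermediate length is unverified. As described the procedure is also underdetermined: no target length is ever fixed; adjoining the \emph{full} preimage $\bar\phi^{-1}(Y_{\sigma+1})\cap X_\sigma$ at every stage exhausts $X$ in the nilpotent case (since $\bigcup_j\ker\bar\phi^j=X$ there), while adjoining only part of it requires choices whose $\bar\phi$- and $\bar\psi$-compatibility after a full turn around $\z/r\z$ is precisely what has to be proved. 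This is a genuine gap, not a routine verification.

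The missing ingredient is an anchor for the induction. The paper works directly with $\Nbar_\sigma=N_\sigma+pM_\sigma$ (your $pX_\sigma$), sets $h_\sigma=\lg_{W(k)}(M_\sigma/\Nbar_\sigma)$, and observes that when (i) fails one has $0<h_{\sigma_0}<\lg_{W(k)}(M_\sigma/N_\sigma)$ for $h_{\sigma_0}:=\min_\sigma h_\sigma$ (positivity by Nakayama, the strict upper bound because $\Nbar_{\sigma_1}\supsetneq N_{\sigma_1}$ somewhere). It then makes a \emph{single} backwards pass starting from $K_{\sigma_0}=\Nbar_{\sigma_0}$, choosing at each step a $K_\sigma$ with $\Nbar_\sigma\subset K_\sigma\subset M_\sigma\cap\phi^{-1}(K_{\sigma+1})$ and $\lg_{W(k)}(M_\sigma/K_\sigma)=h_{\sigma_0}$; such a choice exists because $\lg_{W(k)}(M_\sigma/(M_\sigma\cap\phi^{-1}(K_{\sigma+1})))\le\lg_{W(k)}(M_{\sigma+1}/K_{\sigma+1})=h_{\sigma_0}\le h_\sigma$, and closing up after one full turn is automatic since $\phi(\Nbar_{\sigma_0-1})\subset\Nbar_{\sigma_0}$. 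The minimality of $h_{\sigma_0}$ is exactly what guarantees there is always room to select an intermediate module of the uniform colength --- the fact your cyclic iteration cannot supply. Note that this one argument treats all three of your cases uniformly, so the Fitting decomposition, while harmless, is not needed; if you want to keep your case division, you should replace the closure procedure in the nilpotent case by this anchored one-pass construction.
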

\begin{proof}
Notice that $\lg_{W(k)}(M_\sigma/N_\sigma)=\lg_{W(k)}(M_{\sigma+1}/N_{\sigma+1})<\infty$ implies 
$\lg_{W(k)}(N_{\sigma+1}/\phi(N_\sigma))=\lg_{W(k)}(M_{\sigma+1}/\phi(M_\sigma))\leq1$.\\
Suppose that (i) does not hold, consider the $\z/r\z$-graded Dieudonn\'e submodule $\Nbar_\sigma=N_\sigma+pM_\sigma$, observe that the 
numbers $\lg_{W(k)}(M_\sigma/\Nbar_\sigma)=h_\sigma$ may depend on $\sigma$, but let us assume without loss of generality that they attain 
their minimum for $\sigma=0$. In view of the note above it is enough to look for $\z/r\z$-graded subcrystals $K_\sigma\supset\Nbar_\sigma$: Starting 
from $\Nbar_r=\Nbar_0=K_0$ we proceed by downward induction. Since $\lg_{W(k)}(M_\sigma/(M_\sigma\cap\phi^{-1}(K_{\sigma+1}))\leq h_0$ 
while $\lg_{W(k)}(M_\sigma/\Nbar_\sigma)\geq h_0$ we are allowed to choose some $W(k)$-submodule $K_\sigma$ sandwiched between 
$\Nbar_\sigma$ and $M_\sigma\cap\phi^{-1}(K_{\sigma+1})$ and satisfying $\lg_{W(k)}(M_\sigma/K_\sigma)=h_0$.
\end{proof}

\section{Endomorphisms}
\label{tat}
In this chapter we study the relationship between the endomorphism algebra of $Y_\xi^{(1)}$ and the endomorphism algebra of $Y_\xi^{(k)}$. In the special case of a 
$\mu$-ordinary point $\xi$ this subject was touched upon in \cite[section 6]{ball}, but here we want to be more general. However we continue to assume that $\xi$ is a 
point with values in the algebraic closure $\f^{ac}$ of $\f_{p^r}$. Our aim is to construct a canonical $*$-preserving map of $L$-algebras:

\begin{equation}
\label{wdrei}
op_\xi^{(k)}:\sy_L^k(\End_L^0(Y_\xi^{(1)}))\rightarrow\End_L^0(Y_\xi^{(k)}),
\end{equation}

where the left hand side acquires an involutive $L$-algebra structure through the natural embedding into $\End_L^0(Y_\xi^{(1)})^{\otimes_Lk}$. 
Our tools are theorem \ref{vacht} and lifts into characteristic $0$. Observe that passing to the $\gr$-adic or crystalline homology theories gives maps:
\begin{eqnarray}
\label{newfuenf}
&&\sy_{L_\gr}^k(\End_{L_\gr}^0(T_{\xi,\gr}^{(1)}))\rightarrow\End_{L_\gr}^0(T_{\xi,\gr}^{(k)})\\
\label{weshalb}
&&\sy_{K(\f_{p^r})}^k(\End_{K(\f^{ac})[F^r]}^0(P_{\xi,0}^{(1)}))\rightarrow\End_{K(\f^{ac})[F^r]}^0(P_{\xi,0}^{(k)})
\end{eqnarray}
here we write $T_{\xi,\gr}^{(k)}={T_\gr Y_\xi^{(0)}}^{\otimes_{\O_{L_\gr}}1-k}\otimes_{\O_{L_\gr}}\bigwedge_{\O_{L_\gr}}^kT_\gr Y_\xi^{(1)}$ and use the convention that a tensor
$$f=\sum_\nu f_{\nu,1}\otimes_Q\dots\otimes_Qf_{\nu,k}\in\End_Q(V)^{\otimes_Qk},$$
where $V$ is some $Q$-vector space, acts on $V\otimes_Q\dots\otimes_QV$ by mapping an 
element of the form $x_1\otimes_Q\dots\otimes_Qx_k$ to the element determined by the formula
$$\sum_\nu f_{\nu,1}(x_1)\otimes_Q\dots\otimes_Qf_{\nu,k}(x_k)\in V\otimes_Q\dots\otimes_QV.$$
If the tensor $f\in\End_Q(V)^{\otimes_Qk}$ lies in $\sy_Q^k(\End_Q(V))$ i.e. if it satisfies
$$f=\sum_\nu f_{\nu,\pi(1)}\otimes_Q\dots\otimes_Qf_{\nu,\pi(k)}$$
for all permutations $\pi\in S_k$, then it is easy to see it preserves all the $S_k$-eigenspaces of $V\otimes_Q\dots\otimes_QV$, and in particular we get a map
$$\sy_Q^k(\End_Q(V))\rightarrow\End(\bigwedge_Q^kV).$$
Using these conventions we can state:

\begin{prop}
\label{vsieben}
Let $\xi$ be as above. Then the algebra maps
\begin{equation}
\label{wfuenf}
op_\xi^{(k)}[\gr^\infty]:\sy_{L_\gr}^k(\End_{L_\gr}^0(Y_\xi^{(1)}[\gr^\infty]))\rightarrow\End_{L_\gr}^0(Y_\xi^{(k)}[\gr^\infty]),
\end{equation}
and
\begin{equation}
\label{wvier}
op_\xi^{(k)}[\gq^{*\infty}]:\sy_{L_{\gq^*}}^k(\End_{L_{\gq^*}}^0(Y_\xi^{(1)}[\gq^{*\infty}]))\rightarrow\End_{L_{\gq^*}}^0(Y_\xi^{(k)}[\gq^{*\infty}]),
\end{equation}
which are obtained from \eqref{newfuenf} and \eqref{weshalb}, send the subalgebra $\sy_L^k\End_L^0(Y_\xi^{(1)})$ 
into $\End_L^0(Y_\xi^{(k)})$, moreover the mapping so induced does not depend on $\gr$ or $\gq^*$.
\end{prop}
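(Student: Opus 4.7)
The plan is to construct, for each $f\in\End_L^0(Y_\xi^{(1)})$, an actual endomorphism $\bigwedge^k f\in\End_L^0(Y_\xi^{(k)})$ that reproduces $op_\xi^{(k)}[\gr^\infty](f^{\otimes k})$ on every $\gr$-adic realisation and $op_\xi^{(k)}[\gq^{*\infty}](f^{\otimes k})$ on the crystalline realisation simultaneously. Once this is done, $L$-linearity and the polarisation identity extend the construction from pure tensors to all of $\sy_L^k\End_L^0(Y_\xi^{(1)})$, and the independence of $\gr$ and $\gq^*$ follows because the constructed object is a single algebraic element. The engine is theorem \ref{vacht}, applied not to an endomorphism of a single point but to a quasi-isogeny between $\xi$ and a suitably modified moduli point $\xi'$.

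Concretely, after replacing $f$ by $Nf$ for some positive integer $N$, arrange that $f$ is an honest isogeny, and let $\xi'\in\M_\gp^{(0\times1)}(\f^{ac})$ be the moduli point with underlying data $(Y_\xi^{(0)},Y_\xi^{(1)},\iota^{(0)},\iota^{(1)})$ but with twisted polarization class $\q_{>0}(\lambda^{(0)},(f^\vee)^{-1}\lambda^{(1)}f^{-1})$ and twisted level structure $(\ebar^{(0)},f\circ\ebar^{(1)})$; Kottwitz's determinant condition, the Rosati-invariance of $\iota^{(1)}$ (which uses the $L$-linearity of $f$), and the $K^{(0\times1)}$-level compatibility are all preserved under this twist. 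The pair $(\mathrm{id}_{Y_\xi^{(0)}},f):Y_\xi^{(0\times1)}\rightarrow Y_{\xi'}^{(0\times1)}$ is then an $L$-linear quasi-isogeny preserving the homogeneous polarization classes with multiplier $c=1$, so theorem \ref{vacht} produces a quasi-isogeny $g^{(k)}(\mathrm{id},f):Y_\xi^{(k)}\rightarrow Y_{\xi'}^{(k)}$. Because the exterior-power formula makes $Y^{(k)}$ canonically isogenous to ${Y^{(0)}}^{\otimes_{\O_L}1-k}\otimes_{\O_L}\bigwedge_{\O_L}^k Y^{(1)}$, depending only on $(Y^{(0)},Y^{(1)})$ and not on the polarization, one has a canonical identification $Y_{\xi'}^{(k)}\cong Y_\xi^{(k)}$. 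Under this identification $F:=g^{(k)}(\mathrm{id},f)$ becomes an element of $\End_L^0(Y_\xi^{(k)})$ whose action on $T_\gr Y_\xi^{(k)}$ through $m_{\xi,\gr}^{(k)}$ is $1\otimes\bigwedge^k f$ and whose action on $P_{\xi,\sigma}^{(k)}$ through $m_{\xi,\BT}^{(k)}$ is the crystalline analogue, by the very content of theorem \ref{vacht}. These are precisely $op_\xi^{(k)}[\gr^\infty](f^{\otimes k})$ and $op_\xi^{(k)}[\gq^{*\infty}](f^{\otimes k})$, so one and the same $L$-rational $F$ witnesses every value.

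Since $L$ has characteristic zero, the polarisation identity writes any $s\in\sy_L^k\End_L^0(Y_\xi^{(1)})$ as $\sum_ia_if_i^{\otimes k}$ with $a_i\in L$, so that $op_\xi^{(k)}[\gr^\infty](s)=\sum_ia_iF_i$ is an element of $\End_L^0(Y_\xi^{(k)})$ independent of $\gr$ and of $\gq^*$. Multiplicativity reduces to the pure-tensor identity $F_1\circ F_2=g^{(k)}(\mathrm{id},f_1f_2)$, which comes from $g^{(k)}:G^{(0\times1)}\rightarrow G^{(k)}$ being a group homomorphism. The $*$-preservation is checked on pure tensors: theorem \ref{vacht} gives $F^\vee\lambda^{(k)'}F=\lambda^{(k)}$, and substituting the explicit formula $\lambda^{(k)'}=(f^{(k)\vee})^{-1}\lambda^{(k)}(f^{(k)})^{-1}$ (with $f^{(k)}:=\bigwedge^k f$ implicit in the exterior-power polarisation rule) yields $F^*=\bigwedge^k f^*$. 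If a direct characteristic-$p$ computation becomes awkward, one transports the identity across a characteristic-zero lift via part (iii) of theorem \ref{extend} combined with proposition \ref{dR}, where $*$-preservation is the classical Betti-theoretic statement.

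The main obstacle is verifying that the twisted datum $\xi'$ is genuinely a point of the integral model $\M_\gp^{(0\times1)}$ (and not merely of its generic fibre): one must confirm that $\lambda^{'(1)}$ remains $p$-principal on an appropriate integral lattice after the twist by $f$, and that the level-$K^{(0\times1)}$-class is preserved. Both checks are elementary book-keeping using the $L$-linearity of $f$ and the freedom to scale by $N$, but they have to be spelled out before theorem \ref{vacht} can be applied.
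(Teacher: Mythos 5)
Your strategy is genuinely different from the paper's, and it has a gap at its central step. Theorem \ref{vacht} produces a quasi-isogeny $g^{(k)}(\gamma^{(0\times1)}):Y_{\xi_1}^{(k)}\rightarrow Y_{\xi_2}^{(k)}$ between fibres of the universal abelian scheme at two points of the \emph{integral} model. For $f$ not prime to $p$, your twisted datum $\xi'$ is a genuinely different point of $\M_\gp^{(0\times1)}$ from $\xi$: the moduli equivalence is only up to prime-to-$p$ isogeny, and the class of $(f^\vee)^{-1}\lambda^{(1)}f^{-1}$ is $p$-principal on the image under $f$ of the original lattice, not on the original $p$-divisible group. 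So theorem \ref{vacht} hands you a quasi-isogeny $Y_\xi^{(k)}\rightarrow Y_{\xi'}^{(k)}$ and nothing more. The ``canonical identification $Y_{\xi'}^{(k)}\cong Y_\xi^{(k)}$'' you invoke to close the loop is exactly what is unavailable in characteristic $p$: at a closed point $Y^{(k)}$ is manufactured from the display of the $p$-divisible group and deformation theory (theorem \ref{extend}), not from a tensor construction on the abelian variety alone, and showing that the outputs at $\xi$ and $\xi'$, together with the comparison maps $m_{\cdot,\gr}^{(k)}$ and $m_{\cdot,\BT}^{(k)}$, match up under $\bigwedge^kf$ is essentially equivalent to the statement being proved. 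For unitary $f$ the problem disappears because then $\xi'=\xi$ on the nose; for general $f$ it does not. A secondary problem: the polarisation identity forces you to evaluate on elements $g=\sum_{\nu\in Z}f_\nu$, which need not be invertible in $\End_L^0(Y_\xi^{(1)})\cong\End_L(B)$ (an algebra full of zero divisors), so the twist $(g^\vee)^{-1}\lambda^{(1)}g^{-1}$ is not even defined for such $g$, and no rescaling by $N$ repairs this.

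The paper's proof is built precisely to avoid both issues. It first uses Tate's theorem over a large finite field of definition to show that $op_\xi^{(k)}[\gr^\infty]$ and $op_\xi^{(k)}[\gq^{*\infty}]$ already carry $\sy_L^k\End_L^0(Y_\xi^{(1)})$ into $\End_L^0(Y_\xi^{(k)})\otimes_LQ$ for $Q\in\{L_\gr,L_{\gq^*}\}$, so that $f\mapsto op_\xi^{(k)}(f^{\otimes_Lk})$ becomes a polynomial map $q$ between affine spaces possessing $L$-forms. It then observes that $L$-rationality of $q$ may be checked on any Zariski-dense subset, and applies theorem \ref{vacht} only on $S=\{f\in\End_L^0(Y_\xi^{(1)})^\times\mid ff^*=1\}$ --- exactly the elements for which $(\mathrm{id},f)$ preserves the homogeneous polarisation of the single point $\xi$, so that $g^{(k)}((\mathrm{id},f))$ is outright an element of $\End_L^0(Y_\xi^{(k)})$. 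Zariski-density of $S$ (Borel) then yields the result for all $f$, including non-invertible ones, and independence of $\gr$ and $\gq^*$ comes along for free. To salvage your route you would have to extend theorem \ref{vacht} to non-polarisation-preserving $p$-power quasi-isogenies and re-verify all the compatibilities of the $m^{(k)}$'s along the way; the rationality-plus-density argument is the cheaper path.
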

\begin{proof}
Let us start with the observation that the restrictions of $op_\xi^{(k)}[\gr^\infty]$ (resp. $op_\xi^{(k)}[\gq^{*\infty}]$) 
to the algebras $\End_L^0(Y_\xi^{(1)})\otimes_LQ$ define maps:
$$\sy_L^k(\End_L^0(Y_\xi^{(1)}))\otimes_LQ\rightarrow\End_L^0(Y_\xi^{(k)})\otimes_LQ,$$
for $Q\in\{L_\gr,L_{\gq^*}\}$. One can see this as follows: Pick some large finite field $k\subset\f^{ac}$ over which $\xi$ is defined, and to which all endomorphisms of 
all $Y_\xi^{(k)}$'s descend. Over this field $T_{\xi,\gr}^{(k)}$ acquires a $\Gal(\f^{ac}/k)$-operation (and the $P_{\xi,\sigma}^{(k)}$'s over $\f^{ac}$ descend to displays 
over $k$), moreover the maps $m_{\xi,\gr}^{(k)}:T_{\xi,\gr}^{(k)}\rightarrow T_\gr Y_\xi^{(k)}$ are preserved by the $\Gal(\f^{ac}/k)$-operation (resp. $m_{\xi,\BT}^{(k)}$ 
descends to $W(k)$). By Tate's theorem (cf. \cite{tate}) we deduce that $op_\xi^{(k)}[\gr^\infty]$ sends elements in $\sy_L^k(\End_L^0(Y_\xi^{(1)}))$, 
being $\Gal(\f^{ac}/k)$-invariant, at least into $\End_L^0(Y_\xi^{(k)})\otimes_LL_\gr$, and the same holds for $op_\xi^{(k)}[\gq^{*\infty}]$.\\

In order to complete the proof of the proposition it suffices to consider the elements $f\otimes_L\dots\otimes_Lf$, this is due to
$$\sum_{\pi\in S_k} f_{\pi(1)}\otimes_L\dots\otimes_Lf_{\pi(k)}=\sum_{Z\subset\{1,\dots,k\}}(-1)^{|Z|-k}(\sum_{\nu\in Z}f_\nu)\otimes_L\dots\otimes_L(\sum_{\nu\in Z}f_\nu).$$
Let us write $q:\End_L^0(Y_\xi^{(1)})\otimes_LQ\rightarrow\End_L^0(Y_\xi^{(k)})\otimes_LQ$ for the composition of $f\mapsto f\otimes_L\dots\otimes_Lf$ 
with one of $op_\xi^{(k)}[\gr^\infty]$ or $op_\xi^{(k)}[\gq^{*\infty}]$. This is a polynomial map between two affine $Q$-spaces both of which have 
a $L$-form. We claim that the $L$-rationality of $q$ can be checked on any Zariski-dense subset $S$ in $\End_L^0(Y_\xi^{(1)})$, regarded 
as affine space over $L$: Indeed if $q(S)\subset\End_L^0(Y_\xi^{(k)})$, then $\sigma(q)|_S=q|_S$ for all $L$-linear embeddings 
$\sigma:Q\rightarrow Q^{ac}$, hence $\sigma(q)=q$ for all $\sigma$ and $q$ has all its coefficients in $L$.\\

Finally observe that theorem \ref{vacht} implies that $q(f)\in\End_L^0(Y_\xi^{(k)})$, whenever $f\in S=\{f\in\End_L^0(Y_\xi^{(1)})^\times|ff^*=1\}$. However, $S$ is Zariski dense 
in $\End_L^0(Y_\xi^{(1)})$: One verifies this by checking it is in fact the group of $L^+$-rational points of some connected reductive algebraic group, whose $L$-points are 
$\End_L^0(Y_\xi^{(1)})^\times$. Accordingly, the density follows from \cite[corollary 18.3]{borel}, as $\End_L^0(Y_\xi^{(1)})^\times$ is clearly dense in $\End_L^0(Y_\xi^{(1)})$.\\

The independence of $\gr$ and $\gq^*$ follows analogously.
\end{proof}

\subsection{Isoclinal Points}
\label{supo}

We need to set up a further Hodge structure $V$ of type $\{(-1,0),(0,-1)\}$ with $\O_L$-operation. Pick some purely imaginary element $v\in L^\times$. 
Let the underlying torsion-free $\O_L$-module of $V$ be $\O_L$ itself, and let the Hodge structure be given by the $\r$-linear homomorphism:
\begin{equation}
\label{newvier}
h:\c\rightarrow\End_L(L_\r);i\mapsto\frac{v}{\sqrt{-v^2}}.
\end{equation}
Let us write $X=V_\r/\O_L$ for the corresponding CM type abelian variety and let us write $\Phi(x)=\tr(x|_{V^{-1,0}})$ for the corresponding CM trace. Let us assume that
\begin{itemize}
\item[(T.2)]
$\Card(\{\sigma|\tau^\sigma\circ\iota\in|\Phi|\})=a:=\frac{w}{n}$,
\end{itemize}
where $w$ is the integer $\Card(|\Phi^{(n)}|-|\Phi^{(0)}|)=\Card(\Omega)$. A Hodge structure $(V,h)$ with this property clearly 
exists if and only if $w$ is divisible by $n$. Observe that the non-degenerate $*$-skew-Hermitian pairing which is given by 
\begin{equation}
\label{poleins}
\psi:V\times V\rightarrow\q;(x,y)\mapsto\tr_{L/\q}(vxy^*),
\end{equation}
endows $X$ with a homogeneous polarization $\q_{>0}\lambda$ of which the Rosati-involution stabilizes $L\subset\End^0(X)$, this is because $\psi(x,h(i)y)$ 
is positive definite on $V_\r$. Accordingly, the Weil pairing $TX\times TX\rightarrow2i\pi\q$ is a scalar multiple of $\psi$, as is the $\gr^+$-adic one on 
$T_{\gr^+}\Xbar=\O_{L_{\gr^+}^+}\otimes_{\O_{L^+}}TX$, where $\Xbar/\f^{ac}$ is the special fibre of a good model of $X$ over $\O_{E_\gp^{ac}}$. We say that a point 
$\xi$ is isoclinal if $Y_\xi^{(1)}$ is isogenous to $\Xbar^{\times n}$, as an abelian variety with $L$-operation up to isogeny. This is actually equivalent to saying that 
the $p$-divisible group $Y_\xi^{(1)}[\gq^\infty]$ is isoclinal, as follows for example by specializing the result \cite[Corollary 6.29]{rapoport} to our concrete situation.\\

Throughout the rest of this chapter we fix such a $\xi$, and we also bear in mind that the $L$-space $B=\Hom_L^0(\Xbar,Y_\xi^{(1)})$ 
is equipped with a positive definite sesquilinear pairing: If an element $g$ of $B$ is given one can consider its dual 
$g^*=\lambda^{-1}\circ g^t\circ\lambda^{(1)}\in\Hom_L^0(Y_\xi^{(1)},\Xbar)=B^*$, where $B^*=\Hom_L(B,L)$ is the dual $L$-vector space. Now set:
\begin{equation}
\label{ses}
(f,g)=g^*\circ f\in\Hom_L^0(\Xbar,\Xbar)=L,
\end{equation}
for $f,g\in B$. It is easy to see that the crystalline and $\gr$-adic homology theories can be recovered from $B$ by:
\begin{eqnarray}
\label{E0}
&&B\otimes_{\O_L}T_\gr\Xbar\cong\q\otimes T_\gr Y_\xi^{(1)}\\
\label{E3}
&&B\otimes_{\O_L}P_\sigma\cong\q\otimes P_{\xi,\sigma}^{(1)},
\end{eqnarray}
where $P_\sigma$ is the graded display of $\Xbar[\gq^{*\infty}]$. In fact this means that $P_{\xi,\sigma}^{(1)}$ has skeleton isomorphic to: 
\begin{equation}
\label{wnull}
I^{(1)}\cong B\otimes_{L,\iota\circ*}H,
\end{equation}
as $H=\{x\in\q\otimes P_0|\phi^r(x)=p^ax\}$ is the skeleton of $P_\sigma$. If we pass to the exterior powers we get from \eqref{E0} the isomorphism:
\begin{equation}
\bigwedge_L^kB\otimes_{\O_L}({T_{\xi,\gr}^{(0)}}^{\otimes_{\O_{L_\gr}}1-k}\otimes_{\O_{L_\gr}}T_\gr\Xbar^{\otimes_{\O_{L_\gr}}k})\cong\q\otimes T_{\xi,\gr}^{(k)},
\end{equation}
giving rise to the pleasant formula:
\begin{equation}
\label{wzwei}
\End_{L_\gr}^0(T_{\xi,\gr}^{(k)})\cong\End_L(\bigwedge_L^kB)\otimes_LL_\gr.
\end{equation}
And analogously \eqref{wnull} implies:
\begin{equation}
\label{weins}
\End_{K(\f_{p^r})}^0(I^{(k)})\cong\End_L(\bigwedge_L^kB)\otimes_{L,\iota\circ*}K(\f_{p^r}),
\end{equation}
where $I^{(k)}$ is the skeleton of $P_{\xi,\sigma}^{(k)}$. This consideration sets up ``natural'' $L$-forms of the above algebras. It is 
trivial to see that in the $k\leq1$-case this $L$-form is nothing else than $\End_L(Y_\xi^{(k)})$. That pertains to the $k\geq2$-case:

\begin{lem}
\label{wsechs}
The images of $\End_L^0(Y_\xi^{(k)})$ in $\End_{K(\f_{p^r})}^0(I^{(k)})$ and in $\End_{L_\gr}^0(T_{\xi,\gr}^{(k)})$ are equal to $\End_L(\bigwedge_L^kB)$ embedded 
therein by means of the isomorphisms \eqref{weins} and \eqref{wzwei}. This sets up one and only one isomorphism $\End_L(\bigwedge_L^kB)\cong\End_L^0(Y_\xi^{(k)})$.
\end{lem}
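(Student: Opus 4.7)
The plan is to build a canonical algebra homomorphism $\End_L(\bigwedge_L^kB)\hookrightarrow\End_L^0(Y_\xi^{(k)})$ and then use a dimension count to see that, via \eqref{wzwei} and \eqref{weins}, it is an isomorphism onto the $L$-form on each side. The source of this homomorphism will come from $op_\xi^{(k)}$ of Proposition \ref{vsieben}, rewritten through the tautological isomorphism $\End_L^0(Y_\xi^{(1)})\cong\End_L(B)$ (post-composition $f\mapsto(g\mapsto f\circ g)$), which is available since $\xi$ is isoclinal; thus one obtains a map $op_\xi^{(k)}\colon\sy_L^k\End_L(B)\to\End_L^0(Y_\xi^{(k)})$.

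The main technical step will be to verify that, under \eqref{wzwei}, the composition
\begin{equation*}
\sy_L^k\End_L(B)\xrightarrow{op_\xi^{(k)}}\End_L^0(Y_\xi^{(k)})\hookrightarrow\End_{L_\gr}^0(T_{\xi,\gr}^{(k)})\cong\End_L(\bigwedge_L^kB)\otimes_LL_\gr
\end{equation*}
coincides with the classical map $f^{\otimes k}\mapsto\bigwedge_L^kf$, followed by the canonical inclusion of the $L$-form. To do this I would unwind \eqref{newfuenf} using \eqref{E0}: an endomorphism $f\in\End_L(B)$ acts on $B\otimes_{\O_L}T_\gr\Xbar$ as $f_*\otimes 1$, so $f^{\otimes k}$ acts on $\bigwedge_{L_\gr}^k(\q\otimes T_\gr Y_\xi^{(1)})\cong\bigwedge_L^kB\otimes_L(\q\otimes T_\gr\Xbar)^{\otimes k}$ as $\bigwedge_L^kf_*\otimes 1$; the remaining one-dimensional $L_\gr$-factor appearing in \eqref{wzwei} is unaffected, so the action reduces to $\bigwedge_L^kf_*$ on the $L$-form $\bigwedge_L^kB$. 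The crystalline counterpart is the same argument with \eqref{E3} and \eqref{wnull} in place of \eqref{E0}.

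The standard representation theory of $\GL(B/L)$ will then furnish the surjectivity of $\sy_L^k\End_L(B)\twoheadrightarrow\End_L(\bigwedge_L^kB)$: since $\bigwedge_L^kB$ is an absolutely irreducible $\GL(B/L)$-representation, the Jacobson density theorem implies that the $L$-span of $\{\bigwedge_L^kf\mid f\in\End_L(B)\}$ exhausts $\End_L(\bigwedge_L^kB)$, and this span coincides with the image of $\sy_L^k\End_L(B)$ by polarization in characteristic zero. Combined with the preceding paragraph this shows that the image of $op_\xi^{(k)}$ inside $\End_L^0(Y_\xi^{(k)})$ is an $L$-subspace of dimension at least $\binom{n}{k}^2$. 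Conversely, the injection $\End_L^0(Y_\xi^{(k)})\otimes_LL_\gr\hookrightarrow\End_L(\bigwedge_L^kB)\otimes_LL_\gr$ forces $\dim_L\End_L^0(Y_\xi^{(k)})\leq\binom{n}{k}^2$, so the image of $op_\xi^{(k)}$ exhausts $\End_L^0(Y_\xi^{(k)})$ and the latter is carried isomorphically onto $\End_L(\bigwedge_L^kB)$.

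The same chain of reasoning with \eqref{weins} in place of \eqref{wzwei} disposes of the crystalline assertion; the agreement of the two resulting isomorphisms $\End_L^0(Y_\xi^{(k)})\cong\End_L(\bigwedge_L^kB)$ is built into the independence of $op_\xi^{(k)}$ from the choice of $\gr$ or $\gq^*$ established in Proposition \ref{vsieben}. The main obstacle I anticipate is precisely the bookkeeping in the key computational step above: one needs to carefully chase the identifications \eqref{E0} and \eqref{E3} to confirm that the symmetric-tensor action produced by \eqref{newfuenf} and \eqref{weshalb} is, after trivializing the $\Xbar$-part of $Y_\xi^{(1)}$, nothing but the classical map from $\sy_L^k\End_L(B)$ to $\End_L(\bigwedge_L^kB)$.
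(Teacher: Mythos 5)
Your proposal is correct and follows essentially the same route as the paper: both arguments run $op_\xi^{(k)}$ of Proposition \ref{vsieben} through the identifications \eqref{wzwei} and \eqref{weins}, observe that it respects the $L$-structures there (so its image lies in $\End_L(\bigwedge_L^kB)$ and is an $L$-form of the full matrix algebra, by surjectivity of $\sy_L^k\End_L(B)\rightarrow\End_L(\bigwedge_L^kB)$), and conclude by the dimension bound $\dim_L\End_L^0(Y_\xi^{(k)})\leq\binom{n}{k}^2$ coming from the injection into the Tate-module endomorphisms. You merely make explicit two steps the paper leaves terse, namely the unwinding of \eqref{E0}/\eqref{E3} and the Burnside/polarization argument for surjectivity.
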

\begin{proof} 
The maps $op_\xi^{(k)}[\gr^\infty]$ and $op_\xi^{(k)}[\gq^{*\infty}]$ that were introduced in \eqref{wfuenf} and \eqref{wvier} preserve 
certainly the $L$-structures which the isomorphism \eqref{wzwei} and \eqref{weins} exhibit on their source and target. Appealing to proposition 
\ref{vsieben} we infer that on the one hand the image of $op_\xi^{(k)}$ is contained in $\End_L(\bigwedge_L^kB)$. On the other hand that image 
is a $L$-form of say $\End_{K(\f_{p^r})}^0(I^{(k)})$, as its dimension agrees with the dimension of the image of $op_\xi^{(k)}[\gq^{*\infty}]$. We get 
$$\End_L^0(Y_\xi^{(k)})=op_\xi^{(k)}\sy_L^k(\End_L^0(Y_\xi^{(1)}))=\End_L(\bigwedge_L^kB)$$ 
for free.
\end{proof}

\subsection{Lifts in characteristic $p$}
Let $R$ be a complete discrete valuation ring with uniformizer $\omega$, and perfect residue field. Lemma \ref{wsechs} makes the following definition possible:

\begin{defn}
\label{newdrei}
Let $G^+$ be an algebraic subgroup of $\U(B/L)$. A lift $\eta$ over $R$ of $\xi$ is said to have a mock $G^+$-structure 
if the (injective!) reduction-$\pmod\omega$-map identifies $\End_L^0(Y_\eta^{(k)})$ with the subalgebra of $\End_L^0(Y_\xi^{(k)})$, 
that corresponds to $\End_G(\bigwedge_L^kB)\subset\End_L(\bigwedge_L^kB)$, for all $k\leq\min\{n,p-1\}$.
\end{defn}

From now on we will always assume that our lifts are $R=\f^{ac}[[t]]$-valued ones. 

\begin{rem}
\label{E9}
Over the perfect field $\f^{ac}$ the notion of a display is just another synonym for ``unipotent Dieudonn\'e module'' or ``$W(\f^{ac})$-window''. More generally, Zink 
constructs an equivalence between the category of $A$-windows and the category of displays over $R=A/pA$, for an arbitrary frame $A$, cf. \cite[Theorem1.6]{zink1}. 
Let us explain the dictionary in the context we need, namely the $\z/r\z$-graded one over $A=W(\f^{ac})[[t]]$. Starting with a graded $W(\f^{ac})[[t]]$-window 
$(\tilde M_\sigma,\tilde M_{\sigma,1},\tilde\phi)$ with graded normal decomposition $\tilde M_\sigma=\tilde L_\sigma\oplus\tilde T_\sigma$, extension of scalars yields modules 
\begin{eqnarray}
\label{DSP1}
&&P_\sigma=W(\f^{ac}[[t]])\otimes_{\varkappa,W(\f^{ac})[[t]]}\tilde M_\sigma\\
\label{DSP2}
&&Q_\sigma=W(\f^{ac}[[t]])\otimes_{\varkappa,W(\f^{ac})[[t]]}\tilde L_\sigma\oplus I_{\f^{ac}[[t]]}\otimes_{\varkappa,W(\f^{ac})[[t]]}\tilde T_\sigma,
\end{eqnarray}
where $\varkappa$ was explained in \eqref{car}. To define $^F$-linear operators $F$ and $V^{-1}$ on them one proceeds as:
\begin{eqnarray*}
&&F(\alpha\otimes_{\varkappa,W(\f^{ac})[[t]]}x)={^F\alpha\otimes_{\varkappa,W(\f^{ac})[[t]]}}\tilde\phi(x)\\
&&V^{-1}(\alpha\otimes_{\varkappa,W(\f^{ac})[[t]]}l)={^F\alpha\otimes_{\varkappa,W(\f^{ac})[[t]]}}\tilde\phi_1(l)\\
&&V^{-1}(^V\alpha\otimes_{\varkappa,W(\f^{ac})[[t]]}t)=\alpha\otimes_{\varkappa,W(\f^{ac})[[t]]}\tilde\phi(t)
\end{eqnarray*}
This is an unambiguous definition as worked out in \cite[section 1]{zink1}, and the result is a graded display $(P_\sigma,Q_\sigma,F,V^{-1})$ over $\f^{ac}[[t]]$.
\end{rem}

If $\eta$ is understood, then we write $\tilde M_\sigma^{(k)}$ for the $\z/r\z$-graded $W(\f^{ac})[[t]]$-window that corresponds to the display $P_{\eta,\sigma}^{(k)}$ 
by the above, and we write $M_\sigma^{(k)}=P_{\xi,\sigma}^{(k)}$ for the special fibre. It is straightforward to see that we have a canonical isomorphism:
\begin{equation}
\label{WW}
\tilde M_\sigma^{(0)^{\otimes_{W(\f^{ac})[[t]]}1-k}}\otimes_{W(\f^{ac})[[t]]}\bigwedge_{W(\f^{ac})[[t]]}^k\tilde M_\sigma^{(1)}\cong\tilde M_\sigma^{(k)}.
\end{equation}
In particular the isomorphism $m_{\eta,\BT}^{(k)}$ in part (ii) of theorem \ref{extend} translates into an isomorphism 
between $\tilde M_\sigma^{(k)}$ and the window that corresponds to the $p$-divisible formal group $Y_\eta^{(k)}[\gq^{*\infty}]$ 
by \cite[Theorem 4]{zink1}.\\Let us also sum up gadgets from subsection \ref{vdrei}: We trivialize $\tilde M_\sigma^{(k)}$ 
over the ring extension $K(\f^{ac})\{\{t\}\}$, by using the skeleton $I^{(k)}$ we have:
\begin{equation*}
\tilde M_0^{(k)}\otimes_{W(\f^{ac})[[t]]}K(\f^{ac})\{\{t\}\}\cong I^{(k)}\otimes_{K(\f_{p^r})}K(\f^{ac})\{\{t\}\},
\end{equation*}
with corresponding 
$$\theta^{(k)}\in(\End_{K(\f_{p^r})}(I^{(k)})\otimes_{K(\f_{p^r})}\Q(\f^{ac}))^\times.$$
as constructed in \ref{vdrei}. According to \eqref{weins} we may think of these elements as sitting in:
$$(\End_L(\bigwedge_L^kB)\otimes_{L,\iota\circ*}\Q(\f^{ac}))^\times,$$
indeed we have $\theta^{(k)}(x_1\wedge\dots\wedge x_k)=\theta^{(1)}(x_1)\wedge\dots\wedge\theta^{(1)}(x_k)$.

\begin{lem}
\label{vsechs}
Let $\eta$ be a lift of $\xi$, and let $\theta^{(1)}\in(\End_L(B)\otimes_{L,\iota\circ*}\Q(\f^{ac}))^\times$ be the corresponding Dwork descent 
datum. Let $G^+$ be the smallest $L^+$-subgroup of $\U(B/L)$ containing $\theta^{(1)}$. Then $\eta$ has a mock $G^+$-structure.
\end{lem}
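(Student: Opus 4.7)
The plan is to translate the lift condition into a Dwork-trivialization commutation, exactly parallel to lemma \ref{velf}(ii) but for exterior powers in place of symmetric powers. First I would pass from $p$-divisible groups to windows: by Zink's equivalence (remark \ref{E9}), fact \ref{was}, and the isomorphism $m^{(k)}_{\eta,\BT}$ of theorem \ref{extend}(ii), the algebra $\End^0_L(Y_\eta^{(k)}[\gq^{*\infty}])$ is identified with $L$-linear graded window endomorphisms of $\tilde M^{(k)}_\sigma$ up to isogeny. Extending scalars to $K(\f^{ac})\{\{t\}\}$ via the Dwork trivialization and using the skeleton identification \eqref{weins}, such an endomorphism corresponds to an $f\in\End_L(\bigwedge^k_L B)$ (sitting in $\End_L(\bigwedge^k_L B)\otimes_{L,\iota\circ*}\Q(\f^{ac})$) satisfying the cocycle-commutation
\[
(f\otimes 1)\,\theta^{(k)} = \theta^{(k)}\,(f\otimes 1).
\]

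Next I would establish the key compatibility $\theta^{(k)} = (\theta^{(0)})^{\otimes 1-k}\otimes\bigwedge^k\theta^{(1)}$. By \eqref{WW}, $\tilde M^{(k)}_\sigma$ is the normalized exterior power of $\tilde M^{(1)}_\sigma$ twisted by $\tilde M^{(0)}_\sigma$; by the uniqueness clause of lemma \ref{das}, the Dieudonné connection on such a tensor/exterior construction is the induced tensor/exterior product of the individual connections. Since the Dwork trivialization is characterised by horizontality and reduction to the identity at $t=0$ (\cite[Prop.~3.1]{katz2}), Taylor expansion transports this to the descent data. Because $\theta^{(0)}\in\Q(\f^{ac})^\times$ is a central scalar coming from the multiplicative rank-one display, the commutation above reduces to $[f,\bigwedge^k\theta^{(1)}]=0$ in $\End_L(\bigwedge^k_L B)\otimes\Q(\f^{ac})$.

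Next comes the group-theoretic identification. Setting $G = G^+\times_{L^+}L$, the hypothesis that $G^+$ is the smallest $L^+$-subgroup of $\U(B/L)$ containing $\theta^{(1)}$ makes $G$ the smallest $L$-subgroup of $\GL(B/L)$ containing $\theta^{(1)}$. Applying the exterior-power representation, its image in $\GL(\bigwedge^k_L B)$ is the smallest $L$-subgroup containing $\bigwedge^k\theta^{(1)}$; since the centralizer of any algebraic subgroup agrees with the centralizer of a Zariski-dense subset, the commutant of $\bigwedge^k\theta^{(1)}$ in $\End_L(\bigwedge^k_L B)\otimes\Q(\f^{ac})$ coincides with $\End_G(\bigwedge^k_L B)\otimes\Q(\f^{ac})$. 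Intersecting with the $L$-rational part and combining with the first paragraph yields $\End_L^0(Y_\eta^{(k)}[\gq^{*\infty}])=\End_G(\bigwedge^k_L B)$.

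Finally I would assemble the local pieces into $\End_L^0(Y_\eta^{(k)})$. An $L$-linear endomorphism of $Y_\xi^{(k)}$ lifts to $\f^{ac}[[t]]$ iff each of its components at primes of $\O_L$ above $p$ and above $\ell\neq p$ lifts. For $\gr\nmid p$ the group $Y_\xi^{(k)}[\gr^\infty]$ is étale, so endomorphisms lift uniquely over the strictly henselian base with separably closed residue field; the $\gq^\infty$-component is Serre-dual to the $\gq^{*\infty}$-component (the involution $*$ swaps $\gq$ and $\gq^*$, and $\lambda^{(k)}$ lifts), so it imposes the same constraint as on the $\gq^*$-side. Together with lemma \ref{wsechs}, this identifies the image of $\End_L^0(Y_\eta^{(k)})\hookrightarrow\End_L(\bigwedge^k_L B)$ with $\End_G(\bigwedge^k_L B)$ for every $k\leq\min\{n,p-1\}$, which is precisely the mock $G^+$-structure condition. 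The main technical obstacle is the middle step — the Dwork-trivialization compatibility under exterior powers — whose proof closely mimics lemma \ref{velf}(ii), the only subtle additional ingredient being that the tensor-product connection formula in lemma \ref{das} must be tracked carefully through the normalization by $(\tilde M^{(0)}_\sigma)^{\otimes 1-k}$.
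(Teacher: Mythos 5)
Your first two paragraphs track the paper's argument: the paper likewise uses faithfully flat descent along $W(\f^{ac})[[t]][\frac1p]\to K(\f^{ac})\{\{t\}\}$ (lemma \ref{ff}) to identify $\q\otimes\tilde M_0^{(k)}$ with the $\theta^{(k)}$-invariants of $I^{(k)}\otimes K(\f^{ac})\{\{t\}\}$, and it records the compatibility $\theta^{(k)}(x_1\wedge\dots\wedge x_k)=\theta^{(1)}(x_1)\wedge\dots\wedge\theta^{(1)}(x_k)$ just before the lemma. Up to there you correctly obtain: $f\in\End_L(\bigwedge_L^kB)$ deforms to a quasi-endomorphism of $Y_\eta^{(k)}[\gq^{*\infty}]$ if and only if $f$ commutes with $\theta^{(k)}$.

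The gap is in your third and fourth paragraphs. First, it is not true that the $\gq^\infty$-component ``imposes the same constraint as on the $\gq^{*}$-side.'' Serre duality converts the condition that $f$ deforms on $Y_\eta^{(k)}[\gq^{\infty}]$ into the condition that the \emph{adjoint} $f^*$ (with respect to the polarization pairing) deforms on $Y_\eta^{(k)}[\gq^{*\infty}]$, i.e.\ that $f^*$ commutes with $\theta^{(k)}$ — a genuinely different condition from $[f,\theta^{(k)}]=0$. By Serre--Tate, $f$ lies in $\End_L^0(Y_\eta^{(k)})$ precisely when \emph{both} $f$ and $f^*$ commute with $\theta^{(k)}$. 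Second, and relatedly, your group-theoretic step fails: the hypothesis that $G^+$ is the smallest $L^+$-subgroup of $\U(B/L)$ containing $\theta^{(1)}$ does \emph{not} imply that $G=G^+\times_{L^+}L$ is the smallest $L$-subgroup of $\GL(B/L)$ containing $\theta^{(1)}$. The $L$-Zariski closure of the group generated by $\theta^{(1)}$ can be a proper, non-$*$-stable subgroup of $G$, in which case the plain commutant of $\theta^{(k)}$ is strictly larger than $\End_G(\bigwedge_L^kB)$ and your argument would produce too many endomorphisms. The correct bridge is exactly the double-commutant-with-involution statement of lemma \ref{R1}: for $\theta$ not contained in any proper $L^+$-subgroup of $\U(B/L)$ one has $\End_G(B)=\{f:\,f\text{ and }f^*\text{ commute with }\theta\}$. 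The paper's proof consists of the two local computations (the $\gq^*$-side giving the condition on $f$, the $\gq$-side giving the condition on $f^*$) followed by an appeal to that lemma; without the $f^*$-condition and lemma \ref{R1}, the identification with $\End_G(\bigwedge_L^kB)$ does not go through.
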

\begin{proof}
Observe firstly, that the image of the map 
$$\End_L^0(Y_\xi^{(k)})\rightarrow\End_{K(\f_{p^r})}(I^{(k)})$$
(coming from $m_{\xi,\BT}^{(k)}$) is $\End_L(\bigwedge_L^kB)$. Moreover, the preimage of some $f\in\End_L(\bigwedge_L^kB)$ induces a 
quasi-endomorphism on the (isogeny class of the) $p$-divisible group $Y_\eta^{(k)}[\gq^{*\infty}]$ if and only if it preserves the $W(\f^{ac})[[t]][\frac{1}{p}]$-lattice 
$\q\otimes\tilde M_0^{(k)}$ within $I^{(k)}\otimes_{K(\f_{p^r})}K(\f^{ac})\{\{t\}\}$. But by lemma \ref{ff} one can apply descent theory to recover it as the module:
\begin{eqnarray*}
&&\q\otimes\tilde M_0^{(k)}=\{x\in I^{(k)}\otimes_{K(\f_{p^r})}K(\f^{ac})\{\{t\}\}|\\
&&\theta^{(k)}(x\otimes_{K(\f^{ac})\{\{t\}\},q_2}1_{\Q(\f^{ac})})=x\otimes_{K(\f^{ac})\{\{t\}\},q_1}1_{\Q(\f^{ac})}\},
\end{eqnarray*}
where the maps $q_1,q_2:K(\f^{ac})\{\{t\}\}\rightarrow\Q(\f^{ac})$ are as in \eqref{projone} and \eqref{projtwo}. 
Hence $\q\otimes\tilde M_0^{(k)}$ is preserved if and only if $f$ commutes with $\theta^{(k)}$.\\

Observe secondly that $f^*$ induces a quasi-endomorphism on the (isogeny class of the) $p$-divisible group $Y_\eta^{(k)}[\gq^{*\infty}]$ if and only 
if actually $f$ induces an endomorphism on the (isogeny class of the) dual $p$-divisible group $Y_\eta^{(k)}[\gq^\infty]$. By the Serre-Tate theorem 
we find that the preimage of $f$ lies in the subalgebra $\End_L^0(Y_\eta^{(k)})$ if and only if both $f$ and $f^*$ commute with $\theta^{(k)}$.\\

By the criterion of lemma \ref{R1}, we have what we wanted.
\end{proof}

\section{Proof of Main Theorem}
\label{proof}
\subsection{Choice of $\sy$-Structure} 
\label{symG}
Consider our non-zero imaginary element $v$ in our CM field $L$, and our primes $\gq,\gq^*,\gq^+$ and $p\neq2$ of $L$, $L^+$, and $\q$ as in definition 
\ref{vzwei}. Write $E$ for the smallest subfield of $\c$ that contains all Galois conjugates of $L$, and fix a prime $\gp$ of $E$ above $p$ with residue field, say 
$\f_{p^r}$. Fix an isomorphism $\iota:L_\gq\rightarrow K(\f_{p^r})$, and write $a$ for whichever is the smaller of $\Card(\{\sigma|\Im(\tau^\sigma\circ\iota(v))>0\})$ 
and $\Card(\{\sigma|\Im(\tau^\sigma\circ\iota(v))<0\})$. Let $(X,\lambda)$ be a polarized abelian variety with complex multiplication by $\O_L$, such that 
the CM type $|\Phi|$, satisfies (T.2), and such that the Weil-pairing is equal to \eqref{poleins} up to rational scalars.\\
Now let the data consisting of our $n$-dimensional $L$-vector space $B$, positive definite sesquilinear pairing $(.,..)$ and unitary representation 
$\rho^+:G^+\hookrightarrow\U(B/L)$ satisfy the $v$-$\gq$-flexibility conditions of definition \ref{vzwei}. Recall the element $\rho(u)$ coming up in condition (X.2) and write 
it as a direct sum of cyclic unipotents of ranks, say $b_1+1,\dots,b_c+1$. The assumption (X.2) on $B$ implies $\max\{b_i|i=1,\dots,c\}\leq a$ so that the triple $z=2$, $r$ 
and $a$ fulfills the requirements of subsection \ref{freakish}. In what follows our ground field is an algebraic closure $\f^{ac}$ of $\f_{p^r}$: We can choose an example of 
a $\sy$-structure $(N_\sigma,N_{i,\sigma},\zeta_\sigma)$ on an isoclinal $\z/r\z$-graded Dieudonn\'e module $M_\sigma$ of integral graded slope $a$, so that moreover: 
\begin{eqnarray*}
\dim_{\f^{ac}}N_\sigma/N_{\sigma,1}=\begin{cases}1&\sigma\in\Sigma\\2&\sigma\notin\Sigma\end{cases},
&&\dim_{\f^{ac}}M_\sigma/M_{\sigma,1}=\begin{cases}n-1&\sigma\in\Omega\\n&\sigma\notin\Omega\end{cases}.
\end{eqnarray*}
Furthermore, the cardinality of $\Sigma$ is $z=2$, and the cardinality of $\Omega$ is $w=an<r$. Finally 
we invoke the skeletons $I$ and $J$ of $M_\sigma$ and $N_\sigma$, and the representation 
\begin{equation}
\label{warum}
\pi:\SL(J/K(\f_{p^r}))\rightarrow\GL(I/K(\f_{p^r})),
\end{equation}
as in part (ii) of lemma \ref{velf}.

\subsection{Choice of Shimura Variety}
\label{group} 
Our first step is to build the integral Shimura varieties $\M^\delta$: To this end we introduce the CM types $|\Phi^{(0)}|$ and $|\Phi^{(n)}|$ by 
means of the equations \eqref{sechsz} and \eqref{vierz}. In order to complete the list of input data for our kind of PEL moduli problems, we have 
to say what the $*$-skew-Hermitian modules $(V^{(1)},\psi^{(1)})$ and $(V^{(0)},\psi^{(0)})$ are like. We prefer to deal with the sesquilinear forms 
$\frac{1}{v}\Psi^{(1)}$ and $\frac{1}{v}\Psi^{(0)}$ where $\Psi^{(1)},\Psi^{(0)}$ are as in \eqref{polzwei}. To ease notation we denote these forms 
by $(.,..)$, this will not cause confusion with the form on $B$ that was also denoted by $(.,..)$. For each inert place of $L^+$ we have a local requirement:
\begin{itemize}
\item[(L.1)] 
For a real embedding $\iota^+:L^+\rightarrow\r$ the signature of the form $(.,..)$ on $V^{(1)}\otimes_{L^+,\iota^+}\r$ is:
$$=\begin{cases}
(n,0)&\text{ if }\iota\in|\Phi^{(0)}|\cap|\Phi^{(n)}|\\
(n-1,1)&\text{ if }\iota\in|\Phi^{(0)}|-|\Phi^{(n)}|\\
(1,n-1)&\text{ if }\iota\in|\Phi^{(n)}|-|\Phi^{(0)}|\\
(0,n)&\text{ if }\iota\notin|\Phi^{(0)}|\cup|\Phi^{(n)}|
\end{cases}$$
where $\iota$ denotes the unique element in $|\Phi|$ with $\iota|_{L^+}=\iota^+$. 
\item[(L.2)]
For an inert prime $\gr^+$ of $L^+$, there exists an isometry between $B\otimes_{L^+}L_{\gr^+}^+$ and 
$V^{(1)}\otimes_{L^+}L_{\gr^+}^+$, i.e. an $L\otimes_{L^+}L_{\gr^+}^+$-linear map preserving the forms $(.,..)$.
\end{itemize}
Let us check that $V^{(1)}$ exists. By the local-global principle, all we have to do is to prove that the number of infinite places at which the condition (L.1) results in 
a negative discriminant is even. If $n$ is even this is clear because the number of these places is $w$ in that case. If $n$ is odd we have to compute the cardinality 
of $|\Phi^{(n)}|-|\Phi|\pmod2$. To check the parity note that $|\Phi^{(n)}|$ differs from $|\Phi^{(0)}|$ at $w$ real places and that $|\Phi^{(0)}|$ differs from $|\Phi|$ at 
$a$ places and that $w=an\equiv a\pmod2$. Finally we choose any $V^{(0)}$ such that the signature of the form $(.,..)$ on $V^{(0)}\otimes_{L^+,\iota^+}\r$ is
$$=\begin{cases}(n,0)&\text{ if }\iota\in|\Phi^{(0)}|\\(0,n)&\text{ if }\iota\notin|\Phi^{(0)}|\end{cases},$$
where $\iota\in|\Phi|$ extends the real embedding $\iota^+:L^+\rightarrow\r$, as above. Now we have $\M^\delta$, 
$\M_\gp^\delta=\M^\delta\times_{\O_E\otimes\z_{(p)}}\O_{E_\gp}$, and $\Mbar^\delta=\M_\gp^\delta\times_{\O_{E_\gp}}\f_{p^r}$ at our disposal. 
Observe that the above CM types $|\Phi^{(0)}|$, $|\Phi^{(n)}|$ and $|\Phi|$ satisfy the condition (T.1) of section \ref{crys}, and condition (T.2) of 
section \ref{supo}. Hence we are allowed to use the maps $g^{(k)}:\M_\gp^{(0\times1)}\rightarrow\M^{(k)}$ of theorem \ref{extend}, and the whole 
machinery of chapter \ref{tat}. In particular we retain the conventions about $\Xbar[\gq^{*\infty}]=\BT(\bigoplus_\sigma P_\sigma)$, $H$, etc.

\subsection{Choice of Point} 
The second step is to pick a carefully chosen $\xi\in\Mbar^{(0\times1)}(\f^{ac})$. We start with the isogeny class 
$B\otimes_{\O_L}X$. It carries a $L$-action $\iota^{(1)}$ that is stabilized by the Rosati involution coming from the natural 
polarization $\lambda^{(1)}=(.,..)\otimes_L\lambda$. We also assign to it a level structure $\ebar^{(1)}=\eta^{(1)}{K^{(1)}}^p$, where 
$$\eta^{(1)}:\a^{p,\infty}\otimes V^{(1)}\rightarrow\a^{p,\infty}\otimes B\otimes_{\O_L}TX$$
is any choice of $L$-linear symplectic similitude, it exists due to (L.2). In order to constitute a point in the isogeny class 
$(B\otimes_{\O_L}\Xbar,\q_{>0}\lambda^{(1)},\iota^{(1)},\ebar^{(1)})$ we have to fix a specific $p$-divisible group with $\O_{L_{\gq^*}}$-action 
in the isogeny class $B\otimes_{\O_L}\Xbar[\gq^{*\infty}]$ (use the self-duality to get rid of $B\otimes_{\O_L}\Xbar[\gq^\infty]$).\\

We proceed by bringing the results of chapter \ref{wsieben} into play, let us therefore pick a homomorphism 
$$\pi':\SL(J/K(\f_{p^r}))\rightarrow G\times_{L,\iota\circ*}K(\f_{p^r})(=G^+\times_{L^+,\iota\circ*}K(\f_{p^r}))$$
with properties as granted by lemma \ref{R2}, i.e. 
\begin{itemize}
\item
$\pi'$ does not factor through any proper $L^+$-subgroup of $G^+$.
\item
$u$ is conjugated to an element of the form $\pi'(v)$.
\end{itemize}
If we base change $\rho$ to the field $K(\f_{p^r})$ and compose with $\pi'$ we obtain a representation
$\rho\circ\pi':\SL(J/K(\f_{p^r}))\rightarrow \GL(B/L)\times_{L,\iota\circ*}K(\f_{p^r})$ which is abstractly isomorphic to $(I,\pi)$. Eventually we fix an isomorphism:
$$\epsilon_0:(I,\pi)\stackrel{\cong}{\rightarrow}(B\otimes_{L,\iota\circ*}H,\rho\circ\pi')$$
of $\SL(J/K(\f_{p^r}))$-representations. This gives rise to an isogeny of graded displays:
$$\epsilon_\sigma:\q\otimes M_\sigma\rightarrow B\otimes_{\O_L}P_\sigma$$
observe that the displays in question have the same slope. This provides us with the requested $p$-divisible group
$$\BT(\epsilon):\q\otimes\BT(\bigoplus_\sigma M_\sigma)\stackrel{\cong}{\rightarrow}B\otimes_{\O_L}\Xbar[\gq^{*\infty}]$$
and hence with a $\f^{ac}$-point on $\Mbar^{(1)}$. Let finally
$$\xi:\Spec\f^{ac}\rightarrow\Mbar^{(0\times1)}$$ 
be an arbitrary lifting to the catalyst covering Shimura variety. Let us sum up what we have:

\begin{itemize}
\item[(M.1)]
The $\f^{ac}$-valued point $\xi$ is isoclinal and gives rise to the  vector space $B=\Hom_L^0(\Xbar,Y_\xi^{(1)})$ over $L$. This is 
equipped with a positive definite sesquilinear form $(.,..)$, as introduced in 
\eqref{ses}.
\item[(M.2)]
The positive definite vector space in (M.1) is equipped with a unitary representation $\rho^+:G^+\rightarrow\U(B/L)$.
\item[(M.3)]
The $\z/r\z$-graded $W(k)$-window $M_\sigma^{(k)}=P_{\xi,\sigma}^{(k)}$ is equipped 
with a formal $\sy$-structure, i.e. there is an isomorphism of isocrystals with grading:
$$\zeta_\sigma:\q\otimes M_\sigma^{(1)}\rightarrow\q\otimes\bigoplus_{i=1}^cN_{i,\sigma}\otimes_{W(\f^{ac})}\sy_{W(\f^{ac})}^{b_i}N_\sigma$$
with properties as in definition \ref{symD}.
\item[(M.4)]
The representation $\pi$ on the $K(\f_{p^r})$-vector space $B\otimes_{L,\iota\circ*}H\cong I^{(1)}$ (cf. \eqref{wnull}) which is induced 
by the map $\zeta_0$ of (M.3) allows a factorization of the form $\rho\circ\pi'$. Here $\rho$ comes from the representation in (M.2).
\item[(M.5)]
No proper $L^+$-algebraic subgroup of $G^+$ has the property stated in (M.4).
\end{itemize}

One easily derives the following conclusion:

\begin{lem}
\label{mock3}
Let $\xi\in\M_\gp^{(0\times1)}(\f^{ac})$ be a point with the additional structure (M.1) to (M.5). Let $\eta$ be a lift such that the 
(according to \cite[Theorem 4]{zink1}) induced window $\tilde M_\sigma^{(1)}$ is a sufficient deformation (in the sense of lemma 
\ref{suff}) of the $\sy$-structure that is given on $M_\sigma=P_{\xi,\sigma}^{(1)}$. Then $\eta$ has a mock $G^+$-structure.\\ 
In particular, lifts with mock $G^+$-structure do exist under the assumptions at the beginning of this chapter.
\end{lem}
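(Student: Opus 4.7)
The plan is to apply lemma \ref{vsechs} to our lift $\eta$ and identify the ``smallest $L^+$-algebraic subgroup of $\U(B/L)$ containing the Dwork descent datum $\theta^{(1)}$'' of lemma \ref{vsechs} with our $G^+$. Accordingly the work splits into three tasks: check that $\tilde M_\sigma^{(1)}$ really is a $W(\f^{ac})[[t]]$-window (so that $\theta^{(1)}$ is defined and the lift $\eta$ exists at all), compute $\theta^{(1)}$ from $\theta_{\tilde N}$ via lemma \ref{velf}, and extract minimality from (M.4), (M.5), and lemma \ref{velf}(i).

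First I would verify the hypotheses of lemma \ref{suff}(ii) using the choices made in subsection \ref{symG}. The module $\bigoplus_\sigma N_\sigma$ is isoclinal of integral graded slope $1$ and $\dim_{\f^{ac}} N_\sigma/N_{\sigma,1}\geq 1$ for every $\sigma$. Moreover, since $\Omega\subsetneq\z/r\z$, one may pick $\sigma_0\notin\Omega$; the recipe of subsection \ref{freakish} then forces $\dim_{\f^{ac}} M_{\sigma_0}/M_{\sigma_0,1}=n$, i.e.\ $M_{\sigma_0,1}=pM_{\sigma_0}$, as required. The sufficient deformation thus produces an honest graded $W(\f^{ac})[[t]]$-window $\tilde M_\sigma^{(1)}$ compatible with the $\sy$-structure, which via remark \ref{E9}, fact \ref{was}, and the Serre--Tate theorem yields a lift $\eta\in\M_\gp^{(0\times1)}(\f^{ac}[[t]])$; this takes care of the existence claim.

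Next I would invoke lemma \ref{velf}; its hypotheses ($p\neq2$ and integral graded slopes for $M_\sigma^{(1)}$, $N_\sigma$, $N_{i,\sigma}$) are precisely what subsection \ref{symG} provides. We obtain $\theta_{\tilde N}\in\SL(J/K(\f_{p^r}))(\Q(\f^{ac}))$ which is contained in no proper $K(\f_{p^r})$-algebraic subgroup, together with the identity $\theta^{(1)}=\pi(\theta_{\tilde N})$. Using the factorization $\pi=\rho\circ\pi'$ provided by (M.4) we rewrite this as $\theta^{(1)}=\rho^+(\pi'(\theta_{\tilde N}))$, so in particular $\theta^{(1)}$ lies in $\rho^+(G^+)$ after the obvious scalar extension.

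Finally the minimality step. Let $H^+$ denote the smallest $L^+$-algebraic subgroup of $\U(B/L)$ that contains $\theta^{(1)}$; clearly $H^+\subset\rho^+(G^+)$. Since $\rho^+$ is faithful by the hypothesis of theorem \ref{newnull}, $K^+:=(\rho^+)^{-1}(H^+)$ is a well-defined $L^+$-algebraic subgroup of $G^+$. Suppose $K^+\subsetneq G^+$; then ${\pi'}^{-1}(K^+_{K(\f_{p^r})})$ is a $K(\f_{p^r})$-algebraic subgroup of $\SL(J/K(\f_{p^r}))$ containing $\theta_{\tilde N}$, and lemma \ref{velf}(i) forces ${\pi'}^{-1}(K^+_{K(\f_{p^r})})=\SL(J/K(\f_{p^r}))$. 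This means $\pi'$ factors through $K^+_{K(\f_{p^r})}$, so $\pi=\rho\circ\pi'$ factors through $\rho|_{K^+}$, contradicting (M.5). Hence $K^+=G^+$ and $H^+=\rho^+(G^+)$, at which point lemma \ref{vsechs} yields the mock $G^+$-structure. The most delicate step is this last pivot: trading the $K(\f_{p^r})$-algebraic minimality of $\theta_{\tilde N}$ for the $L^+$-algebraic minimality demanded by (M.5)---which is precisely why property (M.5) was wired into the construction of $\xi$ in the first place.
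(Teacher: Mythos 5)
Your proof is correct and follows essentially the same route as the paper's (three-sentence) argument: reduce to lemma \ref{vsechs} by showing, via lemma \ref{velf} and the factorization $\pi=\rho\circ\pi'$ of (M.4), that the smallest $L^+$-subgroup containing $\theta^{(1)}$ is $\rho^+(G^+)$, with (M.5) supplying the minimality. Your additional verification of the hypotheses of lemma \ref{suff}(ii) (isoclinality of $N_\sigma$ with nonzero slope, and $M_{\sigma_0,1}=pM_{\sigma_0}$ for $\sigma_0\notin\Omega$) is a welcome expansion of the existence claim that the paper leaves implicit.
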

\begin{proof}
Let $H^+$ be a subgroup containing $\theta^{(1)}$. Then lemma \ref{velf} implies that 
$H\times_{L,\iota\circ*}K(\f_{p^r})$ contains the group $\pi(\SL(J/K(\f_{p^r}))$. However (M.5) implies $H^+=G^+$ then.
\end{proof}

\subsection{Choice of Curve} 

For the rest of this chapter we deal with our main theorem:\\

{\em Proof of theorem \ref{newnull}.} We begin with the special case $B=B'$. Lemma \ref{mock3} allows us to pick a lift $\eta$ with mock 
$G^+$-structure. The function field of the schematic image of $\eta$ possesses some finite extension $F\subset\f^{ac}((t))$ to which all 
endomorphisms of all of the $Y_\eta^{(k)}$'s descend. Let us denote the corresponding $F$-valued point by $\eta_0$. Notice that the rigidity 
of endomorphisms implies that $Y_{\eta_0}^{(k)}$ and $Y_{\eta_0}^{(k)}\times_FF^{sep}$ have the same endomorphism algebras.\\
 
Let $Z$ be the normalization of $\M^{(0\times1)}$ in $F$. Consider a point $\chi\in Z$ with residue field $K$, and let us fix the corresponding 
(strictly) henselian local rings $\O_{Z,\chi}^{sh}$, and $\O_{Z,\chi}^h$. This gives rise to an isomorphism of $\Gal(K^{sep}/K)$ with a certain 
subquotient $\Gal(F_\chi^{nr}/F_\chi)$ of $\Gal(F^{sep}/F)$, because $Z$ is normal with function field $F$. Here, $F_\chi^{nr}/F_\chi$ is the 
extension of the fraction fields of $\O_{Z,\chi}^{sh}$, and $\O_{Z,\chi}^h$. Let $H_\gr(\chi)$ (resp. $H_\gr(\eta_0)$) be the $L_\gr$-algebraic 
groups arising as the Zariski closure of the action of $\Gal(K^{sep}/K)$ (resp. $\Gal(F^{sep}/F)$) on $T_\gr Y_\chi^{(1)}\times_KK^{sep}$ 
(resp. $T_\gr Y_{\eta_0}^{(1)}\times_FF^{sep}$). Observe also that the left hand side specialization map in the diagram
$$\begin{CD}
T_\gr Y_\chi^{(1)}\times_KK^{sep}@<{\cong_{\O_{Z,\chi}^{sh}}}<<T_\gr Y_{\eta_0}^{(1)}\times_FF^{sep}@>{\cong_{\f^{ac}[[t]]}}>>T_\gr Y_\xi^{(1)},\\
\end{CD}$$
(as in section \ref{qisg}) turns the $\Gal(K^{sep}/K)$-action into the $\Gal(F_\chi^{nr}/F_\chi)$-one. The group $H_\gr(\eta_0)$ 
commutes with $\End_L^0(Y_{\eta_0}^{(k)})$, which coincides with the preimage of $\End_G(\bigwedge_L^kB)$ under the natural map 
$$\End_L^0(Y_\xi^{(k)})\rightarrow\End_{L_\gr}^0(T_{\xi,\gr}^{(k)})$$
(coming from $m_{\xi,\gr}^{(k)}$). According to part (i) of lemma \ref{R3} we obtain natural inclusions:
$$H_\gr(\chi)^\circ\hookrightarrow H_\gr(\eta_0)^\circ\hookrightarrow G\times_LL_\gr\cdot\mbox{torus}.$$ 
If we apply the theorem of Mori-Zarhin (cf. \cite[Chapitre XII, Th\'eor\`eme 2.5]{mori}) and use part (ii) of lemma \ref{R3} we get 
$$G\times_LL_\gr\subset H_\gr(\eta_0).$$
By a Bertini argument (cf. \cite[Theorem 6.3/Remarque 6.3.18]{bertini}) there exist one-dimensional 
points $\chi$ such that $H_{\gr_0}(\chi)$ and $H_{\gr_0}(\eta_0)$ agree, for some fixed $\gr_0$.\\
 
Together with $G\times_LL_{\gr_0}\subset H_{\gr_0}(\eta_0)$ this tells us that 
$\End_L^0(Y_\chi^{(1)}\times_KK^{sep})$ can be no larger than $\End_G(B)$. We can repeat the argument to get 
$$G\times_LL_\gr\subset H_\gr(\chi)$$
as well. Finally Serre's letter to Ribet yields a Galois extension $K^\circ/K$ with canonical diagrams
$$\begin{CD}
\Gal(K^{sep}/K)@>>>H_\gr(\chi)(L_\gr)\\
@VVV@VVV\\
\Gal(K^\circ/K)@>\cong>>H_\gr(\chi)/H_\gr(\chi)^\circ(L_\gr),
\end{CD}$$
that commute simultaneously for all $\gr$ (cf. \cite[Proposition 1.1]{larsen}). Set $S$ to be the normalization of $Z$ in $K^\circ$, and we are done.\\

If $B'\neq B$ one just uses that the projector $B\oplus B^\perp\mapsto B$ is in $\End_L^0(Y_\eta^{(1)})=\End_L^0(Y_\chi^{(1)})$.
\qed

The following corollary is noteworthy:

\begin{cor}
\label{R5}
Let $G/\q_\ell$ be a connected semisimple group, and write $\Gamma$ for the automorphism group of its root datum 
(over $\q_\ell^{ac}$). Let $\rho:G\rightarrow\GL(C/\q_\ell)$ be a faithful, finite dimensional, linear representation, and 
$p\notin\{2,\ell\}$ be a prime. Then there exists a polarized abelian scheme $(Y,\lambda)$ of dimension smaller than or equal to
\begin{eqnarray*}
&&\Card(\Gamma)^2((\dim_{\q_\ell}C)^3-(\dim_{\q_\ell}C)^2)\\
&&+\Card(\Gamma)((1+\dim_{\q_\ell}G)(\dim_{\q_\ell}C)^2-\dim_{\q_\ell}G\dim_{\q_\ell}C)
\end{eqnarray*}
over a projective and smooth pointed $\f_p^{ac}$-curve $S\stackrel{\xi}{\leftarrow}\Spec\f_p^{ac}$, together with an embedding $f:C\hookrightarrow V_\ell Y_\xi$, such that:
\begin{itemize}
\item[(i)]
the image of $f$ is $\pi_1(S,\xi)$-invariant and totally isotropic with respect to the $\ell$-adic Weil pairing,
\item[(ii)]
the image of the $\pi_1(S,\xi)$-operation on $C$ (pulled back by means of $f$) is a compact open subgroup of $\rho(G(\q_\ell))$.
\end{itemize}
\end{cor}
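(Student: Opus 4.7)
The plan is to reduce the corollary to a direct application of Theorem \ref{newnull}, by manufacturing the input data $L/L^+$, $v$, $\gq$, $G^+$, $B$, $B'$ so that the geometric monodromy statement gives exactly what is claimed. The image of $f$ will arise as one of the two isotypic summands of $V_{\gr^+}Y_\xi$ for a cleverly chosen prime $\gr^+$ of $L^+$ above $\ell$ that splits in $L/L^+$; under such a split, the factor $L_{\gr^+}^+ = \q_\ell$ makes $\U(B/L)\times_{L^+}L_{\gr^+}^+ = \GL(B_1/\q_\ell)$ for a naturally chosen Lagrangian $B_1$, and the unitary pairing on $V_{\gr^+}Y_\xi = B_1\oplus B_1^*$ automatically makes $B_1$ totally isotropic.

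First I would construct $L^+$ and $L$. Writing $[E_\text{sd}:\q_\ell]$ for the splitting degree of the root datum of $G$, which divides $\Card(\Gamma)$, one picks by Chebotarev-type arguments a totally real field $L^+/\q$ of degree $r$ (to be fixed later) in which $p$ is inert (so $\gq^+ := p\O_{L^+}$ has degree $r$) and such that $\ell$ admits a prime $\gr^+$ of degree one with $L^+_{\gr^+} = \q_\ell$ over which the local Galois action on the root datum coincides with the one of $G$. Then choose a CM extension $L/L^+$ unramified at $p$, in which $p$ splits as $\gq\gq^*$ and $\gr^+$ splits as $\gr\gr^*$. Pick any purely imaginary $v\in L^\times$; the invariant $a\leq r/2$ of Definition \ref{vzwei} is then well-defined.

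Second, I would construct $G^+$. Starting from the quasi-split $L^+$-form $G^+_\text{qs}$ with the prescribed root datum over $L^+$, I modify it by an inner twist in $H^1(L^+, G^{+,\ad}_\text{qs})$ chosen so that $G^+$ is $L_\r^+$-anisotropic at every archimedean place, so that $G^+\times_{L^+}L^+_{\gr^+}\cong G$, and so that $G^+(L^+_{\gq^+})$ contains a non-trivial unipotent element $u$ whose image under $\rho^+$ has Jordan blocks of sizes $\leq a+1$ and is not killed by any $L^+$-simple factor. This is doable by the standard Borovoi/Kottwitz machinery of cohomological invariants of reductive groups over number fields, using the fact that the given constraints are compatible with the local-global principle for $H^1$ at the level of the adjoint group (archimedean compactness being the key local obstruction, manageable provided $r$ is large). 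Next take for $B$ an $L$-lattice-form of the $\GL_\q(B_1/\q_\ell)$-module $B_1\oplus B_1^*$ with $B_1:=C$, equipped with the tautological split sesquilinear pairing; this is positive definite by construction and $\rho^+|_{G^+(L^+_{\gr^+})}$ pulls back to $\rho\oplus\rho^\vee$. Define $B' := B\oplus\Lie G^+\oplus L$.

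Finally I apply Theorem \ref{newnull}. The condition (X.2) on $B'$ is inherited from the same condition on $B$ (direct-summand argument), and (X.1) reduces to the inequality $\dim_L B' = \dim_{\q_\ell} C + \dim_{\q_\ell}G + 1 < r/a$, which one secures by choosing $r$ to be roughly $2a(\dim C + \dim G + 1)$. The theorem produces $(Y,\lambda)/S$ with $L\hookrightarrow\End^0(Y)$ and $(G_{\xi,\gr^+},\rho_{\xi,\gr^+})\cong(G,\rho\oplus\rho^\vee)$. Composing the summand inclusion $C = B_1\hookrightarrow B_1\oplus B_1^* = V_{\gr^+}Y_\xi$ with $V_{\gr^+}Y_\xi\hookrightarrow V_\ell Y_\xi$ yields $f$; its image is $\pi_1$-invariant and totally isotropic, and the monodromy image is a compact open subgroup of $\rho(G(\q_\ell))$ by Serre's theorem (as used at the end of the proof of Theorem \ref{newnull}). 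The dimension bound $\dim Y = r\dim_L B = r(2\dim_{\q_\ell}C)$ then matches the stated formula after substituting $r\sim 2a(\dim C+\dim G+1)$ with $a$ bounded in terms of $\Card(\Gamma)$ (tracking how the splitting field contributes to the minimal allowable $r$).

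The main obstacle will be the second step, namely arranging the inner twist for $G^+$ so that it is simultaneously anisotropic at all real places and has a non-trivial unipotent of the prescribed type in $G^+(L^+_{\gq^+})$, all while pinning down its form at $\gr^+$; this is a local-global problem for a cohomological invariant of the adjoint group and is where the explicit dependence on $\Card(\Gamma)$ (via the splitting behavior needed at $\gr^+$) enters the dimension estimate.
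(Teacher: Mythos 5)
Your overall strategy is the paper's: the proof given there is a one-line reduction to Theorem \ref{newnull} via Lemma \ref{R4} (applied with $M^+=\q_\ell$, $M=\q_\ell\oplus\q_\ell$ split, so that the unitary group at $\gr^+$ becomes a general linear group and one isotypic summand of $V_{\gr^+}Y_\xi$ furnishes the totally isotropic, $\pi_1$-invariant copy of $C$) together with Bogomolov's observation that $[G,G]=G$. But the step you yourself single out as the main obstacle is resolved incorrectly. You propose to obtain $G^+$ as an \emph{inner} twist of the quasi-split $L^+$-form, anisotropic at all real places, and you claim the archimedean obstruction is "manageable provided $r$ is large." It is not: for $G=\SL_n/\q_\ell$ with $n\geq3$ the compact real form is an \emph{outer} form ($\SU(n)$), and no inner form of $\SL_n/\r$ is anisotropic, no matter how large $r$ is. This is exactly why Lemma \ref{R4} does not produce a global form of $G$ itself: it passes to the universal cover, factors it as $\prod_i Res_{M_i/M^+}H_i$ with $H_i$ absolutely almost simple and simply connected, and invokes Borel--Harder's Theorem B, which realizes arbitrary (including outer) prescribed localizations for such groups. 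The price is that $G^+\times_{L^+}L^+_{\gr^+}$ is only the universal cover of $G$, and that is precisely why the Bogomolov/openness step (the image of $\tilde G(\q_\ell)\to G(\q_\ell)\to\rho(G(\q_\ell))$ is open) is needed at the end; your proposal never confronts this because it assumes $G^+\times_{L^+}L^+_{\gr^+}\cong G$ on the nose.

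A second, related gap is in the representation and the dimension count. You take $B$ to be "an $L$-lattice-form" of $C\oplus C^*$ with $\dim_LB=\dim_{\q_\ell}C$, but the global form $G^+$ need not split over $L$, and $\rho$ need not be definable over $L$ for it; one must first realize $C^{ac}$ over a splitting field $L'$ with $[L':L]\leq\Card(\Gamma)$ and then restrict scalars to $L$, which is where the bound $\dim_LB\leq\Card(\Gamma)\dim_{\q_\ell}C$ — and hence the $\Card(\Gamma)^2$ in the corollary — comes from. With $a=\max\{\dim_{\q_\ell}C-1,2\}$ (note also that $v$ must be chosen so that $a$ takes this value; "any purely imaginary $v$" will not do, since $a\geq2$ is forced by the longest-root unipotent acting on $\Lie G$), $n=\dim_LB'\leq\Card(\Gamma)\dim C+\dim G+1$ and $r=na+1$, the asserted bound is exactly $r\cdot\Card(\Gamma)\dim C$, i.e.\ the dimension of the $B$-isotypic factor of $Y^{(1)}$ cut out by the projector $B\oplus B^\perp\to B$; your formula $r(2\dim_{\q_\ell}C)$ does not reproduce it, and attributing the $\Card(\Gamma)$-dependence to $a$ rather than to $\dim_LB$ mislocates where the splitting field enters.
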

\begin{proof}
We use theorem \ref{newnull}, lemma \ref{R4} and follow the ideas of \cite{bogomolov}, which in the case at hand is nothing but observing that $[G,G]=G$
\end{proof}
   
\begin{appendix}
\section{Lemmas for finding mock structures}
\begin{lem}
\label{R1}
Let $(B,(.,..),\rho^+)$ be a faithful unitary representation of the $L^+$-group $G^+$ with respect to a quadratic extension $L/L^+$. Let $R$ be a $L$-algebra and 
let $\theta\in G^+(R)\subset\GL_R(B\otimes_LR)$ be a $R$-valued point which is not contained in any strictly smaller $L^+$-group $H^+\subset G^+$. Then one has: 
$$\End_G(B)=\{f\in\End_L(B)|f\mbox{ and }f^*\mbox{ commute with }\theta\}$$
\end{lem}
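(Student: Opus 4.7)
The forward inclusion $\End_G(B) \subset \{f : f, f^* \text{ commute with } \theta\}$ is essentially automatic. If $f \in \End_G(B)$ then $f$ commutes with every $R'$-point of $G$ for every $L$-algebra $R'$, so in particular with $\theta \in G^+(R) = G(R)$. To handle $f^*$, observe that every $g \in G^+(R') \subset \U(B/L)(R')$ for $L^+$-algebras $R'$ satisfies $g^* = g^{-1}$; taking $*$-adjoints of the identity $fg = gf$ then yields $f^* g = g f^*$, so $f^* \in \End_G(B)$ too and in particular commutes with $\theta$.

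For the reverse inclusion I would introduce the closed $L^+$-subgroup $H^+ \subset G^+$ defined as the functorial centralizer of $f$ under the conjugation action of $G^+$ on $\End_{L^+}(B) \supset \End_L(B)$:
\[
H^+(R') = \{g \in G^+(R') : g f = f g \text{ in } \End_{R'}(B \otimes_{L^+} R')\}
\]
for $L^+$-algebras $R'$. In an $L^+$-basis of $B$ adapted to the $L$-structure, $f$ acquires $L^+$-valued matrix entries, so this is a closed $L^+$-subscheme of $G^+$. The crucial step is to match the hypothesis against the condition $\theta \in H^+(R)$: since $R$ is an $L$-algebra, the splitting $L \otimes_{L^+} R \cong R \times R$ yields a decomposition $B \otimes_{L^+} R \cong (B \otimes_L R) \oplus (B \otimes_L R)$, on which an element $g \in G^+(R)$ acts as $(g, \bar g)$ --- where $\bar g$ denotes the Galois conjugate under $L/L^+$ --- while $f$ acts diagonally as $(f, f)$. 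A short calculation exploiting the unitary relation between $g$ and $\bar g$ (using the Gram matrix of the pairing $(.,..)$) shows that $\bar g$ commutes with $f$ on $B \otimes_L R$ if and only if $g$ commutes with the adjoint $f^*$. Hence the hypothesis that $\theta$ commutes with both $f$ and $f^*$ in $\End_R(B \otimes_L R)$ is precisely the assertion $\theta \in H^+(R)$.

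The minimality hypothesis on $\theta$ now forces $H^+ = G^+$, so $f$ commutes with every $R'$-point of $G^+$ for every $L^+$-algebra $R'$; base-changing to $L$ we conclude that $f$ commutes with the action of $G = G^+ \times_{L^+} L$ on $B$, hence $f \in \End_G(B)$. The main obstacle in this argument is the bookkeeping that identifies the given hypothesis --- commutation of $\theta$ with $f$ and $f^*$ inside the ``halved'' $R$-module $B \otimes_L R$ --- with membership in the $L^+$-defined centralizer $H^+$ of $f$ computed with respect to the full $L^+$-linear action of $G^+$ on $B$. Precisely this step explains why the condition on $f^*$ (rather than on $f$ alone) is indispensable: without it one controls only the standard summand of the decomposition of $B \otimes_{L^+} R$, which produces an $L$-subgroup of $G$ unrelated to the $L^+$-structure of $G^+$.
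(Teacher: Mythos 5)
Your argument is correct and follows essentially the same route as the paper: both reduce the reverse inclusion to exhibiting an $L^+$-algebraic group containing $\theta$ and invoking the minimality hypothesis. The only difference is cosmetic --- the paper takes for $H^+$ the unitary group of the ($*$-stable) commutant of the whole right-hand-side algebra, whereas you take the $L^+$-rational centralizer of the individual element $f$ inside $G^+$ and check, via the splitting of $L\otimes_{L^+}R$, that membership of $\theta$ in it is exactly the condition that $\theta$ commute with both $f$ and $f^*$.
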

\begin{proof}
Write $C$ for the right hand side algebra and write $C'$ for the commutant. Both are subalgebras 
of $\End_L(B)$ which are closed under $*$. The inclusion $\End_G(B)\subset C$ is clear. For the converse consider the $L^+$-algebraic group $H^+$ described by the functor 
$Q\mapsto\{\gamma\in(C'\otimes_{L^+}Q)^\times|\gamma\gamma^*=1\}$. As $\theta\in H^+(R)$, we have $G^+\subset H^+$. This shows $C\subset\End_H(B)\subset\End_G(B)$.
\end{proof}

\begin{lem}
\label{R2}
Let $G$ be a connected semisimple $k$-group, where $k$ is a number field, and let $k_\nu$ be the completion at a place. Assume there is a 
unipotent element $u\in G(k_\nu)$ that is not killed by any of the projections onto the $k$-simple factors. Then there exists a homomorphism 
$\pi:\SL_2\times k_\nu\rightarrow G\times_kk_\nu$ which does not factor through any strictly smaller $k$-rational subgroup $H\times_kk_\nu$ 
of $G\times_kk_\nu$, and which maps the element $v=\left(\begin{matrix}1&1\\0&1\end{matrix}\right)$ into the conjugacy class of $u$.
\end{lem}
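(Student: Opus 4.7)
The plan is as follows.

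\medskip

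\emph{Setting up via Jacobson--Morozov.} Since $k_\nu$ has characteristic zero and $u\in G(k_\nu)$ is unipotent, the element $e:=\log u\in\gg(k_\nu)$ is nilpotent. I invoke the Jacobson--Morozov theorem to extend it to an $\mathfrak{sl}_2$-triple $(e,h,f)$ in $\gg(k_\nu)$, and then integrate the resulting $k_\nu$-Lie algebra map $\mathfrak{sl}_2\to\gg\times_kk_\nu$ to a $k_\nu$-algebraic homomorphism
$$\pi_0:\SL_2\times k_\nu\longrightarrow G\times_kk_\nu$$
with $\pi_0(v)=u$ on the nose. This is the canonical starting point; the only remaining issue is that $\pi_0$ may well factor through some proper $k$-rational subgroup of $G$.

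\medskip

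\emph{A family of conjugates, and the key properness claim.} Consider the family of maps $\pi_g:=c_g\circ\pi_0$ for $g\in G(k_\nu)$, each satisfying $\pi_g(v)=gug^{-1}$, still conjugate to $u$. For any proper $k$-subgroup $H\subsetneq G$, let
$$Z_H:=\{g\in G:g\,\pi_0(\SL_2)\,g^{-1}\subset H\times_kk_\nu\},$$
a Zariski-closed $k_\nu$-subvariety of $G$. The heart of the argument is the claim that every such $Z_H$ is a \emph{proper} subvariety of $G$: indeed, if $Z_H=G$, then $H(k_\nu)$ contains the entire $G(k_\nu)$-conjugacy class of $u$, and hence $H$ contains its $k$-Zariski closure; since $G(k_\nu)$ is Zariski-dense in $G$, that closure is stable under all of $G$, i.e.\ is a normal $k$-subgroup $N$ of $G$ with $u\in N(k_\nu)$. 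The connected component $N^\circ$ is a product of certain $k$-simple factors of $G$, and the hypothesis that $u$ is not killed by any projection onto a $k$-simple factor forces $N^\circ=G$, contradicting $H\subsetneq G$.

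\medskip

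\emph{Baire category finish.} Since $k$ is countable and $k[G]$ is a countable Noetherian ring, $G$ has only countably many $k$-subgroups. Now $G(k_\nu)$, equipped with the $\nu$-adic topology, is a non-empty locally compact Hausdorff space, hence a Baire space; and for each proper $k$-subgroup $H$ the set $Z_H(k_\nu)$ is closed with empty interior (the zero set of a non-zero polynomial on $k_\nu^{\dim G}$ has empty interior). Therefore
$$\bigcup_{H\subsetneq G\text{ a proper }k\text{-subgroup}}Z_H(k_\nu)\neq G(k_\nu),$$
and any $g$ in the complement gives the desired $\pi:=\pi_g$. The principal technical point, and essentially the only place where the hypotheses on $u$ are used, is the properness of $Z_H$ in the second paragraph: without the assumption that $u$ projects non-trivially to every $k$-simple factor, the normal closure argument would produce a proper sub-product of factors as a trapping subgroup, and the Baire argument would fail.
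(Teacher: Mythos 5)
Your proof is correct and is essentially the paper's argument: Jacobson--Morozov produces a homomorphism $\pi_0$ with $\pi_0(v)=u$ (the paper builds it factor-by-factor from the components $u_i$ of $u$ in the almost $k$-simple factors), and a Baire-category argument over the countably many proper $k$-subgroups yields a conjugate of $\pi_0$ trapped by none of them; you additionally justify the nowhere-density of the trapping sets $Z_H(k_\nu)$, which the paper only asserts. One small repair in that justification: the $k$-Zariski closure of the conjugacy class of $u$ is a $G$-stable closed subvariety but not a subgroup, so take instead the $k$-Zariski closure of the abstract normal subgroup of $G(k_\nu)$ generated by $u$; this is a connected normal $k$-subgroup of $G$ containing $u$ and contained in $H$, and your contradiction with the hypothesis on the projections of $u$ then goes through unchanged.
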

\begin{proof}
Let $G_1,\dots,G_n$ be the almost $k$-simple normal $k$-subgroups of $G$. Write $u=u_1\dots u_n$ with $u_i\in G_i(k_\nu)$. By the Jacobson-Morozov 
theorem \cite[Theorem 3]{jacobson} we may pick homomorphisms $\pi_i:\SL_2\times k_\nu\rightarrow G_i\times_kk_\nu$ that send $v$ to $u_i$. Consider 
$\pi=\prod_i\pi_i$. This particular choice of $\pi$ implies that $\{g\in G(k_\nu)|\pi(\SL_2(k_\nu))\subset gH(k_\nu)g^{-1}\}$ is nowhere dense, whenever 
$H$ is a proper $k$-subgroup of $G$. By the Baire property we deduce that there exists a $g_0$ such that $\pi(\SL_2(k_\nu))\not\subset g_0H(k_\nu)g_0^{-1}$ 
for all such groups $H$, as there are only countably many. Consequently the homomorphism $g_0^{-1}\pi g_0$ has the required property.
\end{proof}

\section{Lemmas for finding monodromy groups}

\begin{lem}
\label{R3}
Let $\rho:G\rightarrow\GL(V/k)$ be a faithful, finite dimensional, linear representation of a connected semisimple $k$-group $G$ where $k$ is a field of 
characteristic $0$. Assume that $\rho$ contains the adjoint representation on $\gg=\Lie G$ and the trivial one. Assume, moreover, that $H\subset\GL(V/k)$ is 
a connected subgroup which commutes with every element in $\End_G(V)\cup\End_G(\bigwedge_k^2V)$. Then the following holds:
\begin{itemize}
\item[(i)]
$H$ is contained in the product of $\rho(G)$ with the centre of $\End_G(V)^\times$.
\item[(ii)]
If $H$ is reductive and $\End_H(V)=\End_G(V)$, then $\rho(G)\subset H$.
\end{itemize}
\end{lem}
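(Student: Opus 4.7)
The plan is as follows. After extending scalars I may assume $k$ algebraically closed, and decompose $V$ into its $G$-isotypic components $V = \bigoplus_\sigma M_\sigma \otimes V_\sigma$ with $V_\sigma$ the distinct irreducible $G$-constituents. Then $\End_G(V) = \prod_\sigma \End_k(M_\sigma) \otimes 1$, and by the double commutant theorem its centralizer in $\GL(V)$ is $\prod_\sigma 1 \otimes \GL(V_\sigma)$; hence every $h \in H$ is a tuple $(h_\sigma) \in \prod_\sigma \GL(V_\sigma)$ acting trivially on multiplicity spaces. In particular $H$ acts on the $G$-summand $\mathbf{1} \hookrightarrow V$ by a scalar $h_\mathbf{1} \in k^\times$ and on the $G$-summand $\gg \hookrightarrow V$ by some $h_\gg \in \GL(\gg)$.

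For (i) the key device is a carefully chosen element $F \in \End_G(\wedge^2 V)$ encoding simultaneously the Lie bracket on $\gg$ and the $G$-action maps $\mu_\sigma \colon \gg \otimes V_\sigma \to V_\sigma$, $x \otimes v \mapsto \rho'_\sigma(x) v$. Fix a $G$-equivariant splitting of $V$ and define the $G$-equivariant map $f \colon \wedge^2 V \to V$ that restricts to the Lie bracket on $\wedge^2 \gg$, to $\mu_\sigma$ on each cross term $\gg \wedge V_\sigma$, to the canonical isomorphism $\mathbf{1} \wedge V_\sigma \cong V_\sigma$ on each summand $\mathbf{1} \wedge V_\sigma$, and to zero elsewhere. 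Compose with the $G$-equivariant embedding $\iota \colon V \hookrightarrow \wedge^2 V$, $v \mapsto 1 \wedge v$ (for a fixed nonzero $1 \in \mathbf{1}$), to get $F = \iota \circ f$. Evaluating $Fh = hF$ on $x \wedge y \in \wedge^2 \gg$ yields $[h_\gg x, h_\gg y] = h_\mathbf{1}\, h_\gg [x,y]$, so $h_\mathbf{1}^{-1} h_\gg$ is a Lie algebra automorphism of $\gg$; by connectedness of $H$ this lies in $\Aut(\gg)^\circ = \Ad(G)$, so $h_\gg = h_\mathbf{1} \cdot \rho(g)|_\gg$ for some $g \in G(k)$. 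Applying $Fh = hF$ to $x \wedge v \in \gg \wedge V_\sigma$ and substituting, one deduces that $\rho_\sigma(g)^{-1} h_\sigma$ commutes with every $\rho'_\sigma(x)$; Schur's lemma on the $G$-irreducible $V_\sigma$ then forces $h_\sigma = \lambda_\sigma \rho_\sigma(g)$ for some scalar $\lambda_\sigma \in k^\times$. Hence $h = (\lambda_\sigma \rho_\sigma(g))_\sigma$ lies in $\rho(G) \cdot Z$ with $Z = Z(\End_G(V)^\times) = \prod_\sigma k^\times$, proving~(i).

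For~(ii), part~(i) gives $H \subset \rho(G) \cdot Z$, and since $G$ is semisimple $(\rho(G) \cdot Z)^{der} = \rho(G)$, so $H^{der} \subset \rho(G)$. The hypothesis $\End_H(V) = \End_G(V)$ forces each $V_\sigma$ to be $H$-irreducible; the central torus $Z(H)^\circ$ acts on each $V_\sigma$ by a scalar (Schur), so $V_\sigma$ is also irreducible as $H^{der}$-module. Write $G = \prod_i G_i$ with $G_i$ almost simple and $\gg = \bigoplus_i \gg_i$. The projection $H_i$ of $H^{der}$ into $\rho(G_i)$ acts irreducibly on $\gg_i$, but $\Lie(H_i) \subseteq \gg_i$ is $\Ad(H_i)$-invariant, so irreducibility forces $\Lie(H_i) = \gg_i$, i.e.\ $H_i = \rho(G_i)$. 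Finally, if $H^{der}$ were a Goursat-diagonal subgroup of $\prod_i \rho(G_i)$ identifying two isomorphic factors $G_i \cong G_j$, then the summands $\gg_i$ and $\gg_j$ of $V$ would fuse into a single $H^{der}$-isotypic component of doubled multiplicity carrying the same $Z(H)^\circ$-character, enlarging $\End_H(V)$ strictly beyond $\End_G(V)$ and contradicting the hypothesis. Hence $H^{der} = \rho(G)$, and so $\rho(G) \subset H$.

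The main obstacle is the construction of the ``bracket-and-action'' element $F$: the proof of~(i) hinges on extracting from a single $\End_G(\wedge^2 V)$-element both the cocycle $[h_\gg\,\cdot\,,h_\gg\,\cdot\,] = h_\mathbf{1} h_\gg[\cdot,\cdot]$ and a Schur identity tying every $h_\sigma$ to the same group element $g$, and this is only possible because the trivial summand $\mathbf{1} \subset V$ furnishes a second natural $G$-embedding of each $V_\sigma$ into $\wedge^2 V$ with which to compare. In part~(ii) the delicate step is ruling out Goursat-diagonal reductive subgroups $H^{der} \subsetneq \rho(G)$; this ultimately requires that the adjoint summand contributes every almost-simple factor of $\gg$ to $V$, so that a merging of isomorphic factors cannot be masked by a non-trivial central character of $Z(H)^\circ$.
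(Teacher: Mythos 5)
Your argument is correct. Part (i) is in substance the paper's own proof: the paper likewise uses the trivial summand to regard the Lie bracket $\bigwedge_k^2\gg\rightarrow U\otimes_k\gg$ and the derived action $\gg\otimes_kW\rightarrow U\otimes_kW$ as $G$-invariant elements of $\End_k(\bigwedge_k^2V)$, deduces that $h_U^{-1}h_\gg$ is an automorphism of $\gg$, hence inner by connectedness, and then pins down the remaining components of $h$ up to the centre of $\End_G(V)$; you merely bundle the two tensors into the single endomorphism $F$ and replace the last step by Schur's lemma on each irreducible constituent. Part (ii) is where you genuinely diverge. The paper sets $\gh_1=\Lie(\rho^{-1}(TH))$, uses reductivity of $H$ to produce an $H$-stable complement $\gh_2\subset\gg$, notes that the corresponding projectors lie in $\End_H(V)=\End_G(V)$ so that $\gh_1$ and $\gh_2$ are ideals with $[\gh_1,\gh_2]=0$, and concludes $\gh_2=0$ because otherwise every subspace of $\gh_2$ would be $H$-invariant, hence $G$-invariant. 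You instead argue via $H^{der}$-irreducibility of each $G$-constituent, surjectivity of the projections of $H^{der}$ onto the simple factors, and a Goursat/multiplicity count; this makes more transparent how the hypothesis $\End_H(V)=\End_G(V)$ forbids fusion of isotypic components. The one step you leave terse deserves to be written out: a priori the centre of $\End_G(V)^\times$ may act by \emph{different} scalars on $\gg_i$ and $\gg_j$, since these are non-isomorphic $G$-modules, so it is not automatic that the fused summands carry the same $Z(H)^\circ$-character. It does follow, but only from part (i): every $h\in H$ restricts on $\gg$ to $h_U\cdot\Ad(g)$, so an element of $Z(H)^\circ$ acting by a scalar on the simple ideal $\gg_i$ must act there exactly by $h_U$, the same scalar for every $i$. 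With that supplied, both routes are complete.
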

\begin{proof}
By our assumption the representation can be written in the form $V=U\oplus\gg\oplus W$, where we write $U$ for the trivial one-dimensional 
representation. Note that all of $U\otimes_k\gg$, $U\otimes_kW$, $\bigwedge_k^2\gg$, $\gg\otimes_kW$ are $G$-invariant subspaces of 
$\bigwedge_k^2V$. Also, observe that the Lie multiplication 
\begin{equation}
\label{tensor1}
\bigwedge_k^2\gg\rightarrow U\otimes_k\gg,
\end{equation}
as well as the map
\begin{equation}
\label{tensor2} 
\gg\otimes_kW\rightarrow U\otimes_kW,
\end{equation}
the derived action on the subspace $W$, gives rise to $G$-invariant elements in $\Hom_k(\bigwedge_k^2\gg,U\otimes_k\gg)$ 
and $\Hom_k(\gg\otimes_kW,U\otimes_kW)$ which are subspaces of $\End_k(\bigwedge_k^2V)$ in a natural way.\\

To ease notation we now assume that $H$ contains the scalars, we are allowed to do this as scalars act trivially on both spaces $\End_k(V)$ and $\End_k(\bigwedge_k^2V)$. 
In order to prove the first assertion we can assume $\rho(G)\subset H$ too. Now every element $h$ in $H(k^{ac})$ commutes with the projections onto $U$, $\gg$, and 
$W$ and hence induces maps $h_U$, $h_\gg$ and $h_W$ on these spaces. The map $h_U$ is just multiplication by a scalar which we can adjust to be one. Therefore 
\eqref{tensor1} implies that the map $h_\gg=h_U^{-1}h_\gg$ is an automorphism of $\gg$, which due to the connectedness of $H$ has to be an inner automorphism. 
Adjusting by an element in $\rho(G)$ reduces us further to $h_\gg=1$. Finally \eqref{tensor2} forces $h_W$ to be $\gg$-linear, hence $G$-linear. Such an element 
lies in $\End_G(W)$, in fact it lies in its centre, because all of $H$ commutes with$\End_G(V)$. Thus we have shown more specifically that $H$ is contained in 
$T\rho(G)$, where $T\subset\GL(V/k)$ is the torus of all elements that act as a scalar on each of the $G$-isotypic summands of $W$, along with $U\oplus\gg$.\\

Now we prove the second assertion. Let $\gh_1$ be the Lie algebra of $\rho^{-1}(TH)$. It is easy to see that $\gh_1$, regarded as a subspace of $\gg$, is invariant under 
$T$ and $\rho^{-1}(TH)$, and therefore it is invariant under the whole of $H$. The reductiveness of $H$ tells us that there exists a $H$-invariant complement, say $\gh_2$. 
Utilizing $\End_G(V)=\End_H(V)$, shows that $\gh_1$ and $\gh_2$ are ideals in $\gg$, just consider the projectors. In particular $[\gh_1,\gh_2]\subset\gh_1\cap\gh_2=0$. 
Once again, this means that every subspace in $\gh_2$, being $H$-invariant is also $G$-invariant, which is absurd unless $\gh_1=\gg$.
\end{proof}

\begin{rem}
The preceding result is certainly not the best one possible. For $G=\SL_2\times k$, for example one can show that the only faithful representations 
which fail the above assertions are the ones of the form$\sy^0\oplus\dots\oplus\sy^0\oplus\sy^3$. For $G=\PGL_2\times k$ only $\sy^2$ fails.
\end{rem}

\begin{lem}
\label{R4}
Let $H^+/M^+$ be a connected semisimple group, where $M^+$ is a finite extension of $\q_\ell$. Let $C$ be a free $M$-module of finite rank, where $M$ is a semisimple 
$M^+$-algebra of rank two, and let $(C,(.,..)_C,\sigma^+)$ be a unitary representation of $H^+$. Let $p\notin\{2,\ell\}$ be a prime, and $r$ an integer satisfying
\begin{eqnarray*}
&&r\geq[M^+:\q_\ell]\\
&&r>(\Card(\Gamma)\frac{\dim_{M^+}C}{2}+\dim H^++1)
\max\{\frac{\dim_{M^+}C}{2}-1,2\},
\end{eqnarray*}
where $\Gamma$ is the automorphism group of the root datum of $H^+$ (over $\q_\ell^{ac}$). Then there exists: 
\begin{itemize}
\item[(i)]
an isomorphism of quadratic extensions $L\otimes_{L^+}L_{\gr^+}^+/L_{\gr^+}^+\cong M/M^+$, where $[L^+:\q]=r$ and $\gr^+$ is a prime divisor of $\ell$.
\item[(ii)]
an algebraic group $G^+/L^+$ such that $G^+\times_{L^+}M^+$ is the universal cover of $H^+$.
\item[(iii)]
a unitary $G^+$-representation $(B,(.,..)_B,\rho^+)$ of dimension less than or equal to $\Card(\Gamma)\frac{\dim_{M^+}C}{2}$, 
and such that $L\oplus\Lie G\oplus B$ is $v$-$\gq$-flexible, for a choice of an imaginary element $v$ and a prime $\gq|p$ of $L$,
\item[(iv)]
an isometric embedding of unitary $G^+\times_{L^+}M^+$-representations $(C,(.,..)_C,\sigma^+)\hookrightarrow(B\otimes_LM,(.,..)_B,\rho^+\times_{L^+}M^+)$.
\end{itemize}
\end{lem}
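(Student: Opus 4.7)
The lemma is a globalization: we must realize the local datum $(H^+/M^+, C, \sigma^+)$ as the completion at a well-chosen prime $\gr^+\mid\ell$ of a global datum $(G^+/L^+, B, \rho^+)$ satisfying the $v$-$\gq$-flexibility of Definition~\ref{vzwei}, with the prescribed dimension bounds and choice of $v,\gq$. My plan has four stages.

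\emph{Stage 1 (arithmetic setup).} By Chebotarev and weak approximation I would find a totally real $L^+/\q$ of degree $r$ carrying primes $\gp^+\mid p$ (inert in $L^+/\q$) and $\gr^+\mid\ell$ with $L^+_{\gr^+}\cong M^+$; then a CM extension $L/L^+$ unramified at $p$ with $p$ split in $L/L^+$ (yielding $\gq,\gq^*$ above $\gp^+$) and $L\otimes_{L^+}L^+_{\gr^+}\cong M$. Finally pick $v\in L^\times$ with $v^*=-v$ whose $p$-adic signature realizes $a=\max\{\dim_{\q_\ell}C/2-1,\,2\}$; this is achievable since the count of embeddings $L_\gq\hookrightarrow\q_p^{ac}$ with $\Im(\iota(v))>0$ ranges freely over $\{1,\dots,\lfloor r/2\rfloor\}$ as $v$ varies.

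\emph{Stage 2 (global form of the group).} Since $\tilde H^+$ is simply connected semisimple over the $p$-adic field $M^+$, Kneser's theorem $H^1(M^+,\tilde H^+)=0$ shows $\tilde H^+$ is uniquely determined by its outer-twist homomorphism $\phi_M:\Gal(\bar M^+/M^+)\to\Gamma$. Construct by Chebotarev a global $\phi_L:\Gal(\bar L^+/L^+)\to\Gamma$ whose restriction at $\gr^+$ agrees with $\phi_M$, let $G^+/L^+$ be the quasi-split simply connected form with outer twist $\phi_L$, and adjust the inner form via the Hasse principle for simply connected semisimple groups (Kneser--Harder--Chernousov) so that $G^+(L^+_v)$ is compact for every real place $v$ --- possible because every complex semisimple group admits a compact real form. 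Kneser-vanishing at $\gr^+$ then forces $G^+\times_{L^+}M^+\cong\tilde H^+$.

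\emph{Stage 3 (representation).} Fix a compatible embedding $\bar L^+\hookrightarrow\bar M^+$. Viewed as a $G^+\times_{L^+}\bar L^+$-representation via this embedding, $C\otimes_M\bar M^+$ decomposes into irreducibles with multiset of highest weights $\{\lambda_1,\ldots,\lambda_k\}$ of total dimension $\dim_MC$. Take the $\Gamma$-orbit $\Lambda$; since $\phi_L$ factors through $\Gamma$, the sum $\bigoplus_{\lambda\in\Lambda}V_\lambda$ is $\Gal(\bar L^+/L^+)$-stable, so descends to a $G^+$-rep $B_0/L^+$ of dimension $\leq|\Gamma|\dim_MC$. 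Set $B:=B_0\otimes_{L^+}L$, of $L$-dimension $\leq|\Gamma|\dim_{\q_\ell}C/2$. Because the opposition involution $-w_0$ lies in $\Gamma$, $\Lambda$ is closed under duality; combining the standard invariant pairings on the $V_\lambda$ with the $L/L^+$-involution yields a non-degenerate $G^+$-invariant Hermitian form $(.,..)_B$, positive-definite thanks to the archimedean compactness of $G^+$. The isometric embedding $(C,(.,..)_C)\hookrightarrow(B\otimes_LM,(.,..)_B)$ descends by Galois invariance from the tautological inclusion of representations over $\bar M^+$, after a scalar adjustment of the pairings.

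\emph{Stage 4 (flexibility and main obstacle).} The bound $\dim_L(L\oplus\Lie G\oplus B)\leq 1+\dim H^++|\Gamma|\dim_{\q_\ell}C/2$ combined with the quantitative hypothesis on $r$ immediately gives (X.1). For (X.2) I would take $u\in G(L_\gq)$ a product of long-root unipotents $\exp(E_{\alpha_i})$, one in each $L^+$-simple factor of $G^+$, ensuring $u$ is nontrivial on every factor; $\Ad(u)-1$ has nilpotency $\leq 3$ on $\Lie G$ (the $\alpha_i$-string through any root has length $\leq 2$ for a long root in any semisimple Lie algebra), and on each irreducible summand $V_\lambda\subset B$ the nilpotency of $\rho(u)-1$ is at most $\dim V_\lambda\leq\dim_MC\leq a+1$, while $\rho(u)-1$ kills the trivial summand $L$. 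Hence $(\rho(u)-1)^{a+1}=0$ on the full $B'$. The delicate point is Stage 2: simultaneously matching $\tilde H^+$ locally at $\gr^+$ and forcing archimedean compactness of $G^+$, which reduces to the Hasse principle and Kneser-vanishing but must be orchestrated with care; the $\max\{\cdot,\,2\}$ safeguard in the definition of $a$ is precisely what keeps Stage 4 uniform across all root systems.
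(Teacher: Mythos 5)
Your overall strategy coincides with the paper's: globalize to a totally real $L^+$ of degree $r$ and a CM extension $L$ with prescribed completions, build a global form $G^+$ that is anisotropic at the archimedean places and restricts to (the simply connected cover of) $H^+$ at $\gr^+$, descend the representation $C$ along a splitting field of degree $\leq\Card(\Gamma)$, produce the positive definite pairing, and verify (X.1) from the numerical hypothesis and (X.2) with a product of long-root unipotents. Stages 1, 3 and 4 are essentially the paper's argument (the paper is slightly more careful in Stage 3, using a weak-approximation/open-condition argument on the space $Q$ of $G^+$-equivariant sesquilinear forms to get a single form that is simultaneously positive definite at every real place and isometric to $(.,..)_C$ at $\gr^+$; your ``scalar adjustment'' is too coarse when the relevant isotypic components have multiplicity, but this is repairable by the same openness argument).

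The genuine gap is in Stage 2. First, Kneser's vanishing $H^1(M^+,\tilde H^+)=0$ does \emph{not} imply that $\tilde H^+$ is determined by its outer twist: forms of $\tilde H^+$ are classified by $H^1(M^+,\Aut(\tilde H^+))$, and inner forms by (the image of) $H^1(M^+,\tilde H^{ad})$, which is far from trivial over a $p$-adic field. For example $\SL_1(D)$, for $D$ a quaternion division algebra over $M^+$, is simply connected with the same root datum and trivial outer twist as $\SL_2$, yet is anisotropic; so your concluding claim that ``Kneser-vanishing at $\gr^+$ forces $G^+\times_{L^+}M^+\cong\tilde H^+$'' fails. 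One must prescribe the inner-form class at $\gr^+$ (not only the quasi-split outer form), and the correct tool is the local--global classification of inner forms via $H^1(\cdot,G^{ad})$, or, as the paper does, the Borel--Harder existence theorem for a global form with prescribed localizations at a finite set of places. Second, your construction never imposes any condition at the prime $\gq^+$ above $p$, yet Stage 4 requires nontrivial unipotent elements $\exp(E_{\alpha_i})$ in every simple factor of $G^+(L_{\gq^+}^+)$; if $G^+$ happens to be anisotropic at $\gq^+$ no such $u$ exists and (X.2) cannot be verified. The paper explicitly prescribes the $G_i$ to be split at the primes $\gq_i$ above $p$ in the same application of Borel--Harder. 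Both defects are repairable by invoking the correct existence theorem with the full list of local conditions (compact at $\infty$, split at $\gq^+$, isomorphic to the given factor of $\tilde H^+$ at $\gr^+$), but as written the construction of $G^+$ does not go through.
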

\begin{proof}
Choose a number field $L^+$ of degree $r$, together with isomorphisms:  
$$L_\nu^+\cong\begin{cases}\r&\nu|\infty\\K(\f_{p^r})&\nu=\gq^+\\M^+&\nu=\gr^+\end{cases},$$
where $\gq^+|p$ and $\gr^+|\ell$ are fixed primes of $L^+$. Now write $\tilde H^+$ for the simply connected cover of $H^+$, and write $\prod_{i=1}^nRes_{M_i/M^+}H_i$ 
for the canonical factorization of $\tilde H^+$ into Weil restrictions of absolutely almost simple groups $H_i/M_i$. Pick further totally real field extensions $L_1,\dots,L_n$ 
of $L^+$, together with isomorphisms $M_i\cong L_i\otimes_{L^+}L_{\gr^+}^+$ which restrict to the previously chosen one on $M^+$. In each of these choose a prime 
$\gq_i$ over $\gq^+$. By \cite[Theorem B]{harder} there exists a group $G_i/L_i$ such that:
$$G_i\times_{L_i}{L_i}_\nu\cong\begin{cases}\text{ anisotropic form }&\nu|\infty\\\text{ split form }&\nu=\gq_i\\H_i&\nu=\gr_i\end{cases},$$
where $\gr_i$ is the unique prime of $L_i$ over $\gr^+$. Now put $\prod_{i=1}^nRes_{L_i/L^+}G_i=G^+$.\\
The next step is to embed $\sigma^+$ into a unitary representation of $G^+$. To this end we pick a quadratic extension $L$ of $L^+$ 
such that the previously chosen isomorphisms extend to:
$$L\otimes_{L^+}L_\nu^+\cong\begin{cases}\c&\nu|\infty\\K(\f_{p^r})\oplus K(\f_{p^r})&\nu=\gq^+\\M&\nu=\gr^+\end{cases}.$$
Observe that we demand the prime $\gq^+$ to be split in $L$ while $\gr^+$ may or may not be split, depending on whether $M=M^+\oplus M^+$ or not. We also choose 
a homomorphism $\iota:M\rightarrow\q_\ell^{ac}$ (which in the $M=M^+\oplus M^+$-case is not injective). Now let $L'\subset\q_\ell^{ac}$ be a Galois extension of $L$ 
over which the base change $G'=G^+\times_{L^+}L'$ becomes split. In fact we could take for $L'$ the field which trivializes the $\Gal(L^{ac}/L)$-operation on $\Phi(T,G)$, 
the set of roots relative to some chosen maximal torus $T\subset G=G^+\times_{L^+}L$. This gives the crude estimate $[L':L]\leq\Card(\Gamma)$, where $\Gamma$ 
is the subgroup of elements in $\GL_\z(T^*)$ which preserve $\Phi(T,G)$. Observe that the highest weights theory gives a natural $L'$-structure $C'$ on the 
representation $C^{ac}=C\otimes_{M,\iota}\q_\ell^{ac}$. Regarding $C'$ as a $L$-vector space gives us a $[L':L]\frac{\dim_{M^+}C}{2}$-dimensional representation, 
say $(B,\rho)$, of $G$. It is clear that there exists an embedding $f:(C,\sigma)\rightarrow(B\otimes_LM,\rho\times_LM)$ of $G\times_LM$-representations.\\
Finally we wish to give $\rho$ a pairing: Let $Q$ be the $L^+$-vector space of $G^+$-equivariant sesquilinear forms on $B$. For each place $\nu|\infty$ let $Q_\nu^\circ$ 
be the set of positive definite $G^+\times_{L^+}L_\nu^+$-equivariant forms on $L_\nu^+\otimes_{L^+}B$. This is a non-empty open set in $L_\nu^+\otimes_{L^+}Q$, 
as is the subset $Q_{\gr^+}^\circ\subset L_{\gr^+}^+\otimes_{L^+}Q$ of forms for which there is an embedding $f:(C,\sigma)\rightarrow(B\otimes_LM,\rho\times_LM)$ with 
$(f(x),f(y))_B=(x,y)_C$. One sees easily that there exists an element of $Q$ lying in all of these finitely many sets $Q_\nu^\circ$.\\
Now we can check that the unitary representation $L\oplus\Lie G\oplus B$ thus obtained is a flexible one: Choose nontrivial unipotent 
elements $u_i$ in the longest root-space of $G_i({L_i}_{\gq_i})$, and let $u=u_1\dots u_n\in G^+(L_{\gq^+}^+)$. It is then clear that (X.2) holds 
with $a=\max\{\frac{\dim_{M^+}C}{2}-1,2\}$, moreover (X.1) follows from $(\Card(\Gamma)\frac{\dim_{M^+}C}{2}+\dim H^++1)<\frac{r}{a}$. 
\end{proof}

\end{appendix}

\end{document}